\newtheorem{thm}{Theorem}[section]
\newtheorem{lem}[thm]{Lemma}
\newtheorem{prop}[thm]{Proposition}
\newtheorem{cor}[thm]{Corollary}
\newtheorem{dfn}[thm]{Definition}
\newtheorem{ques}[thm]{Question}
\newtheorem*{thm*}{Theorem}
\theoremstyle{remark}
\newtheorem{ex}[thm]{Example}
\newtheorem*{rmk}{Remark}
\newtheorem*{rmks}{Remarks}
\renewcommand{\rm}[1]{\mathrm{#1}}
\renewcommand{\cal}[1]{\mathcal{#1}}
\newcommand{\bbN}{\mathbb{N}}
\newcommand{\bbQ}{\mathbb{Q}}
\newcommand{\bbR}{\mathbb{R}}
\newcommand{\bbT}{\mathbb{T}}
\newcommand{\bbZ}{\mathbb{Z}}
\newcommand{\bfU}{\mathbf{U}}
\newcommand{\sfC}{\mathsf{C}}
\newcommand{\rmH}{\mathrm{H}}
\renewcommand{\d}{\mathrm{d}}
\newcommand{\m}{\mathrm{m}}
\newcommand{\A}{\mathcal{A}}
\newcommand{\B}{\mathcal{B}}
\newcommand{\C}{\mathcal{C}}
\newcommand{\F}{\mathcal{F}}
\renewcommand{\P}{\mathcal{P}}
\newcommand{\Z}{\mathcal{Z}}
\newcommand{\scrM}{\mathscr{M}}
\newcommand{\scrN}{\mathscr{N}}
\newcommand{\scrP}{\mathscr{P}}
\newcommand{\scrQ}{\mathscr{Q}}
\newcommand{\scrR}{\mathscr{R}}
\newcommand{\scrS}{\mathscr{S}}
\newcommand{\frP}{\mathfrak{P}}
\newcommand{\frQ}{\mathfrak{Q}}
\newcommand{\frR}{\mathfrak{R}}
\newcommand{\G}{\Gamma}
\renewcommand{\a}{\alpha}
\renewcommand{\b}{\beta}
\newcommand{\eps}{\varepsilon}
\newcommand{\g}{\gamma}
\renewcommand{\k}{\kappa}
\newcommand{\s}{\sigma}
\renewcommand{\phi}{\varphi}
\newcommand{\Cnd}{\mathrm{Coind}}
\newcommand{\cplx}{\mathrm{cplx}}
\newcommand{\coker}{\mathrm{coker}}
\newcommand{\img}{\mathrm{img}}
\newcommand{\sgn}{\mathrm{sgn}}
\newcommand{\PDE}{{PD$^{\rm{ce}}$E}}
\newcommand{\Mod}{\mathsf{Mod}}
\newcommand{\PMod}{\mathsf{PMod}}
\newcommand{\SFMod}{\mathsf{SFMod}}
\newcommand{\SPMod}{\mathsf{SPMod}}
\newcommand{\incln}{{\hbox{incl$^{\ul{\mathrm{n}}}$}}}
\renewcommand{\sp}{\mathrm{sp}}
\renewcommand{\hat}[1]{\widehat{#1}}
\newcommand{\ol}[1]{\overline{#1}}
\newcommand{\ul}[1]{\underline{#1}}
\newcommand{\into}{\hookrightarrow}
\newcommand{\fin}{\nolinebreak\hspace{\stretch{1}}$\lhd$}
\renewcommand{\t}[1]{\widetilde{#1}}
\renewcommand{\to}{\longrightarrow}
\newcommand{\onto}{\twoheadrightarrow}
\newcommand{\actson}{\curvearrowright}
\newcommand{\lfl}{\lfloor}
\newcommand{\rfl}{\rfloor}
\newcommand{\uhr}{\!\upharpoonright}
\newcommand{\llc}{\llcorner}
\begin{document}

\title{Partial difference equations over compact Abelian groups, II: step-polynomial solutions}
\author{Tim Austin\footnote{Research supported by a fellowship from the Clay Mathematics Institute}\\ \\ \small{Courant Institute, New York University}\\ \small{New York, NY 10012, U.S.A.}\\ \small{\texttt{tim@cims.nyu.edu}}}
\date{}

\maketitle

\begin{abstract}
This paper continues an earlier work on the structure of solutions to two classes of functional equation.  Let $Z$ be a compact Abelian group and $U_1$, \ldots, $U_k \leq Z$ be closed subgroups.  Given $f:Z\to\bbT$ and $w \in Z$, one defines the differenced function
\[d_wf(z) := f(z+w) - f(z).\]
In this notation, we shall study solutions to the system of difference equations
\[d_{u_1}\cdots d_{u_k}f \equiv 0 \quad \forall (u_1,\ldots,u_k) \in \prod_{i\leq k}U_i,\]
and to the zero-sum problem
\[f_1 + \cdots + f_k = 0\]
for functions $f_i:Z\to \bbT$ that are $U_i$-invariant for each $i$.

Part I of this work showed that the $Z$-modules of solutions to these problems can be described using a general theory of `almost modest $\P$-modules'.  Much of the global structure of these solution $Z$-modules could then be extracted from results about the closure of this general class under certain natural operations, such as forming cohomology groups. The main result of the present paper is that solutions to either problem can always be decomposed into summands which either solve a simpler system of equations, or have some special `step polynomial' structure.  This will be proved by augmenting the definition of `almost modest $\P$-modules' further, to isolate a subclass in which elements can be represented by the desired `step polynomials'.  We will then find that this subclass is closed under the same operations.
\end{abstract}

\newpage

\parskip 0pt

\setcounter{tocdepth}{1}
\tableofcontents

\section{Introduction}

This paper continues the work of~\cite{Aus--PDceEI} (`Part I').  As in that paper, let $Z$ be a compact metrizable Abelian group, let $\bfU = (U_1,\ldots,U_k)$ be a tuple of closed subgroups of $Z$, and let $A$ be an Abelian Lie group (often $A$ will be $\bbT := \bbR/\bbZ$).  Given a measurable function $f:Z\to A$ and an element $w \in Z$, we define the associated \textbf{differenced function} to be
\begin{eqnarray}\label{eq:d}
d_wf(z) := f(z-w) - f(z).
\end{eqnarray}

Part I introduced two classes of functional equation for such functions.
\begin{itemize}
\item The \textbf{partial difference equation}, or {\PDE}, associated to $Z$ and $\bfU$ is the system
\begin{eqnarray}\label{eq:PDceE}
d_{u_1}\cdots d_{u_k}f = 0 \quad \forall u_1\in U_1,\,u_2 \in U_2,\,\ldots,\,u_k \in U_k
\end{eqnarray}
(since we quotient by functions that vanish a.e., this means formally that for \emph{strictly} every $u_1$, \ldots, $u_k$, the left-hand side is a function $Z\to A$ that vanishes at \emph{almost} every $z$).

\item The \textbf{zero-sum problem} associated to $Z$ and $\bfU$ is the problem of solving the equation
\begin{eqnarray}\label{eq:zero-sum}
f_1(z) + \cdots + f_k(z) = 0 \quad \quad \hbox{for $m_Z$-a.e.}\ z
\end{eqnarray}
among $k$-tuples of measurable functions $f_i:Z\to A$ such that each $f_i$ is $U_i$-invariant.  A tuple $(f_i)_{i=1}^k$ satisfying~(\ref{eq:zero-sum}) is a \textbf{zero-sum} tuple of functions.
\end{itemize}

For each of these problems, the set of solutions (that is, functions satisfying the {\PDE} in the first case, and zero-sum tuples in the second) forms a Polish $Z$-module.  Part I was concerned with the global structure of these Polish $Z$-modules, and established a basic description of them as two families of examples of `almost modest $\P$-modules'.

In this paper, our interest will be in the form of the individual functions that solve these equations.  Our main result is already suggested by the several examples analyzed in the Introduction and Section 12 of Part I.  Here it will suffice to recall one of those.  Let $\lfl\cdot\rfl$ denote the integer-part function on $\bbR$, and let $\{\cdot\}$ denote the obvious selector $\bbT\to [0,1)\subset \bbR$, which we will call the `fractional part' map.  Then Example 12.7 in Part I showed that the functions $\s,c:\bbT^2\times \bbT^2 \to \bbT$ defined by
\[\s(s,x) := \{s_1\}\{x_2\} - \lfl\{x_2\} + \{s_2\}\rfl\{x_1 + s_1\} \quad \mod 1\]
and
\[c(s,t) = \{s_1\}\{t_2\} - \{t_1\}\{s_2\} \quad \mod 1\]
satisfy the equation
\[\s(t,x) + \s(s,x+t) = \s(s,x) + \s(t,x+s) + c(s,t).\]
As explained there, this may be read as giving a zero-sum quintuple of functions on $\bbT^2\times \bbT^2\times \bbT^2$, with each function invariant under a different subgroup of this group.

The main result of this paper will be that, in a sense to be made precise below, the basic `building block' solutions to {\PDE}s or zero-sum problems may always be (chosen to be) functions assembled using $\lfl\cdot\rfl$ and $\{\cdot\}$, such as those above.

To formulate these theorems properly, we will introduce in Section~\ref{sec:QP} a precise notion of `step polynomial' functions on a compact Abelian group.  For now let us simply remark that a `step polynomial' on $Z$ is a function for which there is a partition of $Z$ into `geometrically-simple' pieces such that, on each of those pieces, the function is given by a polynomial in some fractional parts of characters of $Z$.

In the first place, step polynomials will appear as representative cocycles of the cohomology groups $\rmH^p_\m(Z,\bbT)$, which played a key r\^{o}le in the analysis in Part I.  A precursor to this fact can be found as~\cite[Proposition 9.4]{AusMoo--cohomcty}, and we give a complete proof in Subsection~\ref{subs:compar} below.  Our main results, Theorems A and B below, assert that step polynomials similarly appear as a complete list of solutions to {\PDE}s and zero-sum problems, modulo degenerate solutions.  Those theorems also give the following related fact: if a degenerate {\PDE}-solution (resp. zero-sum tuple) happens to be a step polynomial, then it may be decomposed into {\PDE}-solutions (resp. zero-sum tuples) corresponding to simpler equations so that the summands are still step polynomials.  Conclusions of this second kind will be an essential inclusion in some of the inductive proofs that will lead to Theorems A and B, as well as having some interest in their own right.

Theorems A and B below are best stated using the language of $\P$-modules from Part I.  Fix $Z$ and $\bfU = (U_1,\dots,U_k)$, and assume that $U_1 + \cdots + U_k = Z$.  (If this is not so, then the {\PDE} and zero-sum problems may simply be solved on each coset of $U_1 + \cdots + U_k$ independently, as discussed in Part I.)  Let $\scrM = (M_e)_{e\subseteq [k]}$ and $\scrN = (N_e)_{e \subseteq [k]}$ be the $\P$-modules of {\PDE} solutions and zero-sum tuples associated to this $Z$ and $\bfU$ (Subsection 4.6 in Part I).  Recall the main results of Part I: in this setup, the submodules $\partial^\scrM_k(M^{(k-1)})$ of degenerate {\PDE} solutions, and $\partial^\scrN_k(N^{(k-1)})$ of degenerate zero-sum tuples, are relatively open and co-countable in $M_{[k]}$ and $N_{[k]}$ respectively.

Also, before proceeding, we should recall from Corollaries A$''$ and B$''$ in Part I that if $A$ is a Euclidean space, then these $\P$-modules have vanishing structural homology except in the lowest nonvanishing position of each structure complex.  As a result, {\PDE}-solutions for $k\ge 2$ and zero-sum tuples for $k\ge 3$ could all be expressed quite easily in terms of degenerate solutions for Euclidean $A$.  On the other hand, some of the auxiliary theory to be developed below fails for some Euclidean modules, so we will simply exclude these now.

\vspace{7pt}

\noindent\textbf{Theorem A}\quad \emph{Suppose that $U_1 + \cdots + U_k = Z$ and that $A$ is a compact-by-discrete $Z$-module (this includes all compact and discrete modules).  The cosets of $\partial^\scrM_k(M^{(k-1)})$ in $M_{[k]}$ all contain representatives that are step polynomials.  Also, if an element of $\partial^\scrM_k(M^{(k-1)})$ is a step polynomial, then it is the image of a $k$-tuple in $M^{(k-1)}$ consisting of step polynomials.}

\vspace{7pt}

If $f:Z\to\bbT$ solves the {\PDE} associated to $\bfU$, then by an iterative appeal to Theorem A it can be decomposed as
\begin{eqnarray}\label{eq:PDceE-struct}
f = \sum_{e\subseteq [k]}f_e,
\end{eqnarray}
where:
\begin{itemize}
\item $f_e$ solves the {\PDE}-system associated to the sub-tuple $(U_i)_{i\in e}$;
\item each $f_e$ is a step polynomial on every coset of $\sum_{i\in e}U_i$.
\end{itemize}

\vspace{7pt}

\noindent\textbf{Theorem B}\quad \emph{Suppose that $U_1 + \cdots + U_k = Z$ and that $A$ is a compact-by-discrete $Z$-module. The cosets of $\partial^\scrN_k(N^{(k-1)})$ in $N_{[k]}$ all contain representatives that are step polynomials.  Also, if an element of $\partial^\scrN_k(N^{(k-1)})$ is a step polynomial, then it is the image of a $k$-tuple in $N^{(k-1)}$ consisting of step polynomials.}

\vspace{7pt}

Similarly to the above, this implies that any zero-sum tuple $(f_i)_{i=1}^k$ as in~(\ref{eq:zero-sum}) can be decomposed as
\begin{eqnarray}\label{eq:zero-sum-struct}
(f_i)_{i=1}^k = \sum_{e\subseteq [k],\,|e|\geq 2}(g_{e,i})_{i=1}^k
\end{eqnarray}
where:
\begin{itemize}
\item $(g_{e,i})_{i=1}^k$ is a zero-sum tuple for every $e$;
\item $g_{e,i} = 0$ if $i\not\in e$;
\item each $g_{e,i}$ is a step polynomial on every coset of $\sum_{j\in e}U_j$.
\end{itemize}

As they stand, Theorems A and B are vacuous in case $Z$ is a finite group, since every function on a finite Abelian group may be written as a step polynomial.  However, knowing these results for arbitrary compact Abelian groups, we will be able to extract some (highly ineffective) quantitative dependence via a compactness argument.  This will promise some control on the `complexity' of the functions involved, nontrivial even for finite groups.

To formulate this, we next invoke a notion of complexity for step polynomial functions.  That notion will not be defined until Section~\ref{sec:quantitative}, where the theorem will be proved, but intuitively it bounds the number of partition-cells, the degrees and the coefficients involved in specifying the step polynomial.

\vspace{7pt}

\noindent\textbf{Theorem C}\quad \emph{For every $k\geq 1$ there is an $\eps > 0$ such that for every $d\in \bbN$ there is a $D \in \bbN$ for which the following holds.  Let $Z$ be a compact metrizable Abelian group, let $\bfU = (U_i)_{i=1}^k$ be a tuple of subgroups of $Z$, and let $\scrM = (M_e)_e$ be the associated $\bbT$-valued {\PDE}-solution $\P$-module.  If $f \in M_{[k]}$ is such that
\[d_0(f,g) < \eps\]
for some step polynomial $g \in \F(Z)$ of complexity at most $d$, then $f \in f' + \partial_k(M^{(k-1)})$ for some step polynomial $f' \in M_{[k]}$ of complexity at most $D$.  That is, in the decomposition~(\ref{eq:PDceE-struct}) one may choose $f_{[k]}$ to have complexity at most $D$.}

\vspace{7pt}

Thus, if a function solves a {\PDE}, and can be well-approximated by a step polynomial of a certain complexity, then it agrees with a step polynomial {\PDE}-solution which itself has a bound on its complexity, up to a degenerate solution.  Since we will see that any element of $\F(Z)$ may be approximated in $d_0$ by step polynomials (this will follow easily from Lemma~\ref{lem:q-p-fine}), in principle this can be applied to a set of representatives for each class in $M_{[k]}/\partial_k(M^{(k-1)})$, and so gives a quantitative enhancement of the first conclusion of Theorem A.

It will be clear that the analog of Theorem C holds also for zero-sum tuples, and can be proved in the same way, but we will not give those details separately.  The same argument should also work for other compact-by-discrete target modules $A$, but we will also set this aside for the sake of brevity.

Interestingly, there do not seem to be analogous quantitative versions of the second parts of Theorems A and B.  In the setting of Theorem A, if $Z$ and each $U_i$ are fixed compact Abelian Lie groups, then given a step polynomial $f \in \partial_k(M^{(k-1)})$, one can bound the minimal complexity of pre-images $(f_1,\ldots,f_k) \in \partial_k^{-1}\{f\}$ in terms of the complexity of $f$.  However, it can happen that this bound must deteriorate as one considers increasingly `complicated' Lie groups $Z$ and $\bfU$, and such a bound is actually impossible for some infinite-dimensional $Z$ and $\bfU$.  This will be discussed further, and witnessed by examples, in Subsection~\ref{subs:cplx-discussion}.

Since our proof of Theorem C is by compactness and contradiction, it does not give explicit bounds.  One could presumably extract such bounds by making all steps of the proof suitably quantitative, but even then one would expect them to be extremely poor.  On the other hand, even if one ignores the topological aspects of the proof, these results have a similar flavour to more classical structure theorems in real semi-algebraic algebraic geometry, where quantitative bounds are well-known to grow extremely rapidly: see, for instance, the classical monograph~\cite{BenedettiRis90}, or~\cite{BasPolRoy06}.

\section{Background and basic definitions}

We shall refer to~\cite{Aus--PDceEI} as `Part I', and will freely use the definitions and results of that paper.  They will be cited by prepending `I' to their numbering in that paper: for instance, `Theorem I.$X$.$Y$' means `Theorem $X$.$Y$ in Part I'.

\subsection{Compact and Polish Abelian groups}

Like Part I, this work will focus on measurable functions defined on a compact metrizable Abelian group. Assuming metrizability incurs no loss of generality, as explained in Section I.1.2.

A function from such a group to a Polish space is `measurable' if it is measurable with respect to the Haar-measure completion of the Borel $\s$-algebra; it is `Borel' if it is Borel measurable.  We will freely use the Measurable Selector Theorem for the former notion of measurability.

Many of our functions will take values in an Abelian Lie group.  By this we understand a locally compact, second-countable Abelian group whose topology is locally Euclidean.  It need not be connected or compactly generated.

If $Z$ is a compact metrizable group and $M$ is any Polish Abelian group, then, as in Part I, the space of Haar-a.e. equivalence classes of measurable functions $Z\to M$ is denoted $\F(Z,M)$.  It is regarded as another Polish Abelian group with the topology of convergence in probability.  We also abbreviate $\F(Z,\bbT) =: \F(Z)$.

If $Z$ is a compact Abelian group, then an \textbf{affine function} on $Z$ is a function $f:Z\to \bbT$ of the form $\theta + \chi$ for some $\theta \in \bbT$ and $\chi \in \hat{Z}$.  Since affine functions are continuous, two of them can be equal Haar-a.e. only if they are strictly equal, so we will generally identify them with their Haar-a.e. equivalence classes. Having done so, they form a closed $Z$-submodule $\A(Z) \leq \F(Z)$.  More generally, if $Y$ is another compact Abelian group then an \textbf{affine map} $f:Z\to Y$ is of the form $f(z) = y_0 + \chi(z)$ for some $y_0 \in Y$ and continuous homomorphism $\chi:Z \to Y$.  It is an \textbf{affine isomorphism} if $\chi$ is an isomorphism.

A \textbf{torus} is a compact group isomorphic to $\bbT^d = \bbR^d/\bbZ^d$ for some $d$. For $\bbT^d$ itself there is a canonical choice of fundamental domain, $[0,1)^d \subset \bbR^d$.  For any $t \in \bbT^d$, we let $\{t\}$ denote its unique representative in $[0,1)^d$.  If $Z$ is a torus and $\chi:Z\to \bbT^d$ is a choice of affine isomorphism, then $\{\chi\}:Z\to [0,1)^d$ denotes the composition of $\chi$ with $\{\cdot\}$.

An Abelian Lie group will be called \textbf{compact-by-discrete} if it is isomorphic to $\bbT^d \times D$ for some $d\geq 0$ and some discrete Abelian group $D$.  For this group, the short exact sequence
\[\bbT^d \into \bbT^d \times D \onto D\]
is preserved by any continuous automorphism, because the copy of $\bbT^d$ inside the middle group equals the identity component. Thus, for any compact $Z$ and any $Z$-action on $\bbT^d\times D$, the resulting module is a discrete extension of a compact $Z$-module, hence the name.

\subsection{Functions, sets and partitions}

Let $X$, $Y$ and $Z$ be sets, and let $\chi:X\to Y$ and $\g:X\to Z$ be functions.  Then $\chi$ \textbf{factorizes through} $\g$ if $\chi = f\circ \g$ for some $f:Z\to Y$; equivalently, if the level-set partition of $\g$ refines that of $\chi$.

If $\frP$ and $\frQ$ are two partitions of any set, then $\frP\vee \frQ$ denotes their common refinement, and $\frP \preceq \frQ$ denotes that $\frQ$ is already a refinement of $\frP$.  If $\frP$ is a partition of a set $S$ and $T \subseteq S$, then
\[\frP\cap T:= \{C\cap T\,|\ C\in \frP\},\]
a partition of $T$.   If $\frP$ is a partition of an Abelian group $Z$ and $z \in Z$, then
\[\frP - z:= \{C - z\,|\ Z \in \frP\}.\]
Given also a subgroup $W \leq Z$, the partition $\frP$ is \textbf{$W$-invariant} if $\frP - w = \frP$ for all $w \in W$; of course, this does not require that the individual cells of $\frP$ be $W$-invariant.

If $\frP$ and $\frQ$ are Borel partitions of a compact Abelian group $Z$, then $\frP$ \textbf{almost refines} $\frQ$ if there is a Borel subset $Y \subseteq Z$ with $m_Z(Y) = 1$ and $\frP\cap Y\succeq \frQ\cap Y$; they are \textbf{almost equal} if $\frP\cap Y = \frQ\cap Y$ for such a $Y$.

Relatedly, if $Z$ is a set and $\cal{U}$ and $\cal{V}$ are any covers of it (not necessarily partitions), then $\cal{V}$ is \textbf{subordinate} to $\cal{U}$ if for every $U \in \cal{U}$ there is a $V \in \cal{V}$ such that $U \subseteq V$. This will also be denoted by $\cal{U}\succeq \cal{V}$, as it is obviously equivalent to refinement in case $\cal{U}$ and $\cal{V}$ are partitions.

Lastly, if $\frP$ is a partition of $S$ then $\sim_\frP$ denotes the corresponding equivalence relation on $S$:
\[s \sim_\frP t \quad \Longleftrightarrow \quad s,t\ \hbox{lie in same cell of}\ \frP.\]

\section{Step polynomials}\label{sec:QP}

The section introduces the `step polynomials' that appear in the formulations of Theorems A and B, and builds up some basic theory for them.

\subsection{Quasi-polytopal partitions and step-affine maps}

Our convention below is that a convex polytope in a Euclidean space may contain some of its facets and not others, and may lie in a proper affine subspace.

\begin{dfn}[Quasi-polytopal partitions and step functions]\label{dfn:qp}
If $Z$ is a compact Abelian group and $\frP$ is a partition of $Z$, then $\frP$ is \textbf{quasi-polytopal} (`\textbf{QP}') if there are
\begin{itemize}
\item an affine map $\chi:Z \stackrel{\cong}{\to} \bbT^d$,
\item and a partition $\frQ$ of $[0,1)^d$ into convex polytopes
\end{itemize}
such that $\frP \preceq \{\chi\}^{-1}(\frQ)$.

A subset $C \subseteq Z$ is \textbf{QP} if the partition $\{C,Z\setminus C\}$ is QP

For any set $S$, a function $f:Z\to S$ is \textbf{step} if its level-set partition $\{f^{-1}\{s\}\,|\ s\in S\}$ is QP
\end{dfn}

The following properties are immediate.

\begin{lem}\label{lem:qp-obvious}
Quasi-polytopal partitions have the following properties:
\begin{itemize}
\item If $\theta:Z\to Y$ is affine and $\frP$ is a QP partition of $Y$, then $\theta^{-1}(\frP)$ is a QP partition of $Z$.

\item If $\frP \preceq \frQ$ are partitions of $Z$ such that $\frQ$ is QP, then $\frP$ is QP \qed
\end{itemize}
\end{lem}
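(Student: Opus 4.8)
The plan is to read both items straight off Definition~\ref{dfn:qp}; each reduces to composing affine maps and keeping track of which partition refines which in the order $\preceq$.

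For the second item, suppose $\frP \preceq \frQ$ and $\frQ$ is QP, witnessed by an affine map $\chi : Z \to \bbT^d$ and a partition $\frR$ of $[0,1)^d$ into convex polytopes with $\frQ \preceq \{\chi\}^{-1}(\frR)$ (I rename the polytope partition to $\frR$ to avoid a clash with the $\frQ$ of the hypothesis). By transitivity of refinement, $\frP \preceq \frQ \preceq \{\chi\}^{-1}(\frR)$, so the very same pair $(\chi,\frR)$ witnesses that $\frP$ is QP; nothing more is needed.

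For the first item, let $\theta : Z \to Y$ be affine and let $\frP$ be a QP partition of $Y$, witnessed by an affine map $\psi : Y \to \bbT^d$ and a polytopal partition $\frR$ of $[0,1)^d$ with $\frP \preceq \{\psi\}^{-1}(\frR)$. I would take $\chi := \psi\circ\theta : Z \to \bbT^d$, which is again affine, together with the same $\frR$. The identity to record is $\theta^{-1}\big(\{\psi\}^{-1}(\frR)\big) = \{\chi\}^{-1}(\frR)$, which holds cell by cell because $\{\psi\}\circ\theta = \{\cdot\}\circ\psi\circ\theta = \{\cdot\}\circ\chi = \{\chi\}$ as maps $Z \to [0,1)^d$. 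Since taking preimages under a fixed map preserves the refinement order, applying $\theta^{-1}$ to $\frP \preceq \{\psi\}^{-1}(\frR)$ gives $\theta^{-1}(\frP) \preceq \{\chi\}^{-1}(\frR)$; hence $\theta^{-1}(\frP)$ is QP with data $(\chi,\frR)$.

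I expect no genuine obstacle here — which is exactly why the text calls these properties immediate — and the only care needed is bookkeeping the direction of $\preceq$ under $\theta^{-1}$ and under composition of the witnessing data. The one point that would call for a short extra argument is if Definition~\ref{dfn:qp} is read as demanding that the witnessing affine map be an \emph{isomorphism} onto some $\bbT^{d}$: then $\psi\circ\theta$ need no longer be one, so in the first item one fixes instead an affine isomorphism $\eta : Z \stackrel{\cong}{\to} \bbT^{d'}$ and checks that, inside the fundamental domain $[0,1)^{d'}$, the preimage under the affine map $\psi\circ\theta\circ\eta^{-1}$ of each polytopal cell of $\frR$ (read modulo $\bbZ^{d}$) is a \emph{finite} union of convex polytopes — finite because $[0,1)^{d'}$ is bounded, so only finitely many $\bbZ^{d}$-translates of a cell meet the preimage, and polytopal rather than merely polyhedral for the same reason. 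Collecting these over the cells of $\frR$ gives a polytopal partition $\frR'$ of $[0,1)^{d'}$ with $\theta^{-1}(\frP) \preceq \{\eta\}^{-1}(\frR')$, all of which is routine for the paper's generous notion of convex polytope.
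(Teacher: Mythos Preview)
Your proposal is correct and matches the paper's approach: the paper simply declares these properties ``immediate'' and gives no proof, and your direct verification from Definition~\ref{dfn:qp} via composition of affine maps and transitivity of $\preceq$ is exactly what is intended. Your caveat paragraph is unnecessary: despite the $\stackrel{\cong}{\to}$ in the definition, the paper uses QP partitions freely on groups that are not tori (e.g.\ $(\bbZ/2\bbZ)^{\bbN}$ in the remark after Corollary~\ref{cor:conn-cpts-still-qp}, and general compact Abelian groups in Proposition~\ref{prop:fund-lift}), so the witnessing $\chi$ must be read as an arbitrary affine map, and your first argument suffices as written.
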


\begin{lem}\label{lem:qp-join}
If $\frP_1$ and $\frP_2$ are two QP partitions of $Z$, then $\frP_1\vee \frP_2$ is also QP.
\end{lem}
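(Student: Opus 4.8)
The plan is to reduce the statement about QP partitions to the corresponding statement about partitions of cubes into convex polytopes, where the claim becomes transparent. First I would observe that since $\frP_1$ and $\frP_2$ are each QP, there exist affine isomorphisms $\chi_j : Z \stackrel{\cong}{\to} \bbT^{d_j}$ and partitions $\frQ_j$ of $[0,1)^{d_j}$ into convex polytopes with $\frP_j \preceq \{\chi_j\}^{-1}(\frQ_j)$ for $j = 1,2$. The natural move is to combine these into a single affine map: let $\chi := (\chi_1,\chi_2) : Z \to \bbT^{d_1} \times \bbT^{d_2} = \bbT^{d_1+d_2}$. This is an affine homomorphism, but it need not be an isomorphism onto all of $\bbT^{d_1+d_2}$ — its image is a closed subgroup $W$, and $Z$ maps affinely isomorphically onto $W$.

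The key step is then to realize $\{\chi\}$ appropriately and to witness the pulled-back partition as the preimage of a polytopal partition of a cube. One clean way: pick any affine isomorphism $\psi : Z \stackrel{\cong}{\to} \bbT^d$ (with $d = \dim$-appropriate, i.e. $Z$ is a torus here since it admits an affine iso to $\bbT^{d_1}$), and express each $\chi_j$ as $\chi_j = \a_j \circ \psi$ for an affine map $\a_j : \bbT^d \to \bbT^{d_j}$. Each such $\a_j$ lifts to an affine map $\bbR^d \to \bbR^{d_j}$, so the preimage under $\{\psi\}$ (composed with such a lift) of a convex polytope in $[0,1)^{d_j}$ is, after intersecting with $[0,1)^d$, a finite union of convex polytopes — more precisely, the map $\{\psi\}^{-1}(\{\chi_j\}^{-1}(\frQ_j))$ equals $\{\psi\}^{-1}(\frQ_j')$ for a suitable polytopal refinement $\frQ_j'$ of a partition of $[0,1)^d$. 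Concretely, the preimage of a polytope under an affine map between Euclidean spaces is a polytope (a finite intersection of half-spaces and hyperplanes), intersected with the cube, and the issue of the "wraparound" (the difference between $\{\cdot\}$ on $\bbT^{d_j}$ and honest linear algebra on $\bbR^{d_j}$) is handled by cutting $[0,1)^d$ along the finitely many hyperplanes where the integer-part function of the lifted coordinates jumps; this produces a polytopal refinement on which the lift is genuinely affine.

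Granting that, I would set $\frR_j := \{\psi\}^{-1}(\frQ_j'')$ where $\frQ_j''$ is the polytopal partition of $[0,1)^d$ obtained above, so that $\frP_j \preceq \frR_j$ and $\frR_j$ is QP via the single affine isomorphism $\psi$. Now $\frR_1 \vee \frR_2 = \{\psi\}^{-1}(\frQ_1'' \vee \frQ_2'')$, and $\frQ_1'' \vee \frQ_2''$ is again a partition of $[0,1)^d$ into convex polytopes, since the common refinement of two finite polytopal partitions of a Euclidean region is polytopal (each cell is an intersection of two convex polytopes, hence convex polytopal, and there are finitely many). Therefore $\frR_1 \vee \frR_2$ is QP. Finally, since $\frP_1 \preceq \frR_1$ and $\frP_2 \preceq \frR_2$ give $\frP_1 \vee \frP_2 \preceq \frR_1 \vee \frR_2$, the second bullet of Lemma~\ref{lem:qp-obvious} yields that $\frP_1 \vee \frP_2$ is QP.

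The main obstacle I anticipate is the bookkeeping around using a \emph{common} affine isomorphism $\psi$ and controlling the interaction between the fractional-part maps $\{\cdot\}$ on the various tori and honest affine algebra on $\bbR^d$: an affine map $\bbT^d \to \bbT^{d_j}$ does not lift to a \emph{single} affine map of the cubes, only to a piecewise-affine one whose pieces are cut out by finitely many hyperplanes. Making precise that this cutting refines a polytopal partition into a polytopal partition — i.e. that "convex polytope" is stable under intersecting with these slicing hyperplanes and half-spaces — is the only non-formal point, and it is genuinely routine given the liberal convention (polytopes may omit some facets and lie in proper affine subspaces) adopted just before Definition~\ref{dfn:qp}. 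Everything else is the formal manipulation of refinements via Lemma~\ref{lem:qp-obvious}.
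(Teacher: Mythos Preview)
You have misread Definition~\ref{dfn:qp}: it asks only for an \emph{affine map} $\chi:Z\to\bbT^d$, not an affine isomorphism. Once you drop the spurious isomorphism requirement, the paper's proof is a one-liner: the map $(\chi_1,\chi_2):Z\to\bbT^{d_1+d_2}$ is itself affine, the product partition $\frQ_1\otimes\frQ_2$ of $[0,1)^{d_1+d_2}$ has convex polytopal cells (products of convex polytopes are convex polytopes), and $\frP_1\vee\frP_2 \preceq \{(\chi_1,\chi_2)\}^{-1}(\frQ_1\otimes\frQ_2)$. None of your work on choosing a common isomorphism $\psi$, lifting affine torus maps to piecewise-affine cube maps, or controlling the wraparound is needed; that machinery essentially re-derives the later Lemma~\ref{lem:little-step-aff-coords} and its corollary on freedom in choice of coordinates, which the paper deliberately places \emph{after} this lemma precisely because they are not needed here.

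Your misreading also introduces a genuine gap: you infer that $Z$ must be a torus (``since it admits an affine iso to $\bbT^{d_1}$'') and then pick $\psi:Z\stackrel{\cong}{\to}\bbT^d$. Under the correct definition $Z$ is an arbitrary compact metrizable Abelian group, so no such $\psi$ exists in general, and your argument as written does not cover, e.g., profinite $Z$. The fix is simply to revert to the definition as stated; then the product-map argument works uniformly.
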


\begin{proof}
For $i=1,2$, let $\chi_i:Z\to \bbT^{d_i}$ be affine maps and $\frQ_i$ be partitions of $[0,1)^{d_i}$ into convex polytopes such that $\frP_i \preceq \{\chi_i\}^{-1}(\frQ_i)$.  Then $(\chi_1,\chi_2):Z\to \bbT^{d_1 + d_2}$ is also affine; the product partition $\frQ_1\otimes \frQ_2$ (whose cells are products of cells from $\frQ_1$ and $\frQ_2$) is a partition of $[0,1)^{d_1+ d_2}$ into convex polytopes; and $\frP_1\vee \frP_2 \preceq \{(\chi_1,\chi_2)\}^{-1}(\frQ_1\otimes \frQ_2)$.
\end{proof}

\begin{lem}\label{lem:qp-on-cpts}
If $\frP$ is a partition of a compact Abelian Lie group $Z$, and $Z_0 \leq Z$ is the identity component, then $\frP$ is QP if and only if $(\frP - z)\cap Z_0$ is a QP partition of $Z_0$ for every $z \in Z$.
\end{lem}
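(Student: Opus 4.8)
The plan is to reduce to the case where $Z$ is a direct product of a torus with a finite group, and then, in that case, to manufacture the affine chart witnessing QP-ness of $\frP$ by combining a chart for the identity component with one extra chart that separates the (finitely many) components. Since $Z$ is a compact Abelian Lie group, $\hat Z$ is finitely generated, so Pontryagin duality furnishes a continuous — hence affine — isomorphism $Z\cong\bbT^{d_0}\times F$ with $F$ finite, carrying $Z_0$ onto $\bbT^{d_0}\times\{0\}$. Applying Lemma~\ref{lem:qp-obvious} to this isomorphism and to its inverse, QP-ness of partitions is preserved under the identification, so I may assume $Z=\bbT^{d_0}\times F$. For $a\in F$ write $\iota_a\colon\bbT^{d_0}\to Z$, $x\mapsto(x,a)$ — an affine map — and call $\frP^{(a)}:=\iota_a^{-1}(\frP)$ the \emph{slice} of $\frP$ at $a$. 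A routine computation with cosets (for $C\in\frP$, the set $(C-z)\cap Z_0$ reads off in the first coordinate as $\iota_a^{-1}(C)-x_0$) shows that, for $z=(x_0,a)$, the partition $(\frP-z)\cap Z_0$ of $Z_0\cong\bbT^{d_0}$ is the translate $\frP^{(a)}-x_0$; since translations of $\bbT^{d_0}$ are affine automorphisms, Lemma~\ref{lem:qp-obvious} shows that ``$(\frP-z)\cap Z_0$ is QP for every $z\in Z$'' is equivalent to ``$\frP^{(a)}$ is a QP partition of $\bbT^{d_0}$ for every $a\in F$''. So the lemma becomes: $\frP$ is QP if and only if all of its slices are. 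One implication is immediate: if $\frP$ is QP then each $\frP^{(a)}=\iota_a^{-1}(\frP)$ is QP by Lemma~\ref{lem:qp-obvious}, because $\iota_a$ is affine.

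For the converse, suppose every slice $\frP^{(a)}$ is QP. As $F$ is finite, Lemma~\ref{lem:qp-join} makes the common refinement $\frP^*:=\bigvee_{a\in F}\frP^{(a)}$ a QP partition of $\bbT^{d_0}$; fix an affine map $\eta\colon\bbT^{d_0}\to\bbT^{e}$ and a partition $\frR$ of $[0,1)^{e}$ into convex polytopes with $\frP^*\preceq\{\eta\}^{-1}(\frR)$. Embed $F$ into a torus, $\iota\colon F\into\bbT^{s}$ (using $\bbZ/n\into\bbT$ on cyclic factors), and choose a partition $\frQ_0$ of $[0,1)^{s}$ into half-open boxes fine enough that no box meets two of the finitely many points $\{\iota(a)\}$, $a\in F$ (which clearly exists). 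Set
\[\chi:=\bigl(\iota\circ\mathrm{pr}_F,\ \eta\circ\mathrm{pr}_{\bbT^{d_0}}\bigr)\colon Z\to\bbT^{s+e},\qquad \frQ:=\frQ_0\otimes\frR,\]
an affine map and a partition of $[0,1)^{s+e}$ into convex polytopes; it is enough to check $\frP\preceq\{\chi\}^{-1}(\frQ)$. Let $W\ne\emptyset$ be a cell of $\{\chi\}^{-1}(\frQ)$, say $W=\{(x,a):\{\iota(a)\}\in Q_0,\ \{\eta(x)\}\in R\}$ for some $Q_0\in\frQ_0$, $R\in\frR$. The choice of $\frQ_0$ forces the coordinate $a$ to take a single value $a_0$ on $W$, and the corresponding $x$'s form a subset of $\{\eta\}^{-1}(R)$, which lies in one cell of $\frP^*$ and hence in one cell $C^{(a_0)}$ of $\frP^{(a_0)}$; if $C\in\frP$ is the cell with $\iota_{a_0}^{-1}(C)=C^{(a_0)}$, then $W\subseteq C^{(a_0)}\times\{a_0\}\subseteq C$. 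Thus every cell of $\{\chi\}^{-1}(\frQ)$ lies in one cell of $\frP$, i.e.\ $\frP\preceq\{\chi\}^{-1}(\frQ)$, so $\frP$ is QP.

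There is no deep obstacle here; the argument is essentially bookkeeping. The only genuine external input is the structure of compact Abelian Lie groups as $\bbT^{d_0}\times(\text{finite})$ — equivalently, that $Z_0$ is a torus of finite index in $Z$ and the extension splits. The two points that repay a little care are the coset computation identifying $(\frP-z)\cap Z_0$ with a translate of a slice in the reduction step, and the final cell-chase showing $\{\chi\}^{-1}(\frQ)$ refines $\frP$; neither is hard once the picture of finitely many components is set up, and both directions then fall out of Lemmas~\ref{lem:qp-obvious} and~\ref{lem:qp-join} together with the trivial fact that finitely many points in a cube can be separated by a box partition.
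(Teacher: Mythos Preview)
Your proof is correct and follows essentially the same approach as the paper: separate the finitely many cosets of $Z_0$ by one affine map into a torus with a fine box partition, and handle the torus part using the QP data for the slices together with Lemma~\ref{lem:qp-join}. The only organizational difference is that you first invoke the splitting $Z\cong\bbT^{d_0}\times F$ and take a single common refinement $\frP^*$ of all slices, whereas the paper works directly with a cross-section $z_1,\dots,z_m$ and extends each slice's affine chart $\theta_i$ separately to all of $Z$ before joining; these are equivalent bookkeeping choices.
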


\begin{proof}
If $\frP$ is QP and $z \in Z$, then the map $\theta:Z_0\to Z:z_0\mapsto z_0 + z$ is affine, and $(\frP - z)\cap Z_0 = \theta^{-1}(\frP)$, so this direction follows from Lemma~\ref{lem:qp-obvious}.

On the other hand, let $\frP'$ be the partition of $Z$ into cosets of $Z_0$, and let $\chi:Z\to \bbT^d$ be a homomorphism whose kernel equals $Z_0$ (such exists, because characters separate points). Then $\chi(Z)$ is a finite subgroup of $\bbT^d$, and so if $\frQ'$ is a partition of $[0,1)^d$ into sufficiently small boxes, then $\frP' = \{\chi\}^{-1}(\frQ')$.

Let $z_1$, \ldots, $z_m$ be a cross-section of $Z_0$ in $Z$. If $(\frP - z_i)\cap Z_0$ is QP for each $i$, then there are affine maps $\theta_i:Z_0 \to \bbT^{D_i}$ and convex polytopal partitions $\frQ_i''$ of $[0,1)^{D_i}$ such that $(\frP - z_i)\cap Z_0\preceq \{\theta_i\}^{-1}(\frQ_i'')$.  Let $\theta_i':Z\to \bbT^{D_i}$ be an extension of $\theta_i$ to all of $Z$, and let $\theta_i''$ be the composition of $\theta_i'$ with rotation by $z_i$.  Then
\[\frP \preceq \frP' \vee \bigvee_{i\leq m}\{\theta_i''\}^{-1}(\frQ_i''),\]
and this latter is QP by Lemma~\ref{lem:qp-join}, so $\frP$ is also QP.
\end{proof}

\begin{lem}\label{lem:q-p-fine}
If $\cal{U}$ is an open cover of a compact metrizable Abelian group $Z$, then it is subordinate to some QP partition.
\end{lem}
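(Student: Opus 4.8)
The plan is to reduce the statement to the torus case, where QP partitions are genuinely geometric, and then exploit compactness of $Z$ together with the fact that convex polytopes in $[0,1)^d$ can be chosen as small as we like. First I would use the structure theory of compact metrizable Abelian groups: $Z$ is an inverse limit of compact Abelian Lie groups $Z/K_n$, where the $K_n$ are a decreasing sequence of closed subgroups with $\bigcap_n K_n = \{0\}$. Since $\mathcal{U}$ is an open cover of the compact space $Z$, by a standard argument (using the Baire/compactness properties of the inverse limit topology) there is some $n$ and an open cover $\mathcal{V}$ of the Lie quotient $Z/K_n$ whose pullback along the projection $\pi_n : Z \to Z/K_n$ refines $\mathcal{U}$. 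By the first bullet of Lemma~\ref{lem:qp-obvious}, it suffices to find a QP partition of $Z/K_n$ subordinate to $\mathcal{V}$; so we may assume from the outset that $Z$ is a compact Abelian Lie group.

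Next I would reduce to the connected case using Lemma~\ref{lem:qp-on-cpts}. Write $Z_0$ for the identity component and pick a cross-section $z_1,\dots,z_m$ of $Z_0$ in $Z$. The translated covers $(\mathcal{U} - z_i) \cap Z_0$ are open covers of $Z_0$; if we can find, for each $i$, a QP partition $\frP_i$ of $Z_0$ subordinate to $(\mathcal{U}-z_i)\cap Z_0$, then translating back and joining over $i$ (using Lemma~\ref{lem:qp-join}) together with the coset partition $\frP'$ of $Z$ into $Z_0$-cosets — which is itself QP, as recorded in the proof of Lemma~\ref{lem:qp-on-cpts} — produces a QP partition of $Z$ subordinate to $\mathcal{U}$. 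So it remains to treat a connected compact Abelian Lie group, i.e.\ a torus $\bbT^d$, with $\mathcal{U}$ pulled back via $\{\cdot\} : \bbT^d \to [0,1)^d$ to an open cover of the cube $[0,1)^d$ (extended to the closed cube $[0,1]^d$ by the obvious identifications, or simply by taking a slightly larger cover of $[0,1]^d$).

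For the torus, the task is purely a matter of Euclidean geometry: given an open cover of the compact cube $[0,1]^d$, produce a partition of $[0,1)^d$ into convex polytopes subordinate to it. Let $\delta > 0$ be a Lebesgue number for the cover, so that every subset of $[0,1]^d$ of diameter $< \delta$ lies in some member. Now partition $[0,1)^d$ into a grid of half-open boxes $\prod_{j} [a_j/N, (a_j+1)/N)$ with $N$ chosen so that $\sqrt{d}/N < \delta$; each such box is a convex polytope in our convention (half-open), has diameter $< \delta$, hence is contained in some element of the cover. Pulling this grid partition back along $\{\cdot\}$ and invoking Lemma~\ref{lem:qp-obvious} finishes the torus case, and hence the lemma.

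The main obstacle I anticipate is the first reduction: making precise the claim that an open cover of the inverse-limit group $Z$ is already "detected" at some finite Lie quotient $Z/K_n$. This needs a genuine compactness argument — one extracts a finite subcover of $\mathcal{U}$, and for each of those finitely many open sets finds a basic open neighbourhood (a pullback of an open set from some $Z/K_n$) around each point of a finite subcover of that open set, using that the $\pi_n^{-1}(\text{open})$ form a basis for the topology of $Z$; then one takes $n$ large enough to handle all of them simultaneously. Everything after that reduction is routine bookkeeping with the lemmas already proved.
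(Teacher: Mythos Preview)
Your proposal is correct and follows essentially the same route as the paper: reduce to a Lie-group quotient via the inverse-limit structure (the paper phrases this as structure theory plus Lebesgue's Number Lemma), then handle the Lie case by partitioning each connected component into sufficiently small boxes. One minor slip: in the paper's convention ``$\mathcal{U}$ is subordinate to $\frP$'' means each cell of $\frP$ lies in some member of $\mathcal{U}$, so your phrase ``a QP partition subordinate to $\mathcal{V}$'' has the roles reversed --- but your intended meaning is clear and the argument is unaffected.
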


\begin{proof}
Choosing a suitable metric on $Z$, the structure theory for compact Abelian groups (see, in particular,~\cite[Theorem 9.5]{HewRos79}) and Lebesgue's Number Lemma imply that there are a Lie-group quotient $q:Z\to Z_1$ and an open cover $\cal{V}$ of $Z_1$ such that $\cal{U}$ is subordinate to $q^{-1}(\cal{V})$.  This justifies assuming that $Z$ is a Lie group.  In that case a partition of $Z$ into connected components, and then of each connected component into sufficiently small boxes, has the desired property, by another appeal to Lebesgue's Number Lemma.
\end{proof}

We will next introduce a class of maps that respect QP partitions.  We will first define the corresponding maps on Euclidean spaces, before returning to compact groups.

\begin{dfn}\label{dfn:Euc-step-aff}
If $Q \subseteq \bbR^d$ is a convex polytope in a Euclidean space, then a map $f:Q \to \bbR^r$ is \textbf{step-affine} if there are a partition $\frQ$ of $Q$ into further convex polytopes, and, for each $C \in \frQ$, an affine map $\ell_C:\bbR^d\to \bbR^r$ such that $f|C = \ell_C|C$.
\end{dfn}

\begin{lem}
If $f:Q\to \bbR^r$ is step-affine and $A:\bbR^r\to \bbR^s$ is affine, then $A\circ f$ is step-affine. \qed
\end{lem}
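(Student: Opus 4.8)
The statement to prove is a triviality once the definition of step-affine is unwound, so the plan is to just chase the definitions and apply the previous lemma (that the composition of an affine map with an affine map is affine).

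\medskip

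The plan is as follows. Let $f\colon Q\to\bbR^r$ be step-affine, with witnessing data a convex-polytopal partition $\frQ$ of $Q$ and affine maps $\ell_C\colon\bbR^d\to\bbR^r$ for $C\in\frQ$ satisfying $f|C=\ell_C|C$. Let $A\colon\bbR^r\to\bbR^s$ be affine. I claim the same partition $\frQ$ witnesses that $A\circ f$ is step-affine, now with the affine maps $A\circ\ell_C\colon\bbR^d\to\bbR^s$. First I would note that each $A\circ\ell_C$ is affine, being a composite of affine maps $\bbR^d\to\bbR^r\to\bbR^s$. Then for each $C\in\frQ$ and each $x\in C$ we have $(A\circ f)(x)=A(f(x))=A(\ell_C(x))=(A\circ\ell_C)(x)$, so $(A\circ f)|C=(A\circ\ell_C)|C$. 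Since $\frQ$ is a partition of $Q$ into convex polytopes and the $A\circ\ell_C$ are affine, this is exactly the data required by Definition~\ref{dfn:Euc-step-aff}, so $A\circ f$ is step-affine.

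\medskip

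There is no real obstacle here: the only thing to observe is that the partition $\frQ$ need not be modified at all, since postcomposition with a single global affine map is compatible with the piecewise structure cell-by-cell. (Had $A$ instead been merely step-affine one would need to pull back its polytopal partition along each $\ell_C$ and take common refinements, using that affine preimages of convex polytopes are convex polytopes and that common refinements of convex-polytopal partitions are convex-polytopal; but for $A$ genuinely affine this is unnecessary.) The lemma is stated without proof in the excerpt precisely because this verification is immediate.
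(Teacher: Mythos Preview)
Your proof is correct and matches the paper's approach exactly: the paper marks the lemma with \qed and no argument, indicating it is an immediate consequence of the definition, which is precisely the cell-by-cell verification you spell out.
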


\begin{lem}\label{lem:combine-Euc-step-aff}
If $f:Q\to \bbR^r$ and $g:Q\to \bbR^s$ are step-affine, then so is $(f,g):Q\to \bbR^{r+s}$ and, in case $r=s$, so is $f+g:Q\to \bbR^r$.
\end{lem}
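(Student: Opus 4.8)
The plan is to build a single polytopal partition of $Q$ that simultaneously witnesses the step-affine structure of $f$ and of $g$. First I would invoke Definition~\ref{dfn:Euc-step-aff} to fix partitions $\frQ_f$ and $\frQ_g$ of $Q$ into convex polytopes, together with affine maps $\ell^f_C:\bbR^d\to\bbR^r$ for $C\in\frQ_f$ and $\ell^g_{C'}:\bbR^d\to\bbR^s$ for $C'\in\frQ_g$, such that $f|C=\ell^f_C|C$ and $g|C'=\ell^g_{C'}|C'$. Then I would pass to the common refinement $\frQ:=\frQ_f\vee\frQ_g$, whose nonempty cells are the sets $C\cap C'$ with $C\in\frQ_f$ and $C'\in\frQ_g$.

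The one point worth spelling out is that $\frQ$ is again a partition of $Q$ into convex polytopes. Under the convention fixed at the start of this subsection — a convex polytope being a subset of a Euclidean space cut out by finitely many affine inequalities, each either strict or non-strict, and allowed to lie in a proper affine subspace — the intersection of two such sets is cut out by the union of the two defining families of inequalities, hence is again a convex polytope. So $\frQ$ is a legitimate convex polytopal partition of $Q$ refining both $\frQ_f$ and $\frQ_g$.

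On a cell $D=C\cap C'$ of $\frQ$ we then have $f|D=\ell^f_C|D$ and $g|D=\ell^g_{C'}|D$, so $(f,g)|D$ agrees on $D$ with the affine map $(\ell^f_C,\ell^g_{C'}):\bbR^d\to\bbR^{r+s}$; as this holds for every cell of $\frQ$, the map $(f,g)$ is step-affine. Finally, when $r=s$, write $f+g=\a\circ(f,g)$ with $\a:\bbR^r\times\bbR^r\to\bbR^r$ the (linear, hence affine) addition map, and apply the preceding lemma on postcomposition of a step-affine map with an affine map. I do not foresee a genuine obstacle here; the only care needed is the half-space convention that makes intersections of convex polytopes convex polytopes, which is why I would state it explicitly rather than leave it implicit.
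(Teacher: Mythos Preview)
Your proof is correct and follows exactly the paper's approach: the paper's entire proof is the single sentence ``If two partitions of $Q$ consist of convex polytopes, then so does their common refinement,'' and you have simply unpacked this in full. Your handling of $f+g$ via postcomposition with the addition map and the preceding lemma is a minor stylistic variation --- one could equally well note directly that $\ell^f_C+\ell^g_{C'}$ is affine on each cell --- but the substance is the same.
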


\begin{proof}
If two partitions of $Q$ consist of convex polytopes, then so does their common refinement.
\end{proof}

\begin{lem}\label{lem:Euc-qp-preim}
If $f:Q\to \bbR^s$ is step-affine and $\frR$ is a pairwise-disjoint collection of convex polytopes in $\bbR^s$ which covers $f(Q)$, then $f^{-1}(\frR)$ is a partition of $Q$ which has a refinement into convex polytopes.
\end{lem}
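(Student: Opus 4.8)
The plan is to reduce the statement to a known fact about subdivisions of polytopes by finitely many affine hyperplanes. First I would use the hypothesis that $f$ is step-affine to fix a partition $\frQ$ of $Q$ into convex polytopes together with affine maps $\ell_C:\bbR^d \to \bbR^s$ for $C \in \frQ$ with $f|C = \ell_C|C$. Since $\frR$ is finite (a pairwise-disjoint collection of convex polytopes covering $f(Q)$ — we may as well assume $\frR$ is finite, or at least that only finitely many cells meet $f(Q)$; since $f(Q)$ is contained in a finite union of the closures $\ell_C(C)$, each a bounded polytope, only finitely many cells of $\frR$ are relevant), each cell $R \in \frR$ is cut out inside its affine hull by finitely many linear inequalities (strict or non-strict). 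Pulling these back through each affine map $\ell_C$ gives, for each $C$, finitely many affine half-spaces and affine subspaces in $\bbR^d$; intersecting $C$ with the corresponding hyperplane arrangement subdivides $C$ into convex polytopes, on each of which $\ell_C$ (hence $f$) maps entirely into a single cell $R$ of $\frR$ or into $f(Q)$'s complement in the cover—but the latter cannot occur since $\frR$ covers $f(Q)$ and $\ell_C(C) \subseteq f(Q)$.

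Concretely, the key steps in order are: (1) record the defining data $(\frQ, (\ell_C)_C)$ of step-affineness; (2) observe that for each $C$ the preimage $\ell_C^{-1}(\frR)$ restricted to $C$ is exactly the partition of $C$ induced by slicing along the affine hyperplanes $\ell_C^{-1}(H)$ as $H$ ranges over the finitely many bounding hyperplanes of the relevant cells of $\frR$, and that such a slicing of a convex polytope always yields convex polytopes (intersection of a convex polytope with a closed or open half-space, iterated finitely often, is a convex polytope under our convention that polytopes may include some facets and not others); (3) note $f^{-1}(R) \cap C = \ell_C^{-1}(R) \cap C$, so $f^{-1}(\frR) \cap C$ is a partition of $C$ into convex polytopes; (4) take the common refinement over $C \in \frQ$: since $f^{-1}(\frR)$ is refined by $\bigvee_C (\{C\}\text{-localized subdivisions})$ which is a partition of $Q$ into convex polytopes (common refinement of polytopal partitions is polytopal, by the argument already used in Lemma~\ref{lem:combine-Euc-step-aff}), we are done. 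That $f^{-1}(\frR)$ is itself a partition of $Q$ is immediate from $\frR$ being a pairwise-disjoint cover of $f(Q)$.

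The only mild subtlety — and the step I would be most careful about — is the bookkeeping of strict versus non-strict inequalities: a cell $R$ of $\frR$ is the intersection of its affine hull with some half-spaces, each of which may be open or closed on its bounding hyperplane, and when we pull back through $\ell_C$ and intersect with $C$ we must check that the resulting piece is still a convex polytope in the paper's sense. This is fine precisely because the convention adopted just before Definition~\ref{dfn:qp} allows a convex polytope to contain some of its facets and not others and to lie in a proper affine subspace; so an arbitrary finite intersection of $C$ with open and closed affine half-spaces and with affine subspaces is again such a polytope. No genuine obstacle arises; the statement is essentially the observation that step-affine maps pull polytopal subdivisions of the target back to polytopal subdivisions of the source, which follows from the analogous (standard) fact for a single affine map together with finite common refinement. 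I would keep the written proof to a few lines, citing the convention on polytopes and Lemma~\ref{lem:combine-Euc-step-aff} for the common-refinement step.
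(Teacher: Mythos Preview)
Your proposal is correct and follows essentially the same route as the paper: take the partition $\frQ$ from Definition~\ref{dfn:Euc-step-aff}, and observe that for each $C\in\frQ$ and $R\in\frR$ the set $C\cap f^{-1}(R)=C\cap\ell_C^{-1}(R)$ is a convex polytope (in the paper's flexible sense), so $\frQ\vee f^{-1}(\frR)$ is the desired refinement. The paper's proof is a single sentence to this effect; your hyperplane-arrangement framing in step (2) and the ``common refinement over $C$'' in step (4) are slight over-elaborations---you can simply assemble the pieces $C\cap\ell_C^{-1}(R)$ directly without any finiteness discussion of $\frR$.
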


\begin{proof}
Letting $\frQ$ be as in Definition~\ref{dfn:Euc-step-aff}, it is obvious that $C\cap f^{-1}(\frR)$ consists of convex polytopes for each $C\in \frQ$.
\end{proof}

\begin{cor}\label{cor:Euc-compos-step}
If $f:Q\to \bbR^s$ and $g:R\to \bbR^p$ are step-affine and $f(Q) \subseteq R\subseteq \bbR^s$, then $g\circ f$ is step-affine. \qed
\end{cor}

\begin{lem}\label{lem:Euc-qp-im}
If $f:Q\to \bbR^s$ is step-affine and $D \subseteq Q$ is a finite union of convex sub-polytopes of $Q$, then $f(D)$ is also  finite union of convex polytopes.
\end{lem}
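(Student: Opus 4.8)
The plan is to reduce to the case where $D$ is a single convex polytope, and then to use the partition $\frQ$ coming from the definition of step-affine to reduce further to the case where $f$ is a single affine map on a convex polytope, where the statement is classical. First I would observe that if $D = D_1 \cup \cdots \cup D_n$ with each $D_j$ a convex sub-polytope of $Q$, then $f(D) = f(D_1) \cup \cdots \cup f(D_n)$, so it suffices to prove the claim when $D$ itself is a single convex polytope contained in $Q$. Next, let $\frQ$ be the polytopal partition of $Q$ from Definition~\ref{dfn:Euc-step-aff}, with affine maps $\ell_C:\bbR^d \to \bbR^r$ agreeing with $f$ on each $C \in \frQ$. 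For each $C$, the intersection $D \cap C$ is an intersection of two convex sets, and since $D$ is a convex polytope and $C$ is a cell of a polytopal partition, $D \cap C$ is again a convex polytope (being cut out by finitely many closed or open half-spaces). Then
\[
f(D) = \bigcup_{C \in \frQ} f(D\cap C) = \bigcup_{C\in\frQ} \ell_C(D\cap C),
\]
so it remains to show that the image of a convex polytope under an affine map is a finite union of convex polytopes.

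For that last point I would invoke the standard fact that the image of a convex polytope $P \subseteq \bbR^d$ under an affine map $\ell:\bbR^d \to \bbR^r$ is itself a convex polytope: if $P$ is the convex hull of finitely many vertices (after taking closures), then $\ell(\overline{P})$ is the convex hull of the images of those vertices, hence a closed convex polytope; and the issue of which faces are included, arising from our convention that polytopes need not contain all their facets, only changes $\ell(P)$ by removing some faces of $\ell(\overline{P})$, each of which is again a convex polytope, so $\ell(P)$ is a finite union of convex polytopes. (One can make this precise by induction on dimension: $\ell(P)$ is the disjoint union of $\ell(\mathrm{relint}(F))$ over the faces $F$ of $P$ that meet $P$, and each such image is the relative interior of a face of $\ell(\overline P)$.)

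The only genuinely delicate point is the bookkeeping about half-open faces: because our polytopes may contain some facets and not others, $\ell(P)$ need not be convex (a classic example is the projection of a half-open square), which is exactly why the conclusion is stated as a \emph{finite union} of convex polytopes rather than a single one. I expect this face-by-face decomposition to be the main obstacle, though it is routine polytope combinatorics; everything else reduces to intersecting convex polytopes with cells of a polytopal partition, which stays within the class of convex polytopes by our conventions. No induction on $f$ is needed beyond passing to the cells of $\frQ$.
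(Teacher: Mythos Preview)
Your approach is exactly the paper's: reduce to a single convex polytope $D$, then write $f(D) = \bigcup_{C \in \frQ} \ell_C(D \cap C)$ as a finite union of affine images of convex polytopes. The paper's proof is two sentences and stops there.

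One correction, though: your claim that $\ell(P)$ ``need not be convex'' is false. Affine images of convex sets are always convex---if $a = \ell(p)$ and $b = \ell(q)$ with $p,q \in P$, then $ta+(1-t)b = \ell(tp+(1-t)q) \in \ell(P)$---so there is no ``classic example'' of a half-open square with non-convex projection. Your face-by-face decomposition is therefore unnecessary; each $\ell_C(D \cap C)$ is already a single convex set, and in fact a single convex polytope in the paper's sense (that its image under $\ell_C$ is again cut out by finitely many open or closed half-spaces follows from Fourier--Motzkin elimination for mixed strict and non-strict linear inequalities). The lemma only asks for a finite union anyway, so this over-elaboration does no harm, but the stated reason for it is wrong.
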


\begin{proof}
Clearly we may assume that $D$ is a single convex polytope. Let $\frQ$ be the partition appearing in Definition~\ref{dfn:Euc-step-aff}.  Then $f(C\cap D)$ is an affine image of a convex polytope for each $C \in \frQ$, so their union has the required form.
\end{proof}

In the first place, the relevance of step-affine maps to the setting of compact Abelian Lie groups stems from the following.

\begin{lem}\label{lem:little-step-aff-coords}
If $\a:\bbT^d\to \bbT^s$ is an affine map, then there is a step-affine map $f:[0,1)^d\to [0,1)^s$ which makes the following diagram commute:
\begin{center}
$\phantom{i}$\xymatrix{
\bbT^d \ar_{\{\cdot\}}[d]\ar^\a[r] & \bbT^s \ar^{\{\cdot\}}[d] \\
[0,1)^d \ar_f[r] & [0,1)^s.
}
\end{center}
\end{lem}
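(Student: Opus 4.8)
The plan is to write down the only possible candidate for $f$ and then verify it is step-affine. A continuous homomorphism $\bbT^d\to\bbT^s$ is induced by an integer matrix $M\in\bbZ^{s\times d}$, and an affine map $\a:\bbT^d\to\bbT^s$ therefore has the form $\a(t)=\ol{y}_0+Mt\ \mathrm{mod}\ \bbZ^s$ for some $\ol{y}_0\in\bbR^s$; fix the representative $\ol{y}_0\in[0,1)^s$. For $x\in[0,1)^d$, the point $\{\cdot\}^{-1}$ of $\a$ applied to the class of $x$ is the class of $\ol{y}_0+Mx\in\bbR^s$, whose image under $\{\cdot\}$ is $\ol{y}_0+Mx-\lfl\ol{y}_0+Mx\rfl$ (integer part taken coordinatewise). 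So we are forced to set
\[f(x):=\ol{y}_0+Mx-\lfl\ol{y}_0+Mx\rfl,\]
and with this definition the square commutes essentially by construction; one only has to note that $\{\cdot\}$ is a genuine (single-valued) selector, so there is no ambiguity.

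It remains to check that this $f$ is step-affine in the sense of Definition~\ref{dfn:Euc-step-aff}. The first step is to observe that $g:\bbR^d\to\bbR^s$, $g(x):=\ol{y}_0+Mx$, is affine, hence trivially step-affine, and that $g([0,1)^d)$ is a bounded subset of $\bbR^s$. The second step is to partition $\bbR^s$ into the half-open unit cubes $\prod_{j\le s}[n_j,n_j+1)$, $n\in\bbZ^s$; these are pairwise-disjoint convex polytopes in our convention, and only finitely many of them meet $g([0,1)^d)$. Let $\frR$ be that finite subcollection. The third step is to apply Lemma~\ref{lem:Euc-qp-preim} to $g$ and $\frR$: the partition $g^{-1}(\frR)$ of $[0,1)^d$ admits a refinement $\frQ$ into convex polytopes. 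The fourth step is the punchline: each $C\in\frQ$ lies in $g^{-1}$ of the cube indexed by some $n=n(C)\in\bbZ^s$, and on that cube $\lfl\cdot\rfl\equiv n$, so $f|C=\ell_C|C$ with $\ell_C(x):=Mx+\ol{y}_0-n(C)$ affine on all of $\bbR^d$. Together with the trivial fact that $f$ takes values in $[0,1)^s$ (it is a composition ending in $\{\cdot\}$), this exhibits $f$ as step-affine.

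I do not anticipate a serious obstacle here: the argument is a direct unwinding of the definitions, and the only points that need a moment's care are the identification of $\a$ with an integer matrix plus a translation, and the bookkeeping around the half-open polytope convention (so that $[0,1)^d$, the unit cubes, and their affine preimages all genuinely qualify as the "convex polytopes" allowed in Definitions~\ref{dfn:qp} and~\ref{dfn:Euc-step-aff}). The mild conceptual content is simply that the integer-part map is locally constant, with only finitely many distinct values, on the bounded affine image $g([0,1)^d)$; everything else is routine.
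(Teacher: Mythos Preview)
Your proposal is correct and essentially the same as the paper's proof: both define $f(x)=\{\ol{y}_0+Mx\}$ and observe that the integer-part is constant on convex-polytopal pieces of $[0,1)^d$. The only cosmetic difference is that the paper first reduces to $s=1$ (via Lemma~\ref{lem:combine-Euc-step-aff}) and then writes down the partition $C_m=\{x:\{\theta\}+n\cdot x\in[m,m+1)\}$ explicitly, whereas you keep all $s$ coordinates together and package the partition step as an appeal to Lemma~\ref{lem:Euc-qp-preim}; both routes amount to the same computation.
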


\begin{proof}
By Lemma~\ref{lem:combine-Euc-step-aff}, we may argue coordinate-wise, and hence assume $s = 1$.  In this case there are $\theta \in \bbT$ and $n_1,\ldots,n_d \in \bbZ$ such that
\[\a(t_1,\ldots,t_d) = \theta + n_1t_1 + \ldots + n_dt_d,\]
using the obvious coordinates in $\bbT^d$, and hence
\[\{\a\}(t_1,\ldots,t_d) = \{\theta + n_1t_1 + \ldots + n_dt_d\}.\]
Let $\frQ$ be the partition of $[0,1)^d$ into the sets
\[C_m:= \{(x_1,\ldots,x_d) \in [0,1)^d\,|\ \{\theta\} + n_1x_1 + \ldots + n_dx_d \in [m,m+1)\}\]
for all integers $-|n_1| - \cdots - |n_d| -1 \leq m \leq |n_1| + \cdots + |n_d| + 1$. This is a partition into convex polytopes, and on the set $C_m$ the desired function $f$ agrees with the affine function
\[\ell_{C_m}(x_1,\ldots,x_d) = \{\theta\} + n_1x_1 + \ldots + n_dx_d - m.\]
\end{proof}

This has the some useful corollaries.

\begin{cor}[Freedom in choice of coordinates]
If $\frP$ is a QP partition of $\bbT^d$, then there is a partition $\frQ$ of $[0,1)^d$ into convex polytopes such that $\frP \preceq \{\cdot\}^{-1}(\frQ)$.
\end{cor}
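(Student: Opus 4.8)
The plan is to pass from the arbitrary affine coordinate system witnessing that $\frP$ is QP to the standard one $\{\cdot\}$ on $\bbT^d$, transporting the polytopal structure along the change of coordinates by means of Lemma~\ref{lem:little-step-aff-coords}. Unwinding Definition~\ref{dfn:qp}, since $\frP$ is a QP partition of $\bbT^d$ there are an affine isomorphism $\chi:\bbT^d\stackrel{\cong}{\to}\bbT^d$ (the target dimension must again be $d$, as the dimension of a torus is a topological invariant) and a partition $\frQ$ of $[0,1)^d$ into convex polytopes with $\frP\preceq\{\chi\}^{-1}(\frQ)$. It therefore suffices to produce a partition $\frQ'$ of $[0,1)^d$ into convex polytopes with $\{\chi\}^{-1}(\frQ)\preceq\{\cdot\}^{-1}(\frQ')$.

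To do this, I would apply Lemma~\ref{lem:little-step-aff-coords} to the affine map $\chi$, obtaining a step-affine map $f:[0,1)^d\to[0,1)^d$ with $\{\chi\}=f\circ\{\cdot\}$, so that $\{\chi\}^{-1}(\frQ)=\{\cdot\}^{-1}(f^{-1}(\frQ))$. Now $\frQ$ is a pairwise-disjoint family of convex polytopes covering $[0,1)^d$, hence covering $f([0,1)^d)$, so Lemma~\ref{lem:Euc-qp-preim} applies and yields that $f^{-1}(\frQ)$ is a partition of $[0,1)^d$ admitting a refinement $\frQ'$ into convex polytopes. Pulling $\frQ'$ back along $\{\cdot\}$ preserves this refinement, so $\{\cdot\}^{-1}(\frQ')\succeq\{\cdot\}^{-1}(f^{-1}(\frQ))=\{\chi\}^{-1}(\frQ)\succeq\frP$, which is exactly the required conclusion.

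I do not expect any genuine obstacle: the statement is a bookkeeping corollary of the two preceding lemmas. The only points that call for a little care are checking that the hypotheses of Lemma~\ref{lem:Euc-qp-preim} are literally met here --- in particular that the disjoint polytopal family $\frR$ in that lemma may be taken to be the given partition $\frQ$ itself --- and the elementary observation that taking preimages under the fixed map $\{\cdot\}$ turns a refinement of partitions of $[0,1)^d$ into a refinement of the corresponding partitions of $\bbT^d$.
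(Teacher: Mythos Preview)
Your proof is correct and follows essentially the same route as the paper's: apply Lemma~\ref{lem:little-step-aff-coords} to replace $\{\chi\}$ by $f\circ\{\cdot\}$, then invoke Lemma~\ref{lem:Euc-qp-preim} to refine $f^{-1}(\frQ)$ into convex polytopes. The only cosmetic difference is that the paper writes the affine map as $\a:\bbT^d\to\bbT^s$ without insisting it be an isomorphism (so $s$ may differ from $d$), whereas you read Definition~\ref{dfn:qp} literally and take the target to be $\bbT^d$; the argument is unchanged either way.
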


\begin{proof}
Definition~\ref{dfn:qp} gives an affine map $\a:\bbT^d\to \bbT^s$ and a partition $\frR$ of $[0,1)^s$ into convex polytopes such that $\frP \preceq \{\a\}^{-1}(\frR)$.  Now Lemma~\ref{lem:little-step-aff-coords} gives a step-affine map $f:[0,1)^d\to [0,1)^s$ which makes the diagram in that lemma commute.  Having done so, Lemma~\ref{lem:Euc-qp-preim} gives a convex-polytopal partition $\frQ\succeq f^{-1}(\frR)$, and so the commutativity of that diagram gives $\frP \preceq \{\cdot\}^{-1}(f^{-1}(\frR)) \preceq \{\cdot\}^{-1}(\frQ)$.
\end{proof}

\begin{cor}\label{cor:conn-cpts-still-qp}
If $\frP$ is a QP partition of a compact Abelian Lie group $Z$, and $\frP_1$ is the refinement consisting of all connected components of cells of $\frP$, then $\frP_1$ is also QP
\end{cor}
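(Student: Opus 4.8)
The plan is to reduce to the torus case, where connected components of a convex polytope are easy to understand. By Lemma~\ref{lem:qp-on-cpts}, it suffices to show that for every $z \in Z$ the partition $(\frP_1 - z) \cap Z_0$ is QP on the identity component $Z_0$; and since translation by $z$ carries connected components of cells of $\frP$ to connected components of cells of $\frP - z$, we have $(\frP_1 - z)\cap Z_0 = ((\frP - z)\cap Z_0)_1$, the refinement of $(\frP - z)\cap Z_0$ into connected components. Thus I would first invoke Lemma~\ref{lem:qp-on-cpts} to reduce to the case where $Z = Z_0$ is connected, i.e. $Z$ is a torus $\bbT^d$.

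Next, on $Z = \bbT^d$, use the Corollary on freedom in choice of coordinates to fix an affine isomorphism — namely the canonical $\{\cdot\}:\bbT^d \to [0,1)^d$ — and a partition $\frQ$ of $[0,1)^d$ into convex polytopes with $\frP \preceq \{\cdot\}^{-1}(\frQ)$. The key geometric observation is that the quotient map $\{\cdot\}^{-1}:[0,1)^d \to \bbT^d$ (equivalently the covering $\bbR^d \to \bbT^d$ restricted suitably) is a local homeomorphism, so the connected components of a cell $\{\cdot\}^{-1}(C)$ of $\{\cdot\}^{-1}(\frQ)$ are exactly the images under $\{\cdot\}^{-1}$ of the connected components of $C \subseteq [0,1)^d$. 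A convex polytope $C$ in $[0,1)^d$ need not be connected in the subspace topology it inherits via the identification with $\bbT^d$ — wrapping around the boundary of the cube breaks it up — but its decomposition into pieces is controlled: $C$ is cut into finitely many convex pieces by the coordinate hyperplanes $\{x_i \in \bbZ\}$ that meet the closed cube, and each such piece maps homeomorphically into $\bbT^d$ onto a connected set, though distinct pieces may map to the same component. In any case, refining $\frQ$ by these (finitely many) coordinate hyperplanes produces a convex-polytopal partition $\frQ'$ of $[0,1)^d$ such that each connected component of each cell of $\{\cdot\}^{-1}(\frQ)$ is a union of cells of $\{\cdot\}^{-1}(\frQ')$.

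Finally I would assemble the conclusion: $\frP_1$ is the refinement of $\frP$ into connected components, and since $\frP \preceq \{\cdot\}^{-1}(\frQ)$, every connected component of a cell of $\frP$ is a connected subset of a connected component of a cell of $\{\cdot\}^{-1}(\frQ)$, hence is a union of cells of $\{\cdot\}^{-1}(\frQ')$. Therefore $\frP_1 \preceq \{\cdot\}^{-1}(\frQ')$, and $\{\cdot\}^{-1}(\frQ')$ is QP by construction, so $\frP_1$ is QP by the second bullet of Lemma~\ref{lem:qp-obvious}. The main obstacle — really the only subtle point — is the geometric claim in the previous paragraph: that a convex polytope sitting in $[0,1)^d$, viewed inside $\bbT^d$, has its connected components cut out cleanly by coordinate-integer hyperplanes, so that after refining by those hyperplanes each connected component becomes a union of convex-polytopal cells. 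This requires a little care because a "convex polytope" here is allowed to include some facets and omit others and to lie in a proper affine subspace, but the point is robust: the hyperplanes $x_i \in \bbZ$ are the only places where the local homeomorphism $[0,1)^d \to \bbT^d$ fails to be an open embedding in a neighbourhood inside the cube, so cutting along them is exactly what is needed.
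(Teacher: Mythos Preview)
Your reduction to the torus case is fine (the paper reduces slightly differently, via $\frP_1 \succeq \frP\vee\frR$ with $\frR$ the partition into $Z_0$-cosets, but either works). The problem is in the torus step, where you have both an unnecessary detour and a genuine logical slip.

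First, the detour: you worry that for a convex polytope $C\subseteq [0,1)^d$ the image $\{\cdot\}^{-1}(C)\subseteq\bbT^d$ might be disconnected because of ``wrapping around the boundary''. It cannot be. The map $\{\cdot\}^{-1}:[0,1)^d\to\bbT^d$ is simply the restriction of the covering map $\bbR^d\to\bbT^d$, hence \emph{continuous}; the discontinuity is in the other direction, $\{\cdot\}:\bbT^d\to[0,1)^d$. Since $C$ is convex in $\bbR^d$ it is connected, so its continuous image $\{\cdot\}^{-1}(C)$ is connected. All of the $\frQ'$ machinery is therefore superfluous.

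Second, the gap: in your assembly you assert that ``every connected component of a cell of $\frP$ is a connected subset of a connected component of a cell of $\{\cdot\}^{-1}(\frQ)$''. This has the refinement relation backwards. Since $\{\cdot\}^{-1}(\frQ)$ \emph{refines} $\frP$, a cell $P$ of $\frP$ is a union of several cells of $\{\cdot\}^{-1}(\frQ)$, and a connected component of $P$ can certainly meet more than one of them; it need not sit inside any single cell of $\{\cdot\}^{-1}(\frQ)$.

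The paper's argument repairs both points at once: every cell of $\{\cdot\}^{-1}(\frQ)$ is connected (as above), and each cell $P$ of $\frP$ is a union of such connected cells; hence each connected component of $P$ is also a union of them. This shows directly that $\{\cdot\}^{-1}(\frQ)$ already refines $\frP_1$, so $\frP_1$ is QP.
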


\begin{proof}
First, if $Z_0 \leq Z$ is the identity component, and $\frR$ is the partition of $Z$ into $Z_0$-cosets, then clearly $\frP_1 \succsim \frP\vee \frR$, and $\frP\vee \frR$ is still QP  We may therefore consider $Z_0$-cosets separately, or, equivalently, assume that $Z = Z_0$. In this case, assuming $Z = \bbT^d$, if $\frQ$ is the partition provided by the previous corollary, then every cell of $\frQ$ is connected, and the map $\{\cdot\}^{-1}:[0,1)^d\to \bbT^d$ is just the restriction of the covering map $\bbR^d\to \bbT^d$, hence continuous, so $\{\cdot\}^{-1}(\frQ)$ also refines $\frP_1$.
\end{proof}

\begin{rmk}
The previous lemma and corollary are false for general compact Abelian groups. For example, for the totally disconnected group $Z: = (\bbZ/2\bbZ)^\bbN$, the trivial partition $\{Z\}$ is QP, but the partition into singletons is certainly not. \fin
\end{rmk}

\begin{cor}\label{cor:im-still-qp}
If $\a:Z\to Y$ is an affine map of compact Abelian groups with $Y$ a Lie group, and $C \subseteq Z$ is a QP set, then $q(C) \subseteq Y$ is a QP set.
\end{cor}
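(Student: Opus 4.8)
\emph{Proof proposal.}
Write $\alpha$ for the affine map (written $q$ in the statement); I must show $\alpha(C)$ is QP in $Y$. The plan is first to strip the problem down to an image-of-a-polytope computation in Euclidean coordinates, and then to run it through the commutative square of Lemma~\ref{lem:little-step-aff-coords}.

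First I would reduce. Since $C$ is QP there are an affine isomorphism $\chi\colon Z\xrightarrow{\cong}\bbT^d$ and a partition $\frQ$ of $[0,1)^d$ into convex polytopes with $\{C,Z\setminus C\}\preceq\{\chi\}^{-1}(\frQ)$, so $C$ is a union of cells $\{\chi\}^{-1}(P)=\chi^{-1}(\{\cdot\}^{-1}(P))$, $P\in\frQ$. Hence $\alpha(C)=\bigcup_{P}(\alpha\circ\chi^{-1})(\{\cdot\}^{-1}(P))$, a finite union; and a finite union of QP subsets of a compact group is QP (apply Lemma~\ref{lem:qp-join} to the two-cell partitions, then the second part of Lemma~\ref{lem:qp-obvious}). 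So, replacing $\alpha$ by $\alpha\circ\chi^{-1}$, I may assume $Z=\bbT^d$ and $C=\{\cdot\}^{-1}(P)$ for a single convex polytope $P\subseteq[0,1)^d$. Next, writing $\alpha(z)=y_0+\psi(z)$ with $\psi\colon\bbT^d\to Y$ a continuous homomorphism, connectedness of $\bbT^d$ forces $\psi(\bbT^d)$ into the identity component $Y_0$, which is a torus; by Lemma~\ref{lem:qp-on-cpts} and the (obvious, via Lemma~\ref{lem:qp-obvious}) invariance of QP-ness under translations, $\alpha(C)$ is QP in $Y$ provided $(\alpha(C)-y)\cap Y_0$ is QP in $Y_0$ for every $y$. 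This set is empty (hence the induced partition of $Y_0$ is trivial) unless $y\in y_0+Y_0$, in which case it is a translate of $\psi(C)$ inside $Y_0$. Identifying $Y_0$ affinely with $\bbT^s$, I have reduced the whole statement to the following: for an affine map $\beta\colon\bbT^d\to\bbT^s$ and a convex polytope $P\subseteq[0,1)^d$, the set $\beta(\{\cdot\}^{-1}(P))$ is QP in $\bbT^s$.

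For this core claim, Lemma~\ref{lem:little-step-aff-coords} supplies a step-affine map $f\colon[0,1)^d\to[0,1)^s$ with $\{\cdot\}\circ\beta=f\circ\{\cdot\}$. For each $x\in[0,1)^d$ the unique $z\in\bbT^d$ with $\{z\}=x$ lies in $\{\cdot\}^{-1}(P)$ iff $x\in P$, and then $\{\beta(z)\}=f(x)$; running over $x$ and using injectivity of $\{\cdot\}$ on $\bbT^s$ yields $\beta(\{\cdot\}^{-1}(P))=\{\cdot\}^{-1}(f(P))$. By Lemma~\ref{lem:Euc-qp-im} (with $P$ regarded as a one-term union of convex subpolytopes of $[0,1)^d$), $f(P)$ is a finite union $R_1\cup\cdots\cup R_m$ of convex polytopes in $[0,1)^s$. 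I would then form the finite hyperplane arrangement in $\bbR^s$ consisting of all facet-hyperplanes of the $R_j$ together with the $2s$ coordinate hyperplanes $\{x_i=0\}$ and $\{x_i=1\}$; its cells, intersected with $[0,1)^s$, give a partition $\frQ'$ of $[0,1)^s$ into convex polytopes, and each $R_j$ --- being cut out by affine (in)equalities whose boundary hyperplanes belong to the arrangement --- is a union of cells of $\frQ'$. Thus $f(P)$, and hence $\beta(\{\cdot\}^{-1}(P))=\{\cdot\}^{-1}(f(P))$, is a union of cells of $\{\cdot\}^{-1}(\frQ')$; taking the affine isomorphism in Definition~\ref{dfn:qp} to be $\id_{\bbT^s}$, this exhibits $\beta(\{\cdot\}^{-1}(P))$ as QP, completing the proof.

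The reductions of the first paragraph, and the closure of QP-ness under finite unions and translations, are routine bookkeeping; the one point that genuinely needs an argument is the passage at the end from ``$f(P)$ is a finite union of convex polytopes'' to ``$f(P)$ is a union of cells of some convex-polytopal partition of $[0,1)^s$'' --- here the cells of a QP partition must themselves be convex polytopes, while an arbitrary finite union of polytopes need not be convex, so the hyperplane-arrangement construction is exactly what is required. Everything else is just chasing the commutative square of Lemma~\ref{lem:little-step-aff-coords}.
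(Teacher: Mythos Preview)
Your core argument --- pass to Euclidean coordinates via the commutative square of Lemma~\ref{lem:little-step-aff-coords} and then invoke Lemma~\ref{lem:Euc-qp-im} --- is exactly the paper's, and your final hyperplane-arrangement step makes explicit what the paper leaves implicit.

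There is, however, a gap in your reduction. You read the ``$\cong$'' in Definition~\ref{dfn:qp} literally and take $\chi\colon Z\to\bbT^d$ to be an affine isomorphism, which lets you write ``replacing $\alpha$ by $\alpha\circ\chi^{-1}$, I may assume $Z=\bbT^d$''. But that $\cong$ is a slip: throughout the paper $\chi$ is merely an affine \emph{map}, not an isomorphism (see e.g.\ the remark after Corollary~\ref{cor:conn-cpts-still-qp} that the trivial partition of $(\bbZ/2\bbZ)^{\bbN}$ is QP, and the paper's own proof of this corollary, which begins by \emph{reducing} to $Z$ Lie). For general compact Abelian $Z$ your $\chi^{-1}$ does not exist, and $\alpha$ need not factor through $\chi$ at all, so you cannot transport the problem to $\bbT^d$ this way. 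The correct reduction is the paper's: since $C$ is lifted through an affine map to a torus, and since $\alpha$ (with Lie target $Y$) factors through a Lie quotient of $Z$, a common Lie quotient handles both; one then passes to identity-component cosets to reach the torus case. After that, your argument and the paper's coincide.
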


\begin{proof}
Since $C$ is QP, it is lifted from some Lie group quotient of $Z$.  Also, $\a$ must factorize through such a quotient of $Z$, since $Y$ is Lie.  Combining these two quotients, we may assume $Z$ itself is Lie.

Having done so, the result will be true for $C$ if it is true for $C\cap (z + Z_0)$ for every identity-component coset $z + Z_0 \leq Z$, so we may assume $C \subseteq Z_0$, and hence that $Z$ is a torus.  This implies that the image $\a(Z)$ lies in an identity-component coset of $Y$, so we may assume that $Y$ is also a torus.

Finally, letting $Z = \bbT^d$ and $Y = \bbT^s$, the result follows by Lemma~\ref{lem:Euc-qp-im} and the commutative diagram of Lemma~\ref{lem:little-step-aff-coords}.
\end{proof}

We can now proceed from Definition~\ref{dfn:Euc-step-aff} to the following.

\begin{dfn}[Step-affine map]\label{dfn:step-aff}
Let $Z$ be a compact Abelian group.

A map $f:Z\to \bbR^s$ is \textbf{step-affine} if it equals $f_0\circ \{\chi\}$ for some affine map $\chi:Z\to \bbT^d$ and step-affine map $f_0:[0,1)^d\to \bbR^s$.

A map $f:Z\to\bbT$ is \textbf{step-affine} if it equals $\psi\circ f_1$ for some homomorphism $\psi:\bbR\to\bbT$ and some step-affine $f_1:Z\to\bbR$.

Finally, if $Y$ is another compact Abelian group, then a map $f:Z\to Y$ is \textbf{step-affine} if $\chi\circ f$ is step-affine for every $\chi \in \A(Y)$.
\end{dfn}

Let $\F_{\rm{sa}}(Z,Y)$ denote the set of Haar-a.e. equivalence classes of step-affine functions $Z\to Y$, for $Y$ equal to either $\bbR^s$ or another compact Abelian group.

In case $f:Z\to \bbR$ is step-affine, the QP partition implicit in its definition will be said to \textbf{control} $f$.

The following lemma is immediate from the properties of Euclidean step-affine maps established above.

\begin{lem}\label{lem:step-aff-obvious}
Step-affine functions have the following properties.
\begin{itemize}
\item Sums and scalar multiples of step-affine functions $Z\to\bbR^s$ are still step-affine.
\item If $Y$ is compact Abelian, then sums of step-affine functions $Z\to Y$ are still step-affine, and so $\F_{\rm{sa}}(Z,Y)$ is a subgroup of $\F(Z,Y)$.
\item If $Y$ is compact Abelian and $S \subseteq \hat{Y}$ is a generating set for $\hat{Y}$, then $\psi:Z\to Y$ is step-affine if and only if $\chi\circ\psi$ is step-affine for every $\chi \in S$.
\item If $Y$ is a compact Abelian Lie group, then any step-affine map $Z\to Y$ factorizes through a Lie-group quotient of $Z$.
\item If $f:Z\to \bbR^s$ is step-affine, then so is $\{f\}$.
\item If $f:Z\to\bbT$, then $f$ is step-affine if and only if $\{f\}:Z\to [0,1)$ is step-affine.
\item If $f:Z\to \bbR$ is step-affine, and $\frR$ is a locally finite partition of $\bbR$ into intervals, then $f^{-1}(\frR)$ is a QP partition of $Z$. \qed
\end{itemize}
\end{lem}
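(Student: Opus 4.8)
The plan is to verify each bullet in turn, reducing every statement back to the Euclidean results (Lemmas~\ref{lem:combine-Euc-step-aff}, \ref{lem:Euc-qp-preim}, Corollary~\ref{cor:Euc-compos-step}, Lemma~\ref{lem:Euc-qp-im}) together with Definition~\ref{dfn:step-aff}. For the first bullet, given step-affine $f = f_0\circ\{\chi\}$ and $g = g_0\circ\{\psi\}$ with $\chi:Z\to\bbT^d$, $\psi:Z\to\bbT^e$, form the common affine map $(\chi,\psi):Z\to\bbT^{d+e}$; precompose each $f_0,g_0$ with the relevant coordinate projection (still step-affine by Corollary~\ref{cor:Euc-compos-step} and Lemma~\ref{lem:little-step-aff-coords}, since coordinate projections of $[0,1)^{d+e}$ onto $[0,1)^d$ are restrictions of linear maps, hence step-affine after composing with the relevant $\{\cdot\}$), and then apply Lemma~\ref{lem:combine-Euc-step-aff}. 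Scalar multiples are immediate since $\lambda\cdot$ is affine on $\bbR^s$.

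For the second bullet, the sum of step-affine maps $Z\to Y$ is step-affine because, for each $\chi\in\A(Y)$, $\chi$ of the sum is a sum of step-affine maps $Z\to\bbT$ (since $\chi$ is a homomorphism plus a constant), and that case follows from the $\bbT$-valued part of Definition~\ref{dfn:step-aff} combined with the first two bullets: write each summand as $\psi_i\circ f_{1,i}$ with a common $\psi$ (one may take $\psi:\bbR\to\bbT$ the standard quotient) and add the $f_{1,i}$. The third bullet is a direct consequence of the last clause of Definition~\ref{dfn:step-aff}: it suffices to check $\chi\circ\psi$ step-affine for $\chi$ ranging over a generating set, because any element of $\hat Y$ is an integer combination of generators, and integer combinations of step-affine maps $Z\to\bbT$ are step-affine by the preceding bullets. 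The fourth bullet follows because a step-affine $f:Z\to Y$ into a Lie group is determined by finitely many $\chi_j\circ f$ (take $\chi_j$ generating $\hat Y$, which is finitely generated), each $\chi_j\circ f$ factors through a Lie quotient of $Z$ by its own definition as $\psi\circ f_1$ with $f_1 = (f_1)_0\circ\{\cdot\}\circ(\text{affine }Z\to\bbT^{d_j})$; intersect the kernels.

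The fifth and sixth bullets are about the relation between $f$ and $\{f\}$. If $f:Z\to\bbR^s$ is step-affine, write $f = f_0\circ\{\chi\}$; then $\{f\} = \{f_0\}\circ\{\chi\}$, and $\{f_0\}:[0,1)^d\to[0,1)^s$ is step-affine by applying Lemma~\ref{lem:Euc-qp-preim} with $\frR$ the unit-cube lattice partition restricted to a bounded box containing $f_0([0,1)^d)$ (which is bounded, being a finite union of affine images of polytopes) — on each refined cell $f_0$ differs from $\{f_0\}$ by a constant integer vector. The sixth bullet is then the definition of step-affine for $\bbT$-valued maps read in both directions together with the fifth. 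Finally, for the last bullet, if $f = f_0\circ\{\chi\}$ is $\bbR$-valued step-affine and $\frR$ is a locally finite interval partition of $\bbR$, then on the bounded set $f_0([0,1)^d)$ only finitely many cells of $\frR$ are met, so $f_0^{-1}(\frR)$ refines into convex polytopes by Lemma~\ref{lem:Euc-qp-preim}, whence $f^{-1}(\frR) = \{\chi\}^{-1}(f_0^{-1}(\frR))$ is QP by Definition~\ref{dfn:qp}.

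None of the steps is a serious obstacle; the only mild subtlety is the recurring need to arrange a \emph{common} affine map $Z\to\bbT^d$ before invoking the Euclidean lemmas, and to ensure boundedness of the relevant images so that locally finite partitions become finite — both handled by Lemma~\ref{lem:little-step-aff-coords} and Lemma~\ref{lem:Euc-qp-im} respectively.
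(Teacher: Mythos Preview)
Your proposal is correct and follows exactly the approach the paper intends: the paper states that the lemma ``is immediate from the properties of Euclidean step-affine maps established above'' and marks it with \qed, giving no further proof; you have simply written out the reductions to those Euclidean lemmas (chiefly Lemmas~\ref{lem:combine-Euc-step-aff}, \ref{lem:Euc-qp-preim}, \ref{lem:little-step-aff-coords} and Corollary~\ref{cor:Euc-compos-step}) in the expected way.
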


\begin{lem}
If $Y$ and $Z$ are compact Abelian groups, then affine maps $Z\to Y$ are step-affine.
\end{lem}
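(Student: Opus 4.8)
The plan is to strip off Definition~\ref{dfn:step-aff} one clause at a time, reducing everything to the single case of an affine map $Z\to\bbT$, which is then handled by an explicit formula.

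By the last clause of Definition~\ref{dfn:step-aff}, a map $f:Z\to Y$ is step-affine as soon as $\chi\circ f$ is step-affine for every $\chi\in\A(Y)$. If $f$ is affine, write $f(z) = y_0 + \phi(z)$ with $\phi:Z\to Y$ a continuous homomorphism, and write $\chi = \theta + \rho$ with $\theta\in\bbT$ and $\rho\in\hat Y$. Then $\chi(f(z)) = \bigl(\theta + \rho(y_0)\bigr) + (\rho\circ\phi)(z)$, and since $\rho\circ\phi\in\hat Z$ this is again an affine map, now valued in $\bbT$. So the whole statement reduces to showing that every affine $g:Z\to\bbT$ is step-affine.

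For this, write $g = \eta + \phi$ with $\eta\in\bbT$ and $\phi\in\hat Z$, and let $\psi:\bbR\to\bbT$ be the standard quotient homomorphism. By the $\bbT$-valued clause of Definition~\ref{dfn:step-aff} it suffices to produce a step-affine $g_1:Z\to\bbR$ with $\psi\circ g_1 = g$. I would take $g_1 := f_0\circ\{\phi\}$, where $\{\phi\}:Z\to[0,1)$ is $\phi$ (regarded as an affine map $Z\to\bbT^1$) followed by the fractional-part map, and $f_0:[0,1)\to\bbR$ is the affine function $x\mapsto x + \{\eta\}$. Under the polytope conventions of this section the half-open interval $[0,1)$ is a convex polytope, so $f_0$ is step-affine on the single cell $[0,1)$; hence $g_1$ is step-affine by the first clause of Definition~\ref{dfn:step-aff}, taking $d = 1$ and the (not necessarily surjective) affine map $\chi = \phi$. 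Finally $\psi(g_1(z)) = \{\phi(z)\} + \{\eta\} \bmod 1 = \phi(z) + \eta = g(z)$, as needed.

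There is no real obstacle here beyond bookkeeping; the two points worth keeping in view are that the auxiliary affine map in Definition~\ref{dfn:step-aff} is allowed to be arbitrary (in particular, a single character $\phi$ with $d=1$), not an affine isomorphism as in the definition of QP partitions, and that $[0,1)$ qualifies as a convex polytope. An equally short alternative is to use Lemma~\ref{lem:step-aff-obvious}: $\F_{\rm{sa}}(Z,Y)$ is a subgroup of $\F(Z,Y)$ containing all constant maps, so it is enough to show the homomorphism part $\phi:Z\to Y$ of $f$ is step-affine, and then the generating-set criterion in that lemma reduces this to the step-affineness of characters $Z\to\bbT$ --- the same endgame as above.
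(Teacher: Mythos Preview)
Your proof is correct and follows essentially the same route as the paper: reduce to $Y=\bbT$ by composing with characters, then exhibit the affine map $Z\to\bbT$ as $\psi\circ g_1$ for a step-affine $g_1:Z\to\bbR$ built from the fractional-part map. The paper's version is marginally more compact: rather than splitting $g=\eta+\phi$ and putting the constant into $f_0$, it simply takes the whole affine map $\chi=g$ as the auxiliary map in Definition~\ref{dfn:step-aff} (with $d=1$ and $f_0$ the identity on $[0,1)$), so that $\{\chi\}$ itself is step-affine and $\chi=\{\chi\}\bmod\bbZ$.
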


\begin{proof}
Composing with a character, it suffices to prove this in case $Y = \bbT$.  However, if $\chi \in \A(Z)$, then
\[\chi(z) = \{\chi(z)\} + \bbZ,\]
and $\{\chi\}:Z\to\bbR$ is clearly step-affine.
\end{proof}

The next result is a natural extension of Lemma~\ref{lem:little-step-aff-coords}

\begin{lem}\label{lem:step-aff-coords}
If $Z$ is a compact Abelian Lie group and $f:Z\to \bbT^d$ is step-affine, then there are an affine map $\chi:Z\to \bbT^D$ and a step-affine map $f_0:[0,1)^D \to [0,1)^d$ such that the following diagram commutes:
\begin{center}
$\phantom{i}$\xymatrix{
Z \ar^f[r] \ar_{\{\chi\}}[d] & \bbT^d \ar^{\{\cdot\}}[d]\\
[0,1)^D \ar_{f_0}[r] & [0,1)^d.
}
\end{center}
\end{lem}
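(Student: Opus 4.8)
The plan is to reduce at once to the case $d = 1$ and then unwind Definition~\ref{dfn:step-aff}. Write $f = (f_1,\dots,f_d)$ with $f_i : Z \to \bbT$; since the $d$ coordinate projections generate $\hat{\bbT^d}$, Lemma~\ref{lem:step-aff-obvious} shows that each $f_i$ is step-affine. Granting the lemma for $d = 1$, we obtain for each $i$ an affine map $\chi_i : Z \to \bbT^{D_i}$ and a step-affine $f_{0,i} : [0,1)^{D_i} \to [0,1)$ with $\{\cdot\} \circ f_i = f_{0,i} \circ \{\chi_i\}$. Set $D := D_1 + \cdots + D_d$, let $\chi := (\chi_1,\dots,\chi_d) : Z \to \bbT^D$, which is again affine, and let $\pi_i : [0,1)^D \to [0,1)^{D_i}$ be the projection onto the $i$-th block of coordinates. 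Each $\pi_i$ is affine, hence Euclidean step-affine, so $f_{0,i} \circ \pi_i$ is step-affine by Corollary~\ref{cor:Euc-compos-step}, and therefore $f_0 := (f_{0,i} \circ \pi_i)_{i=1}^d : [0,1)^D \to [0,1)^d$ is step-affine by Lemma~\ref{lem:combine-Euc-step-aff}. Because the fractional-part map $\{\cdot\} : \bbT^D \to [0,1)^D$ acts coordinate-wise, $\pi_i \circ \{\chi\} = \{\chi_i\}$; hence the $i$-th component of $f_0 \circ \{\chi\}$ is $f_{0,i} \circ \{\chi_i\} = \{\cdot\} \circ f_i$, which is the $i$-th component of $\{\cdot\} \circ f$. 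That is exactly the required commutativity, so it remains only to treat $d = 1$.

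For $d = 1$, let $f : Z \to \bbT$ be step-affine. By Lemma~\ref{lem:step-aff-obvious} the function $\{f\} : Z \to [0,1) \subseteq \bbR$ is then step-affine, so the Euclidean ($\bbR^s$) clause of Definition~\ref{dfn:step-aff} supplies an affine map $\chi : Z \to \bbT^D$ and a step-affine $h : [0,1)^D \to \bbR$ with $\{f\} = h \circ \{\chi\}$. Since $\{f\}$ takes values in $[0,1)$, we replace $h$ by its fractional part $\{h\} : [0,1)^D \to [0,1)$. This $\{h\}$ is step-affine: the controlling partition of $h$ consists of convex polytopes on each of which $h$ is affine with bounded range, so refining each cell by the conditions $h \in [m,m+1)$ (a polytopal refinement, by Lemma~\ref{lem:Euc-qp-preim}) and subtracting $m$ on the corresponding cell exhibits $\{h\}$ as step-affine. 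Moreover $\{h\} \circ \{\chi\} = \{\, h \circ \{\chi\}\,\} = \{\{f\}\} = \{f\}$, again because $\{f\}$ lands in $[0,1)$. Setting $f_0 := \{h\}$ gives $f_0 \circ \{\chi\} = \{f\} = \{\cdot\} \circ f$, as wanted.

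No step is a genuine obstacle here: the argument is bookkeeping that combines the closure properties of Euclidean step-affine maps under composition, combination, and pre-composition with affine coordinate projections (Corollary~\ref{cor:Euc-compos-step} and Lemmas~\ref{lem:combine-Euc-step-aff} and~\ref{lem:Euc-qp-preim}) with the coordinate-wise action of the fractional-part maps on tori. The only points worth an explicit remark are that taking fractional parts of a real-valued step-affine function on a compact group is again step-affine, and that the homomorphism $\psi : \bbR \to \bbT$ implicit in the definition of a step-affine map $Z \to \bbT$ must be normalised so that $\{\cdot\} \circ f$ becomes literally the fractional part of a Euclidean-valued step-affine function; passing to $\{f\}$ at the outset handles both automatically. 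In the special case where $f$ is affine this recovers Lemma~\ref{lem:little-step-aff-coords}, of which the present statement is the announced ``coordinatised'' extension.
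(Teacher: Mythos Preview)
Your proof is correct and follows the same approach as the paper: reduce coordinate-wise to $d=1$, then invoke that $\{f\}:Z\to\bbR$ is step-affine and read off $\chi$ and $f_0$ from Definition~\ref{dfn:step-aff}. The paper's proof is two sentences and glosses over the codomain issue (that the definition gives $f_0$ with target $\bbR$ rather than $[0,1)$), which you handle explicitly by replacing $h$ with $\{h\}$; this extra care is harmless and arguably an improvement.
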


\begin{proof}
Arguing coordinate-wise, it suffices to prove this when $d = 1$, in which case $\{f\}:Z\to \bbR$ is step-affine.  The desired conclusion is now just the definition of step-affine maps to $\bbR$.
\end{proof}

\begin{lem}\label{lem:step-aff-compos}
If $X$, $Y$ and $Z$ are compact Abelian groups, and $\phi:X\to Y$ and $\psi:Y\to Z$ are step-affine, then so is $\psi\circ \phi:X\to Z$.
\end{lem}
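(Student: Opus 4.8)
The plan is to reduce the statement to the Euclidean situation via Lemma~\ref{lem:step-aff-coords}, and then apply the composition result for Euclidean step-affine maps, Corollary~\ref{cor:Euc-compos-step}. First I would use the last two items of Lemma~\ref{lem:step-aff-obvious} (the reduction to a generating set of characters, and the factorization of step-affine maps to Lie groups through Lie-group quotients) to reduce to the case where $X$, $Y$ and $Z$ are all compact Abelian \emph{Lie} groups and $Z = \bbT^d$ for some $d$. Indeed, to show $\psi\circ\phi$ is step-affine it suffices to show $\chi\circ\psi\circ\phi$ is step-affine for every $\chi$ in a generating set of $\hat{Z}$, so we may take $Z = \bbT$ and then (arguing coordinate-wise as in the proofs above) $Z = \bbT^d$ with $\chi = \{\cdot\}$ the canonical selector. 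Since $\psi$ is step-affine into a Lie group, it factorizes through a Lie-group quotient of $Y$; replacing $Y$ by that quotient and $\phi$ by its composite with the quotient map (which is still step-affine, being a composite of a step-affine map with an affine one) lets us assume $Y$ is Lie. Likewise $\phi$, being step-affine into the now-Lie group $Y$, factorizes through a Lie-group quotient of $X$, so we may assume $X$ is Lie as well.

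Next I would apply Lemma~\ref{lem:step-aff-coords} to $\phi:X\to Y$: since $\psi$ maps into $\bbT$ after composing with a character, and more to the point $Y$ is now a Lie group so we may write $Y = \bbT^e$ (after separating identity-component cosets, or by just working with $\hat{Y}$ as above), we get an affine map $\xi:X\to\bbT^D$ and a Euclidean step-affine $\phi_0:[0,1)^D\to [0,1)^e$ with $\{\cdot\}\circ\phi = \phi_0\circ\{\xi\}$. Similarly, applying Lemma~\ref{lem:step-aff-coords} (or just the definition) to $\psi:Y=\bbT^e\to \bbT^d$ gives a step-affine $\psi_0:[0,1)^e\to[0,1)^d$ with $\{\cdot\}\circ\psi = \psi_0\circ\{\cdot\}$ on $\bbT^e$ — here I can use the version with the canonical selector on the source, since $\bbT^e$ is already a torus in standard coordinates, possibly after an affine change of coordinates on $Y$. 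Chaining these two commutative squares gives
\[
\{\cdot\}\circ(\psi\circ\phi) = \psi_0\circ\{\cdot\}\circ\phi = \psi_0\circ\phi_0\circ\{\xi\}
\]
as maps $X\to[0,1)^d$. Now $\psi_0\circ\phi_0:[0,1)^D\to[0,1)^d$ is a composite of two Euclidean step-affine maps whose image-domain constraint ($\phi_0([0,1)^D)\subseteq[0,1)^e$) is exactly what is needed, so Corollary~\ref{cor:Euc-compos-step} shows it is step-affine. Therefore $\psi\circ\phi = (\psi_0\circ\phi_0)\circ\{\xi\}$ exhibits $\psi\circ\phi$ in the form required by Definition~\ref{dfn:step-aff}, so it is step-affine.

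The main obstacle, and the point requiring care, is the bookkeeping in the reduction to tori: moving from general compact Abelian targets to Lie groups, and then handling identity-component cosets and affine changes of coordinates so that Lemma~\ref{lem:step-aff-coords} can be applied with the \emph{canonical} selector on each torus in the chain. Once everything is in standard coordinates, the two commutative squares concatenate cleanly and the Euclidean composition lemma does the real work; there is no genuine difficulty beyond matching up the coordinate systems and invoking the already-established functoriality of $\{\cdot\}$ under affine maps (Lemma~\ref{lem:little-step-aff-coords}). One minor subtlety: in chaining the squares one must know $\phi_0$ lands in $[0,1)^e$, which holds because $\phi_0 = \{\cdot\}\circ(\text{lift})$ takes values in the fundamental domain by construction; this is precisely the hypothesis $f(Q)\subseteq R$ of Corollary~\ref{cor:Euc-compos-step}.
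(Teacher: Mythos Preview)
Your approach is essentially the same as the paper's: reduce to tori on both ends and invoke Corollary~\ref{cor:Euc-compos-step} on the Euclidean side. The one place where the paper is tidier is the reduction of $Y$ to a torus. Rather than passing to a Lie-group quotient of $Y$ and then hand-waving about identity-component cosets or an embedding via $\hat{Y}$, the paper simply unwinds Definition~\ref{dfn:step-aff}: once $Z=\bbT$, the definition of ``$\psi$ step-affine'' already supplies an affine $\chi:Y\to\bbT^d$ and a step-affine $\psi_0:\bbT^d\to\bbT$ with $\psi=\psi_0\circ\chi$, so one replaces $\phi$ by $\chi\circ\phi$ (using the trivial affine case, which you correctly invoke but the paper isolates as a separate Step~1) and has $Y=\bbT^d$ immediately. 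This sidesteps the finite-component bookkeeping entirely and makes Lemma~\ref{lem:step-aff-coords} directly applicable to $\phi$. Your route can be made to work, but the embedding $Y\hookrightarrow\bbT^E$ leaves $\psi$ defined only on a subgroup, and the coset-by-coset argument needs the observation that the projection of $\phi$ to the finite part is a step function with QP level sets---both fixable, but neither is spelled out.
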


\begin{proof}
We treat this in two steps.

\vspace{7pt}

\emph{Step 1.}\quad If $\psi$ is actually affine, and $\chi \in\A(Z)$, then $\chi\circ \psi \in \A(Y)$, so in this case the result follows at once from the definition.

\vspace{7pt}

\emph{Step 2.}\quad For the general case, after composing with a character we may assume $Z = \bbT$. Having done so, by definition there are an affine map $\chi:Y\to \bbT^d$ and a step-affine map $\psi_0:\bbT^d\to \bbT$ such that $\psi = \psi_0\circ \chi$.  Replacing $\phi$ with $\chi \circ \phi$ (justified by Step 1) and $\psi$ with $\psi_0$, we may therefore assume that $Y = \bbT^d$.

With this assumption, Definition~\ref{dfn:step-aff} gives a commutative diagram
\begin{center}
$\phantom{i}$\xymatrix{
X \ar^\phi[rr] && \bbT^d \ar^\psi[rr]\ar^{\{\theta\}}[dr] && \bbT \\
& && [0,1)^s \ar_{f_2}[rr] && \bbR \ar_{\mod 1}[ul]}
\end{center}
in which $\theta:\bbT^d\to \bbT^s$ is affine and $f_2$ is step affine.  Now Lemmas~\ref{lem:little-step-aff-coords} and~\ref{lem:step-aff-coords} enable one to enlarge this to a commutative diagram
\begin{center}
$\phantom{i}$\xymatrix{
&X \ar^\phi[rr]\ar_{\{\chi\}}[dl] && \bbT^d \ar^\psi[rr]\ar_{\{\cdot\}}[dl]\ar^{\{\theta\}}[dr] && \bbT \\
[0,1)^D \ar_{f_0}[rr] && [0,1)^d \ar_{f_1}[rr] && [0,1)^s \ar_{f_2}[rr] && \bbR \ar_{\mod 1}[ul]}
\end{center}
in which also $\chi:X\to \bbT^D$ is affine and $f_0$ and $f_1$ are step affine.

Reading around the bottom row of this diagram, the proof is completed by an appeal to Corollary~\ref{cor:Euc-compos-step}.
\end{proof}

\begin{lem}\label{lem:qp-partn-preim}
If $\frP$ is a QP partition of $Y$ and $\psi:Z\to Y$ is step-affine, then $\psi^{-1}(\frP)$ is a QP partition of $Z$.
\end{lem}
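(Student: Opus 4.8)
The plan is to reduce this to its Euclidean counterpart, Lemma~\ref{lem:Euc-qp-preim}, by rewriting both $\psi$ and $\frP$ in fractional-part coordinates on $Z$. By Definition~\ref{dfn:qp} there are an affine map $\chi_0=(\chi_0^{(1)},\dots,\chi_0^{(d)}):Y\to\bbT^d$ (each $\chi_0^{(j)}\in\A(Y)$) and a partition $\frQ$ of $[0,1)^d$ into convex polytopes with $\frP\preceq\{\chi_0\}^{-1}(\frQ)$. Since $\psi^{-1}$ preserves refinement, $\psi^{-1}(\frP)\preceq\psi^{-1}(\{\chi_0\}^{-1}(\frQ))$, so by the second bullet of Lemma~\ref{lem:qp-obvious} it suffices to prove that $\psi^{-1}(\{\chi_0\}^{-1}(\frQ))$ is QP.

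Next I would coordinatize $\psi$ one character at a time. Since $\psi$ is step-affine, Definition~\ref{dfn:step-aff} gives that $\chi_0^{(j)}\circ\psi:Z\to\bbT$ is step-affine for each $j$, hence so is $\{\chi_0^{(j)}\circ\psi\}:Z\to[0,1)\subseteq\bbR$ by Lemma~\ref{lem:step-aff-obvious}; and by Definition~\ref{dfn:step-aff} this last map equals $g^{(j)}\circ\{\eta^{(j)}\}$ for some affine $\eta^{(j)}:Z\to\bbT^{D_j}$ and some Euclidean step-affine $g^{(j)}:[0,1)^{D_j}\to\bbR$. Replacing $g^{(j)}$ by its fractional part (still Euclidean step-affine, and unchanged along the image of $\{\eta^{(j)}\}$), we may assume $g^{(j)}$ maps into $[0,1)$. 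Set $D:=D_1+\dots+D_d$, let $\eta:=(\eta^{(1)},\dots,\eta^{(d)}):Z\to\bbT^D$, and let $G:[0,1)^D\to[0,1)^d$ be the map whose $j$-th coordinate is $g^{(j)}$ applied to the $j$-th block of coordinates; composing the $g^{(j)}$ with coordinate projections and using the closure of Euclidean step-affine maps under composition and tupling (Corollary~\ref{cor:Euc-compos-step}, Lemma~\ref{lem:combine-Euc-step-aff}), $G$ is Euclidean step-affine. A coordinatewise check of fractional parts gives $G\circ\{\eta\}=\{\chi_0\}\circ\psi$ as maps $Z\to[0,1)^d$, and therefore
\[\psi^{-1}(\{\chi_0\}^{-1}(\frQ))=(\{\chi_0\}\circ\psi)^{-1}(\frQ)=\{\eta\}^{-1}\big(G^{-1}(\frQ)\big).\]

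Finally I would invoke Lemma~\ref{lem:Euc-qp-preim}: $\frQ$ is a pairwise-disjoint family of convex polytopes covering $[0,1)^d\supseteq G([0,1)^D)$, so it yields a partition $\frR$ of $[0,1)^D$ into convex polytopes with $G^{-1}(\frQ)\preceq\frR$. Then $\{\eta\}^{-1}(G^{-1}(\frQ))\preceq\{\eta\}^{-1}(\frR)$, and $\{\eta\}^{-1}(\frR)$ is QP straight from Definition~\ref{dfn:qp} (with $\eta$ the affine map and $\frR$ the convex-polytopal partition); combining the refinements, $\psi^{-1}(\frP)\preceq\{\eta\}^{-1}(\frR)$ is QP by Lemma~\ref{lem:qp-obvious}. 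The only real content is the bookkeeping in the middle step: verifying $G\circ\{\eta\}=\{\chi_0\}\circ\psi$ and arranging that $G$ genuinely lands in $[0,1)^d$, so that Lemma~\ref{lem:Euc-qp-preim} applies verbatim. I would also flag why one cannot simply first reduce $Z$ to a Lie group: a step-affine map from $Z$ into a Lie group does factor through a Lie-group quotient of $Z$ by Lemma~\ref{lem:step-aff-obvious}, but it is not clear that the resulting factor is itself step-affine, which is why the proof coordinatizes $\psi$ directly through Definition~\ref{dfn:step-aff} instead.
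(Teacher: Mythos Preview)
Your proof is correct and follows essentially the same approach as the paper. The paper's version is more compressed: after the same reduction to $\frP=\{\chi\}^{-1}(\frQ)$, it replaces $\psi$ by $\chi\circ\psi$ (via Lemma~\ref{lem:step-aff-compos}) to assume $Y=\bbT^d$, then invokes Lemma~\ref{lem:step-aff-coords} directly to obtain a single commutative square $\{\cdot\}\circ\psi = f_1\circ\{\theta\}$ with $\theta$ affine and $f_1$ Euclidean step-affine, and finishes with Lemma~\ref{lem:Euc-qp-preim}. Your argument unpacks Lemma~\ref{lem:step-aff-coords} coordinate-by-coordinate (building $\eta$ and $G$ from the $\eta^{(j)}$ and $g^{(j)}$), which is exactly how that lemma is proved; your extra care with fractional parts to force $G$ into $[0,1)^d$ is a point the paper leaves implicit. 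Your closing remark about not reducing $Z$ to a Lie group is reasonable caution, though in fact the paper's use of Lemma~\ref{lem:step-aff-coords} does not genuinely need the Lie hypothesis in its statement---the proof of that lemma works verbatim for general compact Abelian $Z$, which is effectively what you have re-derived.
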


\begin{proof}
By definition we may assume that $\frP = \{\chi\}^{-1}(\frQ)$ for some affine map $\chi:Y\to \bbT^d$ and some partition $\frQ$ of $[0,1)^d$ into convex polytopes. Replacing $\psi$ with $\chi\circ \psi$, we may therefore assume $Y = \bbT^d$. Now the definitions and Lemma~\ref{lem:step-aff-coords} give a commutative diagram
\begin{center}
$\phantom{i}$\xymatrix{
Z \ar_{\{\theta\}}[d]\ar^\psi[r] & \bbT^d \ar^{\{\cdot\}}[d]\\
[0,1)^D \ar_{f_1}[r] & [0,1)^d
}
\end{center}
for some affine $\theta$ and step-affine $f_1$. Reading counterclockwise around this diagram and applying Lemma~\ref{lem:Euc-qp-preim} completes the proof.
\end{proof}

\subsection{Step-affine cross-sections}

One important way in which step-affine maps appear naturally is as cross-sections of quotient homomorphisms.

\begin{lem}\label{lem:fund-lift}
Suppose that $W \leq Z$ is an inclusion of tori, and let $q:Z \to Z/W$ be the quotient homomorphism.  Then $q$ has a step-affine cross-section $\s:Z/W\to Z$.
\end{lem}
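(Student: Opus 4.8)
The plan is to reduce everything to the case $Z = \bbT^d$ and $W$ a subtorus, and then build the cross-section by hand from a fundamental domain for the sublattice coming from $W$. First I would choose an affine isomorphism identifying $Z$ with $\bbT^d = \bbR^d/\bbZ^d$; under this identification $W$ is a subtorus, so it is the image of a rational subspace $V \leq \bbR^d$, i.e. $V$ is spanned by integer vectors and $W = (V + \bbZ^d)/\bbZ^d$. Then $Z/W \cong \bbT^d/W$ is again a torus, and the quotient map $q$ is covered by the linear projection $\bbR^d \to \bbR^d/V$. The key point is elementary linear algebra over $\bbZ$: since $V$ is rational, one can choose a complementary rational subspace $V'$ with $\bbR^d = V \oplus V'$, and moreover choose integer bases so that $\bbZ^d \cap V$ and $\bbZ^d \cap V'$ together span a finite-index sublattice adapted to this splitting (a standard consequence of the theory of the Smith normal form, or of the fact that a primitive sublattice is a direct summand). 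Concretely one gets an isomorphism of $Z$ with $(\bbR^a/L_1) \times (\bbR^b/L_2)$, under which $W$ becomes the first factor and $q$ becomes the coordinate projection.

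With this normal form in place, the cross-section is the obvious one: for $y$ in the second factor, take the fractional-part representative in the canonical fundamental box and map it back. More precisely, after identifying $Z/W$ with a torus $\bbT^b$ via an affine isomorphism, I would define $\s$ to be the composite $\bbT^b \xrightarrow{\{\cdot\}} [0,1)^b \xrightarrow{\iota} \bbR^d \to Z$, where $\iota$ is the affine inclusion of the box as the graph $\{0\} \times [0,1)^b$ (in the adapted coordinates) and $\bbR^d \to Z$ is the covering projection restricted to this box. That $\s$ is a section of $q$ is immediate from the coordinate description. That $\s$ is step-affine follows directly from Definition~\ref{dfn:step-aff}: it is manifestly of the form $f_0 \circ \{\chi\}$ with $\chi$ an affine isomorphism $Z/W \to \bbT^b$ and $f_0 : [0,1)^b \to \bbR^d$ an affine map (hence trivially step-affine in the sense of Definition~\ref{dfn:Euc-step-aff}), followed by the covering projection $\bbR^d \to Z$; one then checks step-affineness of $Z/W \to Z$ against characters of $Z$ using Lemma~\ref{lem:step-aff-obvious}, since each character of $Z$ pulled back along this composite is a fractional part of an affine function on the box.

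The main obstacle is the linear-algebra normalization: producing integer coordinates on $\bbR^d$ in which both $W$ and the projection $q$ become coordinate sub/quotient objects simultaneously, i.e. exhibiting a basis of $\bbZ^d$ whose first $a$ vectors span the lattice of $W$'s preimage after saturating. This is where one invokes that $\bbZ^d \cap V$ is a primitive (saturated) sublattice of $\bbZ^d$ because $W$ is closed, hence a direct summand, so it extends to a $\bbZ$-basis of $\bbZ^d$; the complementary basis vectors then descend to give the desired affine isomorphism $Z/W \cong \bbT^b$ and the explicit box. Once the coordinates are fixed, both the section property and the step-affine property are routine verifications of the kind already packaged in Lemmas~\ref{lem:little-step-aff-coords} and~\ref{lem:step-aff-obvious}.
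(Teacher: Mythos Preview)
Your argument is correct, but it takes a different route from the paper's proof. The paper does not invoke the splitting of the lattice (Smith normal form / saturated sublattices). Instead, it works directly with the universal covers: writing $Z=\bbT^d$ and $Z/W=\bbT^r$, it lifts $q$ to a surjective linear map $Q:\bbR^d\to\bbR^r$, chooses \emph{any} linear cross-section $A:\bbR^r\to\bbR^d$, and then defines $\s$ by pushing $A\circ\{\cdot\}:\bbT^r\to\bbR^d$ down through the covering map $\pi:\bbR^d\to\bbT^d$. The step-affine structure comes from partitioning $\bbT^r$ according to which unit cube of $\bbR^d$ the composite $A\circ\{\cdot\}$ lands in; on each such piece the section is literally affine. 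So the paper's proof is more self-contained (no appeal to the structure theory of sublattices) and produces a genuinely piecewise cross-section, while your approach imports the algebraic fact that $W$ is a direct factor of $Z$ and thereby obtains a section that is actually a continuous homomorphism.

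One simplification of your write-up: once you have established $Z\cong\bbT^a\times\bbT^b$ with $W=\bbT^a\times\{0\}$ and $q$ the projection onto the second factor, the map $\s:y\mapsto(0,y)$ is a continuous group homomorphism, hence affine, hence step-affine by the lemma immediately preceding Lemma~\ref{lem:step-aff-coords}. The detour through $\{\cdot\}$, the box $[0,1)^b$, and the verification against characters is unnecessary at that point. Also, your intermediate remark about ``$\bbZ^d\cap V$ and $\bbZ^d\cap V'$ together span a finite-index sublattice'' undersells what you actually obtain: since $\bbZ^d\cap V$ is saturated, it is a direct summand of $\bbZ^d$, and choosing $V'$ to be the $\bbR$-span of a complementary summand gives $\bbZ^d=(\bbZ^d\cap V)\oplus(\bbZ^d\cap V')$ on the nose, not merely up to finite index. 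This is exactly what makes the product decomposition of $Z$ work.
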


\begin{proof}
We may identify $Z = \bbT^d$, and then the quotient $Z/W$ may be identified with another torus, say $\bbT^r$.

Let $\pi:\bbR^d\to \bbT^d$ be the universal cover, and let $Q:\bbR^d\to \bbR^r$ be the lift of $q$ to the universal covers.  Then $Q$ is a surjective linear map, so it has a linear cross-section $A:\bbR^r\to \bbR^d$.  Let $\frP$ be the partition of $\bbT^r$ that is the pullback under $A\circ \{\cdot\}$ of the partition of $\bbR^d$ into half-open unit cubes.  Then $\frP$ is QP, and for each $D \in \frP$ the map $(A\circ \{\cdot\})|D$ has image contained in a single fundamental domain of $\pi$.  Composing with $\pi$ identifies each of these restrictions of $A\circ \{\cdot\}$ with a map $D\to \bbT^d$, which gives a suitable step-affine cross-section.
\end{proof}

\begin{prop}\label{prop:fund-lift}
Suppose that $W \leq Z$ is an inclusion of compact Abelian groups, and let $q:Z\to Z/W$ be the quotient.  Then $q$ has a step-affine cross-section $Z/W\to Z$, and there is a step-affine $W$-equivariant map $Z\to W$.
\end{prop}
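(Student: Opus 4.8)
The plan is to reduce the general compact case to the torus case handled in Lemma~\ref{lem:fund-lift} by means of the structure theory of compact Abelian groups, exactly as was done in the proof of Lemma~\ref{lem:q-p-fine}. First I would fix a metric on $Z$ and invoke the structure theorem (\cite[Theorem 9.5]{HewRos79}) to write $Z$ as an inverse limit of compact Abelian Lie groups $Z/K_n$ along a decreasing sequence of compact subgroups $K_n$ with $\bigcap_n K_n = \{0\}$. Since $W$ is closed, one can hope to arrange things so that, after passing to a subnet, the images $W_n := (W + K_n)/K_n$ are honest subgroups of the Lie quotients $Z/K_n$ and the diagrams are compatible; the point is that $Z/(W + K_n) \cong (Z/K_n)/W_n$, and these groups exhaust $Z/W$ in the inverse limit. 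So it suffices to produce step-affine cross-sections (and equivariant maps) at the Lie level that are compatible with the bonding maps, and then take the inverse limit. Here `compatible' should only be needed in a mild sense, since a step-affine cross-section for a later stage composes with a bonding map to give a cross-section at an earlier stage, and by Lemma~\ref{lem:step-aff-compos} step-affineness is preserved.

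The next step is the Lie case: here $W \leq Z$ are compact Abelian Lie groups. I would reduce to tori by splitting off identity components. Let $Z_0$ be the identity component of $Z$, a torus, and note $W_0 := W \cap Z_0$ is the identity component of $W$ and is a subtorus of $Z_0$. One gets a commutative ladder relating $W_0 \into Z_0$, $W \into Z$, and the finite quotients $W/W_0 \into Z/Z_0$ (the latter after possibly enlarging, since $W/W_0$ need not inject into $Z/Z_0$ — one should instead work with $W/(W\cap Z_0)$ mapping into $Z/Z_0$, which does inject). For the torus layer $W_0 \into Z_0$ Lemma~\ref{lem:fund-lift} gives a step-affine cross-section and, by essentially the same argument applied to a linear complement, a linear (hence step-affine) $W_0$-equivariant retraction $Z_0 \to W_0$. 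For the finite layer, any set-theoretic cross-section is automatically step-affine (a partition of a finite group into singletons is QP, since the trivial partition of the connected component is QP and Lemma~\ref{lem:qp-on-cpts} applies), and a finite group always admits an equivariant splitting onto a subgroup by averaging or by the divisibility of... no — rather, one uses that $W/W_0$ is a direct summand is false in general, so instead I would build the equivariant map $Z \to W$ by combining: a step-affine cross-section $\s$ of $q$, then $z \mapsto z - \s(q(z)) \in W$ followed by... the subtlety is equivariance. A cleaner route: an equivariant step-affine map $\rho: Z \to W$ with $\rho|_W = \id_W$ is exactly a step-affine splitting of the inclusion $W \into Z$ in the category of $W$-modules; given a step-affine cross-section $\s$ of $q$, the map $z \mapsto z - \s(q(z))$ lands in $W$ but is not equivariant, so one symmetrizes it over the (compact) group $W$ by integrating — but integration destroys step-affineness. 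Instead, at the Lie level, equivariance of $\rho$ means $\rho(z+w) = \rho(z) + w$ for $w \in W$, i.e. $\rho$ is determined by its restriction to a cross-section of $W$ in $Z$; so one simply picks a step-affine cross-section $\s: Z/W \to Z$ and defines $\rho(z) := z - \s(q(z))$, and this \emph{is} $W$-equivariant because $q(z+w) = q(z)$ forces $\s(q(z+w)) = \s(q(z))$, hence $\rho(z+w) = z + w - \s(q(z)) = \rho(z) + w$. Step-affineness of $\rho$ follows from Lemma~\ref{lem:step-aff-obvious} (differences of step-affine maps) and Lemma~\ref{lem:step-aff-compos}. So in fact the second assertion follows formally from the first, in all cases at once, once one knows subtraction in $Z$ and composition preserve step-affineness — which removes the need to treat the finite layer's equivariance separately.

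With that observation, the whole proof collapses to: produce a step-affine cross-section $\s: Z/W \to Z$ of $q$ for arbitrary compact $W \le Z$, and then set $\rho(z) := z - \s(q(z))$. For the cross-section, I would do the Lie reduction via identity components as above (torus layer from Lemma~\ref{lem:fund-lift}, finite layer from any section, glued by Lemma~\ref{lem:qp-join}/\ref{lem:qp-on-cpts} and the fact that a cross-section of the finite quotient composed with the torus data gives a cross-section of $q$), and then the inverse-limit reduction to the Lie case. The main obstacle I anticipate is the inverse-limit step: one must check that a step-affine cross-section really does survive passage to the inverse limit — i.e. that the limit map is still measurable and step-affine in the sense of Definition~\ref{dfn:step-aff}, which requires that $\chi \circ \s$ be step-affine for every $\chi \in \hat{Z}$, and each such $\chi$ factors through some finite stage, so $\chi\circ\s$ is pulled back from the step-affine cross-section at that stage via Lemma~\ref{lem:step-aff-compos} and Lemma~\ref{lem:qp-obvious}. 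The only real care needed is to set up the directed system of Lie quotients so that the subgroups $W$ pass to subgroups in the quotients compatibly; this is standard for closed subgroups of inverse limits of compact groups, but writing it cleanly is the bulk of the work.
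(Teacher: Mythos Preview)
Your overall strategy matches the paper's: reduce to finite-dimensional/Lie quotients via structure theory, handle those using Lemma~\ref{lem:fund-lift}, and obtain the $W$-equivariant map as $z\mapsto z-\s(q(z))$ once $\s$ is in hand. Your derivation of the equivariant map is exactly the paper's, and your treatment of the Lie case via the identity-component-plus-finite splitting is sound.

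There is, however, a genuine gap in your inverse-limit step. You write that ``a step-affine cross-section for a later stage composes with a bonding map to give a cross-section at an earlier stage,'' but this is false. Concretely: if $K_{n+1}\le K_n$ and $\s_{n+1}\colon Z/(W+K_{n+1})\to Z/K_{n+1}$ is a cross-section, then the composition $Z/(W+K_{n+1})\xrightarrow{\s_{n+1}} Z/K_{n+1}\to Z/K_n$ need not factor through $Z/(W+K_n)$; it is not constant on the fibres of $Z/(W+K_{n+1})\to Z/(W+K_n)$ unless $\s_{n+1}$ was chosen with that extra constraint. (Take $Z=\bbT^2$, $W=\bbT\times 0$, $K_1=0\times\bbT$, $K_2=0$: any cross-section $\s_2(t)=(f(t),t)$ of $\bbT^2\to\bbT$ projects to $t\mapsto f(t)$ on the first factor, which need not be constant.) So independently chosen $\s_n$ do not form an inverse system, and there is no limit map $\s\colon Z/W\to Z$ to speak of.

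The paper fixes this with an explicit adjustment lemma (Lemma~\ref{lem:consistent-fund-lift}): given a cross-section $\tau\colon Z/(U+W)\to Z/U$ at a coarser level, one first chooses an arbitrary step-affine cross-section $\s_1\colon Z/W\to Z$ by the already-proved special case, then corrects it by subtracting a step-affine lift of the discrepancy $q_1\circ\s_1-\tau\circ q_2$ (which lands in $(W+U)/U\cong W/(W\cap U)$) through another application of the special case. Iterating this yields a compatible tower $\s_i\colon Z/W_i\to Z/U_i$ whose limit is the desired $\s$. Your proposal identifies the right shape of argument but is missing precisely this correction step; once you supply it, the rest of your outline goes through.
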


We will first prove this in a special case, and then use that case to prove the result in general.

\begin{proof}[Proof in special case]
Assume that $Z/W$ is finite-dimensional (that is, topologically isomorphic to a subgroup of a torus).  We will find a step-affine cross-section $\s:Z/W\to Z$ under this assumption.  The structure theory for compact Abelian groups (see~\cite[Theorem 9.5]{HewRos79}) gives a decreasing sequence of closed subgroups $U_1 \geq U_2 \geq \cdots$ in $Z$ whose intersection is $\{0\}$, and such that each $Z/U_i$ is finite-dimensional.  Intersecting each of them with $W$ if necessary, we may assume that $U_0 := W \geq U_1$.  Let $q_i:Z\to Z/U_i$ be the quotient homomorphism, and similarly $q_{ij}:Z/U_i\to Z/U_j$ whenever $i \geq j$.  By Lemma~\ref{lem:fund-lift}, there are step-affine cross-sections
\[Z/W \stackrel{\s_1}{\to} Z/U_1 \stackrel{\s_2}{\to} Z/U_2 \stackrel{\s_3}{\to} \cdots\]
of $q_{21}$, $q_{32}$, \ldots.  It is now routine to check that the partial compositions
\[\s_{1,j}:= \s_j\circ \s_{j-1}\circ \cdots\circ \s_1:Z/W\to Z/U_j\]
converge uniformly.  Their limit $\s:Z/W\to Z$ is a step-affine cross-section, because any $\chi \in \A(Z)$ is a lift of some $\chi' \in \A(Z/U_i)$, hence
\[\chi\circ \s = \chi'\circ \s_{1,i},\]
and $\s_{1,i}$ is step-affine by construction.
\end{proof}

\begin{lem}\label{lem:consistent-fund-lift}
Suppose that $Z$ is a compact Abelian group and that $U,W \leq Z$ are closed subgroups such that both $Z/U$ and $Z/W$ are finite-dimensional.  Suppose also that $\tau:Z/(U+W) \to Z/U$ is a step-affine cross-section for the relevant quotient.  Then there is a step-affine cross-section $\s:Z/W\to Z$ which makes the following diagram commute:
\begin{center}
$\phantom{i}$\xymatrix{
Z \ar@{->>}^-{q_1}[r] & Z/U\\
Z/W \ar@{->>}^-{q_2}[r]\ar^\s[u] & Z/(U+W) \ar_\tau[u],
}
\end{center}
where $q_1$ and $q_2$ are the relevant quotient maps.
\end{lem}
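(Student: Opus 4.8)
\emph{Proof proposal.}\quad The plan is to lift $\tau$ in two stages: first across the intermediate quotient $Z\to Z/(U\cap W)$, where the combinatorics of a fibred product does the real work, and then from $Z/(U\cap W)$ up to $Z$ by invoking the finite-dimensional case of Proposition~\ref{prop:fund-lift} that has already been proved.

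Set $V:=U\cap W$ and write $r:Z\to Z/V$ and $q_W:Z\to Z/W$ for the quotient homomorphisms. Since $V\leq U$ and $V\leq W$, there are induced quotients $\bar r_U:Z/V\to Z/U$ and $\bar r_W:Z/V\to Z/W$ with $q_1=\bar r_U\circ r$ and $q_W=\bar r_W\circ r$. The combined map $(\bar r_U,\bar r_W):Z/V\to(Z/U)\times(Z/W)$ is an injective continuous homomorphism, since its kernel is $(U/V)\cap(W/V)=(U\cap W)/V=0$; and the standard argument — lift a pair to $Z$ and correct it by an element of $U+W$ — shows its image is exactly the closed subgroup
\[F:=\{(a,b)\in(Z/U)\times(Z/W)\ :\ p(a)=q_2(b)\},\]
where $p:Z/U\to Z/(U+W)$ is the quotient of which $\tau$ is a cross-section. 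Thus $(\bar r_U,\bar r_W):Z/V\to F$ is an isomorphism of compact groups; and since $Z/U$ and $Z/W$ are finite-dimensional, so is $(Z/U)\times(Z/W)$, hence so is its closed subgroup $F\cong Z/V$.

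Next I would put $\phi:=\tau\circ q_2:Z/W\to Z/U$, which is step-affine by Lemma~\ref{lem:step-aff-compos} and satisfies $p\circ\phi=p\circ\tau\circ q_2=q_2$. Consequently $(\phi,\id_{Z/W})$ maps $Z/W$ into $F$, and composing with the inverse isomorphism $F\to Z/V$ yields a map $\psi:Z/W\to Z/V$ determined by $\bar r_W\circ\psi=\id_{Z/W}$ and $\bar r_U\circ\psi=\phi$. The crucial point is that $\psi$ is step-affine, and I would prove this by checking that $\chi\circ\psi$ is step-affine for every character $\chi$ of $Z/V$ and appealing to Lemma~\ref{lem:step-aff-obvious}. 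The pullback $\chi\circ r$ is a character of $Z$ trivial on $V=U\cap W$, so by the duality identity $(U\cap W)^\perp=U^\perp+W^\perp$ in the discrete group $\hat Z$ (the annihilator of an intersection is the sum of the annihilators) one may write $\chi\circ r=\bar\alpha\circ q_1+\bar\beta\circ q_W$ with $\bar\alpha\in\hat{Z/U}$ and $\bar\beta\in\hat{Z/W}$. Evaluating on an $r$-preimage of $\psi(\bar z)$ and using the two defining relations for $\psi$ gives $\chi(\psi(\bar z))=\bar\alpha(\phi(\bar z))+\bar\beta(\bar z)$, i.e.\ $\chi\circ\psi=\bar\alpha\circ\phi+\bar\beta$, which is step-affine since $\bar\alpha\circ\phi$ is (by step-affineness of $\phi$) and $\bar\beta$ is affine (Lemma~\ref{lem:step-aff-obvious}).

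Finally, since $Z/V$ is finite-dimensional, the case of Proposition~\ref{prop:fund-lift} already established furnishes a step-affine cross-section $\rho:Z/V\to Z$ of $r$. I would then set $\s:=\rho\circ\psi$, step-affine by Lemma~\ref{lem:step-aff-compos}. Using $r\circ\rho=\id$ and the relations $\bar r_W\circ\psi=\id$, $\bar r_U\circ\psi=\phi$ one checks $q_W\circ\s=\bar r_W\circ r\circ\rho\circ\psi=\bar r_W\circ\psi=\id_{Z/W}$, so $\s$ is a cross-section, and $q_1\circ\s=\bar r_U\circ r\circ\rho\circ\psi=\bar r_U\circ\psi=\phi=\tau\circ q_2$, which is the required commutativity. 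I expect the one genuinely non-formal step to be the step-affineness of $\psi$, which rests entirely on the annihilator identity $(U\cap W)^\perp=U^\perp+W^\perp$; the rest is diagram chasing together with the finite-dimensional case of the Proposition.
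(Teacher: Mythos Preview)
Your proof is correct but takes a genuinely different route from the paper's. The paper argues by \emph{correction}: it picks any step-affine cross-section $\s_1:Z/W\to Z$ from the already-established special case, observes that the discrepancy $q_1\circ\s_1-\tau\circ q_2$ lands in the subgroup $(W+U)/U\cong W/(W\cap U)$, takes a step-affine cross-section $\gamma:W/(W\cap U)\to W$ (again by the special case, since this quotient sits inside the finite-dimensional $Z/U$), and then sets $\s:=\s_1-\gamma\circ(q_1\circ\s_1-\tau\circ q_2)$. This is a three-line argument once one sees where the error term lives. Your approach instead realizes $Z/(U\cap W)$ as the fibred product of $Z/U$ and $Z/W$ over $Z/(U+W)$, builds the intermediate lift $\psi$ directly inside that fibred product, and verifies step-affineness of $\psi$ character-by-character via the duality identity $(U\cap W)^\perp=U^\perp+W^\perp$. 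This is more structural and makes the role of Pontryagin duality explicit, at the cost of being longer; the paper's correction trick avoids the fibred-product machinery entirely but invokes the special case of Proposition~\ref{prop:fund-lift} twice rather than once. Both are sound; the paper's is quicker to write down, while yours arguably explains more clearly \emph{why} the lift exists.
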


\begin{proof}
Let $\s_1:Z/W \to Z$ be any step-affine cross-section as given by the special case of Proposition~\ref{prop:fund-lift} proved above.  Then $q_1\circ \s_1 - \tau\circ q_2$ must take values in the subgroup $(W+U)/U \leq Z/U$, which is canonically isomorphic to $W/(W\cap U)$.  Let $\g:(W+U)/U \cong W/(W\cap U)\to W$ be a step-affine cross-section, again by the special case of Proposition~\ref{prop:fund-lift} proved above.  Now
\[\s := \s_1 - \g\circ (q_1\circ \s_1 - \tau\circ q_2)\]
is a step-affine cross-section $Z/W\to Z$ which gives the desired commutativity.
\end{proof}

\begin{proof}[Proof of Proposition~\ref{prop:fund-lift}]
Now consider general $Z$ and $W$.  Let $U_1 \geq U_2 \geq \cdots $ be as in the proof of the special case, and let $W_i := W+U_i$ for each $i$.  This is a decreasing sequence of closed subgroups of $Z$ whose intersection is $W$, and such that $Z/W_i$ is finite-dimensional for each $i$.

By recursively applying Lemma~\ref{lem:consistent-fund-lift}, we can choose step-affine cross-sections $\s_i:Z/W_i\to Z/U_i$ such that the following diagram commutes:
\begin{center}
$\phantom{i}$\xymatrix{
Z \ar@{->>}[r] & \cdots \ar@{->>}[r] & Z/U_2 \ar@{->>}[r] & Z/U_1 \\
Z/W \ar@{->>}[r] & \cdots \ar@{->>}[r] & Z/W_2 \ar@{->>}[r]\ar_{\s_2}[u] & Z/W_1. \ar_{\s_1}[u]
}
\end{center}

Letting $q_i:Z/W\to Z/W_i$ be the quotient maps, it now follows as in the proof of the special case that the maps $\s_i\circ q_i:Z/W\to Z/U_i$ converge uniformly to some cross-section $\s:Z/W\to Z$, and that $\s$ is still step-affine.

Finally, we obtain a step-affine $W$-equivariant map $\xi:Z\to W$ by setting
\[\xi(z) := z - \s(q(z)).\]
If $\s$ is step-affine, then so is this, by Lemmas~\ref{lem:step-aff-compos} and~\ref{lem:step-aff-obvious}.
\end{proof}

\begin{cor}\label{cor:fund-split}
Whenever $W\leq Z$ is an inclusion of compact Abelian groups, there is a $W$-equivariant bijection
\[\psi:(Z/W)\times W\to Z\]
such that both $\psi$ and $\psi^{-1}$ are step-affine.
\end{cor}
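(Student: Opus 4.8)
The plan is to assemble $\psi$ directly from the two step-affine maps produced in Proposition~\ref{prop:fund-lift}. Write $q:Z\to Z/W$ for the quotient, let $\s:Z/W\to Z$ be the step-affine cross-section, and let $\xi:Z\to W$ be the step-affine $W$-equivariant map $\xi(z)=z-\s(q(z))$ supplied by that proposition. Then define
\[\psi:(Z/W)\times W\to Z,\qquad \psi(v,w):=\s(v)+w,\]
and
\[\psi^{-1}:Z\to (Z/W)\times W,\qquad z\mapsto\big(q(z),\,\xi(z)\big).\]

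First I would check that these are mutually inverse. Since $q(\s(v)+w)=q(\s(v))=v$ (as $\s$ is a cross-section and $w\in W$), we get $\xi(\s(v)+w)=\s(v)+w-\s(q(\s(v)+w))=\s(v)+w-\s(v)=w$, so $\psi^{-1}\circ\psi=\id$; conversely $\psi(q(z),\xi(z))=\s(q(z))+\xi(z)=\s(q(z))+z-\s(q(z))=z$. $W$-equivariance of $\psi$ is immediate from the formula, since the $W$-action on the domain is translation in the second coordinate; equivalently, $W$-equivariance of $\psi^{-1}$ follows because $q$ is a homomorphism (hence $q(z+w)=q(z)$ for $w\in W$, so the first coordinate is $W$-invariant as required) and $\xi$ is $W$-equivariant by construction.

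It remains to verify that both $\psi$ and $\psi^{-1}$ are step-affine as maps of compact Abelian groups. For $\psi^{-1}$: the coordinate projection $Z\to Z/W$ is the affine (hence step-affine) map $q$, and the second coordinate is $\xi$, which is step-affine by Proposition~\ref{prop:fund-lift}; a map into a product of compact Abelian groups is step-affine iff each coordinate is (one checks this against $\A((Z/W)\times W)$, which is generated by the characters pulled back from the two factors, using the last-but-two bullet of Lemma~\ref{lem:step-aff-obvious}). For $\psi$: by the same bullet it suffices to precompose with an arbitrary $\chi\in\A(Z)$, and $\chi\circ\psi(v,w)=\chi(\s(v))+\chi(w)$ is the sum of $\chi\circ\s\circ(\text{first projection})$ and $\chi\uhr W\circ(\text{second projection})$; the first projection and second projection are affine maps out of $(Z/W)\times W$, $\chi\circ\s$ is step-affine by Lemma~\ref{lem:step-aff-compos}, $\chi\uhr W$ is affine, and so by Lemma~\ref{lem:step-aff-compos} and the closure of step-affine functions under sums (Lemma~\ref{lem:step-aff-obvious}) the composite $\chi\circ\psi$ is step-affine.

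There is no real obstacle here: the work was already done in Proposition~\ref{prop:fund-lift}, and this corollary is just the observation that a cross-section together with a complementary equivariant retraction packages into an equivariant ``change of coordinates''. The one point that deserves a line of care is the ``step-affine into a product'' principle and its dual for ``step-affine out of a product'' — both reduce to testing against a generating set of characters via Lemma~\ref{lem:step-aff-obvious} and to the fact that the structure projections $(Z/W)\times W\to Z/W$ and $(Z/W)\times W\to W$ are affine — so this is the only spot where I would spell out the reduction rather than cite a single earlier lemma verbatim.
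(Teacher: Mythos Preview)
Your proof is correct and follows exactly the same approach as the paper: the paper simply writes $\psi(\ol{z},w)=\s(\ol{z})+w$ and leaves all the verifications implicit, whereas you have spelled out the inverse, the equivariance, and the step-affine checks in detail. The only minor sloppiness is that when you write $\chi\circ\psi(v,w)=\chi(\s(v))+\chi(w)$ for $\chi\in\A(Z)$ this uses additivity, which holds for characters rather than general affine maps; but since you invoke the generating-set reduction in Lemma~\ref{lem:step-aff-obvious} anyway, testing against $\chi\in\hat{Z}$ suffices and the identity is then exact.
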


\begin{proof}
Letting $\s:Z/W\to Z$ be a step-affine cross-section, as produced by Proposition~\ref{prop:fund-lift}, a suitable bijection is given by
\[\psi(\ol{z},w) = \s(\ol{z}) + w \quad \hbox{for}\ \ol{z} \in Z/W,\ w \in W.\]
\end{proof}

\begin{ex}
For the two-fold covering homomorphism
\[\bbT\stackrel{\times 2}{\to} \bbT,\]
one has the obvious setp-affine selector
\[\theta \mapsto e(\{\theta\}/2):\bbT \to e([0,1/2)) \subset \bbT,\]
where $e:\bbR\to\bbT$ is the usual quotient homomorphism. \fin
\end{ex}

\begin{ex}
The restriction map arising from the inclusion $\bbZ\subset \bbQ$ defines a homomorphism
\[\hat{\bbQ}\to \hat{\bbZ}\cong \bbT,\]
where $\bbQ$ is given its discrete topology so that $\hat{\bbQ}$ is compact ($\hat{\bbQ}$ is sometimes called the \textbf{solenoid}). In this case one has also a `natural' selector, which to $\theta \in \bbT$ assigns the element of the solenoid defined by
\[\phi_\theta(p/q) := e(p\{\theta\}/q)\]
where $e$ is as in the previous example.  By Pontryagin duality, any $\chi \in \hat{\hat{\bbQ}}$ takes the form of evaluation at some $p/q \in \bbQ$, so the above formula shows that the composition $\theta\mapsto \phi_\theta\mapsto \chi(\phi_\theta)$ is step-affine.

Topologically, $\hat{\bbQ}$ is a bundle of copies of the Cantor set over $\rm{S}^1$, and the above selector embeds $[0,1)$ into this as a cross-section of the projection from the total space onto $\rm{S}^1$.  \fin
\end{ex}

\subsection{Step polynomials}\label{subs:step-poly}

Again let $Z$ be a compact metrizable Abelian group and $A$ an Abelian Lie group.  We next introduce step polynomials, which form a natural generalization of step affine maps.

\begin{dfn}\label{dfn:Euc-step-poly}
If $Q \subseteq \bbR^d$ is a convex polytope, then a function $f:Q\to \bbR^r$ is a \textbf{step polynomial} if there are a partition $\frP$ of $Q$ into convex sub-polytopes and, for each $C \in \frP$, a polynomial $p_C:\bbR^d\to \bbR^r$ such that $f|C = p_C|C$.  Such a choice of $\frP$ will be said to \textbf{control} $f$.

A step polynomial $f:Q\to \bbR^r$ is \textbf{basic} if $f = g\cdot 1_R$ for some polynomial $g:\bbR^d\to \bbR^r$ and convex sub-polytope $R \subseteq Q$.
\end{dfn}

\begin{dfn}[Step polynomial]\label{dfn:step-poly}
If $Z$ is a compact Abelian group and $A$ is an Abelian Lie group, then a map $f:Z\to A$ is a \textbf{step polynomial} if it is a composition
\[Z\stackrel{\{\chi\}}{\to} [0,1)^d \stackrel{f_0}{\to} \t{A} \stackrel{\psi}{\to} A,\]
where
\begin{itemize}
\item $\chi:Z\to \bbT^d$ is affine,
\item $\t{A}$ is a closed subgroup of $\bbR^r$ for some $r$,
\item $f_0:[0,1)^d\to \bbR^r$ is a step polynomial with image contained in $\t{A}$,
\item and $\psi:\t{A}\to A$ is a continuous homomorphism.
\end{itemize}

The step polynomial $f$ is \textbf{basic} if $f_0$ may be taken to be basic in the above definition.

The set of Haar-a.e. equivalence classes in $\F(Z,A)$ that contain step polynomials $Z\to A$ is denoted by $\F_\sp(Z,A)$.
\end{dfn}

Clearly any step polynomial decomposes as a finite sum of basic step polynomials.  Also, if $Z$ is an Abelian Lie group with identity component $Z_0 \leq Z$ and $f:Z\to \bbR^d$ is a step polynomial, then an easy exercise shows that $f\cdot 1_{z + Z_0}$ is also a step polynomial for every coset $z + Z_0$.  Combining these facts, if $Z$ is Lie then we may always decompose $f$ into basic step polynomials supported on single identity-component cosets.

\begin{rmk}
Having reached this definition, it is high time we drew attention to the overlap between this section and Bergelson and Leibman's work~\cite{BerLei07}. Their interest is in the study of bounded `generalized polynomials' from $\bbZ^d$ to $\bbR$.  These comprise the smallest class which contains the linear functions and is closed under addition, multiplication, and also taking integer parts.  Generalized polynomials arise naturally in various problems from equidistribution theory and additive combinatorics (see, for instance,~\cite{GreTaoZie12}, and the many further references in~\cite{BerLei07}).  The paper~\cite{BerLei07} develops a general structure theory for them.  It proves that every bounded generalized polynomial can be obtained by sampling along an orbit of a $\bbZ^d$-action by rotations on a compact nilmanifold, where the function sampled is essentially what our terminology would call a step polynomial on that nilmanifold.

Insofar as any compact connected nilmanifold can be described as a tower of topological circle extensions, and has a natural `coordinate system' which identifies it with some cube $[0,1)^d$, the study of such functions on nilmanifolds forms a natural generalization of our work on step polynomials on compact Abelian Lie groups. The concerns of~\cite{BerLei07} are fairly disjoint from ours, and so are the results, but it seems likely that most of the work of this section could be generalized to their setting.  It might even be worth looking for an abstract category of `step-polynomial spaces', whose objects are spaces that admit a coordinate system based on finitely many Euclidean cubes, and whose morphisms are an abstract characterization of `step polynomial mappings' between such spaces.  This would also bear comparison with the abstract study of `nilspaces' in~\cite{CamSze--nilspaces}. Such a theory would put us in the realm of quite general semi-algebraic geometry (see~\cite[Chapter 2]{BenedettiRis90} for a good introduction), but I do not know whether ideas from that theory could shed additional light on the kind of work that we will do below.  Our interest in {\PDE}s and zero-sum problems could also be generalized, by replacing the rotation-actions of subgroups of $Z$ with the actions of commuting nilpotent subgroups of a nilpotent Lie group $G$ on a compact nilmanifold $G/\G$ (a similar proposal was already discussed in Subsection I.10.4).  In this way, there might generalizations of Theorems A and B to that setting.  \fin
\end{rmk}

\begin{lem}
A sum of two $A$-valued step polynomials is a step polynomial, so $\F_\sp(Z,A)$ is a subgroup of $\F(Z,A)$.
\end{lem}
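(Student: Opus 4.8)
The plan is to reduce to the Euclidean statement and handle that combinatorially. First I would take two step polynomials $f, g : Z \to A$. By Definition~\ref{dfn:step-poly}, $f = \psi \circ f_0 \circ \{\chi\}$ and $g = \psi' \circ g_0 \circ \{\chi'\}$ with $\chi : Z \to \bbT^d$, $\chi' : Z \to \bbT^{d'}$ affine, $f_0 : [0,1)^d \to \t A \leq \bbR^r$, $g_0 : [0,1)^{d'} \to \t A' \leq \bbR^{r'}$ Euclidean step polynomials, and $\psi, \psi'$ continuous homomorphisms. The first step is to put both on a common affine coordinate system: form $(\chi,\chi') : Z \to \bbT^{d+d'}$, which is affine, and observe via Lemma~\ref{lem:little-step-aff-coords} that there are step-affine maps $[0,1)^{d+d'} \to [0,1)^d$ and $[0,1)^{d+d'} \to [0,1)^{d'}$ making the obvious diagrams with $\{\cdot\}$ commute. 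Precomposing $f_0$ and $g_0$ with these step-affine maps and invoking the fact that a step polynomial composed with a step-affine map is a step polynomial (this is the Euclidean lemma one needs — see below), we may assume $d = d'$ and $\chi = \chi'$.

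Next I would combine the target side. On the common domain $[0,1)^d$ we now have $f_0 : [0,1)^d \to \t A \leq \bbR^r$ and $g_0 : [0,1)^d \to \t A' \leq \bbR^{r'}$, both Euclidean step polynomials. Form $(f_0, g_0) : [0,1)^d \to \bbR^{r+r'}$; by the Euclidean analog of Lemma~\ref{lem:combine-Euc-step-aff} for step polynomials (common refinement of two polytopal partitions is polytopal, and the pair of the controlling polynomials is a polynomial), this is a Euclidean step polynomial, with image in the closed subgroup $\t A \times \t A' \leq \bbR^{r+r'}$. Finally, set $\psi'' : \t A \times \t A' \to A$ to be the continuous homomorphism $(a, a') \mapsto \psi(a) + \psi'(a')$. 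Then $f + g = \psi'' \circ (f_0, g_0) \circ \{\chi\}$, which exhibits $f + g$ as a step polynomial per Definition~\ref{dfn:step-poly}. Scalar multiples (negation, in the $\bbT$ case the relevant statement) are immediate since $-\psi$ is also a continuous homomorphism, so $\F_\sp(Z,A)$ is a subgroup of $\F(Z,A)$.

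The main obstacle is the Euclidean input that a step polynomial precomposed with a step-affine map is again a step polynomial: given $p : Q \to \bbR^r$ controlled by a polytopal partition $\frP$, and $h : Q' \to Q$ step-affine controlled by a polytopal partition $\frQ'$, one needs that $h^{-1}(\frP) \vee \frQ'$ is polytopal and that $p \circ h$ restricted to each of its cells is a polynomial. The first point follows because on each cell $C' \in \frQ'$ the map $h$ is the restriction of an affine map $\ell_{C'}$, so $C' \cap h^{-1}(C)$ is the intersection of $C'$ with the preimage of the polytope $C$ under an affine map, hence a polytope; the second because $p|_C$ is a polynomial $p_C$, and $p_C \circ \ell_{C'}$ is again a polynomial. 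This is the step-polynomial analog of Corollary~\ref{cor:Euc-compos-step} combined with Lemma~\ref{lem:Euc-qp-preim}, and it is entirely routine, but it is the one genuinely new verification needed beyond bookkeeping; everything else is transport through commuting diagrams of the kind already set up in this subsection.
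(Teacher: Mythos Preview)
Your proof is correct and follows essentially the same architecture as the paper's: form the product affine map $(\chi,\chi'):Z\to\bbT^{d+d'}$, pull both Euclidean step polynomials back to $[0,1)^{d+d'}$, pair them into $\t A\oplus\t A'$, and push forward by the sum homomorphism $\psi+\psi'$.

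The one difference is that you route the pullback through Lemma~\ref{lem:little-step-aff-coords} and then invoke a general ``step polynomial composed with step-affine is step polynomial'' lemma, which you identify as the main obstacle. The paper sidesteps this entirely: since $\{(\chi,\chi')\}=(\{\chi\},\{\chi'\})$ coordinate-wise, the maps $[0,1)^{d+d'}\to[0,1)^d$ and $[0,1)^{d+d'}\to[0,1)^{d'}$ that you need are just the coordinate projections $\pi_1,\pi_2$, which are genuinely affine (indeed linear). A polynomial precomposed with an affine map is trivially a polynomial, and a polytopal partition pulled back by an affine map is trivially polytopal, so no auxiliary Euclidean lemma is required. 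Your extra lemma is true and your sketch of it is fine, but it is not needed for this particular result.
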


\begin{proof}
Let
\[Z\stackrel{\{\chi_i\}}{\to} [0,1)^{d_i} \stackrel{f_i}{\to} \t{A}_i \stackrel{\psi_i}{\to} A\]
for $i=1,2$ be factorizations of our two step polynomials as given by Definition~\ref{dfn:step-poly}.  Then their sum factorizes as
\[Z \ \ \stackrel{\{(\chi_1,\chi_2)\}}{\to}\ \ [0,1)^{d_1 + d_2} \ \ \stackrel{(f_1\pi_1,f_2\pi_2)}{\to} \ \ \t{A}_1\oplus \t{A}_2 \ \ \stackrel{\psi_1q_1 + \psi_2q_2}{\to} \ \ A,\]
where $\pi_i:[0,1)^{d_1 + d_2}\to [0,1)^{d_i}$ is the projection onto the first (resp. last) $d_i$ coordinates for $i=1$ (resp. $i=2$), and also $q_i:\t{A}_1\oplus \t{A}_2\to \t{A}_i$ are the coordinate projections.  This is clearly a factorization into ingredients of the required kind.
\end{proof}

\begin{lem}
If $f:Z\to A$ is a step polynomial, then there is a QP partition of $Z$ such that $f|C$ extends to a uniformly continuous function on $\ol{C}$ for every $C \in \frP$.
\end{lem}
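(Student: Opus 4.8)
The plan is to peel the ingredients of Definition~\ref{dfn:step-poly} off one at a time, reducing the statement to a fact about the fractional-part map $\{\cdot\}\colon\bbT^d\to[0,1)^d$, which is the only source of discontinuity. First I would write $f=\psi\circ f_0\circ\{\chi\}=\psi\circ g\circ\chi$, where $\chi\colon Z\to\bbT^d$ is affine, $g:=f_0\circ\{\cdot\}\colon\bbT^d\to\bbR^r$, the map $f_0\colon[0,1)^d\to\bbR^r$ is a Euclidean step polynomial with image in the closed subgroup $\t A\le\bbR^r$, and $\psi\colon\t A\to A$ is a continuous homomorphism. I claim it is enough to produce a QP partition $\frP'$ of $\bbT^d$ such that $g|_{C'}$ extends to a continuous map on the closure $\ol{C'}$ (taken in $\bbT^d$) for every cell $C'\in\frP'$. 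Granting this, set $\frP:=\chi^{-1}(\frP')$, which is QP by Lemma~\ref{lem:qp-obvious}; for a cell $C=\chi^{-1}(C')$, continuity of $\chi$ gives $\chi(\ol C)\subseteq\ol{C'}$, while the continuous extension $\t g$ of $g|_{C'}$ takes values in $\ol{g(C')}\subseteq\ol{\t A}=\t A$, so $\psi\circ\t g\circ\chi$ is a continuous map $\ol C\to A$ agreeing with $f$ on $C$; it is automatically uniformly continuous since $\ol C$ is compact and $A$ is metrizable. This disposes of both $\chi$ and $\psi$.

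For the remaining task I would argue as follows. Let $\frP_0$ be a partition of $[0,1)^d$ into convex sub-polytopes that controls $f_0$, with $f_0|_C=p_C|_C$ for polynomials $p_C$, and refine it to $\frP_1:=\frP_0\vee\frR$, where $\frR$ cuts $[0,1)^d$ by the hyperplanes $\{x_i=1/2\}$, $i=1,\dots,d$. Then $\frP_1$ is again a convex-polytopal partition and still controls $f_0$, and $\frP':=\{\cdot\}^{-1}(\frP_1)$ is QP by Definition~\ref{dfn:qp} (with the affine isomorphism there taken to be the identity of $\bbT^d$). Writing $\pi\colon\bbR^d\to\bbT^d$ for the covering map, so that $\{\cdot\}^{-1}=\pi|_{[0,1)^d}$, and fixing $C\in\frP_1$ with $D:=\pi(C)=\{\cdot\}^{-1}(C)\in\frP'$, the key point is that $\pi|_{\ol C}$ is injective, where the closure is taken in $\bbR^d$ (so $\ol C\subseteq[0,1]^d$): two distinct points of $\ol C$ with the same image under $\pi$ would differ by $\pm1$ in some coordinate $i$, forcing one to have $i$-th coordinate $0$ and the other $1$, which the cut $\{x_i=1/2\}$ forbids. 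Hence $\pi|_{\ol C}\colon\ol C\to\pi(\ol C)=\ol D$ is a continuous bijection from a compact space onto a Hausdorff space, so a homeomorphism; let $h$ be its inverse. For $t\in D$ the point $h(t)$ is the unique $\pi$-preimage of $t$ in $\ol C$, which in fact lies in $C\subseteq[0,1)^d$, so $h(t)=\{t\}$; therefore $p_C\circ h\colon\ol D\to\bbR^r$ is continuous and restricts on $D$ to $p_C\circ\{\cdot\}=f_0\circ\{\cdot\}=g$, giving the desired continuous --- hence, $\ol D$ being compact, uniformly continuous --- extension of $g|_D$. Ranging over $C$ finishes the construction of $\frP'$, and the reduction then finishes the proof.

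The only genuine obstacle is the discontinuity of $\{\cdot\}$ along the facets of the fundamental domain $[0,1)^d$; every other step is routine bookkeeping. The preliminary refinement by the hyperplanes $\{x_i=1/2\}$ is exactly what makes each cell-closure map homeomorphically under the covering map, so that a polynomial on the cell transports to a continuous function on the closure of the cell's image in $\bbT^d$. Along the way one should check the harmless points that a common refinement of convex-polytopal partitions is convex-polytopal, that refining a controlling partition preserves control, and that $\pi(\ol C)=\ol D$ because $\pi$ is continuous with compact domain $\ol C$; none of these needs more than a sentence.
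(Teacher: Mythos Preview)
Your proof is correct and follows essentially the same strategy as the paper's: refine the controlling partition of $[0,1)^d$ so that the inverse of the fractional-part map restricts to a homeomorphism on the closure of each cell, and pull back. The paper's proof is terse (it simply asserts the existence of such a refinement $\frQ$ and takes $\frP:=\{\chi\}^{-1}(\frQ)$), whereas you supply the concrete refinement by the hyperplanes $\{x_i=1/2\}$ and verify the homeomorphism claim explicitly, and you also spell out the reductions through $\chi$ and $\psi$ that the paper leaves to the reader.
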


\begin{proof}
Let $f = \psi\circ f_0\circ \{\chi\}$ be a factorization as in Definition~\ref{dfn:step-poly}, and let $\frQ_1$ be the convex polytopal partition of $[0,1)^d$ associated to $f_0$ as in Definition~\ref{dfn:Euc-step-poly}. Let $\frQ$ be a further convex polytopal refinement of $\frQ_1$ so that the map $\{\chi\}^{-1}$ restricts to a homeomorphism on each cell of $\frQ$. Then $\frP := \{\chi\}^{-1}(\frQ)$ is the desired QP partition of $Z$.
\end{proof}

\begin{cor}\label{cor:disc-step-poly-step-fn}
If $A$ is discrete, then a step polynomial $f:Z\to A$ is a step function.
\end{cor}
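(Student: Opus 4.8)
The plan is to exhibit a QP partition of $Z$ on each cell of which $f$ is \emph{constant}; then the level-set partition of $f$, being a coarsening of a QP partition, is QP by the second part of Lemma~\ref{lem:qp-obvious}, which is exactly the assertion that $f$ is a step function.

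Start from a factorization $f = \psi\circ f_0\circ\{\chi\}$ as in Definition~\ref{dfn:step-poly}: $\chi:Z\to\bbT^d$ is affine, $f_0:[0,1)^d\to\bbR^r$ is a Euclidean step polynomial with $f_0([0,1)^d)\subseteq\t A$ for a closed subgroup $\t A\le\bbR^r$, and $\psi:\t A\to A$ is a continuous homomorphism. The key structural observation is that, because $A$ is discrete, $\psi$ must kill the Euclidean part of $\t A$. Precisely: $\ker\psi=\psi^{-1}\{0\}$ is open in $\t A$; since $\t A$ is a closed subgroup of $\bbR^r$, its identity component $V$ is a linear subspace of $\bbR^r$, it is open in $\t A$ (so $\t A/V$ sits as a discrete subgroup of the quotient vector space $\bbR^r/V$), and $V\subseteq\ker\psi$. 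Letting $\pi:\bbR^r\to\bbR^r/V$ be the quotient linear map, the inclusion $V\subseteq\ker\psi$ means $\psi$ factors as $\psi=\bar\psi\circ(\pi|_{\t A})$ for some homomorphism $\bar\psi:\t A/V\to A$.

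Next, choose a partition $\frQ$ of $[0,1)^d$ into convex polytopes controlling $f_0$ (Definition~\ref{dfn:Euc-step-poly}), so that $f_0|C$ agrees with a polynomial $p_C:\bbR^d\to\bbR^r$ on each $C\in\frQ$. For such a $C$, the map $\pi\circ p_C:\bbR^d\to\bbR^r/V$ is polynomial, hence continuous, and on $C$ it takes values in $\pi(f_0(C))\subseteq\t A/V$, a discrete subset of $\bbR^r/V$; since $C$ is convex and therefore connected, $\pi\circ p_C$ is constant on $C$. Hence $\pi\circ f_0$, and a fortiori $\psi\circ f_0=\bar\psi\circ\pi\circ f_0$, is constant on every cell of $\frQ$, so $f=(\psi\circ f_0)\circ\{\chi\}$ is constant on every cell of the partition $\{\chi\}^{-1}(\frQ)$ of $Z$. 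Finally, $\{\chi\}^{-1}(\frQ)$ is QP: the partition $\{\cdot\}^{-1}(\frQ)$ of $\bbT^d$ is QP directly from Definition~\ref{dfn:qp}, and its preimage under the affine map $\chi$ is QP by the first part of Lemma~\ref{lem:qp-obvious}. As noted above, this finishes the proof.

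The one step that carries the argument is the claim that each polynomial piece $\pi\circ p_C$ is constant; this rests on $\pi(\t A)=\t A/V$ being discrete in $\bbR^r/V$ — i.e. on the structure of closed subgroups of $\bbR^r$ — together with the connectedness of the cells of a controlling polytopal partition, and everything else is bookkeeping with the quotients and with Lemma~\ref{lem:qp-obvious}. One could instead deduce the corollary from the preceding lemma: after the harmless reduction to $Z=\bbT^d$ via $f=g\circ\chi$ with $g$ a step polynomial on the torus, the cells of the QP partition it produces are homeomorphic images of convex polytopes, hence have connected closure, on which the continuous extension of $f$ must be constant because $A$ is discrete; but the direct argument above avoids invoking it.
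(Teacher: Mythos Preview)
Your proof is correct and, although it reaches the same constancy-on-cells conclusion, it gets there by a different mechanism than the paper. The paper's proof is the one-liner ``combine the preceding lemma with Corollary~\ref{cor:conn-cpts-still-qp}'': first obtain a QP partition on whose cells $f$ extends continuously to the closure, then pass to connected components (still QP, by that corollary, after an implicit reduction to a Lie quotient), and finally use that a continuous map from a connected set to a discrete group is constant. You bypass both ingredients by going straight to the factorization $f=\psi\circ f_0\circ\{\chi\}$ and exploiting the structure of the closed subgroup $\t A\le\bbR^r$: since $A$ is discrete, $\psi$ must kill the identity component $V$ of $\t A$, so you can replace $\psi$ by a map from the discrete group $\t A/V$; then the polynomial pieces of $\pi\circ f_0$ land in this discrete set and are therefore constant on the (convex, hence connected) cells of the controlling partition. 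Your route is more self-contained (it needs neither the preceding lemma nor the connected-components corollary, only the elementary structure of closed subgroups of $\bbR^r$), while the paper's route is shorter on the page because those two lemmas have already been set up. You even sketch the paper's argument in your final paragraph, so you are clearly aware of both.
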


\begin{proof}
This follows by combining the preceding lemma and Corollary~\ref{cor:conn-cpts-still-qp}.
\end{proof}

\begin{lem}\label{lem:stepaffsteppoly}
If $\xi:Z\to Y$ is step-affine and $f:Y\to A$ is a step polynomial, then $f\circ\xi$ is a step polynomial.
\end{lem}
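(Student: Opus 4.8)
The plan is to unfold Definition~\ref{dfn:step-poly} for $f$ and push the whole question down into Euclidean space. Write $f = \psi\circ f_0\circ\{\chi\}$ with $\chi:Y\to\bbT^d$ affine, $\t{A}\leq\bbR^r$ a closed subgroup, $f_0:[0,1)^d\to\bbR^r$ a Euclidean step polynomial whose image lies in $\t{A}$, and $\psi:\t{A}\to A$ a continuous homomorphism. Then
\[f\circ\xi = \psi\circ f_0\circ(\{\chi\}\circ\xi),\]
and since $\t{A}$ and $\psi$ already have the form required by Definition~\ref{dfn:step-poly}, it suffices to produce an affine map $\rho:Z\to\bbT^D$ and a Euclidean \emph{step polynomial} $g_1:[0,1)^D\to\bbR^r$ with image in $\t{A}$ such that $f_0\circ(\{\chi\}\circ\xi) = g_1\circ\{\rho\}$; then $f\circ\xi = \psi\circ g_1\circ\{\rho\}$ will be a step polynomial.

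First I would re-coordinatize the inner map $\{\chi\}\circ\xi:Z\to[0,1)^d$. Since $\chi$ is affine it is step-affine, so $\chi\circ\xi:Z\to\bbT^d$ is step-affine by Lemma~\ref{lem:step-aff-compos}; composing further with each coordinate character $\bbT^d\to\bbT$ (also affine) shows each coordinate $(\chi\circ\xi)_j:Z\to\bbT$ is step-affine, whence $\{(\chi\circ\xi)_j\}:Z\to[0,1)$ is step-affine by Lemma~\ref{lem:step-aff-obvious}. Writing $\{(\chi\circ\xi)_j\} = r_j\circ\{\rho_j\}$ as in Definition~\ref{dfn:step-aff}, with $\rho_j:Z\to\bbT^{m_j}$ affine and $r_j:[0,1)^{m_j}\to\bbR$ Euclidean step-affine, and noting that $\{(\chi\circ\xi)_j\}$ already takes values in $[0,1)$, we may replace $r_j$ by $\{r_j\}$ (still Euclidean step-affine, by the computation in Lemma~\ref{lem:little-step-aff-coords}) and so assume $r_j$ maps into $[0,1)$. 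Setting $\rho := (\rho_1,\dots,\rho_d):Z\to\bbT^D$ with $D = m_1+\cdots+m_d$, and $s := (r_1\circ\pi_1,\dots,r_d\circ\pi_d):[0,1)^D\to[0,1)^d$ with $\pi_j$ the appropriate coordinate projections, and using that $\{\cdot\}$ acts coordinatewise, one obtains an affine $\rho$ and a Euclidean step-affine $s$ with image inside $[0,1)^d$ such that $\{\chi\}\circ\xi = s\circ\{\rho\}$. Forcing $s$ to land in $[0,1)^d$ matters only so that $f_0\circ s$ is defined on all of $[0,1)^D$.

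It then remains to check that $g_1 := f_0\circ s:[0,1)^D\to\bbR^r$ is a Euclidean step polynomial (its image obviously lies in that of $f_0$, hence in $\t{A}$). This is the heart of the argument, though still routine; it is the step-polynomial analogue of Corollary~\ref{cor:Euc-compos-step}. Let $\frQ$ be a convex-polytopal partition controlling $s$, with $s|C = \ell_C|C$ for affine maps $\ell_C:\bbR^D\to\bbR^d$, and let $\frP$ be one controlling $f_0$, with $f_0|P = p_P|P$ for polynomials $p_P$. Since $\ell_C(C) = s(C)\subseteq[0,1)^d$, the sets $\ell_C^{-1}(P)\cap C$ for $P\in\frP$ — each a convex polytope, being the intersection of the bounded polytope $C$ with the affine preimage of a polytope — genuinely partition $C$, and on each such cell $g_1 = p_P\circ\ell_C$ is a polynomial. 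The common refinement of these partitions over $C\in\frQ$ therefore controls $g_1$, completing the proof. (One could instead factor $\chi\circ\xi$ through a Lie-group quotient of $Z$ and invoke Lemma~\ref{lem:step-aff-coords} directly, but the coordinatewise route avoids checking that the factored map stays step-affine.)
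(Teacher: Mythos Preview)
Your proof is correct and follows essentially the same route as the paper's. The paper first reduces to $Y=\bbT^d$ (equivalent to your move of composing with $\chi$), then invokes Lemma~\ref{lem:step-aff-coords} directly to obtain an affine $\theta:Z\to\bbT^D$ and a Euclidean step-affine $\xi_0:[0,1)^D\to[0,1)^d$ with $\{\chi\}\circ\xi=\xi_0\circ\{\theta\}$, and finishes by remarking that $f_0\circ\xi_0$ is a step polynomial ``clear from the definitions''; you carry out both of those steps explicitly---reconstructing Lemma~\ref{lem:step-aff-coords} coordinate-wise (which has the side benefit of avoiding its stated Lie hypothesis on the domain) and spelling out the polytopal-partition argument showing $f_0\circ s$ is a Euclidean step polynomial.
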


\begin{proof}
Clearly every step polynomial factorizes through an affine map to a torus, so we may assume $Y = \bbT^d$.

Let $f = \psi\circ f_0\circ \{\chi\}$ as in Definition~\ref{dfn:step-poly}.  Lemma~\ref{lem:step-aff-coords} provides a commutative diagram
\begin{center}
$\phantom{i}$\xymatrix{
Z \ar_{\{\theta\}}[d] \ar^\xi[r] & \bbT^d \ar_{\{\chi\}}[d]\ar^f[drr]\\
[0,1)^D \ar_{\xi_0}[r] & [0,1)^d \ar_-{f_0}[r] & \t{A} \ar_{\psi}[r] & A,
}
\end{center}
where $\theta$ is affine and $\xi_0$ is step-affine.  The proof is completed by observing that $f_0\circ \xi_0:[0,1)^D\to \t{A}$ is a step polynomial, which is clear from the definitions.
\end{proof}

%
%

Another fact we will use repeatedly is that step polynomials can be lifted through target-module homomorphisms.

\begin{lem}\label{lem:s-p-lift}
Let $q:A\onto B$ be a continuous epimorphism of Abelian Lie groups, and $Z$ a compact Abelian group.  For any step polynomial $f:Z\to B$ there is a step polynomial $F:Z\to A$ such that $f = q\circ F$.
\end{lem}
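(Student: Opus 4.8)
The plan is to reduce to the Euclidean situation and lift there. First, unwind the definition of a step polynomial $f:Z\to B$: it factorizes as $Z\xrightarrow{\{\chi\}}[0,1)^d\xrightarrow{f_0}\t B\xrightarrow{\psi} B$, where $\chi:Z\to\bbT^d$ is affine, $\t B\le\bbR^r$ is a closed subgroup, $f_0:[0,1)^d\to\bbR^r$ is a Euclidean step polynomial with image in $\t B$, and $\psi:\t B\to B$ is a continuous homomorphism. Since $\chi$ and $\{\chi\}$ are common to the source and the target, it suffices to produce a step polynomial $F_0:[0,1)^d\to A$ with $q\circ F_0=\psi\circ f_0$, and then set $F:=F_0\circ\{\chi\}$; that $F_0\circ\{\chi\}$ is a step polynomial is immediate from Definition~\ref{dfn:step-poly}. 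So we are reduced to the following: given a Euclidean step polynomial $g:=\psi\circ f_0:[0,1)^d\to B$ (a step polynomial into the Lie group $B$, in the sense that it is $\psi$ applied to a Euclidean-valued step polynomial), lift it through $q:A\onto B$.

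Next I would use the structure of compact-by-discrete-type Lie groups only insofar as it is needed: $A$ is an Abelian Lie group, so it has an identity component $A_0$, which is a connected Abelian Lie group, hence isomorphic to $\bbT^a\times\bbR^{a'}$; and $A/A_0$ is discrete. The kernel $K:=\ker q$ is a closed subgroup of $A$, and $q$ restricts to a covering-type homomorphism of universal covers. Concretely, pick the universal covering homomorphism $\pi_A:\bbR^N\to A$ (where $N=a+a'$ and $\bbR^N$ surjects onto $A_0$, composed with the inclusion $A_0\into A$ — here I need to be slightly careful because $A$ need not be connected; more precisely I cover $A_0$ and handle the discrete quotient separately). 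Similarly let $\t B\le\bbR^r$ with $\psi:\t B\to B$. The cleanest route: the map $f_0$ already lands in $\t B\le\bbR^r$, and the linear span of $\t B$ in $\bbR^r$ is a Euclidean space on which $\psi$ is defined; I want to lift the composite through $q$. Since $f_0$ is a Euclidean step polynomial, on each cell $C$ of a controlling convex-polytopal partition $\frP$ of $[0,1)^d$ it agrees with a polynomial $p_C:\bbR^d\to\bbR^r$. It is therefore enough to lift cellwise: on each cell $C$, lift $\psi\circ p_C|_C$ through $q$ to a map $C\to A$ that is a (Euclidean) step polynomial, and then glue. Gluing is harmless because a finite union of pieces each given by a step polynomial on a convex polytope is again a step polynomial (by the remarks after Definition~\ref{dfn:step-poly}, or directly, since the common refinement of convex-polytopal partitions is convex-polytopal).

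The heart of the matter is thus the cellwise lift of a single map of the form $\psi\circ p|_C:C\to B$ with $p:\bbR^d\to\bbR^r$ a polynomial, $C$ a convex polytope, $\psi:\t B\to B$ a continuous homomorphism. Now $\psi\circ p$ is $\bbR^r$-valued composed with a homomorphism, so after projecting onto a character $\chi'\in\A(B)$ it becomes a map $C\to\bbT$ of the form $e(\lambda\circ p)$ where $\lambda:\bbR^r\to\bbR$ is linear and $e:\bbR\to\bbT$; decomposing $A$ the same way, lifting through $q$ amounts to lifting such affine-then-polynomial maps through the covering $\bbR\to\bbT$ and reassembling. Here I would invoke Lemma~\ref{lem:little-step-aff-coords}-style reasoning: the composite $C\xrightarrow{\lambda\circ p}\bbR\xrightarrow{e}\bbT$, after subdividing $C$ into finitely many convex sub-polytopes on each of which the real-valued polynomial $\lambda\circ p$ takes values in a single half-open interval $[m,m+1)$ is unnecessary since $\lambda\circ p$ is already an honest $\bbR$-valued polynomial — the point is that $e\circ(\lambda\circ p)$ is the reduction mod $1$ of the genuine real polynomial $\lambda\circ p$, which already gives a canonical lift to $\bbR$. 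More conceptually: $q:A\onto B$ induces a surjection on universal covers $\bbR^N\onto\bbR^M$ (of $A_0$ onto $B_0$) which, being a surjective linear map, has a linear section $s:\bbR^M\to\bbR^N$; the discrete part $q:A/A_0\onto B/B_0$ of $q$ is a surjection of discrete groups and hence has a (set-theoretic) section, which is trivially a step function since $B/B_0$-cosets pull back under a step-affine cross-section to QP sets. Splicing the linear section on the connected part with the set-theoretic section on the discrete part, precomposing with the step polynomial $f_0$, and postcomposing with $\pi_A$ produces $F_0$; it is a step polynomial because $\pi_A\circ s$ is affine in the Euclidean coordinates, the discrete-part section contributes only a step-function adjustment constant on QP pieces, and a step polynomial composed with affine maps is a step polynomial.

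I expect the main obstacle to be bookkeeping around the disconnectedness of $A$: the universal cover only sees $A_0$, and one must arrange the lift so that on the pieces of $[0,1)^d$ that $g$ sends into a nonidentity component of $B$, the chosen $F_0$ lands in a preimage component of $A$ — using that $q$ is surjective on $\pi_0$ — and does so compatibly across the cell boundaries of $\frP$. This is genuinely only a finiteness-and-patching issue (the number of relevant components is finite since $f_0$ has finite image in $\pi_0(B)$, being a step function there by Corollary~\ref{cor:disc-step-poly-step-fn} applied after projecting to the discrete quotient), and once one fixes, for each component of $B$ hit, a preimage component of $A$ and a translate of the linear section landing in it, the construction goes through. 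Everything else — that linear sections of surjective linear maps exist, that compositions and finite gluings of step polynomials are step polynomials, that affine precompositions preserve the class — is either elementary or already recorded in Lemmas~\ref{lem:step-aff-compos}, \ref{lem:stepaffsteppoly} and the remarks of this subsection.
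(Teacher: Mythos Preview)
Your overall strategy --- decompose into connected and discrete parts, lift the connected part via a linear section on universal covers, and handle the discrete part by a set-theoretic section on $\pi_0$ --- is sound in outline, and differs from the paper's approach: the paper instead decomposes the \emph{target} $B$ as $\bbR^r\oplus\bbT^d\oplus D$ and lifts each summand of $f$ separately, using a linear section for the Euclidean part, Proposition~\ref{prop:fund-lift} for the toral part, and cellwise constant choices for the discrete part.

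However, there is a genuine gap. You assert that ``$q:A\onto B$ induces a surjection on universal covers $\bbR^N\onto\bbR^M$ (of $A_0$ onto $B_0$)'', but you give no argument for this, and it is precisely the nontrivial point. The induced map $q|_{A_0}:A_0\to B$ certainly lands in $B_0$, but a priori it could have image a proper (even dense) connected subgroup of $B_0$; equivalently, the lifted linear map $\tilde q:\bbR^N\to\bbR^M$ could have image a proper subspace. What forces surjectivity is exactly the second-countability hypothesis on Lie groups: since $A/A_0$ is countable, $B/q(A_0)$ is countable, hence $B_0/\ol{q(A_0)}$ is countable, forcing $q(A_0)$ to be dense in $B_0$; then a separate argument (the image of $\tilde q$ is a \emph{closed} subspace, so if proper, $q(A_0)$ would be a proper analytic subgroup of $B_0$, not all of it) gives $\tilde q$ surjective. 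The paper carries out the equivalent of this argument piecewise --- showing $q_1(\bbR^{r'})$ is co-countable in $\bbR^r$, hence all of $\bbR^r$, and similarly that $\bbR^{r'}$ surjects onto the complementary torus $T$ --- and explicitly flags in the subsequent remark that without second-countability the lemma fails.

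A secondary issue is that your ``splicing'' paragraph is too vague to count as a proof: you need to say precisely how the linear section on $\bbR^M$, the step-affine section $B_0\to\bbR^M$ of the universal covering, and the component choices are combined into a map that, when precomposed with $f_0$, yields a step polynomial in the sense of Definition~\ref{dfn:step-poly} (i.e.\ factoring through some closed $\t A\le\bbR^{r''}$ and a homomorphism $\t A\to A$). This can be done, but it is more bookkeeping than you indicate, and your earlier paragraph (the one you abandon mid-thought with ``is unnecessary since\ldots'') suggests you had not yet pinned down the mechanism.
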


\begin{proof}
By the structure theory for locally compact Abelian groups (see~\cite[Section II.9]{HewRos79}), we may assume that
\[B = \bbR^r \oplus \bbT^d\oplus D,\]
with $D$ discrete. Correspondingly, one may decompose $f$ as $f_1 + f_2 + f_3$, where each summand takes values within one of the direct summands on the right above.  Each $f_i$ is the composition of $f$ with a coordinate-projection, so is still a step polynomial.  It therefore suffices to lift each $f_i$ separately.

Firstly, $f_3$ takes values in a discrete group.  It is therefore constant on each cell of some QP partition, so one may simply choose a lift of that constant value on each cell separately.

Second, $f_1$ takes values in $\bbR^r$.  Consider the analogous decomposition
\[A = \bbR^{r'} \oplus \bbT^{d'} \oplus D'.\]
Let $q_1:A\to \bbR^r$ be the composition of $q$ with the projection from $B$ onto its summand $\bbR^r$.  The image $q_1(\bbT^{d'})$ must be a compact subgroup of $\bbR^r$, hence it must equal $0$.  Also, the image $q_1(D')$ is countable, so since $q_1$ is onto, the image $q_1(\bbR^{r'})$ is a co-countable vector subspace of $\bbR^r$.  It is therefore equal to $\bbR^r$.  By linear algebra, this implies that there is a linear function $M:\bbR^r\to \bbR^{r'} \leq A$ which is a cross-section of $q_1$, and now $M\circ f_1$ is the desired lift of $f_1$.

Finally, let $q_2:A\to \bbT^d$ be the composition of $q$ with the projection from $B$ onto $\bbT^d$. The image $q_2(\bbT^{d'})$ is a closed, connected subgroup of $\bbT^d$.  Is is therefore a subtorus, and so we may split $\bbT^d$ further as $q(\bbT^{d'}) \oplus T$ for some complementary subtorus $T \leq \bbT^d$.  Correspondingly we may decompose $f_2 = f_{21} + f_{22}$ and lift each summand separately.  For $f_{21}$, Proposition~\ref{prop:fund-lift} gives a step-affine cross-section $\s:q_2(\bbT^{d'}) \to \bbT^{d'}$, so the composition $\s\circ f_{21}$ is a suitable lift of $f_{21}$.  For $f_{22}$, observe that the composition
\[\bbR^{r'} \leq A \stackrel{q}{\onto} B\onto \bbT^d \onto \bbT^d/q_2(\bbT^{d'}) \cong T\]
is surjective: its image is a $\s$-compact (hence Borel) and co-countable subgroup of $T$, so must equal $T$.  This gives a continuous epimorphism $\bbR^{r'}\to T$, which therefore lifts to a linear epimorphism of the universal covers $Q:\bbR^{r'}\to \t{T}$.  Composition $f_{22}$ with a step-affine fundamental domain $T\to \t{T}$ and then with a linear $Q$-cross-section $\t{T}\to \bbR^{r'}$ completes the proof.
\end{proof}

\begin{rmk}
This result has quietly made an appeal to our assumption of second-countability for Abelian Lie groups.  Without it, one could consider the uncountable group $\bbZ^{\oplus \bbT}$ with its discrete topology and the map
\[q:\bbZ^{\oplus \bbT}\to \bbT: (z_t)_{t \in \bbT}\mapsto \sum_{t\in\bbT}z_t\cdot t.\]
This $q$ is a continuous epimorphism, but the identity map $\bbT\to\bbT$ has no lift through $q$ to a step polynomial $\bbT\to \bbZ^{\oplus \bbT}$. \fin
\end{rmk}

\subsection{Slicing}

Suppose that $X$ and $Y$ are compact Abelian groups and that $A$ is an Abelian Lie group.  Given a Borel function $f:X\times Y\to A$, each of its slices $f(x,\,\cdot\,)$ is a Borel function $Y\to A$, so this defines a map $X\to \F(Y,A)$, which is easily seen to be measurable.  However, a step polynomial $f:X\times Y\to A$ cannot generally be viewed as a step function $X\to \F(Y,A)$, even if $A$ is discrete.

\begin{ex}\label{ex:bad-slicing}
If $f:\bbT^2\to \{0,1\}$ is the step function
\[(s,t)\mapsto \lfloor\{s\} + \{t\}\rfloor,\]
then regarded as a function $\bbT\to \F(\bbT,\bbZ)$ it is continuous and injective, so its level-set partition is not even finite, let alone quasi-polytopal. \fin
\end{ex}

However, it will be important later that this `slicing' operation gives rise to a function that is continuous on the cells of a QP partition.  This will be the main result of the present subsection.

\begin{lem}
Let $Q \subseteq \bbR^{d+r}$ be a bounded convex polytope, and suppose that $f = g\cdot 1_Q:\bbR^{d+r}\to \bbR$ is a basic step polynomial supported on $Q$.  Let $\Pi:\bbR^{d+r}\to \bbR^r$ be the projection onto the last $r$ coordinates.  Then the sliced function
\[\bbR^r\to \F(\bbR^d,\bbR):v \mapsto f(\cdot,v)\]
is identically zero on $\bbR^r\setminus \ol{\Pi(Q)}$ and is uniformly continuous on $\rm{int}\,\Pi(Q)$.  (Here, as usual, $\F(\bbR^d,\bbR)$ is the set of a.e. equivalence classes of maps $\bbR^d\to\bbR$ for Lebesgue measure, with the topology of convergence in probability on bounded sets.  The expression $f(\cdot,v)$ is to be understood as such an equivalence class.)
\end{lem}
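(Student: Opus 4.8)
The plan is to reduce the continuity assertion to a concrete estimate on the symmetric differences of the fibres $Q_v := \{z \in \bbR^d : (z,v) \in Q\}$, read off from the half-space description of $Q$. First the easy part: if $v \notin \Pi(Q)$ then $Q_v = \emptyset$, so $f(\cdot,v) \equiv 0$, which gives the first assertion (in particular off $\ol{\Pi(Q)}$). If $\sint\Pi(Q) = \emptyset$ the second assertion is vacuous, so assume it is not. Since $Q$ is bounded, fix $R$ with $Q \subseteq B((0,0),R)$; then every $Q_v$ lies in the fixed ball $B_0 := B(0,R) \subseteq \bbR^d$, so each $f(\cdot,v)$ is supported in $B_0$, and for such functions convergence in $\F(\bbR^d,\bbR)$ is just convergence in probability on $B_0$. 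Write $Q = \{x \in \bbR^{d+r} : \langle a_i,x\rangle \le b_i,\ i=1,\dots,m\}$ and split $a_i = (\alpha_i,\beta_i)$ with $\alpha_i\in\bbR^d$, $\beta_i\in\bbR^r$; then $Q_v = \{z : \langle\alpha_i,z\rangle \le c_i(v)\ \forall i\}$ where $c_i(v) := b_i - \langle\beta_i,v\rangle$ is affine in $v$, and $f(z,v) = g(z,v)\,1_{Q_v}(z)$.

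The core step is to show there is a constant $C$, depending only on $a_i,b_i,R$, with $\mathrm{Leb}(Q_v\triangle Q_{v'}) \le C|v-v'|$ for all $v,v'\in\Pi(Q)$. Writing $H_i(v) := \{z : \langle\alpha_i,z\rangle \le c_i(v)\}$, every $z\in Q_v\triangle Q_{v'}$ lies in $H_i(v)\triangle H_i(v')$ for some $i$, since the defining inequality that fails on one side holds on the other. For $\alpha_i = 0$ this symmetric difference is empty once $v,v'\in\Pi(Q)$, because $Q_v,Q_{v'}\ne\emptyset$ then force $c_i(v),c_i(v')\ge 0$, so $H_i(v)=H_i(v')=\bbR^d$. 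For $\alpha_i\ne 0$, $H_i(v)\triangle H_i(v')$ is a slab in direction $\alpha_i$ of width $|c_i(v)-c_i(v')|/\|\alpha_i\| \le (\|\beta_i\|/\|\alpha_i\|)|v-v'|$, so its intersection with $B_0$ has measure at most $(2R)^{d-1}(\|\beta_i\|/\|\alpha_i\|)|v-v'|$; summing over $i$ and using $Q_v\triangle Q_{v'}\subseteq B_0$ gives the estimate.

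To finish, fix $\eps>0$. On $Q_v\cap Q_{v'}$ we have $f(z,v)-f(z,v') = g(z,v)-g(z,v')$, and uniform continuity of the polynomial $g$ on the compact set $\ol Q$ yields $\delta_1>0$ with $|g(z,v)-g(z,v')|<\eps$ whenever $(z,v),(z,v')\in\ol Q$ and $|v-v'|<\delta_1$; off $Q_v\cap Q_{v'}$ the difference $f(\cdot,v)-f(\cdot,v')$ is supported on $Q_v\triangle Q_{v'}$. Hence for $|v-v'|<\delta_1$ the set $\{\,|f(\cdot,v)-f(\cdot,v')|\ge\eps\,\}$ is contained in $Q_v\triangle Q_{v'}\subseteq B_0$, of measure $\le C|v-v'|$, and taking $|v-v'|<\min(\delta_1,\eps/C)$ makes this $<\eps$; this is precisely uniform continuity of $v\mapsto f(\cdot,v)$ into $\F(\bbR^d,\bbR)$ on $\Pi(Q)$, a fortiori on $\sint\Pi(Q)$. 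The one place with genuine content is the core estimate, whose delicate feature is that the bound is uniform right up to $\partial\Pi(Q)$, where the fibres $Q_v$ can become arbitrarily thin; the slab argument handles this because it never refers to the shape or inradius of $Q_v$, only to how its defining inequalities move, and it is here that the hypothesis that $Q$ is a polytope (finitely many facets), not merely a bounded convex set, is essential.
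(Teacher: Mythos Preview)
Your proof is correct and takes a genuinely different route from the paper's. The paper reduces first to $1_Q$, then writes $1_Q = \prod_i 1_{B\cap H_i}$ for an ambient ball $B$ and analyzes each factor's sliced function separately: if the bounding hyperplane $\partial H_i$ is not of the form $\bbR^d\times V$ the sliced function of $1_{B\cap H_i}$ is continuous on all of $\bbR^r$, while if $\partial H_i = \bbR^d\times V$ it agrees with the sliced function of $1_B$ on the interior of the corresponding half-space $K_i\subseteq\bbR^r$; the product of bounded uniformly continuous functions is uniformly continuous, and $\bigcap_{i\in I}\sint K_i\supseteq\sint\Pi(Q)$.

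Your argument bypasses this factorization entirely and instead proves the quantitative Lipschitz bound $\mathrm{Leb}(Q_v\triangle Q_{v'})\le C|v-v'|$ directly from the half-space description, via the slab estimate. This is more elementary and in fact yields a slightly stronger conclusion: uniform continuity on all of $\Pi(Q)$, not just its interior, because your estimate never uses that $v,v'$ stay away from $\partial\Pi(Q)$. The paper's factorization approach naturally loses the boundary points, since the ``vertical'' half-spaces only give continuity on $\sint K_i$. Your handling of the case $\alpha_i=0$ (forcing $H_i(v)=\bbR^d$ whenever $Q_v\ne\emptyset$) is exactly the counterpart of the paper's ``vertical hyperplane'' case. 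One minor point you might make explicit: since the lemma concerns a.e.\ equivalence classes, you are free to replace $Q$ by its closure and use only non-strict inequalities, as you implicitly do.
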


\begin{proof}
The first assertion is obvious, so we focus on the second.

Since $g$ is uniformly continuous and uniformly bounded on the bounded set $\ol{Q}$, it suffices to prove this for $1_Q$ alone.

Let $B \supseteq Q$ be an open ball containing $\ol{Q}$.  By an easy calculation, the sliced function
\[v\mapsto 1_B(\cdot,v) \in \F(\bbR^d,\bbR)\]
is uniformly continuous on the whole of $\bbR^r$.

Now, $Q$ is defined by a finite intersection of half-spaces in $\bbR^{d+r}$, say $Q = H_1\cap \cdots \cap H_m$, where each $H_i$ may be open or closed.  This implies that $1_Q = (1_{B\cap H_1})\cdots (1_{B\cap H_m})$, and each function here has bounded support and is uniformly bounded by $1$, so the result will follow if we show that each of the sliced functions
\[v\mapsto 1_{B\cap H_i}(\cdot,v), \quad i=1,2,\ldots,m,\]
is uniformly continuous on $\rm{int}\,\Pi(Q)$.

Now, on the one hand, if the bounding hyperplane $\partial H_i$ is not of the form $\bbR^d\times V$ for any hyperplane $V \leq \bbR^r$, then this sliced function is actually continuous on the whole of $\bbR^r$.

On the other, if $\partial H_i$ equals $\bbR^d\times V$ for such a hyperplane $V \leq \bbR^r$, then $H_i$ itself equals $\bbR^d\times K_i$ for some half-space $K_i \subseteq \bbR^r$, and in this case the sliced function of $1_{B\cap H_i}$is uniformly continuous on $\rm{int}\,K_i$, since it agrees with the sliced function of $1_B$ on $\rm{int}\,K_i$.

Let $I \subseteq \{1,2,\ldots,m\}$ be the set of indices $i$ for which $H_i$ is as in the second of the possibilities above.  Then we have shown that the function in question is uniformly continuous on
\[\Pi(B)\cap \bigcap_{i \in I}\rm{int}\,K_i.\]
This clearly contains (and is often equal to) $\rm{int}\,\Pi(Q)$.
\end{proof}

\begin{cor}
If $A \subseteq \bbR^d$ and $B \subseteq \bbR^r$ are bounded convex polytopes of positive measure (equivalently, nonempty interior), and $f:A\times B\to\bbR^D$ is a step polynomial, then there is a partition $\frP$ of $B$ into convex sub-polytopes such that for each $C \in \frP$, the sliced function
\[C\to \F(A,\bbR^D):v\mapsto f(\cdot,v)\]
extends to a uniformly continuous function on $\ol{C}$.
\end{cor}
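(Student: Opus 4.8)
The plan is to reduce the corollary to the preceding lemma by a finite-decomposition argument. First I would write the step polynomial $f:A\times B\to\bbR^D$ as a finite sum of basic step polynomials. By Definition~\ref{dfn:Euc-step-poly}, there is a partition $\frQ$ of $A\times B$ into convex sub-polytopes and, for each $R\in\frQ$, a polynomial $p_R$ with $f|R=p_R|R$; hence $f=\sum_{R\in\frQ}p_R\cdot 1_R$, a finite sum of basic step polynomials on $\bbR^{d+r}$ (after embedding $A\times B\subseteq\bbR^{d+r}$ and noting each $R$ is a bounded convex polytope). Arguing coordinatewise in the target $\bbR^D$ and using that a finite sum of functions each uniformly continuous on $\ol C$ is again uniformly continuous on $\ol C$, it suffices to treat a single basic step polynomial $g\cdot 1_R$ with $R\subseteq A\times B$ a bounded convex polytope.

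For such a basic piece the preceding lemma applies directly, with $Q:=R$ and $\Pi:\bbR^{d+r}\to\bbR^r$ the projection onto the last $r$ coordinates: it tells us the sliced map $v\mapsto (g\cdot 1_R)(\cdot,v)$ is zero off $\ol{\Pi(R)}$ and uniformly continuous on $\sint\Pi(R)$. The remaining task is to produce a \emph{single} convex-polytopal partition $\frP$ of $B$ that simultaneously works for every basic piece. The image $\Pi(R)$ is an affine image of a convex polytope, hence itself a convex polytope (possibly not full-dimensional in $\bbR^r$), and the same holds for its interior and closure being convex; over the finitely many pieces $R$, the boundary hyperplanes of all the sets $\Pi(R)$ generate a finite hyperplane arrangement in $\bbR^r$. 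Refining $B$ by this arrangement yields a partition $\frP$ of $B$ into convex sub-polytopes such that each cell $C\in\frP$ lies entirely inside $\sint\Pi(R)$, inside $\ol{\Pi(R)}\setminus\sint\Pi(R)$ (a lower-dimensional set, on which the slice is either identically zero or can be absorbed after a further refinement), or in the complement of $\ol{\Pi(R)}$, for each $R$. On such a $C$, each basic slice extends uniformly continuously to $\ol C$ — either because $C\subseteq\sint\Pi(R)$ and we invoke the lemma, or because the slice is identically zero on (the closure of) $C$ — and summing over the finitely many pieces gives the uniformly continuous extension of $v\mapsto f(\cdot,v)$ to $\ol C$.

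The main obstacle is the bookkeeping around the boundary of each $\Pi(R)$: the lemma only guarantees uniform continuity on the \emph{open} region $\sint\Pi(R)$ (or more precisely on $\Pi(B)\cap\bigcap_{i\in I}\sint K_i$), and a naive partition could place a cell straddling such a boundary hyperplane, where the slice may jump. Handling this cleanly requires refining $B$ not just by the boundary hyperplanes of the $\Pi(R)$ but by the finitely many hyperplanes $\bbR^r\cap V$ arising from those facets $H_i$ of each $R$ whose bounding hyperplane has the special product form $\bbR^d\times V$ — exactly the index set $I$ from the proof of the lemma. Once the partition $\frP$ is taken fine enough to resolve all of these (which is possible since only finitely many hyperplanes are involved, by Lemma~\ref{lem:combine-Euc-step-aff}-style closure of convex-polytopal partitions under common refinement), each cell $C$ sits on one definite side of every relevant hyperplane, the lemma applies on $\ol C$ for each basic piece, and the finite sum inherits the uniformly continuous extension. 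Everything else is routine.
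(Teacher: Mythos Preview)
Your overall strategy matches the paper's: decompose $f$ into basic step polynomials $g\cdot 1_R$, invoke the preceding lemma for each piece, and build a single convex-polytopal partition of $B$ that simultaneously resolves all the discontinuities. However, there is a genuine gap in how you handle the lower-dimensional cells of that partition.

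Concretely: once you refine $B$ by the relevant hyperplanes, the full-dimensional cells are fine (each lies in $\sint\Pi(R)$ or in the complement of $\ol{\Pi(R)}$ for every piece $R$, and the lemma gives uniform continuity there, which then extends to the closure). But some cells of your partition lie \emph{inside} a hyperplane --- these are the cells sitting in $\partial\Pi(R)$ for some $R$. You say the slice there is ``either identically zero or can be absorbed after a further refinement'', but neither alternative is justified: the slice need not vanish on $\partial\Pi(R)$, and ``further refinement'' within the \emph{same} ambient $\bbR^r$ cannot help, since the obstruction is that the lemma simply gives no information on $\partial\Pi(R)$. The paper closes this gap by an explicit induction on $r$: for each such lower-dimensional cell, contained in some affine hyperplane $V\subset\bbR^r$, one restricts to $f|(A\times(B\cap V))$ and repeats the whole construction inside $V\cong\bbR^{r-1}$. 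That induction is the missing ingredient in your argument.

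A minor secondary point: your two descriptions of which hyperplanes to refine by (``boundary hyperplanes of all the sets $\Pi(R)$'' versus ``hyperplanes $V$ from product-form facets $\bbR^d\times V$'') are not the same collection, and you should commit to one. The paper uses the former --- the projections $\Pi(Q_i)$ together with all their facets --- which is the cleaner choice, since it avoids reaching back into the proof of the lemma.
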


\begin{proof}
Arguing coordinate-wise, we may assume $D=1$.

If $f$ is a basic step polynomial supported on a convex polytope $Q \subseteq A\times B$, then the previous lemma gives that the sliced function is uniformly continuous on the relative interiors in $B$ of both $\Pi(Q)$ and $B \setminus \Pi(Q)$.

A general $f$ may be written as a finite sum of basic step polynomials, say $f = f_1 + \cdots + f_m$.  Let $Q_1$, \ldots, $Q_m$ be convex polytopal supports for $f_1$, \ldots, $f_m$. Let $\frP_0$ be the partition generated by the polytopes $\Pi(Q_1)$, \ldots, $\Pi(Q_m)$ \emph{together with all their facets}, regarded as separate polytopes also, and let $\frP_1$ be a refinement of $\frP_0$ into convex polytopes.

It follows that the sliced function of each $f_i$, and hence also of $f$, is uniformly continuous on the relative interior in $B$ of every cell of $\frP_1$. This implies the desired conclusion for the cells of $\frP_1$ that have nonempty relative interiors, but does not handle the cells that lie in $\partial \Pi(Q_1) \cup\cdots \cup \partial \Pi(Q_m)$.  However, if a cell of $\frP_1$ has no relative interior, then it lies in some co-dimension-$1$ affine subspace of $\bbR^r$.  For each of these lower-dimensional subspaces $V$, we may now simply repeat the previous construction for the restriction $f|(A\times (B\cap V))$. An induction on $r$ completes the proof.
\end{proof}

\begin{cor}\label{cor:connected-images}
Suppose that $A$ is an Abelian Lie group, $W$ and $Z$ are compact Abelian groups, and $f:W\times Z \to A$ is a step polynomial.  Then there is a QP partition $\frQ$ of $Z$ such that the map $z\mapsto f(\cdot,z)$ is uniformly continuous from $C$ to $\F(W,A)$ for every $C \in \frQ$.
\end{cor}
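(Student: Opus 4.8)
The plan is to reduce Corollary~\ref{cor:connected-images} to the Euclidean slicing corollary proved immediately above, in three stages: first to the case where $W$ and $Z$ are Lie groups, then to the case where they are tori, and finally by translating the problem into coordinates on half-open cubes. I expect the last stage to be the main obstacle: one must match the abstract affine data with cube coordinates, which Lemma~\ref{lem:little-step-aff-coords} is designed for, and — more delicately — cope with the fact that the fractional-part map $\{\cdot\}:\bbT^b\to[0,1)^b$ is \emph{not} continuous on all of $\bbT^b$. The bookkeeping needed to carry the target module $A$ and the various measure-preserving identifications of $\F$-spaces through the argument is routine but will have to be done honestly.

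\emph{Reduction to Lie groups.} By Definition~\ref{dfn:step-poly}, $f$ factors through an affine map $\chi:W\times Z\to\bbT^m$, and $\chi$ is the sum of its restrictions to $W\times\{0\}$ and $\{0\}\times Z$ (plus a constant), so it is $(w,z)\mapsto\lambda_W(w)+\lambda_Z(z)+\theta$ for continuous homomorphisms $\lambda_W:W\to\bbT^m$, $\lambda_Z:Z\to\bbT^m$; each of these factors through a quotient by a closed subgroup with compact Lie quotient. Hence $f=\bar f\circ(q_W\times q_Z)$ for surjective homomorphisms $q_W:W\onto W_1$, $q_Z:Z\onto Z_1$ onto compact Abelian Lie groups and a step polynomial $\bar f:W_1\times Z_1\to A$. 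If $\frQ_1$ is a QP partition of $Z_1$ as in the conclusion for $\bar f$, then $\frQ:=q_Z^{-1}(\frQ_1)$ is QP by Lemma~\ref{lem:qp-obvious}, and on a cell $q_Z^{-1}(C_1)$ the slice map for $f$ is the composite of $q_Z$ (uniformly continuous), the slice map for $\bar f$ on $C_1$, and the pullback $q_W^*:\F(W_1,A)\to\F(W,A)$, which is continuous because $q_W$ carries Haar measure to Haar measure. So we may assume $W$ and $Z$ are Lie.

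\emph{Reduction to tori.} Decompose $W=\bigsqcup_i(w_i+W_0)$ and $Z=\bigsqcup_j(z_j+Z_0)$ into cosets of the identity components. For each $i,j$ the translated map $(w_0,z_0)\mapsto f(w_i+w_0,z_j+z_0)$ on $W_0\times Z_0$ is a step polynomial, by Lemma~\ref{lem:stepaffsteppoly} (translations being step-affine). There is a canonical topological-group isomorphism $\F(W,A)\cong\prod_i\F(W_0,A)$ under which the slice $f(\cdot,z_j+z_0)$ corresponds to the tuple of slices of these translated maps; since a map into a finite product is uniformly continuous iff each coordinate is, it suffices, for each $j$, to produce a QP partition of $Z_0$ that works simultaneously for all $i$ (take common refinements, which are QP by Lemma~\ref{lem:qp-join}). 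Translating these cosetwise partitions back onto the cosets $z_j+Z_0$ and using Lemma~\ref{lem:qp-on-cpts} shows the resulting partition of $Z$ is QP. Thus we may assume $W=\bbT^a$ and $Z=\bbT^b$; the degenerate cases $a=0$ (immediate from the step-polynomial structure of $f$ on $Z$) and $b=0$ (take $\frQ=\{Z\}$) are handled directly.

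\emph{The Euclidean case.} Write $f=\psi\circ f_0\circ\{\chi\}$ as in Definition~\ref{dfn:step-poly}: $\chi:\bbT^{a+b}\to\bbT^m$ affine, $f_0:[0,1)^m\to\bbR^r$ a Euclidean step polynomial with bounded image inside a closed subgroup $\t A\le\bbR^r$, and $\psi:\t A\to A$ a continuous homomorphism. Lemma~\ref{lem:little-step-aff-coords} supplies a step-affine $h:[0,1)^{a+b}\to[0,1)^m$ with $\{\chi\}=h\circ\{\cdot\}$, and then $g:=f_0\circ h:[0,1)^a\times[0,1)^b\to\bbR^r$ is again a bounded Euclidean step polynomial (refine the convex-polytopal partition controlling $h$ by the $h$-preimage of the one controlling $f_0$; on each resulting cell $g$ is a polynomial composed with an affine map), so that $f=\psi\circ g\circ\{\cdot\}$. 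Applying the Euclidean slicing corollary to $g$, with the two polytopes taken to be $[0,1)^a$ and $[0,1)^b$, gives a partition $\frP$ of $[0,1)^b$ into convex sub-polytopes such that $v\mapsto g(\cdot,v)$ extends to a uniformly continuous map $\ol C\to\F([0,1)^a,\bbR^r)$ for every $C\in\frP$. Let $p:[0,1)^b\to\bbT^b$ be the fundamental-domain bijection and set $\frQ:=\{p(C):C\in\frP\}=\{\cdot\}^{-1}(\frP)$, which is QP straight from Definition~\ref{dfn:qp}. Here is the point needing care: although $\{\cdot\}:\bbT^b\to[0,1)^b$ is not globally continuous, for $C\in\frP$ the closure $\ol C$ lies in $[0,1]^b$ and hence contains no two points differing by a nonzero element of $\bbZ^b$; so $p|_{\ol C}$ is injective, and being continuous on the compact set $\ol C$ it is a homeomorphism onto its image, whence $\{\cdot\}|_{p(C)}:p(C)\to C$ is uniformly continuous. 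Finally, identifying $\F(\bbT^a,\bbR^r)\cong\F([0,1)^a,\bbR^r)$ via the measure-preserving map $\{\cdot\}:\bbT^a\to[0,1)^a$, for $z\in p(C)$ the slice $f(\cdot,z)$ corresponds to the image under $\psi$ of the slice $g(\cdot,\{z\})$; hence $z\mapsto f(\cdot,z)$ on $p(C)$ is the composite of $\{\cdot\}|_{p(C)}$, the uniformly continuous slice map $v\mapsto g(\cdot,v)$ on $C$, the isomorphism of $\F$-spaces, and post-composition with $\psi$ (uniformly continuous on the bounded set containing the image of $g$, hence inducing a uniformly continuous map of the relevant function spaces). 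This composite is uniformly continuous, which is what was required.
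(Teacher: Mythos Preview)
Your approach is the same as the paper's—reduce to tori and invoke the Euclidean slicing corollary through the fundamental-domain map—and your reductions are sound. But there is a slip at precisely the step you flagged as delicate. You claim that for $C\in\frP$, the closure $\ol C\subseteq[0,1]^b$ ``contains no two points differing by a nonzero element of $\bbZ^b$'', and deduce that $p|_{\ol C}$ is injective. This is false: nothing in the Euclidean slicing corollary prevents $\frP$ from containing a cell such as $C=[0,1)^b$ itself, whose closure contains both $0$ and $(1,0,\dots,0)$. In that case $\{\cdot\}|_{p(C)}$ is the fractional-part map on all of $\bbT^b$, which is not uniformly continuous, and the composite slice map $z\mapsto f(\cdot,z)$ can genuinely jump at the seam.

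The repair is immediate: before pulling back, refine $\frP$ by the partition of $[0,1)^b$ into the $2^b$ half-open subcubes $\prod_{i=1}^b I_i$ with each $I_i\in\{[0,\tfrac12),[\tfrac12,1)\}$. This refinement is still convex-polytopal, the slice map $v\mapsto g(\cdot,v)$ remains uniformly continuous on each smaller cell (as a restriction), and now each refined cell $C'$ has $\ol{C'}$ contained in a closed cube of side $\tfrac12$, so your injectivity claim for $p|_{\ol{C'}}$ becomes correct and the rest of your argument goes through verbatim.
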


\begin{proof}
By Definition~\ref{dfn:step-poly}, there are affine maps $\a:W\to \bbT^d$ and $\chi:Z\to \bbT^r$, a step polynomial $f_0:\bbT^{d+r} \to \bbR^D$ with image contained in some closed subgroup $\t{A} \leq \bbR^D$, and a homomorphism $\psi:\t{A}\to A$, such that $f = \psi\circ f_0\circ (\a\times \chi)$.  It therefore suffices to find a suitable QP partition of $\bbT^r$ for $f_0$ and pull it back through $\chi$.

However, upon factorizing through the fundamental-domain map $\{\cdot\}:\bbT^{d+r} \to [0,1)^{d+r}$, this is precisely the output of the preceding corollary.
\end{proof}

\subsection{Integrating slices of step polynomials}

The following result will be of great importance in the next section, where we develop some group cohomology using step-polynomial cocycles.  It strikes me as something that is probably known, but I have not been able to find a suitable reference.

\begin{prop}
Suppose that $C \subseteq \bbR^n\times \bbR^m$ is a bounded convex polytope and that $p:C\to \bbR$ is a step polynomial, which we extend by $0$ outside $C$.  Then the integrated function
\[q(u) := \int_{\bbR^m}p(u,v)\,\d v\]
is a step polynomial of bounded support.
\end{prop}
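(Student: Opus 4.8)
The plan is to reduce to the basic case and then argue by induction on the number of variables $m$ being integrated out, peeling off one integration at a time. First I would write $p = p_1 + \cdots + p_N$ as a finite sum of basic step polynomials $p_j = g_j \cdot 1_{Q_j}$ with each $Q_j \subseteq C$ a convex sub-polytope and each $g_j$ an honest polynomial, and note that integration is linear, so it suffices to treat a single $g \cdot 1_Q$. Since $Q$ is a bounded convex polytope in $\bbR^n \times \bbR^m$, its fibers over points $u \in \bbR^n$ are bounded convex polytopes in $\bbR^m$, and $\Pi(Q)$ (the projection onto the first $n$ coordinates) is again a bounded convex polytope by Lemma~\ref{lem:Euc-qp-im}. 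So $q$ is supported on $\ol{\Pi(Q)}$, which is bounded, giving the `bounded support' conclusion for free.

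The substantive claim is that $q$ is \emph{piecewise polynomial} over a convex-polytopal partition of $\Pi(Q)$. I would prove this by induction on $m$, integrating out the last coordinate $v_m$ at a time. For $m = 1$: fix the first $n + (m-1)$ coordinates, call them $w$, and consider the fiber $Q_w := \{v_m : (w, v_m) \in Q\}$. Because $Q$ is a finite intersection of affine half-spaces, this fiber is an interval $[\,a(w), b(w)\,]$ (open or closed at either end, which doesn't affect the integral), where $a$ and $b$ are each the pointwise max/min of finitely many affine functions of $w$ (the lower and upper faces of $Q$), hence \emph{step-affine} in $w$ in the Euclidean sense of Definition~\ref{dfn:Euc-step-aff}: there is a convex-polytopal partition of $\Pi(Q)$ on whose cells $a$ and $b$ are single affine functions. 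On each such cell, $\int_{a(w)}^{b(w)} g(w, v_m)\,\d v_m$ is obtained by taking an antiderivative $G(w, v_m)$ of $g$ in $v_m$ (still a polynomial) and forming $G(w, b(w)) - G(w, a(w))$; substituting the affine expressions for $a(w), b(w)$ yields a polynomial in $w$. Thus $q$ is a step polynomial on $\Pi(Q) \subseteq \bbR^{n+m-1}$.

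For the inductive step I would simply iterate: having integrated out $v_m$, the function $w \mapsto \int p(w, v_m)\,\d v_m$ is a step polynomial on $\Pi(Q) \subseteq \bbR^{n + m - 1}$, and Fubini lets me apply the inductive hypothesis to integrate out the remaining $m - 1$ coordinates. The only mild subtlety is that after the first integration we no longer have a single basic step polynomial but a general one over a convex-polytopal partition; this is harmless, since the class of step polynomials over convex-polytopal partitions is closed under finite sums and the induction hypothesis is stated for general step polynomials, and one re-decomposes into basic pieces before the next integration.

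The main obstacle — really the only place where care is needed — is verifying that the fiber-endpoint functions $a(w)$ and $b(w)$ are genuinely step-affine with \emph{convex-polytopal} cells, and in particular that the partition of $\Pi(Q)$ according to `which face of $Q$ is the active upper (resp. lower) face' is a convex-polytopal partition. This is a standard fact about projections of polytopes (it is essentially the combinatorics underlying Fourier–Motzkin elimination): the locus where a given affine function $\ell_i$ dominates another $\ell_j$ is a half-space, so the common refinement over all pairs is a convex-polytopal partition, and on each cell the max of the $\ell_i$'s and the min of the others are single affine functions; one must also intersect with the half-spaces coming from constraints of $Q$ that do not involve $v_m$, which merely cuts down to $\Pi(Q)$. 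Once this bookkeeping is in place, everything else is elementary calculus (antiderivatives of polynomials are polynomials, and composing a polynomial with an affine substitution gives a polynomial), so I would present the Fourier–Motzkin step carefully and keep the rest brief.
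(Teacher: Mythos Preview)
Your proposal is correct and follows essentially the same architecture as the paper: reduce by linearity to a single basic $g\cdot 1_Q$, reduce to $m=1$ by Fubini, partition $\Pi(Q)$ according to which facets of $Q$ supply the lower and upper endpoints of the fiber interval, and check the integral is a polynomial on each cell. The one genuine difference is in that last step. The paper shows $q$ is polynomial on each cell $D'$ by induction on $\deg p$, differentiating under the integral sign to get $\nabla q(u) = \int_{\psi_1(u)}^{\psi_2(u)}\nabla_u p(u,v)\,\d v + p(u,\psi_2(u))\nabla\psi_2 - p(u,\psi_1(u))\nabla\psi_1$ and invoking the inductive hypothesis on the first term. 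Your argument via the antiderivative $G$ of $g$ in $v_m$, writing $q(w) = G(w,b(w)) - G(w,a(w))$ and substituting affine $a,b$, is more direct and avoids that auxiliary induction entirely; it is a cleaner way to handle the same step. The paper is also a bit more explicit about tie-breaking on the measure-zero set where several facets are simultaneously active (it orders the pairs and takes the minimal one, then recurses on lower-dimensional strata), whereas you fold this into the Fourier--Motzkin refinement; both are fine, but you might add one sentence acknowledging how ties are resolved so that the partition is genuinely well-defined.
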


\begin{proof}
It suffices to prove this result for $m=1$, since the general case may then be recovered by integrating out the last $m$ coordinates one-by-one.

Let $\Pi:\bbR^n\times \bbR^m \to \bbR^n$ be the coordinate projection. By decomposing $p$ into basic step polynomials, it suffices to assume that $p$ is the restriction to $C$ of a genuine polynomial.  Because $C$ is bounded, so is $\Pi(C)$, and clearly $q$ vanishes outside $\Pi(C)$.  It remains to prove that $q$ is a step polynomial on $\Pi(C)$.

We will complete this proof on $\rm{int}\,\Pi(C)$.  Having done so, the same proof may be repeated above the $\Pi$-pre-image of each lower-dimensional facet of $\Pi(C)$, so this implies the full result.

The behaviour of $q$ on $\Pi(C)$ is unchanged if we replace $C$ by its closure, so we may now assume that $C$ is closed. In this case it is defined by some intersection of closed linear inequalities, so we may write
\[C = \bigcap_{i=1}^rH_i \quad \hbox{with} \quad H_i = \{(u,v)\,|\ a_i\bullet u + b_iv_i \leq \a_i\}\]
for some $(a_i,b_i) \in (\bbR^n\times \bbR) \setminus \{(0,0)\}$ and $\a_i \in \bbR$, $i=1,2,\ldots,r$.  For each $j\leq r$, let also
\[C_j := \bigcap_{i \in [r]\setminus \{j\}}H_i.\]

We may assume that the above representation of $C$ is irredundant, meaning that for each $i$, the simpler intersection $C_i$ is strictly larger than $C$.  Since $C$ and $C_i$ are both closed, this is equivalent to the assertion that each of the hyperplanes $\partial H_i$ intersects the interior of the corresponding $C_i$.  Knowing this, it follows that each of these intersections $\partial H_i\cap C_i$ is a bounded convex polytope with non-empty interior relative to $\partial H_i$.

By convexity and boundedness, the slice
\[I_u := \{v \in \bbR\,|\ (u,v) \in C\}\]
is a closed bounded interval for every $u$, and the set
\[D := \{u \in \bbR^n\,|\ \rm{int}\,I_u \neq \emptyset\}\]
is also a bounded convex polytope.  It has the property that $q|(\bbR^n\setminus D) = 0$. Each $u \in D$ may be labeled by a pair $(i_1(u),i_2(u))$ of distinct elements of $\{1,2,\ldots,r\}$ with the property that $\partial H_{i_1(u)}$ intersects $\{u\}\times \bbR$ precisely in the lower end-point of $I_u$, and $\partial H_{i_2(u)}$ intersects $\{u\}\times \bbR$ precisely in the upper end-point.  It is easy to see that this choice of $(i_1(u),i_2(u))$ is unique for a.e. $u \in D$, but for those $u$ with more than one possibility, let us order the set of distinct pairs in $\{1,2,\ldots,r\}$, and always choose $(i_1(u),i_2(u))$ to be minimal among the possibilities.

With this convention, the choice of $(i_1(u),i_2(u))$ defines a partition of $D$, say $\frP$, into sets defined by intersections of linear inequalities: that is, into convex sub-polytopes.  The proof will be finished by showing that $q$ agrees with a polynomial on each of these sub-polytopes.

Indeed, suppose that $D' = \{u\,|\ (i_1(u),i_2(u)) = (i_1,i_2)\} \in \frP$.  Then we may write
\[I_u = [\psi_1(u),\psi_2(u)]\]
for some affine functions $\psi_1,\psi_2:D'\to \bbR$, where $\psi_s$ is the function whose graph is $\partial H_{i_s} \cap (D'\times \bbR)$ for $s = 1,2$.  For our integral, this now gives
\[q(u) = \int_{\psi_1(u)}^{\psi_2(u)}p(u,v)\,\d v \quad \forall u \in D'.\]

We will complete the proof by induction on $\deg p$.  If $p$ is a constant, then we simply obtain
\[q(u) = \psi_2(u) - \psi_1(u) \quad \hbox{on}\ D',\]
which is affine.  So now suppose that $\deg p \geq 1$.  We will use the inductive hypothesis to prove that the gradient $\nabla q$ is a polynomial function on $D'$.  Indeed, the rules for differentiation under an integral give
\[\nabla q(u) = \int_{\psi_1(u)}^{\psi_2(u)}\nabla_up(u,v)\,\d v + p(u,\psi_2(u))\cdot \nabla\psi_2 - p(u,\psi_1(u))\cdot\nabla\psi_1,\]
so we can apply the inductive hypothesis to the first term here, and observe directly that the second and third are polynomials on $D'$.
\end{proof}

\begin{cor}\label{cor:int-step-poly}
If $Y$ and $Z$ are compact Abelian groups and $f:Y\times Z \to \bbR$ is a step polynomial, then so is the integrated function
\[g(y) := \int_Zf(y,z)\,\d z.\]
\end{cor}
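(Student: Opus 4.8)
The plan is to push the integral over the (possibly non-Lie, even infinite-dimensional) group $Z$ down to a Euclidean integral of the type handled by the previous proposition, namely integrating a step polynomial on a bounded convex polytope over some of its coordinates against Lebesgue measure. The two things that make this more than a triviality are that $Z$ contributes to $f$ only through some closed subgroup of a torus, which need be neither the whole torus nor connected, and that the fractional-part map couples the $Y$- and $Z$-variables; both are dealt with by a short bookkeeping step.

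First I would unwind Definition~\ref{dfn:step-poly} to write $f = p\circ\{\eta\}$ for an affine map $\eta:Y\times Z\to\bbT^n$ and a Euclidean step polynomial $p:[0,1)^n\to\bbR$. (For the target $\bbR$ the auxiliary closed subgroup $\t A\le\bbR^r$ and homomorphism $\t A\to\bbR$ in that definition can be absorbed, since such a homomorphism extends to a linear functional on $\bbR^r$ and composing a Euclidean step polynomial with a linear map is again a Euclidean step polynomial.) Splitting $\eta(y,z)=\eta(y,0)+\eta_2(z)$ with $\eta_2:Z\to\bbT^n$ the homomorphism in the second variable, and setting $\phi(y):=\eta(y,0)$ (affine in $y$) and $K:=\eta_2(Z)$ (a closed subgroup of $\bbT^n$ that does not depend on $y$), the push-forward of $m_Z$ under $z\mapsto\eta(y,z)$ is Haar measure on the coset $\phi(y)+K$. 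Hence $g(y)=\int_K p(\{\phi(y)+w\})\,\d m_K(w)=G(\phi(y))$, where $G:\bbT^n\to\bbR$ is defined by $G(t):=\int_K p(\{t+w\})\,\d m_K(w)$. Since $\phi$ is affine, hence step-affine, Lemma~\ref{lem:stepaffsteppoly} reduces everything to showing that $G$ is a step polynomial on $\bbT^n$.

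Next I would analyze $G$. Let $K_0\le K$ be the identity component — a genuine subtorus of $\bbT^n$ — with $K/K_0$ finite, say with coset representatives $w_1,\dots,w_\ell$, and fix an isomorphism $\b:\bbT^s\xrightarrow{\cong}K_0$. Then $G(t)=\frac1\ell\sum_{j=1}^\ell\int_{\bbT^s}p(\{t+w_j+\b(x)\})\,\d x$. For each $j$ the map $(t,x)\mapsto t+w_j+\b(x)$ is affine $\bbT^n\times\bbT^s\to\bbT^n$, so Lemma~\ref{lem:little-step-aff-coords} supplies a Euclidean step-affine $h_j:[0,1)^{n+s}\to[0,1)^n$ with $\{t+w_j+\b(x)\}=h_j(\{t\},\{x\})$; and $p\circ h_j:[0,1)^{n+s}\to\bbR$ is again a Euclidean step polynomial (by the convex-polytope argument of Lemma~\ref{lem:Euc-qp-preim} / Corollary~\ref{cor:Euc-compos-step}, now with polynomials rather than affine maps on the cells). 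Pushing $m_{\bbT^s}$ forward to Lebesgue measure on $[0,1)^s$ under $\{\cdot\}$, the $j$-th summand becomes $\int_{[0,1)^s}(p\circ h_j)(\{t\},x)\,\d x$. This is exactly the shape the previous proposition addresses: applied to $p\circ h_j$ on the bounded convex polytope $[0,1)^{n+s}\subseteq\bbR^n\times\bbR^s$ (extended by $0$), it yields a step polynomial $r_j$ of bounded support on $\bbR^n$ with $r_j(u)=\int_{[0,1)^s}(p\circ h_j)(u,x)\,\d x$ for $u\in[0,1)^n$. Therefore $G(t)=R(\{t\})$ with $R:=\frac1\ell\sum_j r_j|_{[0,1)^n}$ a Euclidean step polynomial, so $G=R\circ\{\cdot\}$ is a step polynomial on $\bbT^n$ by Definition~\ref{dfn:step-poly}.

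The main obstacle — and the reason for the coset bookkeeping — is that $\eta_2$ need not be onto, so a priori one integrates over an arbitrary closed subgroup $K$ of $\bbT^n$, and $K$ need not even be connected; one cannot just quote the Euclidean proposition directly. Cutting $K$ into identity-component cosets and re-coordinatizing each subtorus as a standard $\bbT^s$ is what allows passage to Lebesgue measure on a cube, and invoking Lemma~\ref{lem:little-step-aff-coords} to replace $\{t+w_j+\b(x)\}$ by $h_j(\{t\},\{x\})$ is what removes the coupling of the two variable-blocks inside the fractional-part map, so that the proposition (which integrates over a product of Euclidean coordinate-subspaces) genuinely applies. Everything else is routine manipulation with step polynomials, step-affine maps, and push-forwards of Haar measure.
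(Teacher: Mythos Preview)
Your proof is correct and lands on the same Euclidean proposition as the paper, but the reduction is organized differently. The paper first factorizes $f$ through a Lie-group quotient of $Y\times Z$ (so that both $Y$ and $Z$ may be taken to be Lie), then reduces $Y$ to a torus by working on each identity-component coset, and reduces $Z$ to a torus by splitting the integral over $Z$ into integrals over the finitely many cosets of its identity component. After all this, $Y=\bbT^n$, $Z=\bbT^m$, and $g(y)=\int_{\bbR^m}p(\{y\},v)\,\d v$ for a Euclidean step polynomial $p$ supported on $[0,1)^{n+m}$, which is exactly the shape of the preceding proposition.

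You instead keep $Y$ and $Z$ abstract and push the whole computation into the target torus $\bbT^n$: the image $K=\eta_2(Z)$ plays the role of the Lie quotient of $Z$, and your decomposition of $K$ into cosets of its identity component $K_0\cong\bbT^s$ is the counterpart of the paper's passage to a toral $Z$. Your appeal to Lemma~\ref{lem:little-step-aff-coords} to uncouple the variables is exactly what the paper's ``freedom in choice of coordinates'' step accomplishes implicitly when writing $f=f_0\circ\{\cdot\}$ on $\bbT^{n+m}$. So the two arguments are essentially equivalent; the paper's is a bit shorter because replacing the domains by tori up front avoids carrying the image subgroup $K$ and the coset bookkeeping for $K/K_0$ through the Euclidean calculation.
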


\begin{proof}
Since $f$ factorizes through a Lie-group quotient, and then $g$ does the same, we may assume that $Y$ and $Z$ are both Lie groups.  Since a function on an Abelian Lie group is a step polynomial if this is true of its restriction to every identity-component coset, we may assume further that $Y$ is a torus.  On the other hand, if $Z_0$ is the identity-component of $Z$, then $g$ is the sum of $[Z:Z_0]$-many integrals over cosets of $Z_0$, so it suffices to prove the result for each of these, and hence assume that $Z$ is also a torus.

Finally, if $Z = \bbT^m$ and $Y = \bbT^n$, then applying the map $\{\cdot\}:Y\times Z\to [0,1)^{m+n}$ turns the definition of $g$ into
\[g(y) = \int_{\bbR^m}p(\{y\},v)\,\d v\]
for some step polynomial $p:\bbR^{m+n}\to \bbR$ supported on $[0,1)^{m+n}$.  This integral is treated by the preceding proposition.
\end{proof}

\subsection{A special class of Lie modules}

In the sequel we will often consider step polynomials $Z\to A$ for an Abelian Lie group $A$ that is already equipped with an action of $Z$.  In this setting the following special class of Lie modules will become important.

\begin{dfn}\label{dfn:Lie-sp-mod}
Let $A$ be a Polish Abelian group, $Z$ a compact Abelian group and $T:Z\to \rm{Aut}\,A$ an action.  Then this action is \textbf{step-polynomial}, or `\textbf{SP}', if $A$ is a Lie group and,
for every compact Abelian group $Y$ and step polynomial $f:Y\to A$, the function
\[F:Z\times Y\to A:(z,y)\mapsto T^zf(y)\]
is also a step polynomial.  In this case, $A$ is a \textbf{SP $Z$-module}.
\end{dfn}

This includes the requirement that the orbit maps $z\mapsto T^za$ be SP for all $a \in A$.  This will turn out to be equivalent, but we will use the definition in the stronger form above.  Later we will introduce a larger class of `SP modules', of which Lie SP modules will be a special case.

\begin{ex}
The trivial action of $Z$ on $A$ is always SP  More generally, an action that factorizes through a quotient $q:Z \to Z_1$ to a finite group is always SP, because the partition of $Z$ into the cosets of $\ker q$ is QP. \fin
\end{ex}

\begin{ex}
The rotation action $\rm{rot}:\bbT \to \rm{O}(2) \leq \rm{Aut}\,\bbR^2$ given by
\[\rm{rot}(t) = \left(\begin{array}{cc}\cos 2\pi t & \sin 2\pi t\\ -\sin 2\pi t & \cos 2 \pi t\end{array}\right)\]
is not SP, because $\sin$ and $\cos$ are not SP functions $\bbT\to\bbR$.

More generally, a \textbf{full rotation action} of some $Z$ is an action isomorphic to $\rm{rot}\circ \chi:Z\actson \bbR^2$ for some $\chi \in \hat{Z}$ for which $\chi(Z) = \bbT$.  It is easy to see that any full rotation action is not SP. \fin
\end{ex}

In fact, it will turn out that full rotation actions are the unique obstructions to a Lie module being SP.

\begin{prop}\label{prop:full-rot-char}
A Lie $Z$-module $T:Z\actson A$ is SP unless it has a closed submodule which is a full rotation action.  In particular, any compact-by-discrete Lie $Z$-module is SP.
\end{prop}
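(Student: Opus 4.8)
The plan is to combine the structure theory of abelian Lie groups with the fact, noted in the Example following Definition~\ref{dfn:Lie-sp-mod}, that actions factoring through a finite quotient are SP. Write $A_0$ for the identity component of $A$: it is an open, closed $Z$-submodule isomorphic to $\bbR^n\times\bbT^m$, with $A/A_0$ discrete, and I would let $K\le A_0$ denote its maximal compact subgroup (the $\bbT^m$-factor), which is therefore canonical, $Z$-stable, and satisfies $A_0/K\cong\bbR^n$. The first point is that a full rotation action is connected, so any full rotation submodule of $A$ actually lies in $A_0$; moreover, given any closed $Z$-submodule $\bbR^2\le A_0/K$ carrying a full rotation action, its preimage $B\le A_0$ sits in a topological extension $K\into B\onto\bbR^2$ of $Z$-modules which splits $Z$-equivariantly: the continuous homomorphic sections form a torsor under $\Hom_{\mathrm{cont}}(\bbR^2,K)\cong\bbR^{2m}$, on which $Z$ acts affinely, and a compact group acting affinely on a finite-dimensional affine space fixes a point (average an orbit under Haar measure). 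Such a fixed section exhibits a full rotation submodule of $A$. Hence, under the hypothesis of the Proposition, $A_0/K$ carries no full rotation sub-$Z$-module either.

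Next I would deduce that the $Z$-action on $A_0$ factors through a finite quotient. The action on $A_0/K\cong\bbR^n$ has compact image, hence is conjugate into $\mathrm{O}(n)$; a compact abelian subgroup of $\mathrm{O}(n)$ containing no circle of rotations is finite (a positive-dimensional one would act as a full rotation on some $2$-plane, contradicting the previous paragraph), so this action factors through a finite quotient. The action on $K\cong\bbT^m$ has image in the discrete group $\mathrm{GL}_m(\bbZ)$, hence also factors through a finite quotient. Let $N\le Z$ be an open, finite-index subgroup acting trivially on both $K$ and $A_0/K$; then for $z\in N$ the endomorphism $T^z-\id$ kills $K$ and maps $A_0$ into $K$, so $z\mapsto T^z-\id$ is a continuous homomorphism $N\to\Hom_{\mathrm{cont}}(\bbR^n,\bbT^m)\cong\bbR^{nm}$ with compact — hence trivial — image. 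Thus $Z$ acts on $A_0$ through the finite group $Z/N$, so $A_0$ is an SP $Z$-module. The same circle of ideas shows each orbit map $z\mapsto T^za$ (for $a\in A$) is step-affine: writing $\bar a$ for the image of $a$ in $D:=A/A_0$ and $Z':=N\cap\mathrm{Stab}_Z(\bar a)$, which is open of finite index since $D$ is discrete, one has $T^za-a\in A_0$ for $z\in Z'$, and $z\mapsto T^za-a$ is a continuous homomorphism $Z'\to A_0$ with compact image, hence valued in $K$; so $z\mapsto T^za$ is affine on $Z'$, and — since $Z$ is abelian, whence $\mathrm{Stab}_Z(T^{z_0}\bar a)=\mathrm{Stab}_Z(\bar a)$ — agrees with an affine map on every coset of $Z'$ as well. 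Since the partition of $Z$ into those cosets is QP (it is pulled back through an embedding of the finite group $Z/Z'$ into a torus), $z\mapsto T^za$ is a step polynomial $Z\to A$.

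Finally I would assemble the pieces. Let $Y$ be compact abelian and $f:Y\to A$ a step polynomial. Then $\pi\circ f:Y\to D$ is a step polynomial into a discrete group, hence a step function by Corollary~\ref{cor:disc-step-poly-step-fn}: there are a QP partition $\{P_1,\dots,P_r\}$ of $Y$ and elements $\bar a_j\in D$ with $\pi f\equiv\bar a_j$ on $P_j$. Fix lifts $a_j\in A$, and set $g:=\sum_j a_j1_{P_j}$, a step function and hence a step polynomial $Y\to A$, and $h:=f-g$, a step polynomial taking values in $\ker\pi=A_0$ and therefore a step polynomial $Y\to A_0$. Then $T^zf(y)=T^zh(y)+T^zg(y)$. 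Since $A_0$ is SP, $(z,y)\mapsto T^zh(y)$ is a step polynomial $Z\times Y\to A_0\subseteq A$; and $(z,y)\mapsto T^zg(y)$ agrees on each QP set $Z\times P_j$ with the pullback of the step-affine orbit map $z\mapsto T^za_j$ along the projection $Z\times Y\to Z$, so it is a step polynomial too (a step polynomial precomposed with an affine map, by Lemma~\ref{lem:stepaffsteppoly}, times the indicator of a QP set). Their sum is a step polynomial, so $A$ is SP. For the last assertion: if $A$ is compact-by-discrete then $A_0=\bbT^d$ is compact and contains no copy of $\bbR^2$, so $A$ has no full rotation submodule and the preceding applies.

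The step I expect to be the real obstacle is the equivariant splitting in the first paragraph, which is what lets the hypothesis ``no full rotation submodule of $A$'' be propagated to the subquotient $A_0/K$; this is where one must invoke a fixed-point (averaging) argument for compact group actions on finite-dimensional affine spaces, and also be a little careful that the maximal compact subgroup of $A_0$ is genuinely canonical, so that $K$ and the induced $Z$-actions are well defined. Everything else amounts to bookkeeping against Definition~\ref{dfn:step-poly} and the lemmas already established — in particular that step functions and affine maps are step polynomials, that a step polynomial whose image lies in a closed subgroup is a step polynomial into that subgroup (shrink the auxiliary Euclidean group), and that step polynomials are stable under postcomposition with continuous homomorphisms and precomposition with affine maps.
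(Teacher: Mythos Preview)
Your proof is correct and reaches the same conclusion by a route that differs from the paper's in two places.

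For the connected component $A_0$, the paper passes to the universal cover $\t{A}_0$, a Euclidean space, and decomposes it into irreducibles; any full-rotation summand $E_1\leq\t{A}_0$ meets the deck group $D$ trivially (nonzero elements of $E_1$ have non-discrete $Z$-orbits, while $D$ is discrete and $Z$-stable), so $E_1$ injects into $A_0$. You instead pass to the quotient $A_0/K$ and use an averaging argument to split the extension $K\into B\onto\bbR^2$ equivariantly, lifting a full rotation from $A_0/K$ back up. Both are valid; the paper's route sidesteps the fixed-point argument you flagged as the obstacle, but your version is sound (the affine action of a compact group on a finite-dimensional real affine space has bounded orbits, so averaging produces a fixed section).

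For general $A$, the paper invokes Lemma~\ref{lem:spmod-from-finite-index} to pass to a finite-index $W\leq Z$ on which the $A_0$-action is trivial, then splits $A\cong A_0\times B$ topologically, reduces to finitely generated $B$, trivializes the $W$-action on $B$ as well, and reads off the remaining action as $T^w(a,b)=(a+\s(b)(w),b)$. You instead analyse individual orbit maps $z\mapsto T^za$ and show each is affine on cosets of an open subgroup, then assemble the SP property by decomposing a given step polynomial $f$ as a step function $g$ lifting its discrete part plus an $A_0$-valued remainder $h$. This is more hands-on but avoids the finite-index reduction lemma. The one small point you use without stating it is that a step polynomial multiplied by the indicator of a QP set is again a step polynomial; this follows by combining the two auxiliary affine maps into $\bbT^{d+d'}$ and refining the controlling partition, but it is not among the lemmas already recorded, so it is worth a sentence.
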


The proof of this will require a few steps.

\begin{lem}\label{lem:spmod-from-finite-index}
If $T:Z \to \rm{Aut}\,A$ is the action of a Lie $Z$-module, and $W \leq Z$ is a finite-index subgroup such that $T|W$ is SP, then $T$ is SP.
\end{lem}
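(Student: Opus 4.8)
The plan is to reduce step-polynomiality of the full $Z$-action to that of the restricted $W$-action by cutting $Z$ along its finitely many cosets of $W$ (which form a QP partition), and then absorbing the resulting finite family of coset-translations into $\rm{Aut}\,A$ by post-composing with automorphisms. So fix a compact Abelian group $Y$ and a step polynomial $f:Y\to A$; the goal is to show that the function $F:Z\times Y\to A$, $F(z,y):=T^zf(y)$, is a step polynomial.

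First I would fix a step-affine cross-section $\s:Z/W\to Z$ of the quotient $q:Z\to Z/W$, as provided by Proposition~\ref{prop:fund-lift} (or, more cheaply, since $Z/W$ is finite), and set $\xi(z):=z-\s(q(z))$, which is a step-affine map (Lemmas~\ref{lem:step-aff-obvious} and~\ref{lem:step-aff-compos}) taking values in $W=\ker q$, since $q\circ\s=\id_{Z/W}$. Write $z_1,\dots,z_n$ for the elements of the finite transversal $\s(Z/W)$, indexed so that $z\in z_i+W$ exactly when $\s(q(z))=z_i$; then $z=z_i+\xi(z)$ and hence $T^z=T^{z_i}\circ T^{\xi(z)}$ for every $z\in z_i+W$. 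Next I would form $G:W\times Y\to A$, $G(w,y):=T^wf(y)$, which is a step polynomial precisely because $T|W$ is SP and $f$ is a step polynomial. Since $(z,y)\mapsto(\xi(z),y)$ is step-affine from $Z\times Y$ to $W\times Y$ (being step-affine in each of its two coordinates), Lemma~\ref{lem:stepaffsteppoly} shows that $H:=G\circ(\xi\times\id)$, that is $H(z,y)=T^{\xi(z)}f(y)$, is a step polynomial on $Z\times Y$. Finally, the partition of $Z$ into cosets of $W$ is QP — being pulled back along an affine map $Z\to Z/W\into\bbT^d$, exactly as in the examples following Definition~\ref{dfn:Lie-sp-mod} — so each $C_i:=(z_i+W)\times Y$ is a QP subset of $Z\times Y$, and on $C_i$ the two identities above combine to give $F=T^{z_i}\circ H$.

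The last step is to recognise $F=\sum_{i=1}^n(T^{z_i}\circ H)\cdot 1_{C_i}$ as a single step polynomial, and this is the only place where a little care is needed, since it rests on two closure properties of step polynomials that are immediate from Definition~\ref{dfn:step-poly} but not recorded explicitly above. First, post-composing a step polynomial (in the form $\psi\circ f_0\circ\{\chi\}$ of Definition~\ref{dfn:step-poly}) with a continuous homomorphism of Abelian Lie groups — here the automorphism $T^{z_i}$ of $A$ — again gives a step polynomial, with $\psi$ simply replaced by $T^{z_i}\circ\psi$. Second, multiplying a step polynomial by the indicator of a QP set again gives a step polynomial, since after enlarging the controlling affine map so that the QP set becomes $\{\chi\}^{-1}$ of a convex polytope one may restrict the underlying Euclidean step polynomial to that polytope and extend it by $0$, which stays inside the relevant closed subgroup of $\bbR^r$. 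Granting these, each summand of the sum is a step polynomial, hence so is $F$, and therefore $T$ is SP. I do not expect a genuine obstacle here: the real content is just the bookkeeping that lets the finitely many coset-translations be pushed onto the target module.
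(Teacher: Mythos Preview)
Your proof is correct and follows essentially the same approach as the paper: decompose $Z\times Y$ into the finitely many cosets $(z_i+W)\times Y$, factor $T^z=T^{z_i}T^w$ on each, and use that post-composition with the fixed automorphism $T^{z_i}$ preserves step polynomials while the SP hypothesis on $T|W$ handles the $T^w$ part. The only cosmetic difference is the order of those two factors: the paper applies $T^{z_0}$ to $f$ first (obtaining a new step polynomial $Y\to A$) and then invokes the SP property of $T|W$ directly, which avoids the detour through the step-affine map $\xi$ and Lemma~\ref{lem:stepaffsteppoly}.
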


\begin{proof}
If $f:Y\to A$ is SP, then the function $F(z,y) := T^zf(y)$ is SP if this is so on every coset of $W\times Y$ separately, since the partition of $Z\times Y$ into these cosets is QP  However, if $(z_0 + W)\times Y$ is such a coset, then the map $y\mapsto T^{z_0}f(y)$ is a step polynomial on $Y$, and so
\[F(z_0 + \cdot,\cdot):W\times Y \to A:(w,y) \mapsto T^w(T^{z_0}f(y))\]
is a step polynomial because $T|W$ is SP.
\end{proof}

\begin{lem}\label{lem:cpt-or-disc-are-sp}
If $A$ is a discrete Abelian group or a compact Abelian Lie group, then any $Z$-action on $A$ is SP.
\end{lem}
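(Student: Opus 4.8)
The plan is to split into two cases according to whether $A$ is discrete or compact, since --- unlike in several earlier arguments --- one cannot reduce both cases to a finite quotient of $Z$; this is precisely where the difficulty lies, and I address it in the last paragraph.

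For $A$ discrete, I would begin with an arbitrary compact Abelian $Y$ and a step polynomial $f:Y\to A$. By Corollary~\ref{cor:disc-step-poly-step-fn} it is a step function, and since every QP partition is finite (the polytopal partition in Definition~\ref{dfn:qp} is finite, and so therefore is its pullback and any coarsening), $f$ takes only finitely many values $a_1,\dots,a_n$, on the cells $C_1,\dots,C_n$ of a QP partition $\frP$ of $Y$. The key point is that only the finitely many orbit maps $\o_j:z\mapsto T^za_j$ ($j\le n$) then need to be controlled. Each $\o_j$ is continuous from the compact group $Z$ into the discrete group $A$, so it has finite image and factors through a finite quotient $Z\to Z/W_j$, where $W_j$ is the intersection of the clopen (hence finite-index) stabilizers of the finitely many points of the orbit $Z\cdot a_j$. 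Thus the level-set partition $\frQ_j$ of $\o_j$ is a coarsening of the partition of $Z$ into cosets of $W_j$, which is QP by the argument in the proof of Lemma~\ref{lem:qp-on-cpts}; hence $\frQ_j$ is QP by Lemma~\ref{lem:qp-obvious}. I would then pull the partitions $\frP$ of $Y$ and $\bigvee_{j\le n}\frQ_j$ of $Z$ back to $Z\times Y$ along the (affine) coordinate projections and join them; by Lemmas~\ref{lem:qp-obvious} and~\ref{lem:qp-join} this is a QP partition of $Z\times Y$, and $F(z,y):=T^zf(y)$ is constant on each of its cells --- on the part lying over a cell $D$ of $\bigvee_j\frQ_j$ and a cell $C_i$ of $\frP$ it equals the constant value of $\o_i$ on $D$. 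So $F$ is a step function into a discrete group; to conclude that it is a step polynomial I would write $F=\sum_\ell \psi_\ell\circ 1_{E_\ell}$ over the cells $E_\ell$, where $1_{E_\ell}:Z\times Y\to\{0,1\}\subseteq\bbR$ is a step polynomial and $\psi_\ell:\bbZ\to A$ is the homomorphism sending $1$ to the value of $F$ on $E_\ell$, and use closure of step polynomials under finite sums.

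For $A$ a compact Abelian Lie group the argument is short. Here $\hat A$ is finitely generated, and $Z$ acts continuously on the discrete group $\hat A$; choosing finitely many generators $\chi_1,\dots,\chi_m$ of $\hat A$, the subgroup $W:=\ker T=\bigcap_{i\le m}\{z\in Z:T^z\chi_i=\chi_i\}$ is an intersection of clopen sets, hence open, hence of finite index. The restricted action $T|W$ is trivial and so SP, so Lemma~\ref{lem:spmod-from-finite-index} gives at once that $T$ is SP.

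The main obstacle, and the only genuinely non-formal point, is the reduction in the discrete case: a priori $T$ need not factor through any finite quotient of $Z$ (for instance when $Z$ is an infinite product of finite groups acting faithfully on a discrete group), so Lemma~\ref{lem:spmod-from-finite-index} is not directly available as it is in the compact case. The way around this is the observation that a step polynomial into a discrete group assumes only finitely many values, so that although $T$ itself need not factor through a finite quotient, each of the finitely many orbit maps that actually occur does. Everything else --- finiteness of QP partitions, QP-ness of coset partitions of open subgroups, and stability of QP partitions under affine pullback and finite joins --- is routine or already recorded in Section~\ref{sec:QP}.
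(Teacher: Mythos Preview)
Your proof is correct and follows essentially the same approach as the paper. In the discrete case the paper phrases the reduction slightly differently --- it passes to the finitely-generated $Z$-invariant submodule generated by the (finite) image of $f$, on which the action then factors through a finite quotient of $Z$ --- but this is the same idea as your observation that only the finitely many orbit maps $\o_j$ need to be controlled, and each factors through a finite quotient. Your explicit construction of the QP partition and the decomposition $F=\sum_\ell \psi_\ell\circ 1_{E_\ell}$ just spells out what the paper leaves implicit. The compact case is identical in both.
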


\begin{proof}
First suppose that $A$ is discrete and $T:Z\actson A$ is a continuous action.  Then every $T$-orbit must be finite.  It follows that every finite-subset of $A$ is contained in a finitely-generated, $Z$-invariant submodule of $A$.  Since a step polynomial from a compact group always has pre-compact image, this means we may consider only finitely-generated discrete modules. In this case, $T$ factorizes through a finite quotient of $Z$, so it is certainly SP.

On the other hand, if $A$ is a compact Lie $Z$-module, then $A = \hat{\hat{A}}$ is the Pontryagin dual of a finitely-generated discrete $Z$-module, so again the action factorizes through a finite quotient of $Z$.
\end{proof}

\begin{proof}[Proof of Proposition~\ref{prop:full-rot-char}]
\emph{Step 1: Euclidean modules.}\quad Let $E$ be a Euclidean space.  All continuous group automorphisms of $E$ are linear transformations, so the action of $Z$ is actually a linear representation.  Since $Z$ is compact, this representation admits an invariant inner product, by the standard averaging trick.  It therefore decomposes as a finite direct sum $\rho_0\oplus \bigoplus_i (\rm{rot}\circ \chi_i)$, where $\rho_0$ factorizes through a finite quotient of $Z$ and each $\rm{rot}\circ \chi_i$ is a full rotation action.  By Lemma~\ref{lem:spmod-from-finite-index}, this is SP if and only if there are no summands of the second kind.

\vspace{7pt}

\emph{Step 2: connected modules.}\quad Now suppose that $A$ is a connected Lie $Z$-module.  Then it may be presented as
\[D \into \t{A} \stackrel{q}{\onto} A,\]
where $\t{A}$ is the universal cover of $A$, a Euclidean space, and $D$ is a discrete subgroup of $\t{A}$.  This presentation is canonical, so the action of $Z$ on $A$ lifts to a linear representation of $Z$ on $\t{A}$.  By Step 1, either that representation on $\t{A}$ factorizes through a finite quotient of $Z$, in which case so does the action on $A$, or it contains a full rotation action as a direct summand.  In the second case, letting $E_1$ be such a full rotation subrepresentation, it must hold that $E_1\cap D = 0$, since any non-zero element of $E_1$ has non-discrete $Z$-orbit.  Therefore $q|E_1$ is injective, and so its image $q(E_1)$ is a full rotation submodule of $A$.  Thus, for connected modules, any $Z$-action either factorizes through a finite quotient of $Z$ (so is SP) or contains a full rotation subaction (so is not SP).

\vspace{7pt}

\emph{Step 3: general case.}\quad Finally, consider a general Lie $Z$-module $A$, and let $A_0$ be its identity component.  We will show that if $A_0$ does not contain a full rotation subaction, then the whole module $A$ is SP.

If the action of $A$ on $A_0$ contains no full rotation subaction, then step 2 has shown that it must be trivial for some finite-index subgroup $W \leq Z$.  By Lemma~\ref{lem:spmod-from-finite-index}, it suffices to show that the $W$-action on $A$ is SP.

Let $B:= A/A_0$.  Standard structure theory for Abelian Lie groups (see, for instance,~\cite[Section II.9]{HewRos79}) shows that we have an isomorphism $A \cong A_0\times B$ of topological groups, though this may not respect the $Z$-actions.

Now suppose that $f:Y\to A_0\times B$ is a step polynomial, and write it in components as $(f_1,f_2)$.  Then $f_2:Y\to B$ is a step function, hence takes only finitely many values.  Therefore, as in the proof of Lemma~\ref{lem:cpt-or-disc-are-sp}, we may assume that $B$ is finitely generated.  However, having done so, the $W$-action on $B$ must trivialize on a further finite-index subgroup, so by shrinking $W$ again we may assume that the $W$-action on $B$ is trivial.

Finally, in this case a routine exercise shows that the action $T$ on the whole of $A_0 \times B$ must take the form
\[T^z(a,b) = (a + \s(b)(z),b),\]
where $\s\in \rm{Hom}(b,\rm{Hom}(Z,A_0))$. This now gives
\[T^zf(y) = (f_1(y) + \s(f_2(y))(z),f_2(y)).\]
Since $f_2$ takes only finitely many values, controlled by some QP partition of $Z$, this clearly defines a step polynomial on $Z\times Y$.
\end{proof}

\begin{cor}\label{cor:s-p-class-closed}
If $A \leq B \stackrel{q}{\onto} C$ is a short exact sequence in $\PMod(Z)$, then $B$ is Lie and SP if and only if both $A$ and $C$ are Lie and SP.
\end{cor}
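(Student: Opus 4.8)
The plan is to separate ``Lie'' from ``SP''. The ``Lie'' half is routine structure theory: closed subgroups and Hausdorff quotients of Abelian Lie groups are Lie, and an extension of one Abelian Lie group by another is Lie (over a Euclidean neighbourhood of $0$ in $C$ the map $q$ has a continuous local section, so $B$ is locally $A\times(\text{Euclidean})$, hence locally Euclidean, hence Lie by homogeneity); so $B$ is Lie if and only if $A$ and $C=B/A$ are. From now on assume all three groups are Lie, and it remains to prove the ``SP'' equivalence.

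I would first record two facts. First, post-composing a step polynomial with a continuous homomorphism of Abelian Lie groups is again a step polynomial, since in the factorization of Definition~\ref{dfn:step-poly} one only has to compose the terminal homomorphism with the new one. Second, a sublemma: if $A\le B$ is a closed inclusion of Abelian Lie groups, $W$ a compact Abelian group, and $g:W\to B$ a step polynomial whose image lies in $A$, then $g$ is a step polynomial $W\to A$. For this I would use a QP coset decomposition of $W$ (and Lemma~\ref{lem:stepaffsteppoly}) to reduce to the case in which the affine map $\chi$ in a factorization $g=\psi\circ g_0\circ\{\chi\}$ is surjective onto a torus, so that $\{\chi\}$ has image all of $[0,1)^d$; the hypothesis then forces $g_0([0,1)^d)\subseteq\psi^{-1}(A)$, and the triple $\psi|_{\psi^{-1}(A)},\,g_0,\,\{\chi\}$ realizes $g$ as a step polynomial $W\to A$.

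For the forward implication, assume $B$ is SP. Given a step polynomial $f:Y\to C$, Lemma~\ref{lem:s-p-lift} lifts it to a step polynomial $F:Y\to B$ with $q\circ F=f$; since $B$ is SP, $(z,y)\mapsto T^zF(y)$ is a step polynomial on $Z\times Y$, and post-composing with $q$ shows $(z,y)\mapsto T^zf(y)$ is a step polynomial, so $C$ is SP. Given instead a step polynomial $f:Y\to A$, regard it as a step polynomial into $B$ via the inclusion; then $(z,y)\mapsto T^zf(y)$ is a step polynomial into $B$, it takes values in the submodule $A$, and the sublemma promotes it to a step polynomial into $A$, so $A$ is SP. The same argument, applied to $A'\hookrightarrow D\onto D/A'$, shows that every closed submodule of an SP Lie module $D$ is SP; since a full rotation action on $\bbR^2$ is not SP, an SP Lie module therefore has no full rotation submodule.

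For the reverse implication, assume $A$ and $C$ are SP; by Proposition~\ref{prop:full-rot-char} it is enough to show $B$ has no full rotation submodule. Suppose $B'\le B$ were one: $B'\cong\bbR^2$ with action $\mathrm{rot}\circ\chi$, $\chi(Z)=\bbT$, and this representation has no proper nonzero closed $Z$-invariant subgroup (the only subgroups of $\bbR^2$ invariant under all rotations are $0$ and $\bbR^2$), so $B'\cap A$ is $B'$ or $0$. If $B'\cap A=B'$ then $B'\le A$ is a full rotation submodule of $A$, contradicting $A$ SP. If $B'\cap A=0$ then $q|_{B'}$ is an injective $Z$-equivariant homomorphism; let $\pi$ be the $Z$-equivariant quotient of $C_0$ by its maximal compact subgroup. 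Then $\pi(q(B'))$ is a $Z$-equivariant quotient of $B'$, hence $0$ or $\cong B'$. If $\cong B'$, a $Z$-equivariant section over it --- obtained by lifting the relevant connected subgroup of $C$ to its universal cover and averaging an inner product --- produces a closed full rotation submodule of $C$, contradicting $C$ SP. If $0$, then $q(B')$ lies in a subtorus $T\le C$ on which $Z$ acts through the discrete group $\mathrm{Aut}(T)$, hence with a finite-index kernel; by injectivity of $q|_{B'}$ that kernel lies in $\ker\chi$, which has infinite index --- impossible. Thus $B$ has no full rotation submodule and is SP. The main obstacle is the sublemma --- in particular the reduction to a surjective affine map, with its attendant coset and a.e.\ bookkeeping --- and, secondarily, the ``universal cover and average'' step, which is exactly where one uses that $A$ appears only as a kernel rather than a direct summand.
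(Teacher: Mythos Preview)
Your proof is correct and follows the same overall route as the paper's: both directions ultimately rest on Proposition~\ref{prop:full-rot-char} and the irreducibility of full-rotation modules. The paper is terser. In the reverse direction it argues that $B_1\cap A\in\{0,B_1\}$ and, in the case $B_1\cap A=0$, simply asserts that $q|_{B_1}$ is an isomorphism onto a full-rotation submodule of $C$, whence $C$ is not SP. Your further case split on whether $q(B')$ survives the projection $\pi:C_0\to C_0/T$ is effectively the verification that $q(B_1)$ is a \emph{closed} submodule of $C$, a point the paper's one-liner leaves implicit. That said, in the sub-case $\pi(q(B'))\cong B'$ your ``universal cover and averaging'' detour is more than you need: once $\pi(q(B'))$ is a $2$-dimensional linear subspace of the Euclidean group $C_0/T$ carrying a full-rotation action, your already-established forward direction (closed submodules and quotients of SP modules are SP) gives at once that $C_0/T$, and hence $C$, is not SP. Your forward-direction sublemma likewise spells out a step that the paper's one-line argument for ``$A$ is SP'' takes for granted.
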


\begin{proof}
It is classical that the property of being Lie is closed  in $\PMod(Z)$ for extensions, quotients and closed subgroups, so we may assume all of $A$, $B$ and $C$ are Lie modules.  Let $T$ denote any of these three actions.

First suppose that $B$ is SP.  Any step polynomial $Y\to A$ is also a step polynomial $Y\to B$, so $A$ is also SP.  On the other hand, if $f:Y\to C$ is a step polynomial, then Lemma~\ref{lem:s-p-lift} gives a SP lift of it $F:Y\to B$.  It follows that
\[T^zf(y) = q(T^zF(y))\]
is a homomorphic image of a step polynomial, hence is a step polynomial.

On the other hand, suppose that $B$ is not SP.  Then Proposition~\ref{prop:full-rot-char} gives a full rotation $Z$-submodule $B_1 \leq B$.  Since full rotation modules are irreducible (indeed, the orbit of any non-zero element of $B_1$ has $\bbZ$-span which is dense in $B_1$), it follows that either $B_1 \leq A$, in which case $A$ is not SP, or $q|B_1:B_1\to q(B_1) \leq C$ is an isomorphism, in which case $C$ is not SP.
\end{proof}

\section{Functional and semi-functional modules}\label{sec:fnl-and-semi-fnl}

This subsection will introduce a class of Polish $Z$-modules consisting of functions between locally compact groups, and a larger class of modules obtained as quotients of such.  The latter will be assembled into a category.  The next section will develop some basic homological algebra in this category in order to introduce two different notions of groups cohomology that we will need for its objects.

Let $Z$ denote a compact metrizable Abelian group throughout this section.

\subsection{Functional and semi-functional modules}

\begin{dfn}[Functional modules]\label{dfn:fnl}
Suppose that $X$ is another compact Abelian group with a distinguished homomorphism $\a:Z\to X$, and that $A$ is a Lie $Z$-module, say with action $T_A$.  Then a \textbf{functional $Z$-module with fibre $A$ and base $\a$} is a closed $Z$-submodule of $\F(X,A)$, equipped with the diagonal $Z$-action:
\[(z \cdot f)(x) := T_A^z(f(x - \a(z))).\]
Sometimes we will refer instead to $X$ as the `base', when the choice of $\a$ is clear.

Sometimes it will be important that $X = X_0\times Z$ for some $X_0$ with the obvious homomorphism $Z \into X$.  In this case we shall refer instead to a functional $Z$-module with fibre $A$ and \textbf{dummy} $X_0$.

It could happen that a $Z$-module $P$ is a submodule of both $\F(X,A)$ and $\F(X',A')$ for two different bases $X$ and $X'$ and two different fibres $A$ and $A'$.  Thus, the inclusion $P\subseteq \F(X,A)$ for a specific choice of base and fibre is part of the structure that defines a functional $Z$-module, even though it will usually be left implicit.
\end{dfn}

\begin{ex}
Whenever $q:Z\onto Y$ is a surjective homomorphism and $A$ is a Lie $Y$-module, the $Y$-module $\F(Y,A)$ pulls back to a functional $Z$-module
\[\F(Y,A) \circ q := \{f\circ q\,|\ f \in \F(Y,A)\} \leq \F(Z,A^q),\]
where $A^q$ denotes $A$ endowed with the action of $Z$ obtained through $q$.  The same construction may be applied to any other functional $Y$-module. \fin
\end{ex}

\begin{ex}\label{ex:main-fnl-egs}
Most other examples will arise by repeatedly forming images or kernels of suitable homomorphisms.

For instance, given $A$, $Z$ and a subgroup-tuple $\bfU = (U_1,\ldots,U_k)$ in $Z$, the module of associated {\PDE}-solutions,
\[M := \{f \in \F(Z,A)\,|\ d^{U_1}\cdots d^{U_k}f = 0\},\]
is functional, with fibre $A$ and base $\rm{id}_Z$.  It is the kernel of the $Z$-module homomorphism
\[\F(Z,A)\to \F(U_1\times \cdots \times U_k\times Z,A):f\mapsto d^{U_1}\cdots d^{U_k}f,\]
where the target module has dummy $U_1\times \cdots \times U_k$.

Within $M$, one also finds the submodules $M_i$, $i=1,2,\ldots,k$, of functions that satisfy one of the obvious simplified equations:
\[M_i := \{f \in \F(Z,A)\,|\ d^{U_1}\cdots d^{U_{i-1}}d^{U_{i+1}}\cdots d^{U_k}f = 0\}.\]
From these one forms the submodule $M_0 := M_1 + \cdots + M_k$ of `degenerate' solutions to the original {\PDE}.  Each $M_i$ is also obtained as the kernel of a homomorphism defined by repeated differencing, and then $M_0$ is the image of $M_1\oplus \cdots \oplus M_k$ under the sum-homomorphism. The results of Part I include that $M_0$ is closed, so it is another example of a functional $Z$-module. \fin
\end{ex}

Much of our later work will concern pairs of functional modules, and their quotients.

\begin{dfn}[Semi-functional modules]\label{dfn:semi-fnl}
A \textbf{semi-functional} $Z$-module with fibre $A$ and base $\a:Z\to X$ is an inclusion $P\leq Q$ consisting of a functional $Z$-module $Q$ with fibre $A$ and base $\a$ and a closed submodule $P$ of $Q$.  Such a semi-functional module will sometimes be written $P\stackrel{\incln}{\to} Q$. The \textbf{quotient} of this semi-functional $Z$-module is $Q/P$.

As for functional $Z$-modules, the inclusion of $Q$ in $\F(X,A)$ for a particular base and fibre is part of the structure of a semi-functional $Z$-module.
\end{dfn}

\begin{ex}
In the notation of Example~\ref{ex:main-fnl-egs}, since $M_0$ is closed, the inclusion $(M_0 \leq M)$ is a semi-functional $Z$-module.  The quotient $M/M_0$ was studied in several of the specific worked examples in Part I, where it could be computed explicitly in terms of some cohomology groups. \fin
\end{ex}

\begin{dfn}[Semi-functional morphism]\label{dfn:semi-fnl-mor}
Given semi-functional $Z$-modules $(P\leq Q)$ and $(P' \leq Q')$, possibly with different fibres and bases, a \textbf{semi-functional morphism} between them is a commutative diagram
\begin{center}
$\phantom{i}$\xymatrix{
P\ar_-\incln[d]\ar^{\psi_1}[r] & P'\ar^-\incln[d]\\
Q\ar^{\psi_2}[r] & Q'.
}
\end{center}
\end{dfn}

Note that in the above diagram one must have $\psi_1 = \psi_2|P$, so properties of $\psi_1$ often follow from those of $\psi_2$.  This diagram will often be abbreviated to
\[(P\leq Q) \stackrel{\psi_2}{\to} (P'\leq Q').\]

\subsection{Step-polynomial submodules}\label{subs:semi-sp-mods}

It is clear that semi-functional $Z$-modules and morphisms form a category.  This point of view will be important shortly, but we will need to restrict our attention to a special subclass of morphisms, defined in terms of their behaviour on step-polynomial elements.

The following non-standard terminology will be valuable in the sequel.

\begin{dfn}
If $Z$ is a compact metrizable Abelian group, then an \textbf{enlargement} of it is another compact metrizable Abelian group $Z'$ containing $Z$ as a subgroup.
\end{dfn}

\begin{dfn}
Let $A$ an Abelian Lie group.  If $P \leq \F(X,A)$ is any subgroup, then its \textbf{SP subgroup relative to $(X,A)$} is the subgroup $P\cap \F_{\rm{sp}}(X,A)$.  Explicitly, this is the subgroup of Haar-a.e. equivalence classes of functions in $P$ that contain a step-polynomial representative $X \to A$.

Usually, $X$ and $A$ will be left to the reader's understanding, and the SP subgroup will be denoted by $P_{\rm{sp}}$.  To be precise, however, this subgroup may depend on a particular choice of $X$ and $A$.
\end{dfn}

\begin{dfn}[Complexity-bounded homomorphisms]\label{dfn:fnl-mor}
Suppose that $A$ and $A'$ are Abelian Lie groups, that $X$ and $X'$ are compact Abelian groups, and that $P\leq \F(X,A)$ and $Q\leq \F(X',A')$ are closed subgroups.  Then a continuous group homomorphism $\phi:P\to Q$ is \textbf{complexity-bounded relative to $(X,A)$ and $(X',A')$} if
\[\phi(P_{\rm{sp}})\subseteq Q_{\rm{sp}}.\]
Once again, the relevant choice of $(X,A)$ and $(X',A')$ will usually be obvious, and will be suppressed from the notation.

Similarly, a semi-functional morphism as in Definition~\ref{dfn:semi-fnl-mor} is \textbf{complexity-bounded} if the horizontal arrows of its commutative diagram are complexity-bounded.
\end{dfn}

Thus, complexity-bounded homomorphisms are those that respect the property of being step-polynomial among the elements of the domain module $P$.  This requirement can be weak or strong, depending on whether $P$ contains many (equivalence classes represented by) step polynomials.  In the examples that will concern us, the modules will be replete with step polynomials (for instance, the step polynomials will be dense in probability), and so complexity-bounded homomorphisms will be rather special.  However, we should note in passing that I do now know the answer to the following basic question.

\begin{ques}
Are there an Abelian Lie group $A$ with trivial $\bbT$-action and a functional $\bbT$-submodule $Q \leq \C(\bbT,A)$ such that $Q_{\rm{sp}}$ is not dense in $Q$ for convergence in probability?
\end{ques}

\begin{ex}
Suppose that $P$ and $Q$ are as above, that $\phi:P\to Q$ is a continuous homomorphism, and that there are
\begin{itemize}
 \item a continuous homomorphism $\k:A\to A'$, and
\item a family of continuous epimorphisms $\zeta_1,\ldots,\zeta_k:X' \onto X$
\end{itemize}
such that
\begin{eqnarray}\label{eq:cplx-bdd-homo}
\phi(f)(x') = \sum_{i=1}^k \k(f(\zeta_i(x'))) \quad \forall f\in P.
\end{eqnarray}
Homomorphisms of this form are always complexity-bounded. \fin
\end{ex}

In Example~\ref{ex:main-fnl-egs}, several functional modules arising in the study of {\PDE}s were obtained using images and kernels of homomorphisms, and all of those homomorphisms took the form~(\ref{eq:cplx-bdd-homo}).  However, it would be interesting to know whether kernels of complexity-bounded homomorphisms suffice, without allowing images.

\begin{ques}
In the notation of Example~\ref{ex:main-fnl-egs}, is there a complexity-bounded $Z$-module homomorphism $\phi:\F(Z,A)\to \F(X,A')$ for some base $X$ and fibre $A'$ such that $M_0 = \ker \phi$? Can $\phi$ be of the form~(\ref{eq:cplx-bdd-homo})? \fin
\end{ques}

Clearly a composition of complexity-bounded homomorphisms is complexity-bounded, so we may make the following.

\begin{dfn}[Basic semi-functional category]
The \textbf{basic semi-functional category} of $Z$-modules is the category $\SFMod_0(Z)$ whose objects are semi-functional $Z$-modules and whose morphisms are complexity-bounded semi-functional $Z$-morphisms.
\end{dfn}

If $P \leq Q$ is a semi-functional $Z$-module, then clearly $P_\sp \leq Q_\sp$.  From the definition of complexity-boundedness, it follows that the map
\[\big(P \leq Q\big) \mapsto (P_\sp \leq Q_\sp)\]
defines a functor $\rm{StepPoly}$ from $\SFMod_0(Z)$ to the category $\Mod^\leq(Z)$ of inclusions of abstract $Z$-modules (this functor forgets the topologies, base and fibre of $P_\sp$ and $Q_\sp$).

In the sequel, we will sometimes wish to study step polynomials across a whole commutative diagram of semi-functional modules.

\begin{dfn}[Step-polynomial subdiagrams]\label{dfn:subdiag}
Given any commutative diagram in $\SFMod_0(Z)$ (formally, any covariant functor from a small category $\sfC$ to $\SFMod_0(Z)$), its \textbf{step-polynomial subdiagram} is the diagram of the same shape, where each semi-functional module that appears in the original diagram has been replaced by its step-polynomial submodule, and all morphisms are accordingly restricted to these SP subgroups (formally, this is the composition of the original functor $\sfC \to \SFMod_0(Z)$ with $\rm{StepPoly}:\SFMod_0(Z) \to \Mod^\leq(Z)$).
\end{dfn}

We will also need the following approximate reversal of Definition~\ref{dfn:fnl-mor}.

\begin{dfn}[Step-polynomial pre-images]\label{dfn:s-p-pre-ims}
Let $P\leq \F(X,A)$ and $Q \leq \F(X',A')$ be as in Definition~\ref{dfn:fnl-mor}, and let $\phi:P\to Q$ be a continuous homomorphism.  Then $\phi$ admits \textbf{SP pre-images relative to $(X,A)$ and $(X',A')$} if
\[(\phi(P))_{\rm{sp}} \subseteq \phi(P_{\rm{sp}}):\]
that is, whenever $f \in \phi(P)$ is a Haar-a.e. equivalence class of a step polynomial, there is some $g \in P_{\rm{sp}}$ with $\phi(g) = f$.
\end{dfn}

Clearly, if $\phi$ is an invertible homomorphism of functional $Z$-modules, then $\phi$ has SP pre-images if and only if $\phi^{-1}$ is complexity-bounded. In general it may be false that a composition of homomorphisms with SP pre-images has SP pre-images, so one cannot define a further subcategory of $\SFMod_0(Z)$ by requiring SP pre-images of any of the homomorphisms involved.

The last task of this subsection will be to interpret a direct sum of two functional $Z$-modules as another functional $Z$-module.

\begin{dfn}
If $P_i \leq \F(X_i,A_i)$ are functional $Z$-modules with bases $\a_i:Z\to X_i$ for $i=1,2$, then their \textbf{direct sum} in the category $\SFMod_0(Z)$ is the functional $Z$-module with base $Z \to X_1\times X_2:z\mapsto (\a_1(z),\a_2(z))$ and fibre $A_1\oplus A_2$ which consists of all functions of the form
\[(x_1,x_2) \mapsto (f_1(x_1),f_2(x_2))\]
for some $f_1 \in P_1$ and $f_2 \in P_2$.

If $(P_i \leq Q_i)$, $i=1,2$, are semi-functional $Z$-modules, then their \textbf{direct sum} is $(P_1\oplus P_2 \leq Q_1\oplus Q_2)$.
\end{dfn}

It is an easy check that the above definition gives all the usual categorial properties of direct sums, such as associativity: there is a natural isomorphism $(P_1\oplus P_2)\oplus P_3 \cong P_1 \oplus (P_2 \oplus P_3)$.  For this reason we will generally write such multiple direct sums without brackets.

%
%

\subsection{Behaviour under co-induction}\label{subs:behav-coind}

Unfortunately, $\SFMod_0(Z)$ is still not quite adequate for our later homological-algebraic purposes.  The problem is that (as far as I know) the property of complexity-boundedness for morphisms may not be respected by co-induction.

In order to resolve this, we must decide how to interpret `step polynomials' in a module co-induced from a functional module.  Let $P\leq \F(X,A)$ be a functional $Z$-module with base $\a:Z\to X$.

First, if $\beta:Z\to V$ is another base, then we can regard $\F(V,P)$ as a submodule of $\F(V\times X,A)$, so that it also becomes a functional $Z$-module with base $z\mapsto (\b(z),\a(z))$.  We will often use this construction with $\b = 0$.

Now suppose that $Z'\geq Z$ is an enlargement.  Then we have
\[\Cnd_Z^{Z'}P := \F(Z',P)^Z \leq \F(Z'\times X,A)^Z.\]
The $Z$-action whose fixed-point module is $\F(Z'\times X,A)^Z$ is the diagonal action on $X$ (through the homomorphism $\a:Z\to X$), on $Z'$ (by rotation) and on $A$.  Having taken these fixed points, the new $Z'$-action which defines $\Cnd_Z^{Z'}P$ as a $Z'$-module is by rotation of the $Z'$-coordinate alone. Thus we have canonically identified $\Cnd_Z^{Z'}P$ as a functional $Z'$-module in $\F(Z'\times X,A)$, with fibre $A$ and dummy $X$, and its step-polynomial elements will always be understood relative to $(Z'\times X,A)$.

With this interpretation, it is an easy check that given two enlargements $Z \leq Z' \leq Z''$ and a functional $Z$-module $P$, the usual isomorphism
\[\Cnd_Z^{Z''}P \cong \Cnd_{Z'}^{Z''}\Cnd_Z^{Z'}P\]
is complexity-bounded in both directions. We will therefore generally identify these two functional modules whenever convenient.

However, a slightly trickier issue is the following.  If $Z' = Z$, then one has a canonical isomorphism $\Cnd_Z^{Z'}M \cong M$ for any $M \in \PMod(Z)$.  It will sometimes be important that this `trivial' isomorphism respect the notion of step polynomials.  We will find that this is the case only if one restricts attention to fibres $A$ that are SP modules: this is why SP modules were introduced.  In that case, one has the following.

\begin{prop}\label{prop:s-p-coind-consistent}
Let $A$ be a Lie $Z$-module, thought of as a functional module with base $0$ (the trivial group) and fibre $A$.  Then $A$ is SP if the obvious isomorphism $\Cnd_Z^ZA \cong A$ is complexity-bounded.  In this case, for any base $\a:Z\to X$, any functional $Z$-module $P\leq \F(X,A)$, and any enlargement $Z' \geq Z$, there is a $Z$-equivariant isomorphism
\[\Phi:\F(Z'/Z,P) \stackrel{\cong}{\to} \Cnd_Z^{Z'}P\]
(though possibly not $Z'$-equivariant) such that both $\Phi$ and $\Phi^{-1}$ are complexity-bounded.  Moreover, this isomorphism may be chosen naturally in $P$: that is, if $P\to Q$ is a homomorphism of functional $Z$-modules, then one obtains a commutative diagram of $Z$-module homomorphisms
\begin{center}
$\phantom{i}$\xymatrix{
\F(Z'/Z,P) \ar[r] \ar_\cong[d] & \F(Z'/Z,Q) \ar^\cong[d]\\
\Cnd_Z^{Z'} P \ar[r] & \Cnd_Z^{Z'}Q.
}
\end{center}
\end{prop}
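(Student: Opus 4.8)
The plan is to prove the two implications of the first sentence separately, then construct the isomorphism $\Phi$ explicitly using a step-affine cross-section, and finally check naturality.

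\emph{The equivalence ``$A$ is SP $\iff$ the canonical $\Cnd_Z^ZA\cong A$ is complexity-bounded''.} Unwinding the definition in Subsection~\ref{subs:behav-coind} with $Z'=Z$, the module $\Cnd_Z^ZA$ sits inside $\F(Z\times 0,A)^Z = \F(Z,A)^Z$, where the $Z$-action taken fixed is $z\cdot f(\cdot) = T_A^z f(\cdot - z)$; the fixed points are exactly the functions $f$ of the form $f(z') = T_A^{z'}a$ for some fixed $a\in A$, i.e.\ the orbit maps. The canonical isomorphism $\Cnd_Z^ZA\to A$ sends such an $f$ to $f(0)=a$. So ``$\Cnd_Z^ZA\to A$ complexity-bounded'' asks that whenever $z'\mapsto T_A^{z'}a$ is a step polynomial $Z\to A$ then $a$ (a constant map $0\to A$) is a step polynomial — which is automatic — and ``the inverse is complexity-bounded'' asks that for every $a\in A$ the orbit map $z'\mapsto T_A^{z'}a$ is a step polynomial on $Z$. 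Thus the condition is precisely that all orbit maps are SP. Now if $A$ is SP then by definition all orbit maps are SP, giving one direction. For the converse — from ``all orbit maps SP'' to ``$A$ is a full SP module'' — this is exactly the strengthening remarked upon after Definition~\ref{dfn:Lie-sp-mod}, and it follows from Proposition~\ref{prop:full-rot-char}: a Lie $Z$-module all of whose orbit maps are step polynomials cannot contain a full rotation submodule (the orbit map of a nonzero vector in a full rotation module is a $\sin/\cos$ pair, visibly not SP), hence by Proposition~\ref{prop:full-rot-char} it is SP. So I would invoke Proposition~\ref{prop:full-rot-char} here rather than reprove anything.

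\emph{Construction of $\Phi$.} Assume now $A$ is SP and fix a base $\a:Z\to X$, a functional $Z$-submodule $P\leq \F(X,A)$, and an enlargement $Z'\geq Z$. By Corollary~\ref{cor:fund-split} applied to $Z\leq Z'$, there is a $Z$-equivariant bijection $Z'\cong (Z'/Z)\times Z$ with both directions step-affine; equivalently, Proposition~\ref{prop:fund-lift} gives a step-affine cross-section $\s:Z'/Z\to Z'$ of the quotient $q':Z'\to Z'/Z$, and then a step-affine $Z$-equivariant map $\xi:Z'\to Z$, namely $\xi(z') = z' - \s(q'(z'))$. Now $\Cnd_Z^{Z'}P = \F(Z',P)^Z \leq \F(Z'\times X,A)$ consists of those $h(z',x)$ that are $Z$-invariant under the diagonal action (rotation on $Z'$, rotation through $\a$ on $X$, $T_A$ on $A$). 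I would define $\Phi:\F(Z'/Z,P)\to \Cnd_Z^{Z'}P$ by
\[\Phi(g)(z',x) := T_A^{\xi(z')}\big(g(q'(z'))(x - \a(\xi(z')))\big),\]
the `unwinding' of $g$ along the cross-section, and $\Phi^{-1}(h)(\bar z) := h(\s(\bar z),\,\cdot\,) \in P$. A short computation using $Z$-equivariance of $\xi$ checks that $\Phi(g)$ is indeed $Z$-fixed, that $\Phi$ and $\Phi^{-1}$ are mutually inverse and $Z$-equivariant (for the $Z$-action on $\F(Z'/Z,P)$ that acts on the fibre $P$ only — note $Z$ acts trivially on $Z'/Z$), and that these are genuine $Z$-module maps. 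The point of $Z'$-non-equivariance is that $\s$ is only a set-section, not a homomorphism, so $\Phi$ need not intertwine the $Z'$-rotation actions — this is flagged in the statement and causes no trouble.

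\emph{Complexity-boundedness of $\Phi$ and $\Phi^{-1}$, and naturality.} This is where the real work sits, and I expect it to be the main obstacle, though all the needed tools are already assembled. Given $g\in \F(Z'/Z,P)_\sp$, i.e.\ $g$ represented by a step polynomial $Z'/Z\to P\leq \F(X,A)$ — concretely a step polynomial $G:(Z'/Z)\times X\to A$ — I need $\Phi(g)$, viewed in $\F(Z'\times X,A)$, to be a step polynomial. Writing $\Phi(g)(z',x) = T_A^{\xi(z')}\big(G(q'(z'), x-\a(\xi(z')))\big)$, this is a composition: the map $z'\mapsto (q'(z'),\xi(z'))$ is step-affine ($q'$ is affine, $\xi$ is step-affine by Proposition~\ref{prop:fund-lift}), hence so is $(z',x)\mapsto (q'(z'), x-\a(\xi(z')), \xi(z'))$; then $G$ applied to the first two arguments is a step polynomial after this step-affine substitution by Lemma~\ref{lem:stepaffsteppoly}; then applying $T_A^{(\,\cdot\,)}$ in the third variable preserves step polynomials precisely because $A$ is an SP module (Definition~\ref{dfn:Lie-sp-mod}, applied with the compact group $Y = Z'\times X$ or rather its relevant Lie quotient). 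For $\Phi^{-1}$: given $h\in (\Cnd_Z^{Z'}P)_\sp$ represented by a step polynomial $H:Z'\times X\to A$, the element $\Phi^{-1}(h)$ is $\bar z\mapsto H(\s(\bar z),\,\cdot\,)$, which as a function $(Z'/Z)\times X\to A$ is $(\bar z,x)\mapsto H(\s(\bar z),x)$; since $(\bar z,x)\mapsto (\s(\bar z),x)$ is step-affine (Proposition~\ref{prop:fund-lift} again), Lemma~\ref{lem:stepaffsteppoly} gives that this is a step polynomial, so it lies in $P_\sp$ — and here one also uses Corollary~\ref{cor:connected-images} to know that the slice map $\bar z\mapsto H(\s(\bar z),\,\cdot\,)$ really lands in $P$ and is an equivalence class of the right kind, i.e.\ that the a.e.-defined object is well-behaved. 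The potential subtlety — essentially the reason the proposition is nontrivial — is keeping the a.e.\ ambiguities under control: $G$ and $H$ are only a.e.-defined, so one must either fix genuine step-polynomial representatives at the outset (legitimate, since Lemma~\ref{lem:stepaffsteppoly} and the SP condition are about honest functions) and observe that the resulting composite changes only on a null set when the representative changes, or invoke the Measurable Selector / Fubini machinery to justify the slicing. I would do the former. Finally, naturality in $P$: if $\psi:P\to Q$ is a functional $Z$-module homomorphism, then both $\Phi$'s are built from the \emph{same} cross-section $\s$ (equivalently the same $\xi$), and $\psi$ is applied fibrewise; chasing the square shows $\Phi_Q\circ \F(Z'/Z,\psi) = \Cnd_Z^{Z'}(\psi)\circ \Phi_P$ by direct substitution, since post-composition with $\psi$ commutes with the unwinding formula. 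This completes the proof.
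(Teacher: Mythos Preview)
Your approach is the same as the paper's: it too uses the step-affine trivialisation $Z'\cong (Z'/Z)\times Z$ from Corollary~\ref{cor:fund-split} and then unwinds along the $Z$-direction using the SP property of $A$. The paper splits your single formula for $\Phi$ into two stages (first pass from $\F(Z',P)^Z$ to $\F(W\times Z,P)^Z$ by composing with the bijection, then build $\Psi:\F(W,P)\to \F(W\times Z,P)^Z$ by $\Psi(f)(w,z)=R_P^z f(w)$), but the content is identical.

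There is one genuine gap in your treatment of $\Phi^{-1}$. Your proposed fix for the a.e.\ ambiguity --- ``fix genuine step-polynomial representatives \ldots\ and observe that the resulting composite changes only on a null set when the representative changes'' --- does not work as stated: the image of $(\bar z,x)\mapsto (\s(\bar z),x)$ is a Haar-null subset of $Z'\times X$ whenever $Z\neq 0$, so altering $H$ on a null set can change $H(\s(\bar z),x)$ for \emph{every} $(\bar z,x)$. What is missing is exactly Lemma~\ref{lem:strict-equivar}: given a step polynomial $H:Z'\times X\to A$ whose a.e.\ class is $Z$-equivariant, one may replace it by a \emph{strictly} $Z$-equivariant step polynomial agreeing with it a.e.\ (the proof uses the step-affine $\xi:Z'\to Z$ you already have). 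Once $H$ is strictly equivariant, the restriction $(\bar z,x)\mapsto H(\s(\bar z),x)$ is well-defined as an a.e.\ class on $(Z'/Z)\times X$ (because the bijection $(\bar z,z,x)\mapsto (\s(\bar z)+z,x)$ is measure-preserving, and strict equivariance transports the slice at $z=0$ to any other $z$), lands in $P$, and is a step polynomial by Lemma~\ref{lem:stepaffsteppoly} as you say. Your alternative suggestion of ``Measurable Selector / Fubini machinery'' is on the right track but is precisely what Lemma~\ref{lem:strict-equivar} packages; you should invoke it rather than the incorrect null-set claim. (Corollary~\ref{cor:connected-images} is not needed here.)
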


The proof of this will make use of the following simple lemma.

\begin{lem}\label{lem:strict-equivar}
Let $Y\leq Z$ be an inclusion of compact metrizable Abelian groups and let $A$ be an SP Lie $Y$-module.  If $F:Z\to A$ is a Borel function whose Haar a.e. equivalence class in $\F(Z,A)$ is $Y$-equivariant, then there is a \emph{strictly} $Y$-equivariant Borel function $f:Z \to A$ which agrees with $F$ a.e.  If $F$ is step-polynomial, then $f$ may be chosen step-polynomial.
\end{lem}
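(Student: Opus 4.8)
Given the Borel function $F : Z \to A$ whose a.e.-class is $Y$-equivariant, I want to produce a \emph{strictly} $Y$-equivariant Borel representative. The natural move is to average $F$ over the $Y$-action to kill the discrepancy. Concretely, pick a Borel cross-section $\s : Z/Y \to Z$ for the quotient $Z \to Z/Y$ (for the step-polynomial refinement, Proposition~\ref{prop:fund-lift} gives one that is step-affine), and define
\[
f(z) := T_A^{\,z - \s(\bar z)}\,F\big(\s(\bar z)\big), \qquad \bar z := z + Y \in Z/Y,
\]
where $T_A$ is the $Y$-action on $A$ (note $z - \s(\bar z) \in Y$, so this makes sense). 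By construction $f$ is \emph{strictly} $Y$-equivariant: replacing $z$ by $z + y$ for $y \in Y$ leaves $\bar z$ unchanged and multiplies the exponent by $T_A^y$. So the only thing to check is that $f = F$ almost everywhere.

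\textbf{Key steps, in order.} First, record that $f$ and $F$ agree a.e.\ on the fibre $\s(\bar z) + Y$ for $m_{Z/Y}$-a.e.\ $\bar z$: this is exactly the hypothesis that the class of $F$ is $Y$-equivariant, unwound fibrewise via a Fubini/disintegration argument over $Z \to Z/Y$ (the Haar measure on $Z$ disintegrates as Haar on $Z/Y$ against Haar on the fibres). Precisely, $Y$-equivariance of $[F]$ says $F(z + y) = T_A^y F(z)$ for a.e.\ $(z,y) \in Z \times Y$ (with product Haar measure), and Fubini then gives: for a.e.\ $z$, $F(z+y) = T_A^y F(z)$ for a.e.\ $y \in Y$. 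Pushing this through the cross-section shows $f = F$ a.e. Second, check $f$ is Borel: the cross-section $\s$ is Borel, $\bar z \mapsto F(\s(\bar z))$ is Borel, $z \mapsto z - \s(\bar z)$ is Borel, and the action map $(y,a) \mapsto T_A^y a$ is jointly continuous, so $f$ is a Borel function of $z$.

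\textbf{The step-polynomial case.} Suppose now $F$ is a step polynomial. Choose $\s : Z/Y \to Z$ to be a \emph{step-affine} cross-section, as furnished by Proposition~\ref{prop:fund-lift}. Then $\bar z \mapsto \s(\bar z)$ is step-affine, hence by Lemma~\ref{lem:stepaffsteppoly} the composite $z \mapsto F(\s(\bar z))$ is a step polynomial $Z \to A$. Likewise $z \mapsto z - \s(\bar z) \in Y$ is step-affine $Z \to Y$ (difference of the identity and a step-affine map, invoking Lemma~\ref{lem:step-aff-obvious}), and it is $Y$-equivariant onto $Y$ in the appropriate sense. Now $f(z) = T_A^{\,\xi(z)} G(z)$ where $\xi : Z \to Y$ is step-affine and $G : Z \to A$ is a step polynomial. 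This is exactly the shape that the SP hypothesis on $A$ controls: viewing $(\xi, G)$ as a step polynomial $Z \to Y \times A$ (well, $G$ alone need only be pulled through), Definition~\ref{dfn:Lie-sp-mod} applied to the $Y$-action on $A$ — extended over $Z$ via the map $\xi$ — shows that $(z, z') \mapsto T_A^{\,\xi(z)} G(z')$ is a step polynomial on $Z \times Z$, and restricting to the diagonal (a step-affine, indeed affine, embedding $Z \to Z \times Z$) and applying Lemma~\ref{lem:stepaffsteppoly} once more yields that $f$ is a step polynomial.

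\textbf{The main obstacle.} The delicate point is the diagonal-restriction step: Definition~\ref{dfn:Lie-sp-mod} is phrased for the map $(z,y) \mapsto T^z f(y)$ on $Z \times Y$ with $f$ a step polynomial on an \emph{auxiliary} group $Y$, whereas here the ``auxiliary'' variable and the acting variable are entangled through $\xi(z)$. One must first apply the SP property with the acting group being $Y$ and the auxiliary group being $Z$ (so $F \circ \s$ is the step polynomial on the base $Z$), obtaining a step polynomial $(y,z) \mapsto T_A^y (F(\s(z)))$ on $Y \times Z$; then precompose with the step-affine map $Z \to Y \times Z : z \mapsto (\xi(z), z)$ and use Lemma~\ref{lem:stepaffsteppoly}. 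One should also be slightly careful that the hypothesis really is about an SP module structure for the $Y$-action $T_A|_Y$, not a priori the ambient $Z$-action — but $Y \leq Z$, and restricting an SP $Z$-module to a closed subgroup $Y$ remains SP (any step polynomial witnessing SP for $Z$ witnesses it for $Y$), so this is harmless. Everything else — Borelness, a.e.\ agreement, strict equivariance — is routine measure theory.
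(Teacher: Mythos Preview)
Your overall approach matches the paper's: build $f$ by transporting $F$ from a single point in each $Y$-fibre along the $Y$-action, using a step-affine section of $Z \to Z/Y$. The step-polynomial part of your argument is correct and is just a more detailed unpacking of what the paper asserts in one line.

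However, there is a genuine gap in the step ``Pushing this through the cross-section shows $f = F$ a.e.'' Your $f(z) = T_A^{\,z - \sigma(\bar z)}F(\sigma(\bar z))$ depends on the values of $F$ at the specific points $\sigma(\bar z)$, and these lie on a Haar-null subset of $Z$ (the image of the section). Altering $F$ on that null set does not change its a.e.\ class, but it changes $f$ everywhere. Concretely: take $Z = \bbT^2$, $Y = \{0\}\times\bbT$, $A$ with trivial action, $F(s,t) = 0$ for $t \neq 0$ and $F(s,0) = a \neq 0$. Then $[F] = 0$ is $Y$-invariant, but with the section $\sigma(s) = (s,0)$ your formula gives $f \equiv a$, which is \emph{not} equal to $F$ a.e. Your Fubini sentence gives ``for a.e.\ $z$, $F(z+y) = T^y F(z)$ for a.e.\ $y$,'' but you then need this with $z$ restricted to $\sigma(Z/Y)$, which is a null set in $Z$, so nothing follows.

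The missing idea --- which is exactly what the paper supplies --- is to translate the section by a generic element of $Y$. Writing $\xi(z) = z - \sigma(\bar z)$, Fubini (in the other direction) gives that for a.e.\ $y \in Y$ the identity
\[
F(z) = T^{\xi(z)-y}F\big(\sigma(\bar z)+y\big)
\]
holds for a.e.\ $z$; one fixes such a $y$ and takes the right-hand side as $f(z)$. Strict $Y$-equivariance is verified exactly as you did. The translated section $\bar z \mapsto \sigma(\bar z) + y$ is still step-affine, so your step-polynomial argument goes through unchanged after this correction.
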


\begin{proof}
Let $\xi:Z\to Y$ be a step-affine (hence Borel) $Y$-equivariant map, as provided by Proposition~\ref{prop:fund-lift}, and define $\s:Z\to Z$ by $z = \s(z) + \xi(z)$.

Since the a.e. equivalence class of $F$ lies in $\F(Z,A)^Y$, we have that
\[F(z) = F((\s(z) + y) + (\xi(z)-y)) = T^{\xi(z) - y}F(\s(z) + y)\]
for a.e. $(z,y) \in Z\times Y$.  By Fubini's Theorem, we may therefore fix a choice of $y$ so that the above holds for a.e. $z$.  Now the right-hand side defines a Borel function $f(z)$ which is strictly $Y$-equivariant.  If $F$ is a step polynomial, then so is $f$, since $\s$ and $\xi$ are step-affine and $A$ is an SP module.
\end{proof}

\begin{proof}[Proof of Proposition~\ref{prop:s-p-coind-consistent}]
Let $T:Z\actson A$ be the action. Then $\Cnd_Z^Z A = \F(Z,A)^Z$ consists of the Haar-a.e. equivalence classes of the orbit functions $z\mapsto T^za$ for $a \in A$.  If $A \stackrel{\cong}{\to} \Cnd_Z^ZA$ is complexity-bounded, then for every $a\in A_{\rm{sp}} = A$ the function $z\mapsto T^za$ must agree with a step polynomial outside a negligible set.  This precludes a full-rotation submodule, so $A$ is SP, by Proposition~\ref{prop:full-rot-char}.

Now consider $\a:Z\to X$, $Z'$ and $P \leq \F(X,A)$ as in the statement of the proposition.  We construct the desired isomorphism $\Phi$ in two steps.

First, let $W := Z'/Z$, and let
\[\psi:W \times Z \to Z'\]
be a $Z$-equivariant bijection as given by Corollary~\ref{cor:fund-split}.  Then composition of functions with $\psi^{-1}\times \rm{id}_X$ gives a $Z$-equivariant isomorphism
\[\F(W\times Z,P)^Z \to \F(Z',P)^Z\]
which is complexity-bounded in both directions.  Since $\psi$ is chosen independently of $P$, this is obviously natural in $P$.

Secondly, we will find a natural $Z$-equivariant isomorphism
\[\Psi:\F(W,P)\to \F(W\times Z,P)^Z\]
which is complexity-bounded in both directions.  This is given as follows.  Let $R_P$ denote the $Z$-action on $P$.  Then for $f \in \F(W,P)$, let $\Psi(f) \in \F(W\times Z,P)^Z$ be the function
\[\Psi(f)(w,z) := R_P^z(f(w)).\]
If we regard $f$ as a function $W\times X \to A$, then $\Psi(f)$ is the function $W\times Z\times X\to A$ given by
\[\Psi(f)(w,z,x) = T^z\big(f(w,x-\a(z))\big).\]
It is routine to verify that this $\Psi$ is an injective and $Z$-equivariant homomorphism of Polish Abelian groups (recall that the relevant $Z$-action on $\F(W\times Z,P)^Z$ is now rotation of the $Z$-coordinate alone).  It is complexity-bounded because $A$ is assumed to be an SP module. Surjectivity and SP pre-images for $\Psi$ follow using Lemma~\ref{lem:strict-equivar}, because if $F:W\times Z\times X\to A$ is \emph{strictly} $Z$-equivariant then it equals $\Psi(f)$ with $f(w,x):= F(w,0,x)$.

Finally, to see that $\Psi$ is natural in $P$, suppose that $\phi:P\to Q$ is a homomorphism of functional $Z$-modules.  Let $\Psi_P$ and $\Psi_Q$ denote the above isomorphisms as constructed for $P$ and $Q$.  Then for any $f\in \F(W,P)$ one has
\[\Psi_Q(\phi\circ f)(w,z) = R_Q^z(\phi(f(w))) = \phi(R_P^z(f(w))) = \phi \big(\Psi_P(f)(w,z)\big).\]
This verifies the commutativity of the relevant diagram.
\end{proof}

\begin{cor}\label{cor:Lie-homos-s-p-reps}
If $\phi:A\to B$ is a closed homomorphism of Lie SP $Z$-modules, then it has stable SP pre-images.
\end{cor}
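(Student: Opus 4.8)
The plan is to reduce the statement to a direct application of Lemma~\ref{lem:s-p-lift}. Recall (Definition~\ref{dfn:s-p-pre-ims} together with the interpretation of step polynomials in co-induced modules) that $\phi$ has \emph{stable} SP pre-images once we verify that, for every enlargement $Z'\ge Z$, the co-induced homomorphism $\Cnd_Z^{Z'}\phi$ has SP pre-images, and likewise that $\F(V,\phi)$ has SP pre-images for every base $V$. I would organise the argument around three moves: (i) replace $\phi$ by its corestriction onto $\img\phi$, so that it becomes a genuine continuous epimorphism of Lie SP modules; (ii) use Proposition~\ref{prop:s-p-coind-consistent} to trade each co-induced (or $\F(V,-)$-ed) homomorphism for $\F(Y,\phi)$ with $Y$ a compact metrizable Abelian group; and (iii) observe that SP pre-images for $\F(Y,\phi)$ is exactly the content of Lemma~\ref{lem:s-p-lift}, with $Y$ in the r\^{o}le of the compact group there.

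For move (i): since $\phi$ is closed, $\img\phi$ is a closed $Z$-submodule of $B$, hence Lie; applying Corollary~\ref{cor:s-p-class-closed} to the short exact sequence $\img\phi \into B \onto B/\img\phi$ shows it is also SP. The corestriction $\phi_0:A\onto\img\phi$ is a continuous surjection of Polish groups, hence open, so it is an epimorphism of exactly the kind to which Lemma~\ref{lem:s-p-lift} applies. Moreover $\img(\F(Y,\phi))$ is the submodule $\F(Y,\img\phi)$ (lifting arbitrary measurable maps through $\phi_0$ by the Measurable Selector Theorem), and an SP pre-image for $\phi_0$ is an SP pre-image for $\phi$ and conversely; so it suffices to treat $\phi_0$.

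For move (iii), working now with $\phi_0:A\onto\img\phi$ and a compact metrizable Abelian $Y$: let $f\in\F(Y,\img\phi)$ be a step-polynomial class, and grant (by the adjustment discussed below) that it has a step-polynomial representative $Y\to\img\phi$; then Lemma~\ref{lem:s-p-lift}, applied to the compact group $Y$ and the epimorphism $\phi_0$, produces a step polynomial $F:Y\to A$ with $\phi_0\circ F$ representing $f$, and the class of $F$ is the required pre-image. For move (ii): fix $Z'\ge Z$ and put $Y:=Z'/Z$. Because $A$ and $\img\phi$ are SP, Proposition~\ref{prop:s-p-coind-consistent} supplies $Z$-equivariant isomorphisms $\F(Y,A)\cong\Cnd_Z^{Z'}A$ and $\F(Y,\img\phi)\cong\Cnd_Z^{Z'}(\img\phi)$ that are complexity-bounded in both directions and natural in the fibre, so the square relating $\F(Y,\phi_0)$ to $\Cnd_Z^{Z'}\phi_0$ through these isomorphisms commutes. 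Since an isomorphism that is complexity-bounded in both directions carries the property ``has SP pre-images'' across it, a short diagram chase then transfers the conclusion of (iii) to $\Cnd_Z^{Z'}\phi_0$; the same reasoning with $\F(V,\phi_0)$ in place of $\Cnd_Z^{Z'}\phi_0$ settles the remaining cases.

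The one point I expect to require genuine care is hidden in move (iii): a step polynomial is only determined up to Haar-a.e. equality, so a step-polynomial class in $\F(Y,\img\phi)$ is a priori only represented by a step polynomial $Y\to B$ whose values lie in $\img\phi$ merely almost everywhere, whereas Lemma~\ref{lem:s-p-lift} demands a step polynomial genuinely valued in the closed subgroup $\img\phi$. One must therefore first re-exhibit $f$ in that form --- for instance by choosing a controlling QP partition whose cells carry continuous extensions of the function (such exists by the lemma following Definition~\ref{dfn:step-poly}), zeroing the function out on the measure-zero union of cells with empty interior, and absorbing the target into $\psi^{-1}(\img\phi)$ in the factorisation of Definition~\ref{dfn:step-poly}. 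Verifying that this leaves a step polynomial with fibre $\img\phi$ is the only slightly delicate step; everything else is bookkeeping around Proposition~\ref{prop:s-p-coind-consistent} and Corollary~\ref{cor:s-p-class-closed}.
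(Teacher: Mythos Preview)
Your approach matches the paper's: reduce to surjective $\phi$, translate via Proposition~\ref{prop:s-p-coind-consistent} from $\Cnd_Z^{Z'}$ to $\F(Z'/Z,-)$, then invoke Lemma~\ref{lem:s-p-lift}. The paper's proof is terser---it simply writes ``clearly we may assume $B=\phi(A)$'' without addressing your a.e.-versus-strict concern, so your discussion there actually goes a bit beyond what the paper makes explicit; one minor correction is that stable SP pre-images only demands the property for enlargements $Z'\ge Z$, not for arbitrary bases $V$.
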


\begin{proof}
Clearly we may assume $B = \phi(A)$.  Letting $Z'\geq Z$ be an arbitrary enlargement, this is the assertion that any $f \in \F_\sp(Z',B)^Z$ is the image of some element of $\F_\sp(Z',A)^Z$.  By the previous lemma, this follows from the existence of a step-polynomial lift $Z'/Z \to A$ for any step polynomial $Z'/Z \to B$, and this is given by Lemma~\ref{lem:s-p-lift}.
\end{proof}

We are now ready for the following definition.

\begin{dfn}[Stable complexity-boundedness and SP pre-images]
Suppose that $P$ and $P'$ are functional modules with respective bases $X$ and $X'$ and respective fibres $A$ and $A'$, both of them SP  Then a homomorphism $P \to P'$ is \textbf{stably complexity-bounded} (resp. has \textbf{stable SP pre-images}) if for every enlargement $Z' \geq Z$, the co-induced morphism
\[\Cnd_Z^{Z'}P \to \Cnd_Z^{Z'}P'\]
is complexity-bounded (resp. has SP pre-images) relative to $(Z'\times X,A)$ and $(Z'\times X',A')$.

For complexity-boundedness, one also makes the analogous definition for semi-functional morphisms.
\end{dfn}

\begin{dfn}[Semi-functional category]
The \textbf{semi-functional category} of $Z$-modules is the category $\SFMod(Z)$ whose objects are semi-functional $Z$-modules and whose morphisms are stably complexity-bounded semi-functional morphisms.
\end{dfn}

Thus, $\SFMod(Z)$ is a subcategory of $\SFMod_0(Z)$ having the same objects.  Definition~\ref{dfn:subdiag} applies just as well to diagrams in $\SFMod(Z)$.  Semi-functional categories have the built-in property that for any enlargement $Z' \geq Z$, $\Cnd_Z^{Z'}(-)$ defines a functor $\SFMod(Z)\to \SFMod(Z')$; this will be crucial later.

Given a functional $Z$-module $Q$, it can be identified with the semi-functional module $0 \leq Q$.  With this convention, functional  $Z$-module define a full subcategory $\mathsf{FMod}(Z)$ of $\SFMod(Z)$.

\subsection{Semi-functional exact sequences}\label{subs:stable-ses}

Most na\"ively, one could define a `short exact sequence' in $\SFMod(Z)$ simply to be a short sequence of stably complexity-bounded semi-functional morphisms whose forgetful-functor image in $\PMod(Z)$ is short exact.  However, we will need to restrict attention to exact sequences whose step-polyomial subdiagrams are also exact, so we need a more refined definition.  Once again, a further complication arises because the desired behaviour of step-polynomial subdiagrams can be disrupted by co-induction.

Let
\[(P\leq Q) \to (P' \leq Q') \to (P'' \leq Q'')\]
be a pair of morphisms in $\SFMod(Z)$.  The next definition is in the spirit of classical `relative' homological algebra: see, for instance,~\cite[Section VI.2]{Bro82}.

\begin{dfn}[Exact sequences]
The above sequence is \textbf{exact} in $\SFMod(Z)$ if both $Q/P \to Q'/P' \to Q''/P''$ is exact in $\PMod(Z)$, and $Q_\sp/P_\sp \to Q_\sp'/P_\sp' \to Q_\sp''/P_\sp''$ is exact in $\Mod(Z)$.
\end{dfn}

Let $\phi$ be the homomorphism $Q\to Q'$ above.  Then, assuming the exactness of $Q/P \to Q'/P' \to Q''/P''$, the second condition above is equivalent to asserting that the homomorphism
\begin{eqnarray}\label{eq:sum-homo}
Q\oplus P' \to Q':(q,p')\mapsto \phi(q) + p'
\end{eqnarray}
has SP pre-images.

\begin{dfn}[Stable exact sequences]\label{dfn:stable}
A sequence as above is \textbf{stably exact} if the resulting co-induced diagram
\begin{center}
$\phantom{i}$\xymatrix{ \Cnd_Z^{Z'}P \ar[r]\ar[d] & \Cnd_Z^{Z'}P' \ar[r]\ar[d] & \Cnd_Z^{Z'}P''\ar[d]\\
\Cnd_Z^{Z'}Q \ar[r] & \Cnd_Z^{Z'}Q' \ar[r] & \Cnd_Z^{Z'}Q''
}
\end{center}
is still exact in $\SFMod(Z)$ for every enlargement $Z' \geq Z$.

A \textbf{stable short exact sequence} in $\SFMod(Z)$ is a sequence $0 \to (P \leq Q)\to (P' \leq Q') \to (P'' \leq Q'') \to 0$ that is stably exact in all places.
\end{dfn}

This time, assuming again the exactness of $Q/P \to Q'/P' \to Q''/P''$, this definition is equivalent to asserting that the homomorphism in~(\ref{eq:sum-homo}) has stable SP pre-images.  Knowing this for $Z' = Z$ does not seem to imply it for other enlargements, so this definition isolates a distinguished subclass of short exact sequences in $\SPMod(Z)$.

\begin{ex}\label{ex:concat}
Many stable short exact sequences arise as follows.  Let a \textbf{concatenation} of semi-functional $Z$-modules be a triple $P \leq R \leq Q$ of functional $Z$-modules, all contained in some $\F(X,A)$.  Then one checks easily that the diagram
\begin{center}
$\phantom{i}$\xymatrix{
0 \ar[r] & P \ar[r]\ar[d] & P \ar[r]\ar[d] & R \ar[r]\ar[d] & 0\\
0 \ar[r] & R \ar[r] & Q \ar[r] & Q \ar[r] & 0,
}
\end{center}
where every map is either zero or the relevant inclusion, is always a short exact sequence in $\SFMod(Z)$.  Its quotient-sequence is simply $R/P \into Q/P \onto Q/R$.  Since the functor $\Cnd_Z^{Z'}(-)$ simply converts this into the diagram for the concatenation $\Cnd_Z^{Z'}P \leq \Cnd_Z^{Z'}Q \leq \Cnd_Z^{Z'}R$, it is actually stably exact.  \fin
\end{ex}

It is worth noting the following special case of stability: if $Z' := W\times Z$, regarded as an enlargement through the coordinate-injection, then Definition~\ref{dfn:stable} implies that for a stable exact sequence, the following sequence is also exact:
\begin{center}
$\phantom{i}$\xymatrix{ (\F(W,P))_\sp \ar[r]\ar[d] & (\F(W,P'))_\sp \ar[r]\ar[d] & (\F(W,P''))_\sp \ar[d]\\
(\F(W,Q))_\sp \ar[r] & (\F(W,Q'))_\sp \ar[r] & (\F(W,Q''))_\sp.
}
\end{center}

\subsection{Step-polynomial representatives for quotients}

Suppose now that $P \leq Q$ is a semi-functional $Z$-module with base $X$ and fibre $A$.  In many of the examples that appear later, the quotient $Q/P$ will be `small': specifically, an SP module.  We may therefore regard $Q/P$ as a new functional module with base the trivial group $\{0\}$ and with fibre $Q/P$.  With this interpretation, it will be important to study complexity-boundedness or SP pre-images for the quotient map $Q \onto Q/P$.

Most of the results we need in this direction will stem from the next proposition.  It is fairly simple, but it captures a principle that will be vital to many of the more intricate results later.

\begin{prop}\label{prop:complexity-above-and-below}
Suppose that $P$ is a functional $W$-module with base $\a:W\to X$ and SP fibre $A$, and that $\phi:P\to B$ is a continuous $W$-homomorphism to a discrete $W$-module.  Interpret every element of $B$ as a step polynomial, so $\phi$ is trivially complexity-bounded. Then:
\begin{enumerate}
\item[(1)] the homomorphism $\phi$ is stably complexity-bounded;
\item[(2)] if $\phi$ has SP pre-images, then it actually has stable SP pre-images.
\end{enumerate}
\end{prop}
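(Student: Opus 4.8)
The plan is to prove the two parts separately. Both reduce, after passing through the co-induction isomorphisms (where relevant), to a statement about ordinary function modules $\F(V,-)$, and throughout the decisive simplification is that a step polynomial into a discrete group is a step function (Corollary~\ref{cor:disc-step-poly-step-fn}), hence takes only finitely many values, so no genuinely ``polynomial'' behaviour is ever involved on the $B$-side.

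For part (1), complexity-boundedness of $\Cnd_W^{W'}\phi$ is a property of genuine step-polynomial representatives and is insensitive to the equivariance built into $\Cnd_W^{W'}P$, so I would argue directly. Fix an enlargement $W'\geq W$ and a step polynomial $G:W'\times X\to A$ whose Haar-a.e.\ class lies in $\Cnd_W^{W'}P$; the task is to show that $w'\mapsto\phi(G(w',\cdot))$ agrees a.e.\ with a step polynomial $W'\to B$. Since $G=\psi\circ f_0\circ\{\chi\}$ with $\chi$ affine, the slice $G(w',\cdot)\in\F(X,A)$ depends on $w'$ only through a character, hence the slice map $w'\mapsto G(w',\cdot)$ factors through a Lie-group quotient of $W'$, so I may assume $W'$ is a Lie group. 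Corollary~\ref{cor:connected-images} then supplies a QP partition of $W'$ on each of whose cells the slice map is uniformly continuous into $\F(X,A)$; refining to connected components, which stays QP by Corollary~\ref{cor:conn-cpts-still-qp}, I reach cells $C$ that are connected and on which the slice map is continuous into $\F(X,A)$. On a positive-measure such $C$ this map in fact takes values in the closed submodule $P$, because it does so on the conull, hence dense, subset of $C$ where $G(w',\cdot)\in P$; postcomposing with the continuous $\phi$ gives a continuous map $C\to B$, which is constant since $C$ is connected and $B$ is discrete. Hence $w'\mapsto\phi(G(w',\cdot))$ agrees a.e.\ with the step function equal to these constants on the positive-measure cells, which is a step polynomial $W'\to B$. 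This yields stable complexity-boundedness.

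For part (2) the lift one must produce has to be $W$-equivariant, so I would first invoke Proposition~\ref{prop:s-p-coind-consistent}, applied to $P$ and to $B$ (both functional with SP fibre, the latter by Lemma~\ref{lem:cpt-or-disc-are-sp}), to replace $\Cnd_W^{W'}(-)$ everywhere by $\F(W'/W,-)$. Since those isomorphisms are natural in the module and complexity-bounded in both directions, $\phi$ has stable SP pre-images if and only if, for every compact metrizable Abelian group $V$, the postcomposition map $\F(V,\phi):\F(V,P)\to\F(V,B)$ has SP pre-images relative to $(V\times X,A)$ and $(V,B)$ (every such $V$ occurring as $W'/W$ with $W'=V\times W$, and conversely). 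So fix $V$ and a step polynomial $h$ in the image of $\F(V,\phi)$. Then $h:V\to B$ is a step function with finitely many values, constant equal to some $b_C\in\phi(P)$ on each positive-measure cell $C$ of the finite QP partition $\frQ$ given by its level sets. Using that $\phi$ has SP pre-images, pick for each such $C$ a step polynomial $g_C:X\to A$ lying in $P$ with $\phi(g_C)=b_C$, and set $\tilde h:=\sum_{C}(g_C\circ\pi_X)\cdot 1_{\pi_V^{-1}(C)}$ on $V\times X$, the sum over positive-measure cells, with $\tilde h:=0$ over the null cells. Each summand is a step polynomial: $\pi_X:V\times X\to X$ is a continuous homomorphism, hence affine and step-affine, so $g_C\circ\pi_X$ is a step polynomial by Lemma~\ref{lem:stepaffsteppoly}; $\pi_V^{-1}(C)$ is QP by Lemma~\ref{lem:qp-obvious}; and the product of a step polynomial with the indicator of a QP set is again a step polynomial (pass to Euclidean coordinates, combine them, and refine the controlling polytopal partition so that the QP set is a union of cells — the value $0$ off that set still lies in the relevant closed subgroup). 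Hence $\tilde h$ is a step polynomial, each slice $\tilde h(v,\cdot)$ lies in $P$, so $\tilde h$ represents an element of $(\F(V,P))_\sp$, and $\F(V,\phi)(\tilde h)=h$ by construction. This establishes part (2).

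The step I expect to be the main obstacle is not a single computation but the bookkeeping between genuine step-polynomial representatives and their $\F$-classes, together with the reductions to Lie-group quotients: the two facts really doing the work in part (1) — that connected components of QP cells form a QP partition (Corollary~\ref{cor:conn-cpts-still-qp}), and that a continuous map from a connected space to a discrete group is constant — both fail on general compact Abelian groups, so one must check that the reductions genuinely land one in the Lie setting before using them, and must likewise keep track of which cells carry positive measure when the identity $\phi\circ f=h$ holds only almost everywhere. The remaining ingredients — naturality and two-sided complexity-boundedness of the co-induction isomorphisms, and closure of step polynomials under the operations used — are routine given the results already established.
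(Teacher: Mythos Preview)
Your proof is correct and follows essentially the same route as the paper. The paper applies Proposition~\ref{prop:s-p-coind-consistent} up front to reduce \emph{both} parts to the case $Z=V\times W$ with $V=W'/W$, whereas you do this reduction only for part~(2) and argue part~(1) directly on $W'$; after that organizational difference, the substance is identical. In part~(1) you use the same three ingredients as the paper: Corollary~\ref{cor:connected-images} to get cellwise continuity of the slice map, Corollary~\ref{cor:conn-cpts-still-qp} to refine to connected cells (after passing to a Lie quotient), and the observation that a continuous map from a connected set to a discrete group is constant. In part~(2) your construction of $\tilde h$ is exactly the paper's, with the cosmetic difference that the paper replaces $B$ by $\phi(P)$ at the outset while you instead restrict attention to positive-measure cells to ensure $b_C\in\phi(P)$.
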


\begin{proof}
Let $Z\geq W$ be an arbitrary enlargement, and let $V:= Z/W$. Proposition~\ref{prop:s-p-coind-consistent} gives a commutative diagram
\begin{center}
$\phantom{i}$\xymatrix{
\Cnd_W^Z P \ar^-{\Cnd_W^Z\phi}[rr]\ar_\cong[d] && \Cnd_W^Z D \ar^\cong[d] \\
\F(V,P) \ar_-{\F(V,\phi)}[rr] && \F(V,B)
}
\end{center}
in which both vertical isomorphisms are complexity-bounded in both directions.  It therefore suffices to prove parts (1) and (2) for the bottom homomorphism: equivalently, in case $Z = V\times W$.

\vspace{7pt}

\emph{Proof of part (1).}\quad Let $f:V \times X \to A$ be a step-polynomial whose Haar-a.e. equivalence class lies in $\F(V,P)$.

Corollary~\ref{cor:connected-images} gives a QP partition $\frP_1$ of $V$ such that for each $C \in \frP_1$, the map
\[\Phi_C:C\to P: v \mapsto f(v,\cdot)\]
is continuous.

Since $f$ is a step polynomial, there are Lie-group quotients $q:V \onto \ol{V}$ and $q_1:X\onto \ol{X}$ such that $f$ factorizes as $\ol{f}\circ (q\times q_1)$ for some step polynomial
\[\ol{f}:\ol{V}\times \ol{X} \to A,\]
and such that $\frP_1$ is lifted from a QP partition $\ol{\frP}_1$ of $\ol{V}$.  It follows that the above maps $\Phi_C$ are pulled back from the corresponding maps
\[\ol{\Phi}_D:D\to \F(\ol{X},A): \ol{v} \mapsto \ol{f}(\ol{v},\cdot), \quad D \in \ol{\frP}_1.\]

Now Lemma~\ref{cor:conn-cpts-still-qp} provides that the refinement $\ol{\frP}_2$ of $\ol{\frP}_1$ into its connected components is still QP  By restricting the above maps $\ol{\Phi}_D$ to these connected components, we may therefore assume that $\ol{\frP}_2 = \ol{\frP}_1$. Having done so, each $D \in \ol{\frP}_1$ is connected, so has connected image under the continuous map $\ol{\Phi}_D$.  Since $B$ is discrete, each of these connected $\ol{\Phi}_D$-images lies in a single fibre of $\phi$.  It follows that the map
\[V\to B: v \mapsto \phi(f(v,\cdot))\]
is a step function whose level-set partition is refined by $\frP_1$, as required.

\vspace{7pt}

\emph{Proof of part (2).}\quad Now suppose that $\phi$ has SP pre-images.  By replacing $B$ with its (necessarily discrete) submodule $\phi(P)$, we may assume that $\phi$ is surjective.

Let $\ol{f}:V\to B$ be a step polynomial: that is, a step function.  Let $\frP$ be its level-set partition, so this is a QP partition of $V$, and for each $C \in \frP$ let $f_C \in P$ be a step-polynomial $\phi$-pre-image of the single value that $\ol{f}$ takes on $C$.  Now define $f:V\to P$ by
\[f(v,x) = f_C(x) \quad \forall (v,x) \in C\times X,\ C \in \frP.\]
This is a step-polynomial pre-image of $\ol{f}$.
\end{proof}

In a few places, we will use parts (1) and (2) of Proposition~\ref{prop:complexity-above-and-below} in combination, via the following slightly fiddly corollary.

\begin{cor}\label{cor:complexity-above-and-below}
Suppose that $Q$ and $R$ are functional $W$-modules (not necessarily with the same base or fibre), that $B$ is a discrete $W$-module, and that one has a diagram of continuous $W$-homomorphisms
\begin{center}
$\phantom{i}$\xymatrix{
& R \ar^\psi[d]\\
Q \ar_\phi[r] & B.
}
\end{center}
Suppose further that this diagram has the following property:
\begin{quote}
If $r \in R_\sp$ and $\psi(r) \in \phi(Q)$, then $\psi(r) \in \phi(Q_\sp)$.
\end{quote}
Then this property also holds for the co-induction of the above diagram to any enlargement $Z\geq W$.
\end{cor}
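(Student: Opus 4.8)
The plan is to run the argument of Proposition~\ref{prop:complexity-above-and-below} twice over, once with part~(1) and once with part~(2). As in that proposition we assume the fibres of $Q$ and $R$ to be SP; write $X_R, A_R$ for the base and fibre of $R$. First I would reduce to a product enlargement: given an arbitrary enlargement $Z \ge W$, set $V := Z/W$ and invoke Proposition~\ref{prop:s-p-coind-consistent} (with its naturality clause, and noting that $B$, being discrete, is SP by Lemma~\ref{lem:cpt-or-disc-are-sp}) to replace the co-induced diagram $\Cnd_W^Z Q \to \Cnd_W^Z B \leftarrow \Cnd_W^Z R$, up to isomorphisms that are complexity-bounded in both directions and that identify the step-polynomial subgroups, by the diagram obtained from applying the functor $\F(V,-)$. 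So it suffices to verify the stated property for $\F(V,\phi)\colon \F(V,Q)\to\F(V,B)$ and $\F(V,\psi)\colon\F(V,R)\to\F(V,B)$.

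Next I would package the hypothesis. Let $B^\star := \psi(R_\sp) \cap \phi(Q)$, a subgroup of the discrete module $B$, hence itself discrete, and let $\phi^\star \colon \phi^{-1}(B^\star) \onto B^\star$ be the corestriction of $\phi$ (onto since $B^\star \subseteq \phi(Q)$); here $\phi^{-1}(B^\star)$ is a functional $W$-module with the same SP fibre as $Q$. The displayed hypothesis of the corollary says precisely that $B^\star \subseteq \phi(Q_\sp)$, which one checks is equivalent to $\phi^\star$ having SP pre-images. Now apply Proposition~\ref{prop:complexity-above-and-below}: part~(1) to $\psi\colon R\to B$ shows that $\psi$ is stably complexity-bounded, and part~(2) to $\phi^\star$ shows that $\phi^\star$ has stable SP pre-images. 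Translating both conclusions to the product enlargement as in the first paragraph: $\F(V,\psi)$ sends $(\F(V,R))_\sp$ into $(\F(V,B))_\sp$, and every element of $(\F(V,B^\star))_\sp$ is $\F(V,\phi^\star)$ of some element of $(\F(V,\phi^{-1}(B^\star)))_\sp \subseteq (\F(V,Q))_\sp$.

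Now let $r \in (\F(V,R))_\sp$ with $b := \F(V,\psi)(r) \in \F(V,\phi)(\F(V,Q))$; the goal is $b \in \F(V,\phi)((\F(V,Q))_\sp)$. By the first translated conclusion, $b$ is a.e.\ equal to a step function $V \to B$, taking finitely many values, each of which lies in $\phi(Q)$ since $b \in \F(V,\phi)(\F(V,Q))$. The key observation is that a slice of a step polynomial is again a step polynomial: this is immediate from Definition~\ref{dfn:step-poly}, because fixing the first coordinate in an affine map $V \times X_R \to \bbT^N$ leaves an affine map $X_R \to \bbT^N$, so a slice of $\psi_0 \circ r_0 \circ \{\chi\}$ is again of that form. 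Hence for a.e.\ $v$ the class $[r(v,\cdot)]$ lies in $R_\sp$ (its class lies in $R$ for a.e.\ $v$, and it is a.e.\ equal to the step polynomial $r(v,\cdot)$), whence $b(v) = \psi(r(v,\cdot)) \in \psi(R_\sp)$; combined with the previous sentence, and using that $b$ has only finitely many values, each attained on a set of positive measure apart from a negligible exceptional set, this forces $b$ to take all its values in $B^\star$. Thus $b \in (\F(V,B^\star))_\sp$.

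Finally, the second translated conclusion produces $q \in (\F(V,Q))_\sp$ with $\F(V,\phi)(q) = \F(V,\phi^\star)(q) = b = \F(V,\psi)(r)$, which is exactly the required property for the $\F(V,-)$-diagram, and hence, by the first paragraph, for the co-induction to every enlargement $Z \ge W$. The only genuinely delicate point is the middle step: one must notice that, although an element of $(\F(V,R))_\sp$ need not be a step \emph{function} of $v$ into $R$ (this is the phenomenon in Example~\ref{ex:bad-slicing}), its individual slices are nonetheless honest step polynomials, and this is what makes it legitimate to feed the corollary's hypothesis one value at a time; everything else is a routine transcription of the proof of Proposition~\ref{prop:complexity-above-and-below}.
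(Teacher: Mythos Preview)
Your proof is correct and follows essentially the same approach as the paper: define $B^\star = \psi(R_\sp)\cap\phi(Q)$ (the paper calls this $B_1$), observe that the hypothesis amounts to $\phi|_{\phi^{-1}(B^\star)}$ having SP pre-images, upgrade this to stable SP pre-images via part~(2) of Proposition~\ref{prop:complexity-above-and-below}, and then use part~(1) together with the slice observation to land the image of the given $r$ in $(\Cnd_W^Z B^\star)_\sp$. The only presentational difference is that you explicitly pass to the product picture $\F(V,-)$ via Proposition~\ref{prop:s-p-coind-consistent}, whereas the paper works directly with $\Cnd_W^Z$ and the slicing $r(z,\cdot)\in R_\sp$; the substance is the same.
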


\begin{proof}
Let $B_1 := \psi(R_\sp)\cap \phi(Q)$, a (necessarily closed) submodule of $B$, and let $Q_1 := \phi^{-1}(B_1)$, so this is a closed submodule of $Q$.  The assumed property asserts that $\phi|Q_1:Q_1 \to B_1$ has SP pre-images.  Therefore, since $B_1$ is discrete, it actually has stable SP pre-images, by part (2) of Proposition~\ref{prop:complexity-above-and-below}.

Now let $\Psi := \Cnd_W^Z\psi$ and $\Phi := \Cnd_W^Z \phi$.  Let $r \in (\Cnd_W^Z R)_\sp$ be such that $\Psi(r) \in \Phi(\Cnd_W^Z Q) = \Cnd_W^Z\phi(Q)$.  Regarded as an element of $\F(Z,R)$, the fact that $r$ is step-polynomial implies that $r(z,\cdot) \in R_\sp$ for every $z$, and therefore $\Psi(r) \in \Cnd_W^Z B_1$.  Applying part (1) of Proposition~\ref{prop:complexity-above-and-below}, it actually lies in $(\Cnd_W^Z B_1)_\sp$.  Since $\phi|Q_1$ has stable SP pre-images, this is equal to the image of $(\Cnd_W^Z Q_1)_\sp$ under $\Phi$, so this completes the proof.
\end{proof}

Now suppose that $P\leq Q$ is a semi-functional $Z$-module whose quotient $Q/P$ is SP  We will have to refer frequently to SP pre-images for the quotient map $Q \onto Q/P$, so these merit their own definition.

\begin{dfn}[Step-polynomial representatives]\label{dfn:step-fn-reps}
The inclusion $P \leq Q$ has \textbf{SP representatives} if the quotient $Q/P$ is a SP Lie $Z$-module and the quotient homomorphism $Q\onto Q/P$ admits SP pre-images, where we interpret $(Q/P)_{\rm{sp}} = Q/P$.  Similarly, if $Q$ is a functional $Z$-module, $A$ is a SP Lie $Z$-module, and $\phi:Q \onto A$ is a surjective $Z$-homomorphism, then $\phi$ has \textbf{SP representatives} if $(\ker\phi \leq Q)$ has SP representatives.

It has \textbf{stable SP representatives} if that quotient homomorphism is stably complexity-bounded and admits stable SP pre-images.
\end{dfn}

By part (2) of Proposition~\ref{prop:complexity-above-and-below}, if $Q/P$ is discrete and $P \leq Q$ has SP representatives, then it automatically has stable SP representatives.

Given our convention that all elements of a SP Lie module such as $Q/P$ are step polynomial, we see that if $P \leq Q$ has quotient which is an SP Lie module, then it has SP representatives if and only if $P + Q_{\rm{sp}} = Q$.

Proposition~\ref{prop:complexity-above-and-below} and the content of Definition~\ref{dfn:step-fn-reps} can be combined in the following useful way.  The proof is an immediate verification.

\begin{lem}\label{lem:stable-collapse}
Let $P \leq Q$ be a semi-functional $Z$-module whose quotient $Q/P$ is an SP Lie module.  The following are equivalent:
\begin{itemize}
\item[i)] the quotient homomorphism $Q\onto Q/P$ has (resp. stable) SP representatives;
\item[ii)] the diagram
\begin{center}
$\phantom{i}$\xymatrix{
0 \ar[r] & P \ar[r]\ar[d] & 0 \ar[r]\ar[d] & 0\\
0 \ar[r] & Q \ar[r] & Q/P \ar[r] & 0
}
\end{center}
defines a sequence of morphisms in $\SFMod(Z)$ which is exact in $\SFMod(Z)$ (resp. stably exact). \qed
\end{itemize}
\end{lem}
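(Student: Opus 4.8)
The plan is to unwind both sides of the claimed equivalence and observe that they match. Since the sequence displayed in (ii) has such a degenerate shape, almost every clause entering the definitions of ``exact'' and ``stably exact'' for it will be vacuous or automatic, and the single clause that survives is precisely the defining property of (resp.\ stable) SP representatives; so what is really involved is careful bookkeeping with the definitions, plus one appeal to Proposition~\ref{prop:s-p-coind-consistent} to see that nothing is disrupted by co-induction.

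First I would pass to quotients. The displayed diagram encodes the sequence $(0\leq 0)\to(P\leq Q)\to(0\leq Q/P)\to(0\leq 0)$ whose only nonzero arrow is the quotient map $q\colon Q\onto Q/P$ on the lower modules (and the zero map on the upper ones). Its quotient sequence in $\PMod(Z)$ is $0\to Q/P\xrightarrow{\bar q}(Q/P)/0\to 0$, and under the canonical identification $(Q/P)/0=Q/P$ the map $\bar q$ is the identity, so this sequence is exact. Moreover $\Cnd_Z^{Z'}(-)$ carries the displayed diagram to the corresponding one for $\Cnd_Z^{Z'}(P\leq Q)\to(0\leq\Cnd_Z^{Z'}(Q/P))\to(0\leq 0)$, whose quotient sequence is again of the shape $0\to\Cnd_Z^{Z'}(Q/P)\xrightarrow{\id}\Cnd_Z^{Z'}(Q/P)\to 0$. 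Hence the ``$\PMod(Z)$-level'' requirements in the definitions of exact and stably exact sequences hold for free, at every enlargement, and only the step-polynomial requirements carry content.

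Next I would look at the step-polynomial subdiagram. Since $Q/P$ is an SP Lie module we take $(Q/P)_\sp=Q/P$, and since $P\leq Q$ we have $Q_\sp\cap P=Q\cap\F_\sp(X,A)\cap P=P_\sp$, so the step-polynomial quotient sequence is $0\to Q_\sp/P_\sp\xrightarrow{\bar q_\sp}Q/P\to 0$ with $\bar q_\sp$ automatically injective. Its exactness is therefore equivalent to surjectivity of $\bar q_\sp$, that is, to $Q=P+Q_\sp$, which by the discussion following Definition~\ref{dfn:step-fn-reps} is precisely the statement that $q$ admits SP pre-images. For the stable version I would rerun this after applying $\Cnd_Z^{Z'}(-)$: Proposition~\ref{prop:s-p-coind-consistent} identifies $\Cnd_Z^{Z'}Q$ with $\F(Z'/Z,Q)$ and $\Cnd_Z^{Z'}(Q/P)$ with $\F(Z'/Z,Q/P)$ by isomorphisms that are complexity-bounded in both directions (here we use, as is implicit for objects and morphisms of $\SFMod(Z)$, that the common fibre $A$ of $P$ and $Q$ is an SP Lie module, and that $Q/P$ is too), hence compatible with the step-polynomial subgroups; so exactness of the co-induced step-polynomial quotient sequences for all $Z'\geq Z$ translates exactly into $q$ having stable SP pre-images. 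Finally, the requirement that the displayed diagram actually consist of morphisms in $\SFMod(Z)$ reduces to the single condition that $q$ be stably complexity-bounded: for the stable equivalence this is the other clause in the definition of stable SP representatives, and for the unstable equivalence it is automatic under the standing SP hypotheses — in particular it is covered by part (1) of Proposition~\ref{prop:complexity-above-and-below} in the case $Q/P$ discrete, which is the situation of interest. Assembling these observations yields both equivalences.

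There is no genuine obstacle; the only point that needs a moment's care is the interaction with co-induction, namely confirming that $\Cnd_Z^{Z'}(-)$ really does send the displayed diagram, \emph{together with its step-polynomial subdiagram}, to the corresponding data over $Z'$. This is exactly what Proposition~\ref{prop:s-p-coind-consistent}, together with the convention fixing $(Q/P)_\sp$ (and the way this convention propagates under co-induction), was set up to guarantee.
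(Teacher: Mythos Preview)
Your proposal is correct and follows exactly the approach the paper intends: the lemma is stated with a bare $\qed$ after the phrase ``The proof is an immediate verification,'' and your unwinding of the definitions (quotient sequence trivially exact in $\PMod(Z)$, SP-subdiagram exactness reducing to $P+Q_\sp=Q$, and the co-induced version handled via Proposition~\ref{prop:s-p-coind-consistent}) is precisely that verification. Your flag about stable complexity-boundedness of $q$ in the unstable case is honest --- it is part of the definition of ``morphism in $\SFMod(Z)$'' but not of (unstable) SP representatives --- though the paper evidently regards this as absorbed into the standing assumptions, and in every later application the hypothesis is either the stable one or $Q/P$ is discrete, where Proposition~\ref{prop:complexity-above-and-below}(1) covers it as you note.
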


\begin{rmks}
\emph{(1)} \quad This lemma provides examples that illustrate the following feature of the category $\SFMod(Z)$: there can be stable short exact sequences
\[0 \to (P\leq Q) \to (P' \leq Q') \to 0\]
which are nevertheless not isomorphisms, in the sense that there is no well-defined inverse $(P' \leq Q')\to (P \leq Q)$.

\vspace{7pt}

\emph{(2)}\quad Although we will not need it in the sequel, the following fact about SP representatives may help build the reader's understanding.

\begin{lem}
If $P \leq Q \leq \F(Z,A)$ are closed subgroups and $P + Q_{\rm{sp}} = Q$, then $Q/P$ is locally compact.
\end{lem}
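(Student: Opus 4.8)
The plan is to prove the stronger statement that $Q/P$ is $\sigma$-compact. This suffices: $Q/P$ is a Polish group, being the quotient of the Polish group $Q\le\F(Z,A)$ by the closed subgroup $P$, and a $\sigma$-compact Polish group is automatically locally compact. (Indeed, if $G=\bigcup_nK_n$ with each $K_n$ compact, then by the Baire Category Theorem some closed set $K_n$ fails to be nowhere dense, hence has non-empty interior, and translating that interior so as to contain the identity produces a compact identity-neighbourhood.) Since by hypothesis $Q=P+Q_\sp$, the quotient map $\pi:Q\onto Q/P$ satisfies $\pi(Q_\sp)=Q/P$, so it is enough to find countably many compact subsets $K_1,K_2,\dots$ of $\F(Z,A)$ with $\F_\sp(Z,A)\subseteq\bigcup_nK_n$: then $Q/P=\pi(Q_\sp)\subseteq\bigcup_n\pi(Q\cap K_n)$, and each $Q\cap K_n$ is compact (closed in $K_n$), hence so is each $\pi(Q\cap K_n)$.

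The next step is to reduce from $Z$ to compact Abelian \emph{Lie} groups. Since $Z$ is metrizable, $\hat Z$ is countable, and a quotient $Z/K$ is a compact Abelian Lie group exactly when $K^\perp\le\hat Z$ is finitely generated; as a countable group has only countably many finitely generated subgroups, $Z$ has only countably many Lie-group quotients, say with representative quotient maps $q_i:Z\onto L_i$ ($i\in\bbN$). By Definition~\ref{dfn:step-poly}, together with the fact (Lemma~\ref{lem:step-aff-obvious}) that an affine map $Z\to\bbT^d$ factors through a Lie-group quotient of $Z$, every step polynomial $Z\to A$ has the form $g\circ q_i$ for some $i$ and some step polynomial $g:L_i\to A$. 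Since $q_i$ pushes Haar measure to Haar measure, $g\mapsto g\circ q_i$ is an isometric embedding $\F(L_i,A)\to\F(Z,A)$, hence carries compact sets to compact sets. It therefore suffices to show: for every compact Abelian Lie group $L$, the set $\F_\sp(L,A)$ is $\sigma$-compact.

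To prove this, parametrise step polynomials $L\to A$ by their defining data in Definition~\ref{dfn:step-poly}: $\psi\circ f_0\circ\{\chi\}$ with $\chi:L\to\bbT^d$ affine, $f_0:[0,1)^d\to\tilde A\le\bbR^r$ a step polynomial controlled by a convex-polytopal partition of $[0,1)^d$, and $\psi:\tilde A\to A$ a continuous homomorphism. One checks that this data consists of \emph{discrete} parameters --- the integers $d,r$; a bound $N$ on the number of cells of the partition, the number of inequalities per cell, and the degrees of the polynomials; the integer linear part of $\chi$; the (finitely many, for fixed $r$) combinatorial type of the closed subgroup $\tilde A\le\bbR^r$; and the $D$-coordinates of the values of $\psi$ on a basis of the lattice part of $\tilde A$, where $A\cong\bbR^a\times\bbT^b\times D$ with $D$ discrete --- together with \emph{real} parameters ranging over some $\bbR^M$: the translation part of $\chi$, the real coefficients of the defining inequalities and polynomials of $f_0$, and the Euclidean part of $\psi$. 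There are countably many discrete parameters, and for each of them the real parameters may be restricted to one of the countably many cubes $[-R,R]^M$. Hence $\F_\sp(L,A)$ is a countable union of sets $\mathrm{ev}(\Theta)$, where $\Theta$ is a compact box of parameters and $\mathrm{ev}(\theta)\in\F(L,A)$ is the Haar-a.e. class of the corresponding step polynomial; it remains to show each $\mathrm{ev}(\Theta)$ is relatively compact.

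The one genuine difficulty --- and the main obstacle --- is the continuity of $\mathrm{ev}:\Theta\to\F(L,A)$, which is threatened by the discontinuity of $\{\cdot\}:\bbT^d\to[0,1)^d$ along the (Haar-null) seam $\bigcup_i\{t_i=0\}$: a small change in the cell-defining coefficients moves the function only on a set of small \emph{Lebesgue} measure in $[0,1)^d$, but this controls the measure of the affected set in $L$ only if $\chi$ pushes $m_L$ onto $m_{\bbT^d}$. I would resolve this by stratifying $\F_\sp(L,A)=\bigcup_{d\ge0}S_d$, where $S_d$ collects the step polynomials admitting a presentation whose target torus has dimension exactly $d$, and proving $S_d$ $\sigma$-compact by induction on $d$. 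When $\chi:L\to\bbT^d$ is surjective (a condition depending only on the discrete parameters), $\chi$ does push $m_L$ to $m_{\bbT^d}$, so the seam and all the polytope boundaries pull back to $m_L$-null sets; a routine argument then gives, for $\theta_k\to\theta$ in a parameter box, that $\{\chi_{\theta_k}(z)\}\to\{\chi_\theta(z)\}$ inside a common cell for $m_L$-a.e.\ $z$, with the polynomial and $\psi$ pieces varying uniformly on the relevant compacta, so $\mathrm{ev}(\theta_k)\to\mathrm{ev}(\theta)$ in probability and $\mathrm{ev}(\Theta)$ is compact. When $\chi$ is not surjective, $G:=\chi(L)$ is a proper closed subgroup of $\bbT^d$, so $\dim G<d$; the restriction $\{\cdot\}|_G:G\to\bbR^d$ is step-affine (the affine inclusion $G\into\bbT^d$ followed by the step-affine $\{\cdot\}$), so by Lemma~\ref{lem:stepaffsteppoly} the step polynomial factors as an affine surjection $L\onto H$, with $H$ affinely isomorphic to $G$ and $\dim H<d$, followed by a step polynomial $H\to A$ (which therefore uses target tori of dimension $<d$), and the inductive hypothesis applies. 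The base case $d=0$ is that the constant functions form a subgroup of $\F(L,A)$ isomorphic to $A$, which is $\sigma$-compact since $A$ is second-countable and locally compact. Assembling these facts yields the required countable cover of $\F_\sp(Z,A)$ by compact sets, and hence the lemma.
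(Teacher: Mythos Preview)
Your overall strategy---write $Q/P$ as a countable union of compact sets and invoke Baire---is exactly the paper's. The paper packages this via a single complexity function $\mathrm{Cplx}$ that records the index of an enumerated homomorphism $\chi_i:Z\to\bbT^{d_i}$ together with the usual polytope-and-polynomial data, and then asserts (essentially citing Corollary~\ref{cor:cptness}) that each sublevel set $Q_{\leq n}$ is compact. Your reduction to Lie quotients and Baire step are fine.

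There is, however, a genuine gap in your inductive treatment of non-surjective $\chi$. The claim that a step polynomial on $H$ with $\dim H<d$ ``therefore uses target tori of dimension $<d$'' is false. For a counterexample take $L=(\bbZ/2\bbZ)^2$, $d=2$, and $\chi:L\hookrightarrow\bbT^2$ the obvious embedding: then $G=\chi(L)$ has $\dim G=0<2$, but a function on $G\cong(\bbZ/2\bbZ)^2$ that separates all four points cannot factor through any affine map $G\to\bbT^1$, since $(\bbZ/2\bbZ)^2$ does not embed in $\bbT$. So your induction does not close up. (More generally, the sets $S_d$ are \emph{increasing} in $d$, so ``$S_{d'}$ is $\sigma$-compact for $d'<d$'' is not a usable inductive hypothesis unless you prove the stronger statement that such $f$ actually lie in some $S_{d'}$, which they need not.)

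The repair is not to induct on $d$ at all. Once you have reduced to a Lie group $L$ and (by working on each coset of the identity component) to $L=\bbT^r$, the ``freedom of coordinates'' corollary following Lemma~\ref{lem:little-step-aff-coords} lets you replace an arbitrary affine $\chi:\bbT^r\to\bbT^d$ by $\chi=\mathrm{id}_{\bbT^r}$: there is a step-affine $\psi:[0,1)^r\to[0,1)^d$ with $\{\chi\}=\psi\circ\{\cdot\}$, and $f_0\circ\psi$ is again a step polynomial of complexity bounded in terms of $\psi$ and $\cplx(f_0)$. With $\chi=\mathrm{id}$ the map is surjective, your continuity argument for $\mathrm{ev}$ goes through directly, and the countable cover by compacta follows without any stratification. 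This is exactly what the paper does in Corollary~\ref{cor:cptness}.
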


To prove this, let $\chi_1$, $\chi_2$, \ldots be an enumeration of all homomorphisms from $Y$ to finite-dimensional tori, and now for $f \in \F_{\rm{sp}}(Z,A)$, let $\rm{Cplx}(f)$ denote the sum of the least $i$ such that $f$ factorizes through $\chi_i$, and of the number of linear inequalities and of all the degrees and coefficients needed to specify the convex polytopes and polynomials that go into the definition of $f$, if one uses that $\chi_i$ in the definition.  (This is similar to the notion of complexity that will be set up in Definition~\ref{dfn:cplxty}, except that $\rm{Cplx}$ also keeps track of which quotient $\chi_i$ one uses to factorize $f$.)

Let $q:Q\to Q/P$ be the quotient homomorphism, and for each $n$ let
\[Q_{\leq n} := \{f \in Q_{\rm{sp}}\,|\ \rm{Cplx}(f) \leq n\}.\]
An easy argument shows that each $Q_{\leq n}$ compact (or see Corollary~\ref{cor:cptness}), and hence $Q/P = q(Q_{\rm{sp}}) = \bigcup_{n\geq 1}q(Q_{\leq n})$ is $\s$-compact.  By the Baire Category Theorem, some $q(Q_{\leq n})$ must be co-meager inside some nonempty open set $U$ in $Q/P$.  This now implies that $q(Q_{\leq n}) - q(Q_{\leq n})$ is a compact neighbourhood of the identity in $Q/P$, so $Q/P$ is locally compact, as required.

In view of this, the assumption that $Q/P$ is SP in Definition~\ref{dfn:step-fn-reps} is only slightly more restrictive than simply asserting directly that $P + Q_{\rm{sp}} = Q$. \fin
\end{rmks}

The next simple property of SP representatives will be used repeatedly.

\begin{lem}\label{lem:step-reps-of-common-enlargement}
Suppose that $P \leq Q \leq \F(X,A)$ is a semi-functional $Z$-module with $Q/P$ discrete and with step-function representatives, and that $D \leq \F(X,A)$ is another $Z$-submodule such that $P + D$ and $Q+D$ are closed.  Then $P + D \leq Q + D$ has step-function representatives.
\end{lem}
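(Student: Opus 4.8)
The plan is to verify directly the two requirements of Definition~\ref{dfn:step-fn-reps} for the semi-functional $Z$-module $(P+D \leq Q+D)$: that the quotient $(Q+D)/(P+D)$ is an SP Lie $Z$-module, and that the quotient homomorphism $Q+D \onto (Q+D)/(P+D)$ admits SP pre-images. By the characterization recorded just after Definition~\ref{dfn:step-fn-reps}, once the quotient is known to be an SP Lie module the second requirement is equivalent to the algebraic identity $(P+D) + (Q+D)_\sp = Q+D$. So the proof reduces to two points: (a) $(Q+D)/(P+D)$ is discrete, and (b) that identity holds. First note that $P+D \leq Q+D$ genuinely is a semi-functional $Z$-module: both $P+D$ and $Q+D$ are $Z$-submodules of $\F(X,A)$, both are closed by hypothesis, and $P \leq Q$ gives the inclusion.

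For (a) I would use the continuous homomorphism obtained by composing the inclusion $Q \into Q+D$ with the quotient map $Q+D \onto (Q+D)/(P+D)$. Its kernel is $Q \cap (P+D) \supseteq P$, and its image is all of $(Q+D)/(P+D)$ because $Q + (P+D) = (Q+P) + D = Q+D$ (using $P \leq Q$); hence it factors through a \emph{surjection} $Q/P \onto (Q+D)/(P+D)$. Since $Q/P$ is discrete and Polish, it is countable, so $(Q+D)/(P+D)$ is countable; being the quotient of the Polish group $Q+D$ by the closed subgroup $P+D$, it is also Polish, and a countable Polish group is discrete by the Baire Category Theorem. A discrete $Z$-module is Lie and is SP by Lemma~\ref{lem:cpt-or-disc-are-sp}.

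For (b), take any $h \in Q+D$ and write $h = q+d$ with $q \in Q$, $d \in D$. Because $P \leq Q$ has step-function representatives (and $Q/P$ is discrete, so step polynomials into it are step functions), we have $P + Q_\sp = Q$, hence $q = p+s$ for some $p \in P$ and $s \in Q_\sp$. Then $h = (p+d) + s$, where $p+d \in P+D$ and, since $Q \subseteq Q+D$ forces $Q_\sp = Q \cap \F_\sp(X,A) \subseteq (Q+D) \cap \F_\sp(X,A) = (Q+D)_\sp$, also $s \in (Q+D)_\sp$. Thus $Q+D \subseteq (P+D) + (Q+D)_\sp$; the reverse inclusion is trivial, so (b) holds and the lemma follows.

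I do not expect any serious obstacle here. The only genuinely topological ingredient is the remark in (a) that a countable Polish group is discrete, and the key bookkeeping point is the inclusion $Q_\sp \subseteq (Q+D)_\sp$, which is immediate once one remembers that SP subgroups are taken relative to the single pair $(X,A)$ shared by $P$, $Q$ and $D$. (In particular no appeal to stability or co-induction is needed, since only the non-stable notion of SP representatives is asserted.)
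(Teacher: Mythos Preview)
Your proof is correct and follows essentially the same approach as the paper: the paper also observes that $(Q+D)/(P+D)$ is a (Polish) quotient of the discrete module $Q/P$, hence SP, and then writes $q+d = q' + p + d$ with $q' \in Q_\sp$ to exhibit the SP representative. You have simply spelled out more carefully the topological step (countable Polish $\Rightarrow$ discrete via Baire) and the inclusion $Q_\sp \subseteq (Q+D)_\sp$, which the paper leaves implicit.
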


\begin{proof}
First, $(Q+D)/(P+D)$ is Polish by assumption, and is a quotient of $Q/P$, so is still an SP $Z$-module.

If $q + d \in Q + D$, then it equals $q' + p + d$ for some $q' \in Q_\sp$ and some $p \in P$, so it agrees with $q'$ modulo $P+D$.
\end{proof}

%
%
%
%

\subsection{Semi-functional complexes}\label{subs:semi-fun-cplx}

As with the Polish modules studied in Part I, we will soon have a need for whole complexes of semi-functional modules.

\begin{dfn}[Semi-functional complex]\label{dfn:semi-fnl-cplx}
A \textbf{semi-functional $Z$-complex} is a diagram of the form
\begin{center}
$\phantom{i}$ \xymatrix{
\cdots \ar[r] & P_\ell \ar[r]\ar^\incln[d] & P_{\ell+1} \ar[r]\ar^\incln[d] & P_{\ell+2}\ar^\incln[d]\ar[r] & \cdots \\
\cdots \ar^{\a_\ell}[r] & Q_\ell \ar^{\a_{\ell+1}}[r] & Q_{\ell+1} \ar^{\a_{\ell+2}}[r] & Q_{\ell+2} \ar^{\a_{\ell+3}}[r] & \cdots \\
}
\end{center}
where every column is a semi-functional $Z$-module with an SP fibre (possibly different for different $i$), and every column of horizontal arrows defines a semi-functional morphism.

Such a diagram will often be abbreviated to $P_\bullet \leq Q_\bullet$, when no confusion can arise.

A semi-functional complex is \textbf{left-bounded} if all modules sufficiently far to the left in this diagram are $0$.
\end{dfn}

\begin{dfn}
Given a semi-functional complex as above, its \textbf{quotient complex} is the resulting Polish complex
\[\cdots \to Q_\ell/P_\ell \to Q_{\ell+1}/P_{\ell+1} \to Q_{\ell+2}/P_{\ell+2} \to \cdots.\]

The \textbf{homology} of the semi-functional complex $P_\bullet \leq Q_\bullet$ is defined to be the homology of its quotient complex: in position $\ell$ this is
\[\frac{\ker(Q_\ell/P_\ell \to Q_{\ell+1}/P_{\ell+1})}{\img(Q_{\ell-1}/P_{\ell-1}\to Q_\ell/P_\ell)} \cong \frac{\a_{\ell+1}^{-1}(P_{\ell+1})}{\a_\ell(Q_{\ell-1}) + P_\ell}\]
(where this is easily seen to be a topological isomorphism, even if these quotients are not Hausdorff: compare Lemma~\ref{lem:coker-top} below).
\end{dfn}

Let $P_\bullet \leq Q_\bullet$ be a semi-functional $Z$-complex.  We will need to refer to several properties concerning step-polynomial elements of the modules appearing there.  The simplest are the following.

\begin{dfn}[Complexity-bounded complex]
The semi-functional complex is (resp. \textbf{stably}) \textbf{complexity-bounded} if every morphism $\a_\ell$ is (resp. stably) complexity-bounded.
\end{dfn}

\begin{dfn}[Finite-complexity decompositions]\label{dfn:decomp-cplx}
A semi-functional complex has (resp. \textbf{stable}) \textbf{finite-complexity decompositions at position $\ell$} if the homomorphism
\[Q_{\ell-1} \oplus P_\ell \to Q_\ell:(q,p)\mapsto \a_\ell(q) + p\]
admits (resp. stable) SP pre-images.  The complex has (resp. \textbf{stable}) \textbf{finite-complexity decompositions} if this holds at all positions.
\end{dfn}

The next lemma sometimes provides a convenient way to understand finite-complexity decompositions.  Given $P_\bullet \leq Q_\bullet$ as above, it has an SP subdiagram: in this case, this is an inclusion of complexes of abstract $Z$-modules, which we may write $P_{\sp,\bullet} \leq Q_{\sp,\bullet}$.  Let $M_\bullet = Q_\bullet/P_\bullet$ be the quotient complex, and similarly let $\t{M}_\bullet := Q_{\sp,\bullet}/P_{\sp,\bullet}$. Let $H_\bullet$ be the homology of $M_\bullet$, and $\t{H}_\bullet$ that of $\t{M}_\bullet$.

The inclusions $Q_{\sp,\bullet} \subseteq Q_\bullet$ restrict to inclusions $P_{\sp,\bullet} \subseteq P_\bullet$, and therefore induce a commutative diagram of complexes
\begin{center}
$\phantom{i}$\xymatrix{
P_{\sp,\bullet} \ar_\incln[d] \ar[r] & P_\bullet \ar^\incln[d]
\\ Q_{\sp,\bullet} \ar@{->>}[d] \ar[r] & Q_\bullet \ar@{->>}[d]\\
\t{M}_\bullet \ar[r] & M_\bullet.
}
\end{center}
This, in turn, induces a sequence of homomorphisms on the homologies of the two bottom rows of this diagram:
\begin{eqnarray}\label{eq:homol-mors}
\t{H}_\bullet \to H_\bullet.
\end{eqnarray}

\begin{lem}\label{lem:criterion-for-finite-complex}
In the situation above, the original complex has finite-complexity decompositions at position $\ell$ if and only if the map $\t{H}_\ell \to H_\ell$ in~(\ref{eq:homol-mors}) is injective.
\end{lem}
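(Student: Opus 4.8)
The plan is to translate each of the two conditions into an inclusion of subgroups of $Q_\ell$ and then observe that the two inclusions coincide verbatim. Write $Z_\ell := \a_{\ell+1}^{-1}(P_{\ell+1})$ and $B_\ell := \a_\ell(Q_{\ell-1})+P_\ell$, so that $H_\ell = Z_\ell/B_\ell$ by the identification recorded after Definition~\ref{dfn:semi-fnl-cplx}; in particular $B_\ell\subseteq Z_\ell$, this being precisely what makes that quotient well posed (i.e. what makes $M_\bullet$ a complex). Restricting to step polynomials, set $\t Z_\ell := \{q\in Q_{\sp,\ell}\,:\,\a_{\ell+1}(q)\in P_{\sp,\ell+1}\}$ and $\t B_\ell := \a_\ell(Q_{\sp,\ell-1})+P_{\sp,\ell}$, so that $\t H_\ell = \t Z_\ell/\t B_\ell$, with $\t B_\ell\subseteq \t Z_\ell$, $\t B_\ell\subseteq B_\ell$ and $\t Z_\ell\subseteq Z_\ell$.

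First I would spell out ``finite-complexity decompositions at position $\ell$''. The domain $Q_{\ell-1}\oplus P_\ell$ of the sum-homomorphism $(q,p)\mapsto \a_\ell(q)+p$ has SP subgroup $Q_{\sp,\ell-1}\oplus P_{\sp,\ell}$, since an element of a direct sum of functional modules is a step polynomial if and only if each of its two components is (compose with the affine, hence step-affine, coordinate injections and projections on base and fibre and use Lemma~\ref{lem:stepaffsteppoly} together with the closure of step polynomials under continuous homomorphisms and finite sums). The image of the sum-homomorphism is $B_\ell$ and the image of its SP subgroup is $\t B_\ell$, so by Definition~\ref{dfn:s-p-pre-ims} the complex has finite-complexity decompositions at $\ell$ exactly when $B_\ell\cap \F_\sp\subseteq \t B_\ell$, where $\F_\sp=\F_\sp(X_\ell,A_\ell)$ for the base $X_\ell$ and fibre $A_\ell$ of $Q_\ell$; that is, exactly when every step-polynomial member of $B_\ell$ already lies in $\t B_\ell$.

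Second I would make the homology map~(\ref{eq:homol-mors}) explicit: under the identifications above it is the map $q+\t B_\ell\mapsto q+B_\ell$, well defined since $\t B_\ell\subseteq B_\ell$, and its kernel is $(\t Z_\ell\cap B_\ell)/\t B_\ell$; hence $\t H_\ell\to H_\ell$ is injective if and only if $\t Z_\ell\cap B_\ell\subseteq \t B_\ell$. To finish, I would identify $\t Z_\ell\cap B_\ell$ with $B_\ell\cap\F_\sp$. Complexity-boundedness of $\a_{\ell+1}$ gives $\a_{\ell+1}(Q_{\sp,\ell})\subseteq Q_{\sp,\ell+1}$, and for an element of $Q_{\sp,\ell+1}$ --- already a step polynomial --- membership in $P_{\ell+1}$ is equivalent to membership in $P_{\sp,\ell+1}=P_{\ell+1}\cap\F_\sp$; consequently $\t Z_\ell = Z_\ell\cap\F_\sp$, i.e. $\t Z_\ell$ is exactly the group of step-polynomial cycles. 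Using $B_\ell\subseteq Z_\ell$ once more, $\t Z_\ell\cap B_\ell = Z_\ell\cap\F_\sp\cap B_\ell = B_\ell\cap\F_\sp$, so ``$\t Z_\ell\cap B_\ell\subseteq\t B_\ell$'' is literally the condition obtained in the previous step, and the equivalence is proved.

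The whole argument is a diagram chase with no genuinely hard step; the only points needing a little care are the two identifications used above --- that the SP subgroup of a direct sum of functional modules splits as the direct sum of the SP subgroups, and that the cycles of the SP subcomplex $\t M_\bullet$ coincide with the step-polynomial cycles of $M_\bullet$ --- the latter being where complexity-boundedness of the complex (implicit in $\t M_\bullet$ being a complex of abstract modules) is used.
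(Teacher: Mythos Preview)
Your proof is correct and follows essentially the same definition-chase as the paper's own argument: both unwind finite-complexity decompositions to the condition $(\a_\ell(Q_{\ell-1})+P_\ell)\cap Q_{\sp,\ell} = \a_\ell(Q_{\sp,\ell-1})+P_{\sp,\ell}$ and then identify this with injectivity of $\t H_\ell\to H_\ell$. Your version is more explicit about two points the paper leaves tacit --- that the SP subgroup of the direct sum $Q_{\ell-1}\oplus P_\ell$ splits as expected, and that complexity-boundedness of $\a_{\ell+1}$ forces $\t Z_\ell = Z_\ell\cap Q_{\sp,\ell}$ --- but the route is the same.
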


\begin{proof}
This is just a matter of chasing the definitions: $P_\bullet \leq Q_\bullet$ has finite-complexity decompositions at position $\ell$ if and only if
\begin{eqnarray*}
&&(\a_\ell(Q_{\ell-1}) + P_\ell)_\sp = \a_\ell(Q_{\sp,\ell-1}) + P_{\sp,\ell}\\
&& \Longleftrightarrow \quad Q_{\sp,\ell} \cap \big(\a_\ell(Q_{\ell-1}) + P_\ell\big) = \a_\ell(Q_{\sp,\ell-1}) + P_{\sp,\ell}\\
&& \Longleftrightarrow \quad \t{H}_\ell \stackrel{\rm{dfn}}{=} \frac{Q_{\sp,\ell}\cap \a_{\ell+1}^{-1}(P_{\sp,\ell+1})}{\a_\ell(Q_{\sp,\ell-1}) + P_{\sp,\ell}} \to \frac{\a_{\ell+1}^{-1}(P_{\ell+1})}{\a_\ell(Q_{\ell-1}) + P_\ell} \stackrel{\rm{dfn}}{=} H_\ell \quad \hbox{is injective}.
\end{eqnarray*}
\end{proof}

Now consider a stable short exact sequence of semi-functional complexes:
\begin{center}
$\phantom{i}$\xymatrix{
0 \ar[r] & P_\bullet \ar[r]\ar_\incln[d] & P'_\bullet \ar[r]\ar^\incln[d] & P''_\bullet \ar[r]\ar^\incln[d] & 0\\
0 \ar[r] & Q_\bullet \ar[r] & Q'_\bullet \ar[r] & Q''_\bullet \ar[r] & 0.
}
\end{center}
This means, of course, that every column is a semi-functional complex, and that this diagram becomes a stable short exact sequence in $\SFMod(Z)$ for every particular position $\ell$ in those complexes.

Example~\ref{ex:concat} generalizes easily to this setting.

\begin{ex}\label{eq:concat-cplx}
A \textbf{concatenation} of semi-functional complexes is a sequence of concatenations
\[P_\ell \leq R_\ell \leq Q_\ell \leq \F(X_\ell,A_\ell)\]
together with homomorphisms $\a_\ell:Q_{\ell-1}\to Q_\ell$ such that $\a_\ell(R_{\ell-1}) \leq R_\ell$ and $\a_\ell(P_{\ell-1}) \leq P_\ell$ for each $\ell$. Given such a concatenation, one obtains a stable short exact sequence of semi-functional complexes as follows:
\begin{center}
$\phantom{i}$\xymatrix{
0 \ar[r] & P_\bullet \ar[r]\ar_\incln[d] & P_\bullet \ar[r]\ar^\incln[d] & R_\bullet \ar[r]\ar^\incln[d] & 0\\
0 \ar[r] & R_\bullet \ar[r] & Q_\bullet \ar[r] & Q_\bullet \ar[r] & 0.
}
\end{center}
$\phantom{i}$ \fin
\end{ex}

Now consider again a general short exact sequence of semi-functional complexes as above. Let $\a_\bullet$, $\a'_\bullet$ and $\a''_\bullet$ be the sequences of morphisms of $Q_\bullet$, $Q'_\bullet$ and $Q''_\bullet$, respectively, and let $M_\bullet$, $M'_\bullet$ and $M''_\bullet$ be the quotient complexes. Similarly to the previous subsection, let $\t{M}'_\bullet$, $\t{M}'_\bullet$ and $\t{M}''_\bullet$ be the complexes of quotients arising from the step-polynomial subdiagrams of the above semi-functional complexes.  As in~(\ref{eq:homol-mors}), we have natural maps of the resulting homology groups
\begin{eqnarray}\label{eq:maps-on-homol}
\t{H}_\ell\to H_\ell, \quad \t{H}'_\ell \to H'_\ell \quad \hbox{and} \quad \t{H}''_\ell \to H''_\ell
\end{eqnarray}

Since our short exact sequences are assumed to be exact in$\SFMod(Z)$, the SP subdiagram of the original short exact sequence of complexes defines a new short exact sequence of complexes.  It maps to the original short exact sequence via all the inclusion maps such as $P_{\sp,\ell} \into P_\ell$, so we now have the following.

\begin{lem}\label{lem:compar-diag}
In the above setting, the homomorphisms in~(\ref{eq:maps-on-homol}) fit together into a homomorphism of the resulting long exact sequences:
\begin{center}
$\phantom{i}$\xymatrix{
\cdots \ar[r] & \t{H}_\ell \ar[r]\ar[d] & \t{H}'_\ell \ar[r]\ar[d] & \t{H}''_\ell \ar[r]\ar[d] & \t{H}_{\ell+1} \ar[r]\ar[d] & \t{H}'_{\ell+1} \ar[r]\ar[d] & \cdots\\
\cdots \ar[r] & H_\ell \ar[r] & H'_\ell \ar[r] & H''_\ell \ar[r] & H_{\ell+1} \ar[r] & H'_{\ell+1} \ar[r] & \cdots.
}
\end{center}
Since both rows here are the long exact sequences arising from certain short exact sequences of complexes, both are exact.

We will call this the \textbf{comparison diagram}. \qed
\end{lem}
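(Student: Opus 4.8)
The plan is to read Lemma~\ref{lem:compar-diag} off as a special case of the naturality of the long exact homology sequence for a morphism of short exact sequences of complexes. Almost all of the scaffolding is already in place in the discussion preceding the statement. On the one hand, the quotient complexes fit into a short exact sequence of complexes $0 \to M_\bullet \to M'_\bullet \to M''_\bullet \to 0$: each $0 \to M_\ell \to M'_\ell \to M''_\ell \to 0$ is exact in $\PMod(Z)$ because the given short exact sequence of semi-functional complexes is, in particular, exact in $\SFMod(Z)$ in position $\ell$, and the differentials $\a_\bullet,\a'_\bullet,\a''_\bullet$ commute with these maps. On the other hand, the step-polynomial subdiagram assembles into a short exact sequence $0 \to \t{M}_\bullet \to \t{M}'_\bullet \to \t{M}''_\bullet \to 0$ of complexes of abstract $Z$-modules. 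The bottom and top rows of the asserted diagram are, by definition of the homology of a semi-functional complex, the zig-zag long exact sequences of these two short exact sequences of complexes; each is therefore exact.

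Two small points underlie the second claim. First, $\t{M}_\bullet$ really is a complex: its differential is the restriction of $\a_\bullet$ to SP submodules, well defined because the semi-functional complex is complexity-bounded, and $\a_{\ell+1}\circ\a_\ell$ already carries $Q_{\ell-1}$ into $P_{\ell+1}$, hence carries $Q_{\sp,\ell-1}$ into $P_{\ell+1}\cap Q_{\sp,\ell+1} = P_{\sp,\ell+1}$, so consecutive differentials of $\t{M}_\bullet$ compose to zero. Second, short exactness of $0 \to \t{M}_\ell \to \t{M}'_\ell \to \t{M}''_\ell \to 0$ at each position $\ell$ is precisely the content of the second clause in the definition of exactness in $\SFMod(Z)$, applied to the short exact sequence of semi-functional modules sitting in that position. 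Finally, the commutative diagram displayed just above the statement exhibits the inclusion maps $P_{\sp,\bullet}\into P_\bullet$, $Q_{\sp,\bullet}\into Q_\bullet$ (and their primed and double-primed analogues), after passage to quotients, as a morphism from $0 \to \t{M}_\bullet \to \t{M}'_\bullet \to \t{M}''_\bullet \to 0$ to $0 \to M_\bullet \to M'_\bullet \to M''_\bullet \to 0$ compatible with every horizontal arrow, and the induced maps on homology are exactly the $\t{H}_\ell\to H_\ell$, $\t{H}'_\ell\to H'_\ell$, $\t{H}''_\ell\to H''_\ell$ of~(\ref{eq:maps-on-homol}).

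From here the conclusion is the standard functoriality of the zig-zag construction: a morphism of short exact sequences of complexes induces a morphism of the associated long exact homology sequences, in which the squares not involving connecting homomorphisms commute because homology is a functor and the squares involving connecting homomorphisms commute by naturality of the zig-zag map. I expect the only obstacle to be this bookkeeping — confirming that on the step-polynomial level one genuinely has a short exact sequence of complexes of $Z$-modules rather than a mere commuting diagram — and even that has been front-loaded into the definition of stable exactness. There is no further homological-algebra subtlety, and topologies may be ignored throughout since both long exact sequences can be formed in the category of abstract $Z$-modules.
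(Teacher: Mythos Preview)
Your argument is correct and is exactly the approach the paper has in mind: the lemma is stated with a \qed and no proof, the preceding paragraph having already observed that the SP subdiagram forms a new short exact sequence of complexes mapping via the inclusions to the original one. You have simply written out the standard naturality-of-the-zig-zag argument that the paper leaves implicit.
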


\subsection{Semi-functional complexes with fully SP homology}

We now focus on a more restricted class of semi-functional complexes.  In general, if the homology groups $H_\bullet$ of a semi-functional $Z$-complex $P_\bullet \leq Q_\bullet$ are Hausdorff, then they may be viewed as the quotients of the semi-functional modules
\[\a_\ell(Q_{\ell-1}) + P_\ell \leq \a_{\ell+1}^{-1}(P_{\ell+1}).\]
In some special cases, the homology groups turn out to be 'small':

\begin{dfn}
A semi-functional $Z$-complex $P_\bullet \leq Q_\bullet$ is \textbf{locally SP} if it is stably complexity-bounded, if its homology groups $H_\bullet$ are all SP Lie $Z$-modules, and if the quotient homomorphisms
\begin{eqnarray}\label{eq:homol-quot}
\a_\ell(Q_{\ell-1}) + P_\ell \leq \a_{\ell+1}^{-1}(P_{\ell+1}) \onto H_\ell
\end{eqnarray}
are also all stably complexity-bounded.
\end{dfn}

Recall that Subsection I.2.4 introduced Polish complexes that have $\ell_0$-almost discrete homology groups: this means that all modules to the left of $\ell_0$ are zero, the homology at $\ell_0$ is Lie, and the homology at all positions to the right of $\ell_0$ is discrete.  The examples of locally SP complexes that we will meet below all have almost discrete homology, so it is worth knowing how these notions interact.

\begin{lem}\label{lem:almost-disc-and-locally-SP}
Let $(P_\bullet \leq Q_\bullet)$ be a left-bounded semi-functional complex with homomorphisms $\a_\bullet$, quotient complex $M_\bullet$, and homology groups $H_\bullet$.  Suppose that $M_\bullet$ has $\ell_0$-almost discrete homology.  Then $(P_\bullet \leq Q_\bullet)$ is locally SP if and only if $H_{\ell_0}$ is an SP module and the quotient homomorphism
\[P_{\ell_0} \leq \a_{\ell_0+1}^{-1}(P_{\ell_0+1}) \onto H_{\ell_0}\]
is stably complexity-bounded.
\end{lem}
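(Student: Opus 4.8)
The plan is to dispatch the forward implication at once and then, for the converse, to walk through the three requirements in the definition of ``locally SP'' position by position, the positions to the right of $\ell_0$ being handled uniformly by Proposition~\ref{prop:complexity-above-and-below}.

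The forward implication needs no work. If $(P_\bullet \leq Q_\bullet)$ is locally SP, then by definition every $H_\ell$ is an SP Lie $Z$-module (so in particular $H_{\ell_0}$ is), and every quotient homomorphism $(\ref{eq:homol-quot})$ is stably complexity-bounded. At position $\ell_0$ the assumption that $M_\bullet$ has $\ell_0$-almost discrete homology gives $M_{\ell_0-1}=0$, hence $Q_{\ell_0-1}=P_{\ell_0-1}$; since $\a_{\ell_0}$ respects the subcomplex $P_\bullet$ this yields $\a_{\ell_0}(Q_{\ell_0-1})+P_{\ell_0}=P_{\ell_0}$, so $(\ref{eq:homol-quot})$ at $\ell_0$ is exactly the homomorphism $P_{\ell_0}\leq \a_{\ell_0+1}^{-1}(P_{\ell_0+1})\onto H_{\ell_0}$ named in the statement.

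For the converse I would assume the two conditions at $\ell_0$ and run through the positions. Stable complexity-boundedness of the morphisms $\a_\bullet$ of the complex is a requirement not involving the homology, and in every complex we meet it holds by construction, so I would take it as given and concentrate on the homology. To the left of $\ell_0$ the quotient modules $M_\ell$ vanish, so $H_\ell=0$, which is trivially an SP Lie module and for which $(\ref{eq:homol-quot})$ is the zero morphism. At $\ell_0$ itself, $H_{\ell_0}$ is Lie by $\ell_0$-almost discreteness and SP by hypothesis, and $(\ref{eq:homol-quot})$ at $\ell_0$ is stably complexity-bounded by hypothesis, as identified in the forward direction. To the right of $\ell_0$, each $H_\ell$ is discrete, hence an SP Lie module by Lemma~\ref{lem:cpt-or-disc-are-sp}; and for the quotient homomorphism I would invoke part (1) of Proposition~\ref{prop:complexity-above-and-below} applied to $\a_{\ell+1}^{-1}(P_{\ell+1})\onto H_\ell$, after noting that $\a_{\ell+1}^{-1}(P_{\ell+1})$ is a closed $Z$-submodule of the functional module $Q_\ell$, hence itself a functional $Z$-module with the same (SP) fibre, and that $H_\ell$ is discrete; the other horizontal arrow of this semi-functional morphism is the trivial map to $0$. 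This makes $(\ref{eq:homol-quot})$ at every position to the right of $\ell_0$ automatically stably complexity-bounded, which exhausts all positions.

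The step I expect to need the most care is this last one: one must be sure that $\a_{\ell+1}^{-1}(P_{\ell+1})$ genuinely qualifies as a functional $Z$-module with SP fibre, so that Proposition~\ref{prop:complexity-above-and-below} is applicable, and that $\ell_0$-almost discreteness of $M_\bullet$ really does force $H_\ell$ to be discrete at every position strictly to the right of $\ell_0$ (so that that proposition, rather than the $\ell_0$-hypothesis, is what is being used there). Both points come straight from the definitions, and once they are in place the remainder is pure bookkeeping against the definition of ``locally SP''; no computation is involved.
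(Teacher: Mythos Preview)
Your proposal is correct and follows essentially the same approach as the paper's own proof, which is very terse: it simply observes that for $\ell>\ell_0$ the homology $H_\ell$ is discrete (hence SP), and stable complexity-boundedness of the quotient homomorphism in~(\ref{eq:homol-quot}) then comes from part~(1) of Proposition~\ref{prop:complexity-above-and-below}. Your version fills in the bookkeeping (the identification of the quotient map at $\ell_0$ via $Q_{\ell_0-1}=P_{\ell_0-1}$, the check that $\a_{\ell+1}^{-1}(P_{\ell+1})$ is a functional submodule with SP fibre) that the paper leaves implicit, and your flagging of the stable complexity-boundedness of $\a_\bullet$ as a standing assumption is accurate---the paper silently treats it the same way.
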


\begin{proof}
The point is simply that these conditions do not need to be checked at any positions to the right of $\ell_0$: since $H_\ell$ is discrete for every $\ell > \ell_0$, it is SP, and stable complexity-boundedness of the quotient homomorphisms in~(\ref{eq:homol-quot}) is given by part (1) of Proposition~\ref{prop:complexity-above-and-below}.
\end{proof}

Often we will need to handle complexes that are co-induced from locally SP complexes.  Now let $Y\leq Z$ be an inclusion of compact Abelian groups, and suppose that $P_\bullet^\circ \leq Q_\bullet^\circ$ is a locally SP semi-functional $Y$-complex.  Let $H^\circ_\bullet$ be its homology groups, and let
\begin{eqnarray}\label{eq:semi-good}
(P_\bullet \leq Q_\bullet) := \Cnd_Y^Z(P_\bullet^\circ \leq Q_\bullet^\circ).
\end{eqnarray}
The homology groups of $(P_\bullet \leq Q_\bullet)$ are simply $H_\bullet \cong \Cnd_Y^Z H^\circ_\bullet$.  Since each $H^\circ_\ell$ is an SP $Y$-module, each $H_\ell$ may therefore be interpreted as a functional $Z$-module with base $\rm{id}_Z$ and fibre $H^\circ_\ell$.  This gives a direct meaning to `step polynomial' elements in these homology groups.  Any further co-induction gives an analogous structure.  With this interpretation, we may make the following definition.

\begin{dfn}[SP representatives]\label{dfn:s-p-reps-cplx}
Let $P^\circ_\bullet \leq Q^\circ_\bullet$, $H^\circ_\bullet$, $P_\bullet \leq Q_\bullet$ and $H_\bullet$ be as above.  Then $P_\bullet \leq Q_\bullet$ has (resp. \textbf{stable}) \textbf{SP representatives at position $\ell$} if the quotient homomorphism
\[\a_\ell(Q_{\ell-1}) + P_\ell \leq \a_{\ell+1}^{-1}(P_{\ell+1}) \onto H_\ell\]
has (resp. stable) SP pre-images (where the `step polynomials' in the quotient module here are understood as described above).  It has (resp. \textbf{stable}) \textbf{SP representatives} if this holds at all positions.
\end{dfn}

Often we will work with complexes satisfying both of Definitions~\ref{dfn:s-p-reps-cplx} and~\ref{dfn:decomp-cplx}.

\begin{dfn}[Fully SP homology]
Again let $P_\bullet \leq Q_\bullet$ be the co-induction to $Z$  of a locally SP $Y$-complex.  It has (resp. \textbf{stably}) \textbf{fully SP homology} if it has both (resp. stable) finite-complexity decompositions and (resp. stable) SP representatives.
\end{dfn}

Letting $H_\bullet$ be the homology of the $Z$-complex in the above definition, it follows that it has (resp. stably) fully SP homology if and only if the sequence
\begin{center}
$\phantom{i}$\xymatrix{
P_{\ell-1} \ar[r]\ar[d] & P_\ell \ar[r]\ar[d] & 0 \ar[r]\ar[d] & 0\\
Q_{\ell-1} \ar[r] & \a_{\ell+1}^{-1}(P_{\ell+1}) \ar[r] & H_\ell \ar[r] & 0
}
\end{center}
is (resp. stably) exact in $\SFMod(Z)$ for every $\ell$.

Next consider the comparison homomorphisms~(\ref{eq:homol-mors}) in the present setting.  Since the quotient homomorphisms~(\ref{eq:homol-quot}) are stably complexity-bounded, the comparison homomorphisms must take values in $H_{\sp,\bullet}$.  This leads to the following simple counterpart of Lemma~\ref{lem:criterion-for-finite-complex}.

\begin{lem}\label{lem:criterion-for-meekness}
In the situation above, the complex has SP representatives if and only if the maps $\t{H}_\ell \to H_{\sp,\ell}$ are all surjective.  Hence, it has fully SP homology if and only if these maps are isomorphisms. \qed
\end{lem}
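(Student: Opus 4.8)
The plan is to unwind the definitions of \emph{SP representatives} and of the comparison maps $\t H_\ell\to H_{\sp,\ell}$ exactly as was done for Lemma~\ref{lem:criterion-for-finite-complex}, and then to invoke that lemma for the second assertion; no genuinely new idea is needed.

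Fix a position $\ell$. By definition, $(P_\bullet\le Q_\bullet)$ has SP representatives at $\ell$ precisely when the quotient homomorphism $\a_\ell(Q_{\ell-1})+P_\ell\le\a_{\ell+1}^{-1}(P_{\ell+1})\onto H_\ell$ has SP pre-images, i.e.\ every element of $H_{\sp,\ell}$ is the image of some step-polynomial element of $\a_{\ell+1}^{-1}(P_{\ell+1})$. The one bookkeeping point is to identify this last group: since the complex is stably complexity-bounded, $\a_{\ell+1}$ carries step polynomials to step polynomials, so a step-polynomial element of $\a_{\ell+1}^{-1}(P_{\ell+1})$ in fact has $\a_{\ell+1}$-image lying in $P_{\sp,\ell+1}$, not merely in $P_{\ell+1}$; hence
\[\big(\a_{\ell+1}^{-1}(P_{\ell+1})\big)_\sp=Q_{\sp,\ell}\cap\a_{\ell+1}^{-1}(P_{\sp,\ell+1}),\]
which is exactly the numerator of $\t H_\ell$ as computed in the proof of Lemma~\ref{lem:criterion-for-finite-complex}. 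Now observe that the composite of the quotient map $Q_{\sp,\ell}\cap\a_{\ell+1}^{-1}(P_{\sp,\ell+1})\onto\t H_\ell$ with $\t H_\ell\to H_{\sp,\ell}$ is simply the restriction to this subgroup of the quotient map $\a_{\ell+1}^{-1}(P_{\ell+1})\onto H_\ell$, because the denominator $\a_\ell(Q_{\sp,\ell-1})+P_{\sp,\ell}$ of $\t H_\ell$ already lies inside $\a_\ell(Q_{\ell-1})+P_\ell$ and therefore dies in $H_\ell$. Consequently the image of $\t H_\ell\to H_{\sp,\ell}$ coincides with the image of $\big(\a_{\ell+1}^{-1}(P_{\ell+1})\big)_\sp$ under the quotient map, and ``SP representatives at $\ell$'' says exactly that this image is all of $H_{\sp,\ell}$, i.e.\ that $\t H_\ell\to H_{\sp,\ell}$ is surjective. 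Ranging over all $\ell$ gives the first assertion.

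For the second assertion, combine this with Lemma~\ref{lem:criterion-for-finite-complex}: finite-complexity decompositions at position $\ell$ hold iff $\t H_\ell\to H_\ell$ is injective, and since that map factors as $\t H_\ell\to H_{\sp,\ell}\into H_\ell$ with the second arrow a genuine inclusion of subgroups, this is equivalent to injectivity of $\t H_\ell\to H_{\sp,\ell}$. As \emph{fully SP homology} means finite-complexity decompositions together with SP representatives, the conjunction of the two amounts to each $\t H_\ell\to H_{\sp,\ell}$ being simultaneously injective and surjective, i.e.\ an isomorphism. The whole argument is formal; the only step meriting a moment's attention — and the closest thing to an obstacle — is the displayed identification of $\big(\a_{\ell+1}^{-1}(P_{\ell+1})\big)_\sp$, which is precisely where stable complexity-boundedness of the $\a_\bullet$ is used (so that applying $\a_{\ell+1}$ cannot destroy the step-polynomial property of a pre-image).
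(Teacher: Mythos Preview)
Your proposal is correct and matches the paper's approach exactly: the paper marks this lemma with a bare \qed, treating it as an immediate verification from the definitions together with Lemma~\ref{lem:criterion-for-finite-complex}, and you have simply written out those details carefully. The one nontrivial point you flag --- that $\big(\a_{\ell+1}^{-1}(P_{\ell+1})\big)_\sp = Q_{\sp,\ell}\cap\a_{\ell+1}^{-1}(P_{\sp,\ell+1})$ via complexity-boundedness of $\a_{\ell+1}$ --- is exactly the computation already recorded in the proof of Lemma~\ref{lem:criterion-for-finite-complex}, so nothing new is needed.
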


Consider again a stable short exact sequence
\[0 \to (P^\circ_\bullet \leq Q^\circ_\bullet) \to (P'^\circ_\bullet \leq Q'^\circ_\bullet) \to (P''^\circ_\bullet \leq Q''^\circ_\bullet) \to 0\]
of semi-functional $Y$-complexes.  Assume now that all three of them are locally SP.  Let
\begin{eqnarray}\label{eq:resulting-Z-cplxs}
0 \to (P_\bullet \leq Q_\bullet) \to (P'_\bullet \leq Q'_\bullet) \to (P''_\bullet \leq Q''_\bullet) \to 0
\end{eqnarray}
be the result of co-inducing to some $Z \geq Y$.  It will be important later that, in this setting, fully SP homology (and hence also stably fully SP homology) is preserved by such stable short exact sequences.

\begin{prop}\label{prop:complex-ses-good}
In the above setting, if two of the complexes in~(\ref{eq:resulting-Z-cplxs}) have fully SP homology, then so does the third.
\end{prop}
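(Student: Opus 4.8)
The plan is to run a five-lemma argument across three long exact sequences of homology groups, the crucial one being a new exact sequence built from the step-polynomial subgroups of the quotient complexes.

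I would first reduce to the statement as displayed (the stable refinement alluded to in the paragraph before the Proposition then follows by applying the same statement over every enlargement, since $\Cnd$ carries a stable short exact sequence of locally SP $Y$-complexes to another such, commuting with passage to homology and with formation of step-polynomial subdiagrams). The backbone is the comparison diagram of Lemma~\ref{lem:compar-diag}: it gives a morphism of long exact sequences from the homologies $\t H_\bullet,\t H'_\bullet,\t H''_\bullet$ of the step-polynomial subdiagrams to the homologies $H_\bullet,H'_\bullet,H''_\bullet$ of the quotient complexes, with the comparison maps of~(\ref{eq:maps-on-homol}) as vertical arrows. Since the quotient homomorphisms~(\ref{eq:homol-quot}) are stably complexity-bounded, each comparison map factors through the step-polynomial subgroup $H_{\sp,\ell}\leq H_\ell$, and by Lemma~\ref{lem:criterion-for-meekness} each of our complexes has fully SP homology exactly when the resulting maps $\t H_\ell\to H_{\sp,\ell}$ are all isomorphisms. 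So the goal is to deduce that the comparison maps of the third complex are isomorphisms at every position from the corresponding facts for the other two.

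The heart of the argument is to interpose a third long exact sequence
\[\cdots\to H_{\sp,\ell}\to H'_{\sp,\ell}\to H''_{\sp,\ell}\to H_{\sp,\ell+1}\to\cdots,\]
obtained by restricting the bottom long exact sequence to its step-polynomial subgroups. Because all three $Y$-complexes are locally SP, their $Y$-level homology groups $H^\circ_\bullet,H'^\circ_\bullet,H''^\circ_\bullet$ are SP Lie $Y$-modules, and every arrow of the associated $Y$-level long exact sequence --- both the maps induced by the morphisms of complexes and the connecting homomorphisms --- is a continuous homomorphism with closed image between SP Lie $Y$-modules (the images being kernels of the next arrows, hence closed and SP by Corollary~\ref{cor:s-p-class-closed}). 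Composing a step polynomial with a continuous homomorphism of Lie groups leaves it a step polynomial (read off the shape in Definition~\ref{dfn:step-poly}), so, using the natural identifications of Proposition~\ref{prop:s-p-coind-consistent}, each such arrow is stably complexity-bounded, and after restriction to its image it has stable SP pre-images by Corollary~\ref{cor:Lie-homos-s-p-reps}. Co-inducing to $Z$, the arrows of the bottom long exact sequence therefore carry step-polynomial subgroups into step-polynomial subgroups and, on step-polynomial elements, admit step-polynomial pre-images; combining this with the exactness of the full bottom sequence yields exactness of the displayed $H_{\sp}$-sequence. The comparison maps land in these subgroups, so they assemble into a morphism of long exact sequences from the top row to this new middle row. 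The five lemma then finishes it: whichever two of the three complexes are assumed to have fully SP homology, the comparison maps of those two, viewed as maps into the $H_{\sp}$-groups, are isomorphisms, and around any fixed position this supplies exactly the data --- one epimorphism, two isomorphisms, one monomorphism among four of the five vertical arrows --- needed to conclude that the fifth vertical arrow, the comparison map of the third complex at that position, is an isomorphism. As this holds at every position, the third complex has fully SP homology; the three cases are identical apart from a shift in which five consecutive terms one inspects.

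The step I expect to be the main obstacle is the construction of the middle $H_{\sp}$-long exact sequence, and in particular the treatment of the connecting homomorphisms: one must correctly identify them as co-inductions of the $Y$-level connecting homomorphisms $H''^\circ_\bullet\to H^\circ_{\bullet+1}$, verify that these are continuous homomorphisms with closed image between SP Lie $Y$-modules so that Corollary~\ref{cor:Lie-homos-s-p-reps} applies, and keep scrupulous track of the several functional-module structures on the homology groups so that ``step-polynomial element of $H'_\ell$'' carries a single unambiguous meaning throughout the diagram chase.
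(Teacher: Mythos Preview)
Your proposal is correct and matches the paper's proof essentially line for line: the paper likewise restricts the bottom row of the comparison diagram to its SP subdiagram, argues that this restricted row is still exact because all arrows are co-induced from closed morphisms of SP Lie $Y$-modules (invoking Corollary~\ref{cor:Lie-homos-s-p-reps} for stable SP pre-images), and then applies the Five Lemma. Your anticipated obstacle --- showing exactness of the $H_{\sp}$-sequence, especially at the connecting maps --- is exactly the point the paper singles out, and it resolves it with the same appeal to Corollary~\ref{cor:Lie-homos-s-p-reps}; if anything, your justification is slightly more explicit than the paper's.
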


\begin{proof}
This involves three cases: we treat only one, since they are all much the same. Suppose that $P_\bullet \leq Q_\bullet$ and $P'_\bullet \leq Q'_\bullet$ both have fully SP homology.

Consider the comparison diagram of Lemma~\ref{lem:compar-diag}.  All the images from the top row give step polynomials in the relevant groups of the bottom row, as in Lemma~\ref{lem:criterion-for-meekness}.  We may therefore restrict the bottom row of the comparison diagram to its SP subdiagram, and so obtain the following:
\begin{center}
$\phantom{i}$\xymatrix{
\cdots \ar[r] & \t{H}_\ell \ar[r]\ar[d] & \t{H}'_\ell \ar[r]\ar[d] & \t{H}''_\ell \ar[r]\ar[d] & \t{H}_{\ell+1} \ar[r]\ar[d] & \t{H}'_{\ell+1} \ar[r]\ar[d] & \cdots\\
\cdots \ar[r] & H_{\sp,\ell} \ar[r] & H'_{\sp,\ell} \ar[r] & H''_{\sp,\ell} \ar[r] & H_{\sp,\ell+1} \ar[r] & H'_{\sp,\ell+1} \ar[r] & \cdots.
}
\end{center}
The new bottom row is still exact, because it is the SP subdiagram of an exact sequence of closed morphisms all of which are co-induced from morphisms of SP Lie $Y$-modules, so are stably complexity-bounded (obvious) and have stable SP pre-images (by Corollary~\ref{cor:Lie-homos-s-p-reps}).

Finally, our assumptions give that the first, second, fourth and fifth vertical arrows here are isomorphisms, by Lemmas~\ref{lem:criterion-for-finite-complex} and~\ref{lem:criterion-for-meekness}, and hence so is the middle vertical arrow, by the Five Lemma~\cite[Proposition 2.72]{Rotman09}.
\end{proof}

\section{Cohomology theories for semi-functional modules}

As in Part I, a central r\^ole in this paper will be played by the cohomology of our various $Z$-modules of interest.  In Part I, the appropriate cohomology theory was that constructed by Calvin Moore in terms of measurable cochains.  A more complete overview of this was given in Section I.3.  Here we will need this theory again, but also another, constructed in terms of step-polynomial cochains.  This section will derive some basic properties of the former for semi-functional modules, then introduce the latter, and finally prove some comparison results between them.

\subsection{Measurable cohomology}\label{subs:semi-fnl-cohom}

We will bring forward from Part I the notation for measurable cocycles, coboundaries, coboundary operators, cohomology groups and so on.

There is an obvious forgetful functor $\SFMod(Z)\to \PMod(Z)$ which acts on objects by
\[\big(P \leq Q\big) \mapsto Q/P\]
and on morphisms in the corresponding way. Measurable cohomology for objects of $\SFMod(Z)$ will simply be the composition of $\rmH^\ast_\m$ with this forgetful functor: if $P \leq Q$ is a semi-functional $Z$-module, then its measurable cohomology in degree $p$ will be defined to be $\rmH^p_\m(Z,Q/P)$.

It will be most important to us that these groups $\rmH^p_\m(Z,Q/P)$ also admit descriptions as quotients of pairs of functional $Z$-modules (although these quotients may not be Hausdorff).  Indeed, quite generally, when a Polish $W$-module $M$ is given as a quotient of two other Polish $Z$-modules, one can present the group $\rmH^p_\m(Z,M)$ in terms of those other modules.  The following definition is in much the same spirit as relative homology and cohomology in algebraic topology~\cite[Sections 2.1 and 3.1]{Hat02}.

\begin{dfn}[Relative cocycles]
Given a short exact sequence
\[P\into Q\onto M\]
in $\PMod(Z)$, a \textbf{relative cocycle from $Z$ to $(P,Q)$ in degree $p$} is an element of the module
\[\Z^p(Z,Q,P) := \{f \in \C^p(Z,Q)\,|\ df \in \C^{p+1}(Z,P)\},\]
and a \textbf{relative coboundary from $Z$ to $(P,Q)$ in degree $p$} is an element of the module
\[\B^p(Z,Q,P) := \C^p(Z,P) + \B^p(Z,Q).\]
\end{dfn}

Just as for the usual groups of cocycles and coboundaries, $\Z^p(Z,P,Q)$ is always a closed subgroup of $\C^p(Z,Q)$, and the subgroup $\B^p(Z,Q,P)$ is contained in $\Z^p(Z,Q,P)$ but may not be closed.

These modules can be used to give an alternative presentation of the cohomology groups $\rmH^p_\m(Z,Q/P)$.  This is based on the following auxiliary lemma.

\begin{lem}\label{lem:coker-top}
Suppose that $\psi_i:P_i \into Q_i$ are continuous injective homomorphisms of Polish groups for $i=1,2$ with quotient groups $Q_i/\psi_i(P_i) = M_i$ (which are not assumed Hausdorff).  Suppose also that $\phi^P:P_1\to P_2$ and $\phi^Q:Q_1\to Q_2$ are continuous homorphisms which give rise to a commutative diagram
\begin{center}
$\phantom{i}$\xymatrix{
P_1 \ar@{^{(}->}_{\psi_1}[d]\ar^{\phi^P}[r] & P_2 \ar@{^{(}->}^{\psi_2}[d]\\
Q_1 \ar[d]\ar^{\phi^Q}[r] & Q_2 \ar[d]\\
M_1 \ar[r] & M_2.
}
\end{center}
Then the algebraic isomorphism
\[\coker(M_1\to M_2)\cong \frac{Q_2}{\phi^Q(Q_1) + \psi_2(P_2)}\]
is also a topological isomorphism.
\end{lem}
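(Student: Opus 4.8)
The plan is to realize $\coker(M_1 \to M_2)$ as an iterated quotient of $Q_2$ and then to invoke the standard transitivity of quotient topologies for topological groups. Concretely, write $\pi \colon Q_2 \onto M_2 \onto \coker(M_1 \to M_2)$ for the composite of the quotient map $Q_2 \onto M_2 = Q_2/\psi_2(P_2)$ with the further passage from $M_2$ to its quotient by the image of $M_1 \to M_2$. Each of these two maps is a quotient homomorphism of topological groups, hence continuous, surjective and open (here $M_2$ and the cokernel carry their quotient topologies, whether or not these are Hausdorff), so $\pi$ is continuous, surjective and open as well.

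First I would check well-definedness of the comparison map and compute $\ker\pi$. The map $M_1 \to M_2$ induced by $\phi^Q$ is well defined precisely because $\phi^Q(\psi_1(P_1)) = \psi_2(\phi^P(P_1)) \subseteq \psi_2(P_2)$, by commutativity of the given diagram; its image in $M_2$ is therefore the subgroup $\bigl(\phi^Q(Q_1) + \psi_2(P_2)\bigr)/\psi_2(P_2)$. Consequently an element $q_2 \in Q_2$ lies in $\ker\pi$ if and only if its class modulo $\psi_2(P_2)$ lies in that image, i.e.\ if and only if $q_2 \in \phi^Q(Q_1) + \psi_2(P_2)$. Thus $\ker\pi = \phi^Q(Q_1) + \psi_2(P_2)$. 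Now I would invoke the elementary fact that a continuous, open, surjective homomorphism $f\colon G \onto L$ of topological groups factors as $G \onto G/\ker f \stackrel{\cong}{\to} L$, the second arrow being a topological isomorphism (it is a continuous bijection, and it is open because $f$ is open while $G \onto G/\ker f$ is a quotient map). Applying this to $\pi$ produces a topological isomorphism $Q_2/\bigl(\phi^Q(Q_1) + \psi_2(P_2)\bigr) \stackrel{\cong}{\to} \coker(M_1 \to M_2)$, which is readily seen to be the inverse of the algebraic isomorphism in the statement.

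There is no genuine obstacle here: the only points meriting attention are that $\psi_1,\psi_2$ are assumed only continuous and injective (not topological embeddings) and that the quotients $M_i$ and $Q_2/(\phi^Q(Q_1)+\psi_2(P_2))$ need not be Hausdorff. Both are harmless, since the entire argument is phrased through quotient topologies and the openness of the quotient maps $G \onto G/H$, which hold for an arbitrary subgroup $H$ of an arbitrary topological group $G$ regardless of whether $H$ is closed or of how it sits topologically; in particular none of these quotients need be separated for the transitivity argument to go through. The one genuinely necessary input from the hypotheses is the commutativity of the square, used to ensure $M_1 \to M_2$ is defined and to identify its image.
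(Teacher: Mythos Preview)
Your proof is correct and essentially identical to the paper's: both identify the composite $Q_2 \onto M_2 \onto \coker(M_1\to M_2)$ as a continuous surjection with kernel $\phi^Q(Q_1)+\psi_2(P_2)$ and use the transitivity of quotient topologies to conclude. The only cosmetic difference is that you package the argument via openness of quotient homomorphisms, whereas the paper checks continuity in each direction separately by appealing twice to the universal property of the quotient topology.
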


\begin{proof}
The construction of this algebraic isomorphism is a standard diagram chase.  It remains to prove that it is continuous in each direction.

The continuity from right to left is obvious from the definition of the quotient topology, because the composition of quotient maps
\[Q_2 \to M_2 \to \coker(M_1\to M_2)\]
is continuous by definition and its kernel is $\phi^Q(Q_1) + \psi_2(P_2)$.  Similarly, from left to right we see that $\rm{img}(M_1\to M_2)$ is the kernel of the homomorphism
\[M_2 = Q_2/\psi_2(P_2) \to Q_2/(\phi^Q(Q_1) + \psi_2(P_2)),\]
so the map is continuous in this direction also, by the definition of the topology on the cokernel.
\end{proof}

\begin{lem}\label{lem:rel-cocycs}
In the setting of the above definition, the inclusion
\[\B^p(Z,Q,P) \leq \Z^p(Z,Q,P)\]
has quotient topologically isomorphic to $\rmH_\m^p(Z,Q/P)$.
\end{lem}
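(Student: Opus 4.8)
The plan is to realise the asserted isomorphism through the quotient map $q:Q\onto M:=Q/P$ and its induced maps on cochain groups, deducing the topological part from Lemma~\ref{lem:coker-top} together with the open mapping theorem for Polish groups.

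First I would record how $q$ behaves on cochains. Since $q$ is a continuous surjection of Polish groups, the Measurable Selector Theorem shows that $q_*:\C^j(Z,Q)\to\C^j(Z,M)$ is a continuous surjective homomorphism for every $j$, with kernel $\C^j(Z,P)$, and $q_*$ commutes with $d$ by naturality. From this I would extract: (i) $q_*$ restricts to a surjection $\Z^p(Z,Q,P)\onto\Z^p(Z,M)$ --- surjectivity because, lifting any $g\in\Z^p(Z,M)$ to some $f\in\C^p(Z,Q)$, one has $q_*(df)=dg=0$, hence $df\in\C^{p+1}(Z,P)$ and $f\in\Z^p(Z,Q,P)$ --- whose kernel is $\C^p(Z,P)$, noting that $\C^p(Z,P)\subseteq\Z^p(Z,Q,P)$ because $d$ preserves $P$-valued cochains; and (ii) $q_*(\B^p(Z,Q))=\B^p(Z,M)$, using the surjectivity of $q_*$ in degree $p-1$. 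Both $\Z^p(Z,Q,P)=d^{-1}(\C^{p+1}(Z,P))$ and $\Z^p(Z,M)=\ker\big(d:\C^p(Z,M)\to\C^{p+1}(Z,M)\big)$ are closed subgroups of Polish groups, hence Polish, so the open mapping theorem makes the surjection in (i) open; it therefore induces a topological isomorphism $\Z^p(Z,Q,P)/\C^p(Z,P)\cong\Z^p(Z,M)$, under which, by (ii), the subgroup $(\B^p(Z,Q)+\C^p(Z,P))/\C^p(Z,P)$ corresponds to $\B^p(Z,M)$.

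Next I would invoke Lemma~\ref{lem:coker-top} with $P_1:=\Z^{p-1}(Z,Q,P)$, $Q_1:=\C^{p-1}(Z,Q)$, $P_2:=\C^p(Z,P)$, $Q_2:=\Z^p(Z,Q,P)$, the maps $\psi_1,\psi_2$ being the inclusions and $\phi^P=\phi^Q=d$. All hypotheses are immediate: $\psi_2$ makes sense since $d$ preserves $P$-valued cochains, $\phi^P$ since $d(\Z^{p-1}(Z,Q,P))\subseteq\C^p(Z,P)$ by definition of that group, $\phi^Q$ since $d^2=0$, and the square commutes strictly. The lemma then yields a topological isomorphism
\[\coker(M_1\to M_2)\;\cong\;\frac{Q_2}{\phi^Q(Q_1)+\psi_2(P_2)}\;=\;\frac{\Z^p(Z,Q,P)}{\B^p(Z,Q)+\C^p(Z,P)}\;=\;\frac{\Z^p(Z,Q,P)}{\B^p(Z,Q,P)},\]
where $M_2=\Z^p(Z,Q,P)/\C^p(Z,P)$ and $\img(M_1\to M_2)=(\B^p(Z,Q)+\C^p(Z,P))/\C^p(Z,P)$.

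Finally I would splice the two computations: the topological isomorphism $M_2\cong\Z^p(Z,M)$ of the first step carries $\img(M_1\to M_2)$ onto $\B^p(Z,M)$, hence descends to a topological isomorphism $\coker(M_1\to M_2)\cong\Z^p(Z,M)/\B^p(Z,M)=\rmH_\m^p(Z,Q/P)$, and composing this with the previous display gives $\Z^p(Z,Q,P)/\B^p(Z,Q,P)\cong\rmH_\m^p(Z,Q/P)$ as required (the edge case $p=0$ is covered using the convention $\C^{-1}=0$). The only genuinely delicate point is keeping the identifications honest as \emph{topological} isomorphisms of the possibly non-Hausdorff quotients; this is exactly what the open mapping theorem secures in the first step and what Lemma~\ref{lem:coker-top} is built to handle, so the remainder is routine bookkeeping with the cochain complexes recalled from Part I.
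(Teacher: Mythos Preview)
Your proof is correct and follows essentially the same route as the paper: both arguments pass through the identification $\Z^p(Z,Q,P)/\C^p(Z,P)\cong\Z^p(Z,M)$ (carrying $\B^p(Z,Q,P)/\C^p(Z,P)$ onto $\B^p(Z,M)$) and then invoke Lemma~\ref{lem:coker-top} to handle the topological isomorphism of the possibly non-Hausdorff quotients. Your version is simply more explicit about the instantiation of Lemma~\ref{lem:coker-top} and the appeal to the open mapping theorem, whereas the paper compresses this into a short diagram and a one-line citation of that lemma.
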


\begin{proof}
Quotienting both $\B^p(Z,Q,P)$ and $\Z^p(Z,Q,P)$ by the common subgroup $\C^p(Z,P)$ gives a commutative diagram
\begin{center}
$\phantom{i}$\xymatrix{
\B^p(Z,Q,P) \ar@{^{(}->}[r]\ar[d] & \Z^p(Z,Q,P) \ar[d]\\
\B^p(Z,Q/P) \ar@{^{(}->}[r] & \Z^p(Z,Q/P) \ar@{->>}[r] & \rmH^p_\m(Z,Q/P).
}
\end{center}
The first of these vertical maps is obviously surjective, and a simple application of the Measurable Selector Theorem shows that the second is also surjective.  Since $\B^p(Z,Q,P)$ contains
\[\ker\big(\Z^p(Z,Q,P) \to \C^p(Z,Q/P)\big) = \C^p(Z,P),\]
it follows that the sequence
\[0 \to \B^p(Z,Q,P)\to \Z^p(Z,Q,P) \to \rmH^p_\m(Z,Q/P) \to 0\]
is algebraically exact.  Topological isomorphism follows from Lemma~\ref{lem:coker-top}.
\end{proof}

In the sense of Subsection~\ref{subs:semi-fun-cplx}, Lemma~\ref{lem:rel-cocycs} has identified $\rmH^\ast_\m(Z,Q/P)$ with the homology of the semi-functional $Z$-complex
\begin{center}
$\phantom{i}$\xymatrix{
0 \ar[r] & P \ar^-d[r]\ar^\incln[d] & \C^1(Z,P) \ar^-d[r]\ar^\incln[d] & \C^2(Z,P)
\ar^-d[r]\ar^\incln[d] & \cdots\\
0 \ar[r] & Q \ar^-d[r] & \C^1(Z,Q) \ar^-d[r] & \C^2(Z,Q) \ar^-d[r] & \cdots.
}
\end{center}

\begin{dfn}\label{dfn:rel-cochain-cplx}
The above semi-functional $Z$-complex is the \textbf{relative cochain complex} of $(P\leq Q)$.
\end{dfn}

In addition to defining $\rmH^\ast_\m$ for individual semi-functional modules, it will be important to note that one still obtains a long exact sequence relating these groups from a short exact sequence in the category $\SFMod(Z)$.  This is because a short exact sequence in $\SFMod(Z)$ is mapped to a short exact sequence in $\PMod(Z)$ by the forgetful functor, and long exact sequences have been constructed there.

\subsection{Step-polynomial cohomology}\label{subs:semi-sp-cohom}

Our next step is to introduce cohomology with step-polynomial coefficients.  Let $Q \leq \F(X,A)$ be a functional $Z$-module with base $\a:Z\to X$ and SP fibre $A$.  Let $p \geq 0$. Then the \textbf{SP $p$-cochains} with values in $Q$ are the elements of the group
\[\C_\sp^p(Z,Q) := \F_\sp(Z^p,Q),\]
where the right-hand side is understood according to Subsection~\ref{subs:behav-coind}.

Now recall the usual differentials for the inhomogeneous bar resolution, as in Subsection I.3.1:
\begin{eqnarray}\label{eq:dfn-d}
df(z_1,\ldots,z_{p+1},x) &:=& (z_1\cdot f)(z_2,\ldots,z_{p+1},x)\nonumber \\ && + \sum_{i=1}^p(-1)^pf(z_1,\ldots,z_i + z_{i+1},\ldots,z_{p+1},x)\nonumber \\ 
&& + (-1)^{p+1}f(z_1,\ldots,z_p,x).
\end{eqnarray}
If $f$ is a step polynomial, then so is every term on the right. This is obvious for all but the first term, and that term is a step polynomial because $A$ was assumed to be SP.  Moreover, all the appropriate slices on the right are elements of $Q_\sp$, so these differentials define complexity-bounded maps $\C^p(Z,Q)\to \C^{p+1}(Z,Q)$.  Since there are obvious isomorphisms
\[\Cnd_Z^{Z'}\C^p(Z,Q) \cong \C^p(Z,\Cnd_Z^{Z'}Q)\]
which are complexity-bounded in both directions, applying the same reasoning to $\Cnd_Z^{Z'}Q$ shows that these differentials are in fact \emph{stably} complexity-bounded.

Now let $P \leq Q \leq \F(X,A)$ be a semi-functional $Z$-module with base $\a:Z\to X$ and SP fibre $A$.

\begin{dfn}
For each $p \geq 0$, the \textbf{SP relative $p$-cocycles} with values in $P \leq Q$ are the elements of the group
\[\Z_\sp^p(Z,Q,P) := \{f \in \C^p_\sp(Z,Q)\,|\ df \in \C^{p+1}_\sp(Z,P)\},\]
and the \textbf{SP relative $p$-coboundaries} are the elements of the sum
\[\B^p_\sp(Z,Q,P) := d(\C^{p-1}_\sp(Z,Q)) + \C^p_\sp(Z,P).\]
Clearly $\B^p_\sp \subseteq \Z^p_\sp$.
\end{dfn}

\begin{rmk}
Note that $\B^p_\sp(Z,Q,P)$ may not be the same as the set of (Haar-a.e. classes of) step polynomials in $\B^p(Z,Q,P)$.  When we need the latter, it will be denoted by $(\B^p(Z,Q,P))_\sp$. \fin
\end{rmk}

\begin{dfn}
With $P \leq Q$ as above, the \textbf{SP cohomology} of $P \leq Q$ in degree $p$ is the quotient group
\[\rmH^p_\sp(Z,P\leq Q) := \Z_\sp^p(Z,Q,P) \big/ \B_\sp^p(Z,Q,P).\]
\end{dfn}

This definition is easily extended to see that each $\rmH_\sp^p(Z,-)$ defines a functor from $\SPMod(Z)$ to Abelian groups.

Similarly to the case of $\rmH^\ast_\m$, we now recognize that $\rmH^\ast_\sp(Z,P\leq Q)$ is precisely the homology of the step-polynomial subdiagram of the relative cochain complex of $P\leq Q$ (Definition~\ref{dfn:rel-cochain-cplx}).  The reasoning about $d$ above gives that this semi-functional complex is stably complexity-bounded, so this step-polynomial subdiagram is well-defined.

Having made this observation, now consider a stable short exact sequence
\[ 0 \to (P \leq Q) \to (P' \leq Q') \to (P'' \leq Q'') \to 0\]
in $\SFMod(Z)$.  Applying $\C^p(Z,-)$ for each $p$, it gives rise to a stable short exact sequence of the corresponding relative cochain complexes. Note that we really need the stability of the original short exact sequence here, in order that each of the sequences of cochain-module-inclusions
\begin{multline*}
0 \to (\C^p(Z,P) \leq \C^p(Z,Q)) \to (\C^p(Z,P') \leq \C^p(Z,Q'))\\ \to (\C^p(Z,P'') \leq \C^p(Z,Q'')) \to 0
\end{multline*}
still be an exact sequence in $\SFMod(Z)$ for $p \geq 1$.

Applying the abstract construction of long exact sequences (see, for instance,~\cite[Theorem 6.10]{Rotman09}) to this step-polynomial sub-diagram, we now obtain a long exact sequence for $\rmH^\ast_\sp$ corresponding to a stable short exact sequence in $\SFMod(Z)$.

\begin{rmk}
At this point, we have more or less covered all of the abstract properties of $\rmH^\ast_\m$ and $\rmH^\ast_\sp$ that we will use below.  However, it seems clear that one could develop both theories for $\SFMod(Z)$ considerably further.

The basis for this development would be the abstract structure of the category $\SFMod(Z)$. Letting $\mathsf{E}(Z)$ be the collection of all stable short exact sequences in $\SFMod(Z)$, one can show that the pair $(\SFMod(Z),\mathsf{E}(Z))$ is actually an \emph{exact category} in the sense of Quillen~\cite{Qui73}.  This is a certain kind of enrichment of an additive category: see, for instance,~\cite{Buh10} for a thorough survey.  It enables one to do homological algebra even when the underlying additive category, such as our $\SFMod(Z)$, is not Abelian (in our case, this is because injective homomorphisms need not have closed image: in categorial terms, `monos' are not always `kernels').

Given this exact-category structure, one can then show that both $\rmH^\ast_\m$ and $\rmH^\ast_\sp$ are cohomological functors from $(\SFMod(Z),\mathsf{E}(Z))$ to sequences of Abelian groups: this just means they have long exact sequences, as we have seen.  Also, both are effaceable in a canonical way: for any $(P\leq Q) \in \SFMod(Z)$, the semi-functional morphism
\[(P\leq Q)\to \F(Z,P \leq Q)\]
given by the inclusion $Q \into \F(Z,Q)$ as the constant functions can be extended to a stable short exact sequence $(P\leq Q) \into (P'\leq Q')\onto (P'' \leq Q'')$ such that
\[\rmH^p_\m(Z,Q'/P') = \rmH^\ast_\sp(Z,P' \leq Q') = 0 \quad \forall p \geq 1.\]
From this fact, the universality of these cohomological functors given their respective first terms $\rmH^0_\m$ and $\rmH^0_\sp$ follows by the usual argument.  This, in turn, should lead to analogs of various other standard facts from classical group cohomology, such as the Shapiro Lemma.

We will not use any of these more sophisticated results below, but I am grateful to Theo B\"uhler and Igor Minevich for helping me find my way around these ideas. \fin
\end{rmk}

\subsection{Comparing the cohomology theories}\label{subs:compar}

Having defines $\rmH^\ast_\m$ and $\rmH^\ast_\sp$ on the category $\SFMod(Z)$, we next observe that there are obvious comparison homomorphisms between them.  Indeed, for each $p$ one has inclusions $\C^p_\sp(Z,Q) \leq \C^p(Z,Q)$ which respect the coboundary morphisms, so these inclusions quotient to a sequence of comparison homomorphisms
\begin{eqnarray}\label{eq:compar-cohom}
\rmH^p_\sp(Z,P \leq Q) \to \rmH^p_\m(Z,Q/P).
\end{eqnarray}
In fact, these are nothing but a special case of the comparison homomorphisms in~(\ref{eq:homol-mors}) for the relative chain complex of $P\leq Q$ (Definition~\ref{dfn:rel-cochain-cplx}).

Much of our work later will concern whether these homomorphisms are injective or surjective.  At this point, it will be convenient to consider the action of a closed subgroup $W \leq Z$.

\begin{dfn}
Let $P \leq Q$ be a semi-functional $Z$-module, and let $W \leq Z$.  Then it has \textbf{SP relative-coboundary-solutions over $W$} if the comparison homomorphism $\rmH^p_\sp(W,P\leq Q) \to \rmH^p_\m(W,Q/P)$ is injective for all $p$.

On the other hand, it has \textbf{SP-represented cohomology over $W$} if those comparison maps are all surjective.

Finally, for either of these properties, $P \leq Q$ has that property \textbf{stably} if that property holds for $\Cnd_Z^{Z'}(P \leq Q)$ for every enlargement $Z' \geq Z$ (for the same fixed $W$).
\end{dfn}

More concretely, $P\leq Q$ has:
\begin{itemize}
\item SP relative-coboundary-solutions over $W$ if whenever $f \in \C^p(W,Q)$ is such that $df \in \C_\sp^{p+1}(W,Q) + \C^{p+1}(W,P)$, there is some $f' \in \C^p_\sp(W,Q)$ such that $df = df'$ modulo $\C^{p+1}(W,P)$;
\item SP-represented cohomology over $W$ if every $f \in \Z^p(W,Q,P)$ is relatively cohomologous to an element of $\Z^p_\sp(W,Q,P)$.
\end{itemize}
In terms of the notions of Subsection~\ref{subs:semi-fun-cplx}, we see, for instance, that $P\leq Q$ admits SP relative-coboundary solutions if and only if its relative cochain complex admits finite-complexity decompositions.

Now a special case of Lemma~\ref{lem:compar-diag} gives the following.

\begin{lem}\label{lem:compare-les}
Given a stable short exact sequence in $\SFMod(Z)$ as above, there is a commutative diagram relating the resulting long exact sequences,
\begin{scriptsize}
\begin{center}
$\phantom{i}$\xymatrix{
\cdots \ar[r] &\rmH^p_\sp(W,P\leq Q) \ar[r]\ar[d] & \rmH^p_\sp(W,P'\leq Q') \ar[r]\ar[d] & \rmH^p_\sp(W,P''\leq Q'') \ar[r]\ar[d] & \rmH^{p+1}_\sp(W,P\leq Q) \ar[r]\ar[d] & \cdots\\
\cdots \ar[r] & \rmH^p_\sp(W,Q/P) \ar[r] & \rmH^p_\m(W,Q'/P') \ar[r] & \rmH^p_\m(W,Q''/P'') \ar[r] & \rmH^{p+1}_\m(W,Q/P) \ar[r] & \cdots,}
\end{center}
\end{scriptsize}
in which the vertical homomorphisms are those given by~(\ref{eq:compar-cohom}). \qed
\end{lem}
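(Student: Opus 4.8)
The plan is to recognise the asserted ladder as nothing more than the comparison diagram of Lemma~\ref{lem:compar-diag}, specialised from a general semi-functional complex to the \emph{relative cochain complex} of a semi-functional module (Definition~\ref{dfn:rel-cochain-cplx}) and with the acting group taken to be $W$ rather than $Z$. First I would recall the two identifications already in place. By Lemma~\ref{lem:rel-cocycs}, viewing $(P\leq Q)$ as a semi-functional $W$-module, its measurable cohomology $\rmH^\ast_\m(W,Q/P)$ is the homology of the quotient complex of the relative cochain complex of $(P\leq Q)$; and by the discussion in Subsection~\ref{subs:semi-sp-cohom}, $\rmH^\ast_\sp(W,P\leq Q)$ is the homology of the step-polynomial subdiagram of that same relative cochain complex. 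Moreover the comparison homomorphisms~(\ref{eq:compar-cohom}) are, by their very definition, the maps~(\ref{eq:homol-mors}) attached to this complex. So it suffices to exhibit a short exact sequence of the three relative cochain complexes which is exact in $\SFMod(W)$ at every position, and then quote Lemma~\ref{lem:compar-diag}.

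To produce that short exact sequence, apply the functors $\C^p(W,-)=\F(W^p,-)$ to the given stable short exact sequence $0\to(P\leq Q)\to(P'\leq Q')\to(P''\leq Q'')\to 0$. Here one first notes that restriction to the subgroup $W$ preserves SP fibres (the inclusion $W\into Z$ is affine, hence step-affine, so by Lemma~\ref{lem:stepaffsteppoly} the composite of any step polynomial $Z\times Y\to A$ with $W\times Y\into Z\times Y$ is again a step polynomial), so the relative cochain complexes over $W$ are genuine semi-functional $W$-complexes and the differentials are stably complexity-bounded exactly as recorded in Subsection~\ref{subs:semi-sp-cohom}. That the resulting sequence is exact in $\PMod(W)$ in each degree is the usual measurable-selector argument. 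For the step-polynomial subdiagrams I would use that, since the fibres are SP, Proposition~\ref{prop:s-p-coind-consistent} identifies $\C^p(W,-)=\F(W^p,-)$ up to complexity-bounded isomorphism with $\Cnd_Z^{W^p\times Z}(-)$, and $W^p\times Z$ is an enlargement of $Z$; the remark at the end of Subsection~\ref{subs:stable-ses} (applied with $W^p$ in place of $W$) then yields that the step-polynomial subdiagrams form a short exact sequence of complexes of abstract $W$-modules. Hence the sequence of relative cochain complexes is exact in $\SFMod(W)$ at every position.

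With this in hand, the argument proving Lemma~\ref{lem:compar-diag} applies verbatim with $W$ read for $Z$: that proof uses only exactness in $\SFMod(-)$ at each position (so that the step-polynomial subdiagram of the short exact sequence of complexes is itself a short exact sequence of complexes) together with the fact that the inclusions $P_{\sp,\bullet}\into P_\bullet$, $Q_{\sp,\bullet}\into Q_\bullet$ make this a map \emph{into} the short exact sequence of quotient complexes; functoriality of the long exact sequence of a short exact sequence of complexes then assembles the connecting maps and the comparison maps~(\ref{eq:homol-mors})$=$(\ref{eq:compar-cohom}) into the stated commuting ladder, both rows of which are exact. I do not expect a genuine obstacle here: the only point requiring care is the bookkeeping of how the cochain functors $\C^p(W,-)$ interact with co-induction and with the notion of a step polynomial, and that is precisely what Proposition~\ref{prop:s-p-coind-consistent} and the remark closing Subsection~\ref{subs:stable-ses} are designed to absorb; everything past that is the diagram chase already carried out in Lemma~\ref{lem:compar-diag}.
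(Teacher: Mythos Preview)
Your proposal is correct and follows exactly the paper's route: the paper states the lemma with a \qed immediately after the sentence ``Now a special case of Lemma~\ref{lem:compar-diag} gives the following'', and your argument simply unpacks that special case, supplying the verification (via Proposition~\ref{prop:s-p-coind-consistent} and the remark closing Subsection~\ref{subs:stable-ses}) that the sequence of relative cochain complexes is exact in $\SFMod(W)$ at each position --- a verification the paper already recorded in Subsection~\ref{subs:semi-sp-cohom} for $\C^p(Z,-)$ and which carries over to $\C^p(W,-)$ in the way you describe.
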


Various key steps later will be proofs that particular semi-functional modules have SP relative-coboundary-solutions or SP-represented cohomology.  The seed from which all of those proofs will grow is the basic fact that if $A$ is a discrete $Z$-module, then the groups $\rmH^p_\m(Z,A)$ are all discrete (by Proposition I.3.3), and the comparison homomorphisms
\[\rmH^p_\sp(Z,0 \leq A) \to \rmH^p_\m(Z,A)\]
are isomorphisms.

In fact, we will need a slightly fiddly generalization of this result to allow for co-inductions and restrictions.  The quickest approach seems to be to prove the more general version directly.  To formulate it, let $Z$ be a compact metrizable Abelian group and $W,Y \leq Z$ two closed subgroups.  Let $A_0$ be an SP Lie $Y$-module, and let $Z_1 := Y+W$, let $A_1:= \Cnd_Y^{Z_1}A_0$ and let
\[A := \Cnd_Y^Z A_0 \cong \Cnd_{Z_1}^ZA_1\]
(using relation (9) in Part I).

Then Corollary I.3.4 gave that $\rmH^p_\m(W,\Cnd_Y^{Z_1}A)$ is an s.p. Lie $Z_1$-module for all $p\geq 0$, and Lemma I.3.5 gave that
\[\rmH^p_\m(W,\Cnd_Y^ZA) \cong \Cnd_{Z_1}^Z\rmH^p_\m(W,\Cnd_Y^{Z_1}A)\]
(an isomorphism of Polish $Z$-modules). This right-hand side is now a functional $Z$-module with base $\rm{id}_Z$ and fibre equal to $\rmH^p_\m(W,\Cnd_Y^{Z_1}A)$.  As such, it has step-polynomial elements of its own.

\begin{prop}\label{prop:disc-sp-and-m-cohom}
In the above setting, if $A_0$ is discrete, then the comparison homomorphism
\[\rmH^p_\sp(W,0\leq A) \to \rmH^p_\m(W,A) \cong \Cnd_{Z_1}^Z\rmH^p_\m(W,A_1)\]
is injective for every $p\geq 0$, and has image equal to
\[\big(\Cnd_{Z_1}^Z\rmH^p_\m(W,A_1)\big)_\sp.\]
\end{prop}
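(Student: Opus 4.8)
The plan is to reduce the general co-induced statement to the two base facts already recalled just before the proposition: that $\rmH^p_\m(W,B)$ is discrete for discrete $B$, and that for a discrete $W$-module $B$ the comparison map $\rmH^p_\sp(W,0\leq B)\to\rmH^p_\m(W,B)$ is an isomorphism. The issue is that $A=\Cnd_Y^ZA_0$ is generally not discrete, only ``step-polynomial-layered'' over the base $Z$, so one must sort out how $\rmH^p_\sp$ interacts with co-induction and how ``step polynomial'' in the co-induced cohomology group is to be read.

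First I would set up coordinates. Since $A_0$ is a discrete (hence SP) Lie $Y$-module, Proposition~\ref{prop:s-p-coind-consistent} and Corollary~\ref{cor:fund-split} give, for any enlargement, complexity-bounded-in-both-directions identifications $\Cnd_Y^{Z'}(-)\cong\F(Z'/Y,-)$ (at the level of the relevant cochain modules). Applying this with $Z'=Z^p\times Z$ and tracking the bar differential~(\ref{eq:dfn-d}), the relative cochain complex of $(0\leq A)=(0\leq\Cnd_Y^ZA_0)$, restricted to $W$, becomes (complexity-bounded-isomorphic to) the co-induction from $Y$ to $Z$ of the relative cochain complex of $(0\leq A_0)$ over $W$ — i.e.\ of $\C^\bullet(W,A_0)$, which is a complex of discrete $Y$-modules. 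Concretely: $\C^p_\m(W,\Cnd_Y^ZA_0)\cong\Cnd_Y^Z\C^p_\m(W,A_0)$ and the same with $\C_\sp$ in place of $\C_\m$, compatibly with $d$ and with the comparison maps. So the statement to be proved is exactly that the comparison homomorphism commutes with $\Cnd_Y^Z(-)$ applied to the complex $\C^\bullet(W,A_0)$ of discrete modules, in the sense that it is injective and its image is the SP-subgroup.

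Next I would handle the co-induction of a complex of discrete modules. Write $V:=Z/Y$, so $\C^p_\sp(W,\Cnd_Y^ZA_0)\cong\F_\sp(V,\C^p_\m(W,A_0))$: since $\C^p_\m(W,A_0)$ is discrete, such a step polynomial is precisely a step \emph{function} $V\to\C^p_\m(W,A_0)$ (finitely many values, QP level sets), by Corollary~\ref{cor:disc-step-poly-step-fn} and Lemma~\ref{lem:q-p-fine}. The key computation is then that $\rmH^p$ of the complex $\F(V,C^\bullet)$ for a complex $C^\bullet$ of discrete modules, with coboundaries computed ``slicewise'', is $\F(V,\rmH^p(C^\bullet))$ when one uses step-function cochains, and $\rmH^p_\m$ (measurable-cochain cohomology) when one uses measurable cochains; and the comparison map between them is $\F_\sp(V,\rmH^p(C^\bullet))\hookrightarrow\F(V,\rmH^p(C^\bullet))$. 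Injectivity of the comparison map is then the statement that a step-function relative cocycle which is a measurable relative coboundary is already a step-function relative coboundary — this should follow from the base-case isomorphism applied fibrewise together with a measurable-selection argument, or more cleanly by noting that a step-function cocycle is controlled by a QP partition of $V$, on each cell of which it is constant with values a genuine cocycle of the discrete complex $C^\bullet$, and the cohomology class being trivial measurably forces it to be a coboundary of the discrete complex on that cell (since $\rmH^p_\m(W,-)$ agrees with the algebraic cohomology of the discrete complex — this is exactly the recalled base fact). The description of the image — that it is $\bigl(\Cnd_{Z_1}^Z\rmH^p_\m(W,A_1)\bigr)_\sp$ — is obtained by identifying $\rmH^p_\m(W,A_1)=\rmH^p_\m(W,\Cnd_Y^{Z_1}A_0)$ with the fibre $\rmH^p(C^\bullet)$ and matching ``step polynomial into the co-induced module $\Cnd_{Z_1}^Z\rmH^p_\m(W,A_1)$'' with ``step function $V\to\rmH^p(C^\bullet)$''; surjectivity onto that SP subgroup then comes from lifting: given a step-function class $V\to\rmH^p(C^\bullet)$, use finiteness of its range plus the base-case surjectivity on each QP cell to pick step-function cocycle representatives cell by cell, as in the proof of part (2) of Proposition~\ref{prop:complexity-above-and-below}.

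The main obstacle I anticipate is bookkeeping rather than conceptual: making the two identifications — (a) $\Cnd$ of the relative cochain complex versus the relative cochain complex of $\Cnd$, compatibly with $d$, and (b) ``step polynomial into $\Cnd_{Z_1}^Z\rmH^p_\m(W,A_1)$'' versus ``step function $V\to\rmH^p_\m(W,A_1)$ with QP level sets'' — genuinely commute with the comparison homomorphism, and doing so without getting tangled in the nested co-inductions $Y\leq Z_1\leq Z$ (one wants to apply Proposition~\ref{prop:s-p-coind-consistent} and relation (9) of Part~I carefully, e.g.\ factoring $\Cnd_Y^Z$ through $\Cnd_{Z_1}^Z\Cnd_Y^{Z_1}$). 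A secondary subtlety is the QP-partition argument: one must know that a single QP partition of $V$ can be chosen that simultaneously controls the step-function cochain and its coboundary, and that on each cell the restriction to constants really does land in cocycles of the discrete complex — this is routine given Lemmas~\ref{lem:qp-join} and~\ref{lem:qp-obvious} and Corollary~\ref{cor:disc-step-poly-step-fn}, but it is the place where one must be slightly careful, since ``agreeing a.e.\ with a step polynomial'' must be upgraded to ``strictly a step function'' on the relevant QP cells before the fibrewise base case can be invoked.
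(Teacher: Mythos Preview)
There is a genuine gap, and it lies exactly where you feared the ``bookkeeping'' would be delicate. Your key claim is that $\C^p_\sp(W,\Cnd_Y^ZA_0)\cong\F_\sp(V,\C^p_\m(W,A_0))$ and that, ``since $\C^p_\m(W,A_0)$ is discrete, such a step polynomial is precisely a step function $V\to\C^p_\m(W,A_0)$.'' Both halves fail. First, $\C^p(W,A_0)=\F(W^p,A_0)$ is \emph{not} discrete for $p\ge1$ unless $W$ is finite, so Corollary~\ref{cor:disc-step-poly-step-fn} does not apply. (There is also a type issue: $A_0$ is a $Y$-module, not a $W$-module; the well-defined object is $\C^p(W,A_1)$ with $A_1=\Cnd_Y^{Z_1}A_0$, which is even further from being discrete.) Second --- and this is the more serious obstruction --- an SP cochain here is a step function $V\times W^p\times Z_1\to A_0$, and slicing such a function as a map $V\to\F(W^p\times Z_1,A_0)$ does \emph{not} give a step function on $V$: this is precisely Example~\ref{ex:bad-slicing}. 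So your picture of ``constant on QP cells of $V$ with values that are genuine cocycles of a discrete complex, then apply the base case cell by cell'' never gets off the ground. The ``base case'' you invoke, $\rmH^p_\sp(W,B)\cong\rmH^p_\m(W,B)$, is stated only for discrete $B$; the case $B=A_1$ with $Z=Z_1$ is already nontrivial and is itself part of what the proposition asserts.

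The paper's proof confronts exactly this slicing failure. It first reduces (as you do) to $Z=V\times Z_1$ with $V=Z/Z_1$, but then replaces step functions by \emph{almost-step} functions (uniform limits of step functions), because those \emph{are} stable under slicing (Lemma~\ref{lem:slicing-as} and Corollary~\ref{cor:connected-images}). This gives two well-defined cochain theories $H^\ast$ and $K^\ast$, now defined on \emph{all} of $\PMod(Y)$, which agree with $\rmH^p_\sp(W,0\leq\F(V,M_1))$ and $\F_\sp(V,\rmH^p_\m(W,M_1))$ respectively when $M$ is discrete. The comparison $H^\ast\to K^\ast$ is then shown to be an isomorphism not by any direct argument but by Buchsbaum--Grothendieck universality: both functors are effaceable (via the inclusion $M\hookrightarrow\F(Z_1,M)$ with an explicit step-affine primitive), and they agree in degree zero, so they agree everywhere. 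Your cell-by-cell lifting idea does resurface --- but only in the much softer surjectivity direction for discrete targets, essentially as in Proposition~\ref{prop:complexity-above-and-below}(2); it cannot carry the injectivity.
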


This is similar to~\cite[Proposition 94]{AusMoo--cohomcty}, but is rather more precise.  Its proof is also similar to various arguments from~\cite{AusMoo--cohomcty}, but requires a digression from the developments of the current section.  It is therefore deferred to Appendix~\ref{app:sp-and-m-cohom}.

From the case of discrete $A_0$ treated by Proposition~\ref{prop:disc-sp-and-m-cohom}, we can now easily prove the following enhancement.

\begin{prop}\label{prop:sp-and-m-cohom}
In the above setting, for any SP Lie $Y$-module $A_0$, the comparison homomorphism
\begin{eqnarray}\label{eq:sp-and-m-cohom}
\rmH^p_\sp(W,0\leq A) \to \rmH^p_\m(W,A) \cong \Cnd_{Z_1}^Z\rmH^p_\m(W,A_1)
\end{eqnarray}
is injective for every $p\geq 0$, and has image equal to
\[\big(\Cnd_{Z_1}^Z\rmH^p_\m(W,A_1)\big)_\sp.\]

In particular, $A$ has stable SP relative-coboundary-solutions over $W$.
\end{prop}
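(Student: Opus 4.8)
The plan is to prove the displayed claim by dévissage, reducing to the cases where $A_0$ is discrete (already handled by Proposition~\ref{prop:disc-sp-and-m-cohom}) or Euclidean, and then bootstrapping with the comparison of long exact sequences and the Five Lemma. Before any induction, note that the degree $p=0$ case holds for \emph{every} SP Lie $A_0$: there $\rmH^0_\sp(W,0\leq A)$ is the group of $W$-invariant elements of $A_\sp$, $\rmH^0_\m(W,A)=A^W$, the comparison map is the inclusion, and Lemma I.3.5 in degree $0$ identifies $A^W\cong\Cnd_{Z_1}^Z\rmH^0_\m(W,A_1)$, so the image of the comparison map is precisely $(A^W)_\sp$, as asserted. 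It therefore remains to handle degrees $p\geq 1$.

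For the Euclidean base case, let $A_0=E$ be a finite-dimensional real $Y$-module; by Step 1 of the proof of Proposition~\ref{prop:full-rot-char}, being SP forces the $Y$-action on $E$ to factor through a finite quotient. I would then run the standard argument that compact-group cohomology with real coefficients vanishes in positive degree, but carried out inside step polynomials: on the (co-induced) relative cochain complex of $0\leq\Cnd_Y^Z E$, the usual contracting homotopy is obtained by integrating a cochain against normalized Haar measure over one copy of $W$. This operation carries step polynomials to step polynomials by Corollary~\ref{cor:int-step-poly}, the integrated coordinate does not interact with the $Y$-action so $W$-equivariance is preserved, and the same remains true after any further co-induction. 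Hence $\rmH^p_\sp(W,0\leq\Cnd_Y^Z E)=0=\rmH^p_\m(W,\Cnd_Y^Z E)$ for $p\geq 1$, and the comparison map is trivially an isomorphism onto the (vanishing) SP part.

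Now let $A_0$ be a general SP Lie $Y$-module. Let $A_{0,0}\leq A_0$ be the identity component, a closed SP submodule with discrete quotient $A_0/A_{0,0}$ (Corollary~\ref{cor:s-p-class-closed}), let $\t{A}_{0,0}$ be its universal cover (a Euclidean space, on which the $Y$-action factors through a finite quotient by Step 2 of the proof of Proposition~\ref{prop:full-rot-char}), and let $D$ be the discrete kernel of $\t{A}_{0,0}\onto A_{0,0}$. This gives two short exact sequences of SP Lie $Y$-modules,
\[D\into\t{A}_{0,0}\onto A_{0,0}\qquad\text{and}\qquad A_{0,0}\into A_0\onto A_0/A_{0,0},\]
each a stable short exact sequence in $\SFMod(Y)$, since the surjections have stable SP pre-images by Corollary~\ref{cor:Lie-homos-s-p-reps}. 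Co-inducing to $Z$ preserves stable exactness, so Lemma~\ref{lem:compare-les} yields for each sequence a commutative ladder from the $\rmH^\ast_\sp(W,0\leq-)$ long exact sequence to the $\rmH^\ast_\m(W,-)$ long exact sequence. I would restrict the bottom row of each ladder to its step-polynomial subdiagram; it stays exact, because — exactly as in the proof of Proposition~\ref{prop:complex-ses-good} — every term there is co-induced from an SP Lie $Z_1$-module (Corollary I.3.4 and Lemma I.3.5) and every morphism is co-induced from a morphism of such modules, hence has stable SP pre-images by Corollary~\ref{cor:Lie-homos-s-p-reps}. For the first sequence the outer comparison maps are the discrete case ($D$) and the Euclidean case ($\t{A}_{0,0}$), so the Five Lemma gives the claim for $A_{0,0}$; feeding this into the second sequence, whose remaining outer term $A_0/A_{0,0}$ is discrete, the Five Lemma gives the claim for $A_0$. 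Finally, the `in particular' clause is immediate: co-inducing $A=\Cnd_Y^Z A_0$ to any enlargement $Z'\geq Z$ gives $\Cnd_Y^{Z'}A_0$ by transitivity of co-induction, so the injectivity just proved, applied with $Z'$ in place of $Z$, shows that $A$ has stable SP relative-coboundary-solutions over $W$.

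The step I expect to be the main obstacle is the Euclidean base case: one must check carefully that the Haar-averaging contracting homotopy really stays inside the step-polynomial subcomplex and remains compatible with the $W$-action and with further co-induction — everything there rests on Corollary~\ref{cor:int-step-poly} — together with the bookkeeping, in the dévissage step, needed to confirm that the step-polynomial subdiagram of the bottom long exact sequence is still exact.
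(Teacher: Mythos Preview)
Your proposal is correct and follows essentially the same approach as the paper: the discrete case (Proposition~\ref{prop:disc-sp-and-m-cohom}), the Euclidean case via the averaging primitive $g(w_1,\ldots,w_{p-1},z)=(-1)^p\int_W f(w_1,\ldots,w_{p-1},w,z)\,\d w$ with Corollary~\ref{cor:int-step-poly}, and then d\'evissage through short exact sequences using the comparison ladder and the Five Lemma. Your two-step d\'evissage (universal cover of the identity component, then quotient by the identity component) is slightly more economical than the paper's three-step route via the maximal torus $T\leq A_0'\leq A_0$; the paper also packages the Five-Lemma step as a direct appeal to Proposition~\ref{prop:complex-ses-good} on the relative cochain complexes rather than restricting the bottom row to its step-polynomial subdiagram by hand, but the underlying argument is identical.
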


\begin{proof}
\emph{Step 1.}\quad Proposition~\ref{prop:disc-sp-and-m-cohom} already proves this in case $A_0$ is discrete.

\vspace{7pt}

\emph{Step 2.}\quad We next prove it for Euclidean $A_0$. When $p=0$, one obviously has
\[\rmH^0_\sp(W,A) = \big(\Z^0(W,\Cnd_{Z_1}^Z A_1)\big)_\sp = (\Cnd_{Z_1}^Z A_1^W)_\sp \subseteq A^W = \rmH^0_\m(W,A).\]

On the other hand, the Shapiro Isomorphism for the theory $\rmH^\ast_\m$ (see Theorem I.3.2) gives $\rmH^p_\m(W,A_1) \cong \rmH^p_\m(W\cap Y,A_0)$, and by~\cite[Theorem A]{AusMoo--cohomcty}, this equals $0$ for all $p\geq 1$, so the right-hand side of~(\ref{eq:sp-and-m-cohom}) is zero.  We therefore need to prove that also $\rmH^p_{\rm{sp}}(W,A) = 0$ for all $p \geq 1$.

Suppose that $p\geq 1$ and $f \in \Z^p_{\rm{sp}}(W,A)$, so $f$ is a step polynomial $W^p\times Z\to A_0$.  Since step polynomials are bounded, we may efface this cocycle as in the classical cohomology of finite groups (or as in the proof of~\cite[Theorem A]{AusMoo--cohomcty}): one has $f = dg$, where $g:W^{p-1}\times Z\to A_0$ is defined by
\[g(w_1,\ldots,w_{p-1},z) := (-1)^p\int_W f(w_1,\ldots,w_{p-1},w,z)\, \d w.\]
This $g$ is also a step polynomial, by Corollary~\ref{cor:int-step-poly}, so $\rmH^p_\sp(W,A) = 0$.

\vspace{7pt}

\emph{Step 3.}\quad Now consider a general SP module $A$, and consider its subgroups
\[T \leq A_0' \leq A_0,\]
where $A_0'$ is the identity component of $A_0$ and $T$ is the maximal compact subgroup of $A_0'$.  These are preserved by any topological automorphism of $A_0$, and standard structure theory for locally compact Abelian groups (see~\cite[Section II.9]{HewRos79}) gives that $A_0/A_0'$ is discrete, $A_0'/T$ is Euclidean, and $T$ is toral.  Also, $T$ is a quotient of its universal cover $\t{T}$, a Euclidean space, by a discrete subgroup, and any $Z$-action on $T$ lifts to a $Z$-action on $\t{T}$.

To finish the proof, it therefore suffices to prove that the desired conclusion is closed under forming quotients and extensions of SP Lie modules.  However, letting $A_0' \into A_0 \onto A_0''$ be a short exact sequence of SP Lie modules, this will follow by applying Proposition~\ref{prop:complex-ses-good} to the short exact sequence of relative chain complexes arising from the stable short exact sequence
\[0 \to (0 \leq A) \to (0 \leq A') \to (0 \leq A'') \to 0.\]
To see that all the conditions required for that proposition are satisfied here, we must check that that all three of the relevant cochain complexes are locally SP.  Firstly, the homology of the first complex in position $0$ is $\rmH^0_\m(W,A) = A^W$, which is a submodule of $A$ and hence still SP, and similarly for $A'$ and $A''$.  On the other hand, $\rmH^p_\m(W,A)$ is discrete, and hence SP, for all $p\geq 1$, by Corollary I.3.4, and similarly for $A'$ and $A''$.  Finally, all of the quotient homomorphisms
\[\B^p(W,A'')\leq \Z^p(W,A'') \onto \rmH_\m^p(W,A'')\]
are stably complexity-bounded: when $p=0$, this is an isomorphism, and for $p\geq 1$ the target $\rmH^p_\m(W,A'')$ is co-induced from a discrete module, so stable complexity-boundedness is given by part (1) of Proposition~\ref{prop:complexity-above-and-below}.
\end{proof}

Using the long exact sequence, Proposition~\ref{prop:sp-and-m-cohom} generalizes directly to semi-functional modules with SP quotients and stable SP representatives.

\begin{cor}\label{cor:sp-and-m-cohom}
Let $Y,W \leq Z$ be as above, and let $P_0 \leq Q_0$ be a semi-functional $Y$-module whose quotient $A_0 := Q_0/P_0$ is SP and which has stable SP representatives.  Let $(P \leq Q) := \Cnd_Y^Z(P_0 \leq Q_0)$ and $A := \Cnd_Y^Z A_0$.  Then the comparison homomorphisms
\[\rmH^p_\sp(W,P \leq Q) \to \rmH^p_\m(W,Q/P) \cong \Cnd_{Y+W}^Z\rmH^p_\m(W,\Cnd_Y^{Y+W}A_0)\]
are injective (so $P\leq Q$ has SP relative-coboundary-solutions), and have images equal to
\[\big(\Cnd_{Y+W}^Z\rmH^p_\m(W,\Cnd_Y^{Y+W}A_0)\big)_\sp\]
\end{cor}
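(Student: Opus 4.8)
The plan is to reduce the corollary to Proposition~\ref{prop:sp-and-m-cohom} by recognizing $(P\leq Q)$ and $(0\leq A)$ as essentially the same object of $\SFMod(Z)$. First I would apply Lemma~\ref{lem:stable-collapse}: since $P_0\leq Q_0$ has stable SP representatives and $A_0=Q_0/P_0$ is an SP Lie $Y$-module, the diagram
\begin{center}
$\phantom{i}$\xymatrix{
0 \ar[r] & P_0 \ar[r]\ar[d] & 0 \ar[r]\ar[d] & 0\\
0 \ar[r] & Q_0 \ar[r] & A_0 \ar[r] & 0
}
\end{center}
is a stably exact sequence $0\to(P_0\leq Q_0)\to(0\leq A_0)\to 0$ in $\SFMod(Y)$ (padded with a zero third term to be a stable short exact sequence in the sense of Definition~\ref{dfn:stable}).

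Next I would push this through $\Cnd_Y^Z(-)$. Since $\SFMod(Z)\to\SFMod(Z)$ by $\Cnd$ is a functor and stable exactness is tested by co-inducing to \emph{all} further enlargements — and every enlargement of $Z$ is an enlargement of $Y$, with $\Cnd_Z^{Z'}\Cnd_Y^Z\cong\Cnd_Y^{Z'}$ complexity-bounded in both directions — co-induction carries stably exact sequences over $Y$ to stably exact sequences over $Z$. Thus $0\to(P\leq Q)\to(0\leq A)\to 0$ is stably exact in $\SFMod(Z)$, where $(P\leq Q)=\Cnd_Y^Z(P_0\leq Q_0)$, $A=\Cnd_Y^Z A_0$, and $Q/P=A$. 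Feeding this (with zero third module) into Lemma~\ref{lem:compare-les} with the chosen $W\leq Z$, the resulting commutative ladder of long exact sequences for $\rmH^\ast_\sp(W,-)$ and $\rmH^\ast_\m(W,-)$ degenerates — all terms involving the zero module vanish — so the collapse morphism induces isomorphisms $\rmH^p_\sp(W,P\leq Q)\cong\rmH^p_\sp(W,0\leq A)$ and, since $Q/P=A$, the equality $\rmH^p_\m(W,Q/P)=\rmH^p_\m(W,A)$, and by the commutativity of that ladder these identify the comparison homomorphism of $(P\leq Q)$ with the comparison homomorphism of $(0\leq A)$, for every $p$.

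Finally, Proposition~\ref{prop:sp-and-m-cohom}, applied to the SP Lie $Y$-module $A_0$ and the subgroup $W$, says precisely that $\rmH^p_\sp(W,0\leq A)\to\rmH^p_\m(W,A)\cong\Cnd_{Y+W}^Z\rmH^p_\m(W,\Cnd_Y^{Y+W}A_0)$ is injective with image $\big(\Cnd_{Y+W}^Z\rmH^p_\m(W,\Cnd_Y^{Y+W}A_0)\big)_\sp$ (its $Z_1$, $A_1$ being our $Y+W$, $\Cnd_Y^{Y+W}A_0$). Transporting through the isomorphisms of the previous step gives the stated description of the comparison homomorphism for $P\leq Q$, and injectivity for all $p$ is exactly the assertion that $P\leq Q$ has SP relative-coboundary-solutions over $W$.

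I do not expect a genuine obstacle: all the substance is already contained in Propositions~\ref{prop:sp-and-m-cohom} and~\ref{prop:s-p-coind-consistent}. The one point requiring care is the bookkeeping in the first two steps — verifying that the collapse datum of Lemma~\ref{lem:stable-collapse} really is a (stable) short exact sequence after co-induction, with cokernel vanishing on the nose rather than merely up to a non-Hausdorff quotient, so that the long exact sequences truly degenerate. This is exactly where the hypothesis of \emph{stable} (not merely ordinary) SP representatives for $P_0\leq Q_0$ is needed.
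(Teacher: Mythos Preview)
Your proposal is correct and follows essentially the same route as the paper: invoke Lemma~\ref{lem:stable-collapse} to recognize $(P\leq Q)\to(0\leq A)$ as a stable short exact sequence (with a zero third term), apply Lemma~\ref{lem:compare-les} so that the long exact sequences degenerate into commutative squares identifying the two comparison maps, and then read off the conclusion from Proposition~\ref{prop:sp-and-m-cohom}. The only cosmetic difference is that you apply Lemma~\ref{lem:stable-collapse} at the $Y$-level and then co-induce, whereas the paper applies it directly at the $Z$-level (which is legitimate because stable SP representatives for $P_0\leq Q_0$ over $Y$ passes to $P\leq Q$ over $Z$ by the very definition of stability); the two are equivalent.
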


Put another way, the second conclusion here implies that the semi-functional module
\[\B^p(W,\Cnd_Y^{Y+W}Q_0,\Cnd_Y^{Y+W}P_0) \leq \Z^p(W,\Cnd_Y^{Y+W}Q_0,\Cnd_Y^{Y+W}P_0)\]
has SP quotient module (isomorphic to $\rmH^p_\m(W,\Cnd_Y^{Y+W}A_0)$) and has stable SP pre-images.

\begin{proof}
According to Lemma~\ref{lem:stable-collapse}, the diagram
\begin{center}
$\phantom{i}$\xymatrix{
0 \ar[r] & 0 \ar[r]\ar[d] & P \ar[r]\ar[d] & 0 \ar[r]\ar[d] & 0\\
0 \ar[r] & 0 \ar[r] & Q \ar[r] & A \ar[r] & 0
}
\end{center}
is a stable short exact sequence in $\SFMod(Z)$.  Now consider the long exact sequences for $\rmH^\ast_\sp$ and $\rmH^\ast_\m$ that result from this stable short exact sequence, and the comparison diagram between them given by Lemma~\ref{lem:compare-les}.  Since the first column above is zero, this diagram disconnects into a sequence of commutative squares
\begin{center}
$\phantom{i}$\xymatrix{
\rmH^p_\sp(W,P\leq Q) \ar[r]\ar[d] & \rmH^p_\m(W,0\leq A) \ar[d]\\
 \rmH^p_\m(W,A) \ar@{=}[r] & \rmH^p_\m(W,A).
}
\end{center}
By exactness of the long exact sequence, the first horizontal arrow here is an isomorphism.  The second vertical arrow is injective and has image equal to
\[\big(\Cnd_{Y+W}^Z\rmH^p_\m\big(W,\Cnd_Y^{Y+W}A_0\big)\big)_\sp,\]
by Proposition~\ref{prop:sp-and-m-cohom}.  Therefore the same is true of the first vertical arrow.
\end{proof}


\subsection{Cohomology applied to complexes}

Now suppose that $Z_1$ is a compact Abelian group and $Y,W \leq Z_1$ are closed subgroups such that $Z_1 = Y+W$.  Suppose further that
\[0 = (P'_0 \leq Q'_0) \to (P'_1 \leq Q'_1) \to \cdots \to (P'_k \leq Q'_k) \to (P'_{k+1} \leq Q'_{k+1}) = 0\]
is a bounded semi-functional $Y$-complex, let $M'_\bullet$ be its quotient complex, and let
\begin{eqnarray}\label{eq:pre-init-cplx}
\big(P^\circ_\bullet \leq Q^\circ_\bullet \onto M^\circ_\bullet\big) := \Cnd_Y^{Z_1}\big(P'_\bullet \leq Q'_\bullet \onto M'_\bullet\big).
\end{eqnarray}
Let $\a'_\bullet$ be the homomorphisms of $Q'_\bullet$, $\a^{M'}_\bullet$ those of $M'_\bullet$, and similarly $\a^\circ_\bullet$ and $\a^{M^\circ}_\bullet$.

Now let $p\geq 0$, and assume a priori that $\rmH^p_\m(W,M^\circ_\ell)$ is Hausdorff for every $\ell$ (in the cases of interest this will follow from results of Part I).  Under this assumption, this subsection will study the resulting semi-functional complex
\begin{eqnarray}\label{eq:semi-fnl-cohom-cplx}
\B^p(W,Q^\circ_\bullet,P^\circ_\bullet) \leq \Z^p(W,Q^\circ_\bullet,P^\circ_\bullet),
\end{eqnarray}
whose quotient complex is equal to $\rmH^p_\m(W,M^\circ_\bullet)$.

To lighten notation, we abbreviate $\rmH^p_\sp(W,-) =: \rmH^p_\sp(-)$, $\rmH^p_\m(W,-) =: \rmH_\m^p(-)$, $\Z^p(W,-,-) =: \Z^p(-,-)$, $\B^p(W,-,-) =: \B^p(-,-)$ and $\C^p(W,-) =: \C^p(-)$ for the rest of this subsection.

The next proposition is an analog of Propositions I.3.6 and I.3.7 for the present setting.  Its proof will require several steps, involving many different ideas from earlier in the paper.

\begin{prop}\label{prop:Hp-of-cplx}
Assume that the $Y$-complex $M'_\bullet$ has locally SP and $\ell_0$-almost discrete homology, that $P'_\bullet \leq Q'_\bullet$ has stably fully SP homology, and that the following hold for all $i \in \{\ell_0,\dots,k-1\}$:
\begin{itemize}
\item[a)$_i$] the semi-functional module $P^\circ_i \leq Q^\circ_i$ admits stable SP relative-coboundary-solutions over $W$ for all $p\geq 1$;
\item[b)$_i$] the semi-functional complex~(\ref{eq:semi-fnl-cohom-cplx}) has stable SP representatives at position $i$ for all $p\geq 1$;
\item[c)$_i$] the semi-functional complex~(\ref{eq:semi-fnl-cohom-cplx}) has stable finite-complexity decompositions at position $i + 1$ for all $p\geq 1$.
\end{itemize}
Then property (a)$_k$ also holds; property (b)$_k$ also holds; and the following hold for all $i \in \{\ell_0,\dots,k\}$:
\begin{itemize}
\item[d)$_i$] the semi-functional complex~(\ref{eq:semi-fnl-cohom-cplx}) has stable SP representatives at position $i$ when $p=0$;
\item[e)$_i$] the semi-functional complex~(\ref{eq:semi-fnl-cohom-cplx}) has stable finite-complexity decompositions at position $i$ when $p=0$.
\end{itemize}
\end{prop}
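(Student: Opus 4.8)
The plan is to run the argument of Propositions~I.3.6 and~I.3.7 in the step-polynomial setting, carrying every Polish complex of that proof together with its step-polynomial subdiagram. Two inputs replace the Part~I machinery at its two key moments. First, wherever the earlier argument uses that $\rmH^p_\m(W,-)$ of a discrete module is discrete, one uses instead Corollary~\ref{cor:sp-and-m-cohom}, which for any SP Lie coefficient module co-induced to the relevant enlargement identifies $\rmH^p_\sp(W,-)$ with the step-polynomial part of $\rmH^p_\m(W,-)$, on the nose. Second, wherever the earlier argument combines long exact sequences, one uses the comparison diagram of Lemma~\ref{lem:compare-les} together with Proposition~\ref{prop:complex-ses-good}. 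Since $\Cnd_{Z_1}^{Z'}$ commutes with forming cochain groups, relative cocycles and coboundaries, restriction to $W$, homology, and passage to step-polynomial subdiagrams, it is enough to carry out the argument for $Z'=Z_1$: run verbatim after co-induction, it yields the stable form of every conclusion. Hausdorffness of the groups $\rmH^p_\m(W,M^\circ_\ell)$, part of the standing hypotheses, guarantees that the semi-functional complexes~(\ref{eq:semi-fnl-cohom-cplx}) are genuinely defined.

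Both $\rmH^\ast_\m$ and $\rmH^\ast_\sp$ turn $P^\circ_\bullet \leq Q^\circ_\bullet$ into the semi-functional complex~(\ref{eq:semi-fnl-cohom-cplx}) and its step-polynomial subdiagram, and comparing the two amounts to comparing the two ways of computing the hypercohomology of $M^\circ_\bullet$ over $W$: cohomology first, then the complex direction, versus the complex direction first, then cohomology. The hypothesis that $M'_\bullet$ has $\ell_0$-almost discrete, locally SP homology makes the second route manageable, because the homology modules $H^\circ_\ell$ vanish below $\ell_0$, equal the SP Lie module $H^\circ_{\ell_0}$ at position $\ell_0$, and are discrete above; Corollary~\ref{cor:sp-and-m-cohom} pins down $\rmH^\ast_\sp(W,-)$ of each of them. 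The proof is then an induction on $k$ which peels off the top homology group of $M^\circ_\bullet$ one step at a time. For the inductive step, realize the canonical truncation of $M^\circ_\bullet$ at position $k-1$ by a short exact sequence of semi-functional complexes: the sub-complex is the co-induction of a shorter locally SP $Y$-complex, still with $\ell_0$-almost discrete homology and, since $P'_\bullet \leq Q'_\bullet$ has stably fully SP homology (used here in full strength, via the reformulation recorded after that definition, to see that $\ker\a'_k$ modulo $P'_{k-1}$ is again a semi-functional module with stable SP representatives), still with stably fully SP homology; its quotient complex has homology concentrated at position $k$ and equal to $H^\circ_k$. The inductive hypothesis applies to the sub-complex, with hypotheses (a)$_i$, (b)$_i$, (c)$_i$ for $i<k$ as precisely its input, while Corollary~\ref{cor:sp-and-m-cohom} controls the quotient.

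Applying $\rmH^p_\m(W,-)$ and $\rmH^p_\sp(W,-)$, forming the comparison diagram of Lemma~\ref{lem:compare-les}, and feeding the resulting data into Proposition~\ref{prop:complex-ses-good}, one obtains the (stably) fully SP homology statements for the cohomology complexes~(\ref{eq:semi-fnl-cohom-cplx}) of $M^\circ_\bullet$ at position $k$. Reading these off at $p\geq 1$ gives properties (a)$_k$ and (b)$_k$; reading them off at $p=0$ — where~(\ref{eq:semi-fnl-cohom-cplx}) becomes the $W$-invariants complex $(M^\circ_\bullet)^W$, which since $Z_1=Y+W$ is again a co-induction, this time of $M'_\bullet$ restricted to $Y\cap W$, hence again of the required shape with $\ell_0$-almost discrete, fully-SP homology — gives properties (d)$_i$ and (e)$_i$ at every $i\in\{\ell_0,\dots,k\}$ (the lower values $i<k$ being supplied by the inductive hypothesis). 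The base case $k=\ell_0$ is the degenerate one in which the sub-complex is exact, so contributes only finite-complexity-decomposition data already supplied by the stably-fully-SP-homology hypothesis, and the conclusion is immediate from Corollary~\ref{cor:sp-and-m-cohom} applied to $H^\circ_{\ell_0}$.

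The main obstacle is that $\rmH^p_\m(W,-)$ is merely a cohomological functor, not exact, so truncating $M^\circ_\bullet$ does not truncate the cohomology complexes~(\ref{eq:semi-fnl-cohom-cplx}); every comparison must therefore be routed through the long exact sequences attached to the truncation — equivalently, through the low pages of the corresponding spectral sequence — and the Five Lemma, while every statement is kept in its stable form and each intervening sequence of semi-functional complexes is checked to be stably exact in $\SFMod(Z_1)$. It is exactly here that the precise content of Corollary~\ref{cor:sp-and-m-cohom}, namely the identification of the \emph{image} of the comparison homomorphism rather than merely its injectivity or surjectivity, is indispensable, and here that the stability of the fully-SP-homology hypothesis on $P'_\bullet \leq Q'_\bullet$ cannot be weakened; organizing this bookkeeping so that Proposition~\ref{prop:complex-ses-good} applies at each inductive step is the real labour of the proof.
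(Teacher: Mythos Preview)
Your high-level strategy --- induction on the length $k$, truncation to a shortened complex, and comparison of long exact sequences --- matches the paper's approach. However, there are two genuine gaps.

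First, Proposition~\ref{prop:complex-ses-good} is not the right tool here, and your own last paragraph implicitly concedes this. That proposition requires a \emph{short exact sequence of semi-functional complexes}, but applying $\rmH^p_\m(W,-)$ to your truncation sequence does not produce one: the resulting sequences $\rmH^p_\m(W,M^\circ_\bullet)$ are linked only through long exact sequences in $p$. The paper never invokes Proposition~\ref{prop:complex-ses-good} in this proof. Instead it works position-by-position with two specific short exact sequences in $\SFMod(Z)$, namely
\[0 \to (P_{k-1}\leq R) \to (P_{k-1}\leq Q_{k-1}) \to (P_k \leq S) \to 0\]
(where $R = \a_k^{-1}(P_k)$ and $S = \a_k(Q_{k-1})+P_k$) and the concatenation
\[0 \to (P_k\leq S) \to (P_k\leq Q_k) \to (S\leq Q_k) \to 0,\]
and applies to each a truncated Five Lemma (the paper's Lemma~\ref{lem:four}) carrying an extra hypothesis $\img\,\g_2\cap\img\,\b_1 = \img(\b_1\circ\g_1)$. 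Verifying that hypothesis is nontrivial and is where Corollary~\ref{cor:complexity-above-and-below} enters, to pass from the lean case $Z=Z_1$ to general enlargements. Your reduction ``it is enough to carry out the argument for $Z'=Z_1$'' glosses over exactly this: passage to step-polynomial subdiagrams does not commute with co-induction on the nose, and the paper needs Corollary~\ref{cor:complexity-above-and-below} at several points to bridge that gap.

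Second, you conflate property (a)$_k$ with the fully-SP-homology properties of the cohomology complex~(\ref{eq:semi-fnl-cohom-cplx}). Property (a)$_k$ asserts that the single semi-functional module $P^\circ_k \leq Q^\circ_k$ has SP relative-coboundary-solutions over $W$; this is a statement about injectivity of $\rmH^p_\sp(W,P_k\leq Q_k)\to\rmH^p_\m(W,M_k)$, not about the complex~(\ref{eq:semi-fnl-cohom-cplx}) at position $k$. The paper devotes two separate claims to it, first establishing the analogous property for $P_k\leq S$ via the first short exact sequence above, then bootstrapping to $P_k\leq Q_k$ via the second. This cannot be read off from any fully-SP-homology statement for~(\ref{eq:semi-fnl-cohom-cplx}).

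In short: the skeleton is right, but ``feeding the resulting data into Proposition~\ref{prop:complex-ses-good}'' does not work, and the actual proof consists of six hand-crafted diagram chases using Lemma~\ref{lem:four} and Corollary~\ref{cor:complexity-above-and-below} rather than any single packaged result.
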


By re-numbering if necessary, we may of course assume that $\ell_0 = 1$.

Proposition~\ref{prop:Hp-of-cplx} will be proved by induction on the length $k$ of these complexes.  The inductive step will involve the following construction: given the semi-functional $Y$-complex $P'_\bullet \leq Q'_\bullet$ as above, its \textbf{shortened version} is the length-$(k-1)$ semi-functional $Y$-complex
\begin{center}
$\phantom{i}$\xymatrix{
0 \ar[r] & P'_1 \ar[r]\ar^\incln[d] & \cdots \ar[r] & P'_{k-2} \ar[r]\ar^\incln[d] & P'_{k-1} \ar[r]\ar^\incln[d] & 0\\
0 \ar[r] & Q'_1 \ar[r] & \cdots \ar[r] & Q'_{k-2} \ar[r] & R' \ar[r] & 0,
}
\end{center}
where $R' := (\a'_k)^{-1}(P_k)$.  The quotient of this complex is
\[0 \to M'_1 \to \cdots \to M'_{k-2} \to K' \to 0,\]
where $K' := \ker \a^{M'}_k$.

\begin{lem}\label{lem:pre-Hp-of-cplx}
This shortened version satisfies the analogs of all the assumptions of Proposition~\ref{prop:Hp-of-cplx} for $i\leq k-2$.
\end{lem}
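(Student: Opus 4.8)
Throughout, write $R' = (\a'_k)^{-1}(P'_k)$, a closed functional submodule of the same ambient as $Q'_{k-1}$, and $R^\circ := \Cnd_Y^{Z_1}R' = (\a^\circ_k)^{-1}(P^\circ_k)$ (co-induction being exact). The plan exploits that the shortened complex agrees with $(P'_\bullet\le Q'_\bullet)$ in every position $\le k-2$ and differs from it at position $k-1$ only by replacing $Q'_{k-1}$ with $R'$; the induced differential $Q'_{k-2}\to R'$ is well-defined because the complex relation gives $\a'_k(\a'_{k-1}(Q'_{k-2}))\subseteq P'_k$, and it is stably complexity-bounded, being a corestriction of $\a'_{k-1}$. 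Hence much transfers by inspection. Assumption (a)$_i$ mentions only the semi-functional module in position $i$, so for $i\le k-2$ it is literally the corresponding clause of the hypothesis for $(P'_\bullet\le Q'_\bullet)$. For $i\le k-3$ the clauses (b)$_i$ and (c)$_i$ refer only to positions $\le k-2$ of the cohomology complex~(\ref{eq:semi-fnl-cohom-cplx}) and the differentials among them, all of which are unchanged, so these too are inherited verbatim. (We may assume $k\ge 2$, else there is nothing to prove.)

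Next I would record that, because $K'=\ker\a^{M'}_k$ is a genuine submodule of $M'_{k-1}$ and $\a^{M'}_k\a^{M'}_{k-1}=0$, the quotient complex $M'_1\to\cdots\to M'_{k-2}\to K'$ and the semi-functional complex itself have exactly the same homology as $(P'_\bullet\le Q'_\bullet)$ in positions $1,\dots,k-1$: at position $k-2$ because $\ker(M'_{k-2}\to K')=\ker(M'_{k-2}\to M'_{k-1})$ and $(\a'_{k-1})^{-1}(P'_{k-1})$ does not depend on whether $\a'_{k-1}$ is viewed as landing in $R'$ or in $Q'_{k-1}$, and at position $k-1$ because $R'/(\a'_{k-1}(Q'_{k-2})+P'_{k-1})$ is the common value. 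Consequently the shortened complex has $1$-almost discrete homology and is locally SP (unchanged, hence SP Lie, homology groups; and the quotient-onto-homology morphisms are literally the same maps, hence stably complexity-bounded). For stably fully SP homology, at positions $\le k-2$ the conditions of Definitions~\ref{dfn:decomp-cplx} and~\ref{dfn:s-p-reps-cplx} are again verbatim those of the original; at position $k-1$ the decomposition problem $Q'_{k-2}\oplus P'_{k-1}\to R'$ and the representative problem $R'\onto H_{k-1}$ coincide with the original ones, since the elements to be decomposed already lie in $R'$, $R'_\sp=R'\cap Q'_{k-1,\sp}$, and $H_{k-1}$ is unchanged. (Alternatively this could be deduced from Proposition~\ref{prop:complex-ses-good} applied to the concatenation-type stable short exact sequence introduced below.)

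The only genuinely new content is (b)$_{k-2}$ and (c)$_{k-2}$, which concern the cohomology complex~(\ref{eq:semi-fnl-cohom-cplx}) of the shortened complex; this differs from that of $(P'_\bullet\le Q'_\bullet)$ only in position $k-1$, where $Q^\circ_{k-1}$ is replaced by $R^\circ$. For (b)$_{k-2}$ the relevant homology group has numerator $(\a^\circ_{k-1})^{-1}(\B^p(W,R^\circ,P^\circ_{k-1}))\cap\Z^p(W,Q^\circ_{k-2},P^\circ_{k-2})$, a submodule of the original numerator over the same denominator $\a^\circ_{k-2}(\Z^p(W,Q^\circ_{k-3},P^\circ_{k-3}))+\B^p(W,Q^\circ_{k-2},P^\circ_{k-2})$, so it injects into the homology group appearing in the original's (b)$_{k-2}$. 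Given a step-polynomial class there, apply the original's (b)$_{k-2}$ to lift it to a step-polynomial cocycle $f_1$ in the original numerator. Since $f_1$ differs from a genuine representative of the class in the shortened numerator by an element of the common denominator, and $\a^\circ_{k-1}$ carries that denominator into $\C^p(W,P^\circ_{k-1})+\B^p(W,R^\circ,P^\circ_{k-1})=\B^p(W,R^\circ,P^\circ_{k-1})$ — using $\a^\circ_{k-1}(Q^\circ_{k-2})\subseteq R^\circ$, $\a^\circ_{k-1}(P^\circ_{k-2})\subseteq P^\circ_{k-1}$ and $\a^\circ_{k-1}\a^\circ_{k-2}(Q^\circ_{k-3})\subseteq P^\circ_{k-1}$ — the cocycle $f_1$ already lies in the shortened numerator. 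So (b)$_{k-2}$ holds, and co-inducing the whole argument yields its stable form.

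The remaining clause (c)$_{k-2}$ is where the work concentrates. Given a step polynomial $f\in\Z^p_\sp(W,R^\circ,P^\circ_{k-1})$ in the image of the shortened decomposition map, the original's (c)$_{k-2}$ produces step-polynomial $g_1\in\Z^p_\sp(W,Q^\circ_{k-2},P^\circ_{k-2})$ and $b_1:=f-\a^\circ_{k-1}(g_1)\in\B^p_\sp(W,Q^\circ_{k-1},P^\circ_{k-1})$; since $\a^\circ_{k-1}$ has image in $R^\circ$ and $f$ is valued in $R^\circ$, in fact $b_1\in\C^p_\sp(W,R^\circ)$, and inspecting $db_1$ shows $b_1\in\Z^p_\sp(W,R^\circ,P^\circ_{k-1})$. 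It therefore suffices to prove $b_1\in\B^p_\sp(W,R^\circ,P^\circ_{k-1})$, i.e. that the class $[b_1]\in\rmH^p_\sp(W,P^\circ_{k-1}\le R^\circ)$ — which we already know dies in $\rmH^p_\sp(W,P^\circ_{k-1}\le Q^\circ_{k-1})$ — vanishes. To this end I would run the comparison of long exact sequences of Lemma~\ref{lem:compare-les} for the concatenation stable short exact sequences $0\to(P^\circ_{k-1}\le R^\circ)\to(P^\circ_{k-1}\le Q^\circ_{k-1})\to(R^\circ\le Q^\circ_{k-1})\to 0$ and $0\to(0\le R^\circ)\to(0\le Q^\circ_{k-1})\to(R^\circ\le Q^\circ_{k-1})\to 0$ (Examples~\ref{ex:concat} and~\ref{eq:concat-cplx}), which reduces the vanishing of $[b_1]$ to step-polynomial control of $\rmH^\ast_\sp(W,R^\circ\le Q^\circ_{k-1})$. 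That control comes from the hypotheses at the top position of the original: the two-term tail complex $(R^\circ\le Q^\circ_{k-1})\xrightarrow{\a^\circ_k}(P^\circ_k\le Q^\circ_k)$ has stably fully SP homology — this follows from the original's stable finite-complexity decomposition and stable SP representatives at position $k$ — and, together with (a)$_{k-1}$, (b)$_{k-1}$, (c)$_{k-1}$ and the effacement results of Corollaries~\ref{cor:Lie-homos-s-p-reps} and~\ref{cor:sp-and-m-cohom}, this pins down $\rmH^\ast_\sp(W,R^\circ\le Q^\circ_{k-1})$ finely enough to conclude. This last reduction — keeping track of the comparison maps so that the extra flexibility of replacing $Q^\circ_{k-1}$ by $R^\circ$ does not leak step-polynomial classes — is the step I expect to be the \emph{main obstacle}; everything else is bookkeeping.
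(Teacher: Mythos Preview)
Your treatment of the routine parts --- the verbatim transfer of (a)$_i$ for $i\le k-2$, of (b)$_i$ and (c)$_i$ for $i\le k-3$, the locally-SP almost-discrete structure, stably fully SP homology, and (b)$_{k-2}$ --- is correct and matches the paper (for (b)$_{k-2}$ the paper's version is the one-line sandwich $\img\le\ker(\rmH^p_\m(M_{k-2})\to\rmH^p_\m(K))\le\ker(\rmH^p_\m(M_{k-2})\to\rmH^p_\m(M_{k-1}))$, which your argument unwinds).

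For (c)$_{k-2}$, however, your reduction is too strong to be provable. You correctly arrive, via the original (c)$_{k-2}$ (and implicitly (a)$_{k-1}$), at a step polynomial $b_1\in\Z^p_\sp(W,R^\circ,P^\circ_{k-1})\cap\B^p(W,Q^\circ_{k-1},P^\circ_{k-1})$, and then assert that it suffices to prove $[b_1]=0$ in $\rmH^p_\sp(W,P^\circ_{k-1}\le R^\circ)$. But this class need not vanish: by construction $b_1\equiv\a^\circ_{k-1}(g-g_1)\bmod\B^p(W,R^\circ,P^\circ_{k-1})$, so its image in $\rmH^p_\m(W,K^\circ)$ is the $\a^\circ_{k-1}$-image of the (in general non-trivial) class $[g-g_1]\in\rmH^p_\m(W,M^\circ_{k-2})$. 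The long exact sequence you propose only places $[b_1]$ in the image of the switchback from $\rmH^{p-1}_\sp(W,R^\circ\le Q^\circ_{k-1})$; no amount of control of that group forces the switchback image to be zero. What you actually need is the weaker assertion that $b_1\equiv\a^\circ_{k-1}(g_2)\bmod\B^p(W,R^\circ,P^\circ_{k-1})$ for some $g_2\in\Z^p_\sp(W,Q^\circ_{k-2},P^\circ_{k-2})$; then $(g_1+g_2,\,f-\a^\circ_{k-1}(g_1+g_2))$ is the desired SP pre-image.

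The paper obtains this weaker statement without using (b)$_{k-1}$, (c)$_{k-1}$, or the tail complex. It works in the discrete group $A:=\ker(\rmH^p_\m(W,K^\circ)\to\rmH^p_\m(W,M^\circ_{k-1}))$: the class of $b_1$ lies in $A$, and the map $\phi$ from $\Z^p(W,Q^\circ_{k-2},P^\circ_{k-2})\cap(\a^\circ_{k-1})^{-1}(\B^p(W,Q^\circ_{k-1},P^\circ_{k-1}))$ to $A$ factors through the structural-homology quotient at position $k-2$. Thus the original (b)$_{k-2}$ --- which you have already verified --- gives $\phi$ SP pre-images when $Z=Z_1$, and Corollary~\ref{cor:complexity-above-and-below} extends this to arbitrary enlargements. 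The missing idea, then, is to trade the attempted vanishing in $\rmH^p_\sp(W,P^\circ_{k-1}\le R^\circ)$ for an SP-lifting problem into the discrete obstruction group $A$, solved with (b)$_{k-2}$ rather than the top-position data.
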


\begin{proof}
This is immediate for the counterparts of assumptions (a)$_i$ for all  $i\leq k-2$, and for the counterparts of assumptions (b)$_i$ and (c)$_i$ for $i\leq k-3$.

Now let $Z \geq Z_1 = Y+W$ be any enlargement, and let $P_i$ be the result of co-inducing $P_i'$ to $Z$, and similarly for the all the other modules involved here.

The counterpart of assumption (b)$_{k-2}$ now follows by observing that
\begin{multline*}
\img\big(\rmH^p_\m(M_{k-3}) \to \rmH^p_\m(M_{k-2})\big) \leq \ker\big (\rmH^p_\m(M_{k-2}) \to \rmH^p_\m(K)\big)\\ \leq \ker\big(\rmH^p_\m(M_{k-2})\to \rmH^p_\m(M_{k-1})\big).
\end{multline*}

It remains to verify the counterpart of assumption (c)$_{k-2}$: that is, that the relevant homomorphism
\[\Z^p(Q_{k-2},P_{k-2}) \oplus \B^p(R,P_{k-1}) \to \Z^p(R,P_{k-1})\]
admits SP pre-images.  Thus, suppose that $\s \in \Z^p(Q_{k-2},P_{k-2})$ and $\b \in \B^p(R,P_{k-1})$ are such that $\tau = \a_{k-1}\s + \b$ is a step polynomial.  Then, by assumption (c)$_{k-2}$ for the original length-$k$ complex, we know that $\a_{k-1}\s = \a_{k-1}\s'$ modulo $\B^p(Q_{k-1},P_{k-1})$ for some $\s' \in \Z^p_\sp(Q_{k-2},P_{k-2})$.  We need the same conclusion, but modulo $\B^p(R,P_{k-1})$.

At this point, we may replace $\s$ by $\s - \s'$ without disrupting our desired conclusion, and hence assume that $\a_{k-1}\s$ is itself a $(Q_{k-1},P_{k-1})$-relative coboundary.  It follows that the class of $\a_{k-1}\s$ in $\rmH^p_\m(K)$ lies in
\[A:= \ker(\rmH^p_\m(K) \to \rmH_\m^p(M_{k-1})).\]

Now consider the diagram
\begin{center}
$\phantom{i}$\xymatrix{ & \Z^p(R,P_{k-1})\cap \B^p(Q_{k-1},P_{k-1}) \ar^\psi[d]\\
\Z^p(Q_{k-2},P_{k-2}) \cap \a_{k-1}^{-1}\big(\B^p(Q_{k-1},P_{k-1})\big) \ar^-\phi[r]& A,
}
\end{center}
where $\psi$ is the quotient by $\B^p(R,P_{k-1})$ and
\[\phi(\s) = \a_{k-1}\s + \B^p(Q_{k-1},P_{k-1}) \in \rmH^p_\m(K).\]
Our assumptions above give that $\phi(\s)$ is equal to $\psi(\tau)$ for the step polynomial $\tau$.

In case $Z = Z_1$, $A$ is discrete, by Claim 1 in the proof of Proposition I.3.6--7.  The general case $Z \geq Z_1$ is co-induced from that one.  Also in case $Z=Z_1$, $\phi$ factorizes as
\begin{multline*}
\Z^p(Q_{k-2},P_{k-2}) \cap \a_{k-1}^{-1}\big(\B^p(Q_{k-1},P_{k-1})\big)\\ \onto \frac{\ker(\rmH^p(M_{k-2})\to \rmH^p(M_{k-1}))}{\img(\rmH^p(M_{k-3}) \to \rmH^p(M_{k-2}))} \to A.
\end{multline*}
The middle module here is also discrete, and the first of these factorizing homomorphisms has SP representatives by assumption (b)$_{k-2}$, so their composition has SP pre-images.  Therefore in this case there is certainly some $\s'' \in \Z_\sp^p(Q_{k-2},P_{k-2}) \cap \a_{k-1}^{-1}(\B^p(Q_{k-1},P_{k-1}))$ such that $\phi(\s) = \phi(\s'')$, and hence $\a_{k-1}\s = \a_{k-1}\s''$ modulo $\B^p(R,P_{k-1})$.  However, that last conclusion now extends to the case of a general enlargement $Z \geq Z_1$, by Corollary~\ref{cor:complexity-above-and-below}.
\end{proof}

To complete the proof of Proposition~\ref{prop:Hp-of-cplx}, we will also call on a slight variant of a standard truncated version of the Five Lemma: compare~\cite[Proposition 2.72(ii)]{Rotman09}.  Its proof is a routine diagram-chase.

\begin{lem}\label{lem:four}
Let
\begin{center}
$\phantom{i}$\xymatrix{
A_1 \ar_{\g_1}[d]\ar^{\a_1}[r] & A_2 \ar_{\g_2}[d]\ar^{\a_2}[r] & A_3 \ar_{\g_3}[d]\ar^{\a_3}[r] & A_4 \ar_{\g_4}[d]\\
B_1 \ar_{\b_1}[r] & B_2 \ar_{\b_2}[r] & B_3 \ar_{\b_3}[r] & B_4
}
\end{center}
be a commutative diagram of Abelian groups in which both rows are exact, $\g_2$ and $\g_4$ are injective, and one has
\begin{eqnarray}\label{eq:four}
\img\,\g_2\cap \img\,\b_1 = \img(\b_1\circ \g_1).
\end{eqnarray}
Then $\g_3$ is injective. \qed
\end{lem}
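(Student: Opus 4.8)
The plan is a routine diagram chase; the only point requiring care is to keep track of precisely where each hypothesis is used, and in particular the non-standard hypothesis~(\ref{eq:four}), which replaces the surjectivity of $\g_1$ that one assumes in the ordinary (truncated) Five Lemma.

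First I would take an arbitrary $a_3 \in A_3$ with $\g_3(a_3) = 0$ and aim to conclude $a_3 = 0$. Pushing forward along the bottom row and using commutativity of the right-hand square, $\g_4(\a_3(a_3)) = \b_3(\g_3(a_3)) = 0$, so the injectivity of $\g_4$ forces $\a_3(a_3) = 0$. Exactness of the top row at $A_3$ then supplies some $a_2 \in A_2$ with $\a_2(a_2) = a_3$. Applying $\g_2$ and using commutativity of the middle square gives $\b_2(\g_2(a_2)) = \g_3(\a_2(a_2)) = \g_3(a_3) = 0$, and exactness of the bottom row at $B_2$ produces $b_1 \in B_1$ with $\b_1(b_1) = \g_2(a_2)$.

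At this stage $\g_2(a_2) \in \img\,\g_2 \cap \img\,\b_1$, which by hypothesis~(\ref{eq:four}) equals $\img(\b_1 \circ \g_1)$; so there is $a_1 \in A_1$ with $\b_1(\g_1(a_1)) = \g_2(a_2)$, i.e. $\g_2(\a_1(a_1)) = \g_2(a_2)$ by commutativity of the left-hand square. The injectivity of $\g_2$ now yields $\a_1(a_1) = a_2$, and therefore $a_3 = \a_2(a_2) = \a_2(\a_1(a_1)) = 0$, using exactness of the top row at $A_2$ (so that $\a_2 \circ \a_1 = 0$). This completes the argument. I do not expect any genuine obstacle here: hypothesis~(\ref{eq:four}) has been tailored exactly to make the chase succeed at the one step where the classical Four Lemma would instead invoke surjectivity of $\g_1$, and every other step uses only the stated exactness and injectivity assumptions.
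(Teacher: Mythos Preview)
Your proof is correct and is exactly the routine diagram chase the paper has in mind; the paper itself omits the argument entirely (it just writes \qed, having remarked beforehand that the proof is a routine diagram-chase).
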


\begin{proof}[Proof of Proposition~\ref{prop:Hp-of-cplx}]
This is proved by induction on the length $k$ of the complex~(\ref{eq:pre-init-cplx}).

Let $Z \geq Z_1$ be an arbitrary enlargement, and let
\begin{eqnarray}\label{eq:init-cplx}
\big(P_\bullet \leq Q_\bullet \onto M_\bullet\big) := \Cnd_{Y+W}^Z\big(P^\circ_\bullet \leq Q^\circ_\bullet \onto M^\circ_\bullet\big).
\end{eqnarray}
Let $\a_\bullet$ be the morphisms of the complex $Q_\bullet$, and $\a^M_\bullet$ the morphisms of the quotient complex $M_\bullet$.  They are co-induced from the morphisms of $Q^\circ_\bullet$ and $M^\circ_\bullet$, respectively.

\vspace{7pt}

\emph{Base clause: $k=1$.}\quad In this case, our semi-functional complex is simply $\Cnd_Y^Z(P'_1 \leq Q'_1)$, and the quotient $M'_1$ is an SP $Y$-module.  Of the desired conclusion, (e)$_1$ is trivial, and (a)$_1$, (b)$_1$ and (d)$_1$ are all given by Corollary~\ref{cor:sp-and-m-cohom}.

\vspace{7pt}

\emph{Recursion clause.}\quad Now assume that $k\geq 2$. Consider the shortened version of $P'_\bullet \leq Q'_\bullet$, as in Lemma~\ref{lem:pre-Hp-of-cplx}, and its co-inductions from $Y$ to $Z_1$ and to $Z$.  Let $R'$, $R^\circ$, $R$, $K'$, $K^\circ$ and $K$ be the final modules in the shortened versions and their quotients, as in the proof of Lemma~\ref{lem:pre-Hp-of-cplx}.

In light of that lemma, we may apply the inductive hypothesis to this shortened complex.  It immediately gives the desired conclusions (d)$_i$ and (e)$_i$ for all $i\leq k-1$, and also the following:
\begin{itemize}
\item[i)] $(P_{k-1} \leq R)$ admits SP relative-coboundary-solutions over $W$ for all $p \geq 1$;
\item[ii)] the semi-functional module
\[\a_{k-1}(\Z^p(Q_{k-2},P_{k-2})) + \B^p(R,P_{k-1}) \leq \Z^p(R,P_{k-1})\]
has quotient $\coker(\rmH_\m^p(M_{k-2})\to \rmH_\m^p(K))$ that is co-induced from a discrete $Z_1$-module (as in the proof of Propositions I.3.6--7), and has SP representatives, for all $p\geq 1$.
\end{itemize}

It remains to prove (a)$_k$, (b)$_k$, (d)$_k$ and (e)$_k$.  The remainder of the proof will be presented as a sequence of claims which together cover all of these conclusions.

\vspace{7pt}

\noindent\emph{Claim 1.}\quad Let $I := \img \,\a^M_k$, and let $S := \a_k(Q_{k-1}) + P_k$.  The semi-functional $Z$-module $P_k \leq S$ admits SP relative-coboundary-solutions over $W$ in every degree $p\geq 1$.

\vspace{7pt}

\noindent\emph{Proof of claim.}\quad The short sequence
\[0 \to (P_{k-1} \leq R) \to (P_{k-1} \leq Q_{k-1}) \to (P_k \leq S) \to 0\]
is stably exact in $\SFMod(Z)$: this follows from our initial assumption of stable finite-complexity decompositions for the complex~(\ref{eq:pre-init-cplx}).  Consider the following portion of the comparison between the resulting long exact sequences for $\rmH^\ast_\sp$ and $\rmH^\ast_\m$ (from Lemma~\ref{lem:compare-les}):
\begin{small}
\begin{center}
$\phantom{i}$\xymatrix{
\rmH^p_\sp(P_{k-1}\leq R) \ar[r]\ar[d] & \rmH^p_\sp(P_{k-1}\leq Q_{k-1}) \ar[r]\ar[d] & \rmH^p_\sp(P_k\leq S) \ar[r]\ar[d] & \rmH^{p+1}_\sp(P_{k-1}\leq R) \ar[d]\\
\rmH^p_\m(K) \ar[r] & \rmH^p_\m(M_{k-1}) \ar[r] & \rmH^p_\m(I) \ar[r] & \rmH^{p+1}_\m(K).}
\end{center}
\end{small}
We will apply Lemma~\ref{lem:four} to show that the third vertical arrow here is injective, as required.

Inductive conclusion (i) gives that $(P_{k-1} \leq R)$ admits SP relative-coboundary-solutions, and assumption (a)$_{k-1}$ gives this for $(P_{k-1} \leq Q_{k-1})$. It remains to check the hypothesis~(\ref{eq:four}) in this setting.

To do this, next let
\[A := \frac{\ker(\rmH^p_\m(M_{k-1})\to \rmH^p_\m(M_k))}{\img(\rmH^p_\m(M_{k-2})\to \rmH^p_\m(M_{k-1}))},\]
and consider the diagram
\begin{center}
$\phantom{i}$\xymatrix{
& \Z^p(Q_{k-1},P_{k-1})\cap \a_k^{-1}(\B^p(S,P_k)) \ar^\psi[d]\\
\Z^p(R,P_{k-1}) \ar^-\phi[r] & A,
}
\end{center}
where
\[\phi(\s) = \s + \big(\B^p(Q_{k-1},P_{k-1}) + \a_{k-1}\big(\Z^p(Q_{k-2},P_{k-2})\big)\big),\]
and similarly for $\psi(\tau)$.  The image $\phi(\s)$ always lies in $A$ because ${\img(\rmH^p_\m(K) \to \rmH^p_\m(M_{k-1}))}$ is contained in $\ker(\rmH^p_\m(M_{k-1}) \to \rmH^p_\m(M_k))$, since $K = \ker \a_k^M$.

Now observe that $\phi$ has a natural factorization
\[\Z^p(R,P_{k-1})\to \coker\big(\rmH^p_\m(M_{k-2}) \to \rmH^p_\m(K)\big)\to A.\]
In case $Z = Z_1$, inductive conclusion (ii) gives that the second module here is discrete and that the first homomorphism has SP representatives.  This implies that $\phi$ has SP pre-images when $Z = Z_1$.  Thus, if $\s \in \Z^p(R,P_{k-1})$ and $\tau \in \Z^p_\sp(Q_{k-1},P_{k-1})$ are such that they represent the same cohomology class in $\rmH^p_\m(M_{k-1})$, then in particular they have the same image in $A$, and since $\phi$ has SP pre-images there is some $\s' \in \Z^p_\sp(R,P_{k-1})$ which also has that image in $A$.

Having proved this conclusion when $Z = Z_1$, Corollary~\ref{cor:complexity-above-and-below} extends it to an arbitrary enlargement $Z \geq Z_1$.  We therefore obtain
\[\tau = \s = \s' + \a_{k-1}\k \quad \mod \B^p(Q_{k-1},P_{k-1})\]
for some $\k \in \Z^p(Q_{k-2},P_{k-2})$.  Finally, this asserts that $\a_{k-1}\k$ agrees with the step polynomial $\tau - \s'$ modulo $\B^p(Q_{k-1},P_{k-1})$.  By assumption (c)$_{k-2}$, this gives some $\k' \in \Z^p_{\rm{sp}}(Q_{k-2},P_{k-2})$ such that $\a_{k-1}\k = \a_{k-1}\k'$ modulo $\B^p(Q_{k-1},P_{k-1})$.  Hence
\[\s = \s' + \a_{k-1}\k' \quad \mod \B^p(Q_{k-1},P_{k-1}),\]
where the right-hand side is the required element of $\Z^p_\sp(R,P_{k-1})$. \qed$\phantom{i}_{\rm{Claim}}$

\vspace{7pt}

\noindent\emph{Claim 2.}\quad The semi-functional $Z$-module $P_k \leq Q_k$ admits SP relative-coboundary-solutions over $W$ in every degree $p\geq 1$. (This is conclusion (a)$_k$.)

\vspace{7pt}

\noindent\emph{Proof of claim.}\quad The short exact sequence
\[0\to (P_k \leq S) \to (P_k \leq Q_k) \to (S \leq Q_k) \to 0\]
is stable, since it arises from a concatenation (see Example~\ref{ex:concat}).  Consider the following portion of the resulting comparison between the long exact sequences for $\rmH^\ast_\sp$ and $\rmH^\ast_\m$:
\begin{small}
\begin{center}
$\phantom{i}$\xymatrix{
\rmH^{p-1}_\sp(S \leq Q_k) \ar[r]\ar[d] & \rmH^p_\sp(P_k \leq S) \ar[r]\ar[d] & \rmH^p_\sp(P_k\leq Q_k) \ar[r]\ar[d] & \rmH^p_\sp(S\leq Q_k) \ar[d]\\
\rmH^{p-1}_\m(M_k/I) \ar[r] & \rmH^p_\m(I) \ar[r] & \rmH^p_\m(M_k) \ar[r] & \rmH^p_\m(M_k/I).}
\end{center}
\end{small}
The result will follow by applying Lemma~\ref{lem:four} to this picture.

Claim 1 has given that $(P_k \leq S)$ has SP relative-coboundary-solutions in all degrees, and this holds for $(S \leq Q_k)$ by Corollary~\ref{cor:sp-and-m-cohom} (since $M'_k/I'$ is a SP $Y$-module and $P'_\bullet \leq Q'_\bullet$ has stable SP representatives, by assumption).

It remains to check the hypothesis~(\ref{eq:four}).  To do this, let
\[B := \ker(\rmH^p_\m(I) \to \rmH^p_\m(M_k)),\]
and consider the diagram
\begin{center}
$\phantom{i}$\xymatrix{
& \Z^p(S,P_k) \cap \B^p(Q_k,P_k) \ar^\psi[d]\\
\Z^{p-1}(Q_k,S) \ar^-\phi[r] & B,
}
\end{center}
where $\psi$ is the quotient by $\B^p(S,P_k)$ and $\phi$ is the composition of the homomorphisms
\[\Z^{p-1}(Q_k,S) \ \stackrel{\rm{quotient}}{\to} \ \rmH^{p-1}_\m(M_k/I) \ \stackrel{\rm{switchback}}{\to} \ \rmH^p_\m(I).\]

Observe that $\phi$ is surjective, by the exactness of the bottom row of the comparison diagram above.  We need to show that if $\s \in \Z^p_\sp(S,P_k)\cap \B^p(Q_k,P_k)$, then $\psi(\s)$ is equal to $\phi(\tau)$ for some $\tau \in \Z^{p-1}_\sp(Q_k,S)$.

However, in the special case $Z = Z_1$, Corollary~\ref{cor:sp-and-m-cohom} gives that the comparison map
\[\rmH^{p-1}_\sp(S\leq Q_k)\to \rmH^{p-1}_\m(Q_k/S) = \rmH^{p-1}_\m(M_k/I)\]
is an isomorphism, so in fact $\phi(Z^{p-1}(Q_k,S)) = \phi(\Z^{p-1}_\sp(Q_k,S))$, from which the desired conclusion follows trivially.  Also, in this special case, $B$ is discrete, since it is equal to the image under the switchback homomorphism of $\rmH^p_\m(M_k/I)$, which is discrete in that case (since $M_k/I$ is co-induced from a discrete $Y$-module: see Corollary I.3.4).

We may therefore apply Corollary~\ref{cor:complexity-above-and-below} to the above diagram, and deduce that the desired $\tau \in \Z^{p-1}_\sp(Q_k,S)$ exists for any enlargement $Z \geq Z_1$ as well. \qed$\phantom{i}_{\rm{Claim}}$

\vspace{7pt}

\noindent\emph{Claim 3.}\quad The quotient homomorphism
\[\Z^p(S,P_k) \onto \coker\big(\rmH_\m^p(M_{k-1})\to \rmH^p_\m(I)\big)\]
has target that is co-induced from a discrete $Z_1$-module, and has stable SP representatives for all $p\geq 0$.

\vspace{7pt}

\noindent\emph{Proof of claim.}\quad The co-induced-of-discrete structure follows from Claim 2 in the proof of Proposition I.3.6--7.  Given this, by part (2) of Proposition~\ref{prop:complexity-above-and-below}, stable SP representatives will follow in general if we prove it in the special case $Z = Z_1$, so we now make that assumption.

Consider again the stable short exact sequence
\[0 \to (P_{k-1} \leq R) \to (P_{k-1} \leq Q_{k-1}) \to (P_k \leq S)\to 0,\]
as in the proof of Claim 1, and the following portion of the resulting comparison diagram:
\begin{small}
\begin{center}
$\phantom{i}$\xymatrix{\rmH^p_\sp(P_{k-1} \leq Q_{k-1}) \ar[r]\ar[d] & \rmH^p_\sp(P_k \leq S) \ar[r]\ar[d] & \rmH^{p+1}_\sp(P_{k-1} \leq R) \ar[r]\ar[d] & \rmH^{p+1}_\sp(P_{k-1} \leq Q_{k-1}) \ar[d] \\
\rmH^p_\m(M_{k-1}) \ar[r] & \rmH^p_\m(I) \ar[r] & \rmH^{p+1}_\m(K) \ar[r] & \rmH^{p+1}_\m(M_{k-1}).
}
\end{center}
\end{small}
Introducing cokernels and kernels, this collapses to
\begin{small}
\begin{center}
$\phantom{i}$\xymatrix{\coker\big(\rmH^p_\sp(P_{k-1} \leq Q_{k-1}) \to \rmH^p_\sp(P_k \leq S)\big) \ar^-\cong[r]\ar[d] & \ker\big(\rmH^{p+1}_\sp(P_{k-1} \leq R) \to \rmH^{p+1}_\sp(P_{k-1} \leq Q_{k-1})\big) \ar[d] \\
\coker\big(\rmH^p_\m(M_{k-1}) \to \rmH^p_\m(I)\big) \ar^-\cong[r] & \ker\big(\rmH^{p+1}_\m(K) \to \rmH^{p+1}_\m(M_{k-1})\big).
}
\end{center}
\end{small}

Our present task to prove that the first column here is surjective, so we may instead prove surjectivity for the second column.

Thus, suppose that $\s \in \Z^{p+1}(R,P_{k-1})\cap \B^{p+1}(Q_{k-1},P_{k-1})$.  Then the inductive conclusion (ii) gives
\[\s = \tau + \a_{k-1}\g \quad \mod \B^{p+1}(R,P_{k-1})\]
for some $\tau \in \Z_{\rm{sp}}^{p+1}(R,P_{k-1})$, $\g \in \Z^{p+1}(Q_{k-2},P_{k-2})$.

Re-arranging, this implies that
\[\a_{k-1}\g = -\tau \quad \mod \B^{p+1}(Q_{k-1},P_{k-1}),\]
where the right-hand side is a step polynomial.  Therefore property (c)$_{k-2}$ gives some $\g_1 \in \Z^p_\sp(Q_{k-2},P_{k-2})$ such that
\[\a_{k-1}\g = \a_{k-1}\g_1 \quad \mod \B^{p+1}(Q_{k-1},P_{k-1}).\]
This, in turn, implies that $\g - \g_1$ represents a class in
\[\ker\big(\rmH^{p+1}_\m(M_{k-2})\to \rmH^{p+1}_\m(M_{k-1})\big).\]
Since we are currently assuming $Z = Y+W$, this kernel is co-discrete over
\[\img\big(\rmH^{p+1}_\m(M_{k-3})\to \rmH^{p+1}_\m(M_{k-2})\big),\]
and property (b)$_{k-2}$ gives SP representatives for that kernel over this image.  Therefore we may write
\[\g = \g_1 + \g_2 \quad \mod\ \big(\a_{k-2}(\Z^{p+1}(Q_{k-3},P_{k-3})) + \B^{p+1}(Q_{k-2},P_{k-2})\big)\]
for some $\g_2 \in \Z^{p+1}_\sp(Q_{k-2},P_{k-2})$.

Applying $\a_{k-1}$ and substituting back into our expression for $\s$, we find that
\[\s = \tau + \a_{k-1}(\g_1 + \g_2) \quad \mod \B^{p+1}(R,P_{k-1}).\]

Letting $\s_1 := \tau + \a_{k-1}(\g_1 + \g_2)$, it follows that $\s_1$ is an element of $\Z^{p+1}_\sp(R,P_{k-1}) \cap \B^{p+1}(Q_{k-1},P_{k-1})$.  Since the assumed property (a)$_{k-1}$ implies that
\[\rmH^{p+1}_\sp(P_{k-1}\leq Q_{k-1}) \to \rmH^{p+1}_\m(M_{k-1})\]
is injective, it follows that in fact $\s_1$ lies in $\Z^{p+1}_\sp(R,P_{k-1}) \cap \B_\sp^{p+1}(Q_{k-1},P_{k-1})$, and hence represents a class in
\[\ker\big(\rmH^{p+1}_\sp(P_{k-1}\leq R) \to \rmH^{p+1}_\sp(P_{k-1} \leq Q_{k-1})\big).\]
\qed$\phantom{i}_{\rm{Claim}}$

\vspace{7pt}

\noindent\emph{Claim 4.}\quad The quotient homomorphism
\[\Z^p(Q_k,P_k) \onto \coker\big(\rmH^p_\m(I)\to \rmH^p_\m(M_k)\big)\]
has target that is co-induced from a discrete $Z_1$-module, and has stable SP representatives for all $p\geq 0$.

\vspace{7pt}

\noindent\emph{Proof of claim.}\quad Since $k\geq 2$, the co-induced-of-discrete structure of the cokernel here holds because the long exact sequence identifies it with a subgroup of $\rmH^p_\m(M_k/I)$ via a homomorphism that is co-induced over $Z_1$.

Given this co-induced-of-discrete structure, we may now assume that $Z = Y+W$, similarly to the proof of Claim 3.  In this case our assertion is that the given quotient homomorphism above is surjective.

Consider the stable short exact sequence
\[0 \to (P_k \leq S) \to (P_k \leq Q_k) \to (S \leq Q_k) \to 0\]
as in the proof of Claim 2, and the following portion of the resulting comparison diagram of long exact sequences:
\begin{center}
$\phantom{i}$\xymatrix{\rmH^p_\sp(P_k \leq S) \ar[r]\ar[d] & \rmH^p_\sp(P_k \leq Q_k) \ar[r]\ar[d] & \rmH^p_\sp(S \leq Q_k) \ar[r]\ar[d] & \rmH^{p+1}_\sp(P_k \leq S) \ar[d] \\
\rmH^p_\m(I) \ar[r] & \rmH^p_\m(M_k) \ar[r] & \rmH^p_\m(M_k/I) \ar[r] & \rmH^{p+1}_\m(I).
}
\end{center}
Introducing cokernels, we may collapse the left-hand square of this diagram as follows:
\begin{small}
\begin{center}
$\phantom{i}$\xymatrix{0 \ar[r] & \coker\big(\rmH^p_\sp(P_k\leq S) \to \rmH^p_\sp(P_k \leq Q_k)\big) \ar[r]\ar[d] & \rmH^p_\sp(S \leq Q_k) \ar[r]\ar[d] & \rmH^{p+1}_\sp(P_k \leq S) \ar[d] \\
0 \ar[r] & \coker\big(\rmH^p_\m(I) \to \rmH^p_\m(M_k)\big) \ar[r] & \rmH^p_\m(M_k/I) \ar[r] & \rmH^{p+1}_\m(I),
}
\end{center}
\end{small}
where both rows here are still exact.  In terms of this diagram, we need to show that the first vertical arrow here is surjective.  However, by a routine diagram chase (in fact, another truncated version of the Five Lemma: see~\cite[Proposition 2.72(i)]{Rotman09}), this follows because
\begin{itemize}
\item the map
\[\rmH^p_\sp(S \leq Q_k)\to \rmH^p_\m(M_k/I)\]
is an isomorphism, hence surjective (by Corollary~\ref{cor:sp-and-m-cohom}, since $Z = Y+W$ and $k\geq 2$ so $\rmH^p_\m(M_k/I)$ is discrete, and $S \leq Q_k\onto M_k/I$ has stable SP representatives by our assumption that $P'_\bullet \leq Q'_\bullet$ has stably fully SP homology), and
\item the map
\[\rmH^{p+1}_\sp(P_k \leq S) \to \rmH^{p+1}_\m(I)\]
is injective, because $(P_k \leq S)$ admits SP relative-coboundary-solutions (Claim 1). \qed$\phantom{i}_{\rm{Claim}}$
\end{itemize}

\vspace{7pt}

\noindent\emph{Claim 5.}\quad The quotient homomorphism
\[\Z^p(Q_k,P_k) \onto \coker\big(\rmH_\m^p(M_{k-1})\to \rmH_\m^p(M_k)\big)\]
has target that is co-induced from a discrete $Z_1$-module, and has stable SP representatives for all $p\geq 0$. (This gives properties (b)$_k$ and (d)$_k$.)

\vspace{7pt}

\noindent\emph{Proof of claim.}\quad Since $k\geq 2$, the co-induced-of-discrete structure of the cokernel again follows from Propositions I.3.6 and I.3.7.  As for Claim 3, we may therefore assume that $Z = Y+W$.

Let $D := \B^p(Q_k,P_k)$.  Clearly it suffices to find step-polynomial representatives in every coset for each of the two inclusions
\[\a_k\big(\Z^p(Q_{k-1},P_{k-1})\big) + D \leq \Z^p(S,P_k) + D \leq \Z^p(Q_k,P_k).\]
SP representatives for the first of these inclusions follow by Lemma~\ref{lem:step-reps-of-common-enlargement} and Claim 3, and for the second they follow by Claim 4.  \qed$\phantom{i}_{\rm{Claim}}$

\vspace{7pt}

\noindent\emph{Claim 6.}\quad Property (e)$_k$ holds.

\vspace{7pt}

\noindent\emph{Proof of claim.}\quad Suppose $q \in \Z^0(Q_{k-1},P_{k-1})$ is such that $\a_k(q) \in Q_{k,\sp} + P_k$.  Since the original complex itself has finite-complexity decompositions, there is some $q_1 \in Q_{k-1,\sp}$ such that $q - q_1 \in (\a_k)^{-1}(P_k) = R$.  Since $q$ is relatively $W$-invariant over $P_{k-1}$, it follows that
\[d^W q_1 = d^W(q_1 - q) \quad \mod \C^1(P_{k-1}),\]
so this is a $(R,P_{k-1})$-valued relative cocycle (by the right-hand side) which is a step polynomial (by the left-hand side).  By inductive conclusion (i), we may now find some $q_2 \in R$ such that $d^Wq_1 = d^W q_2$ modulo $\C^1(P_{k-1})$, so now $q_1 - q_2$ is a step-polynomial member of $\Z^0(Q_{k-1},P_{k-1})$ whose $\a_k$-image equals $\a_k (q)$. \qed$\phantom{i}_{\rm{Claim}}$

\vspace{7pt}

This completes the whole proof.
\end{proof}

\section{Semi-functional $\P$-modules}\label{sec:semi-fun-delta}

Starting in this section, we make free use of all the definitions from Section I.4.

\subsection{Definitions}

We first need the following simple generalization of Definition~\ref{dfn:fnl-mor}.  Recall the notion of derivation-actions from Definition I.4.2.

\begin{dfn}[Complexity-bounded derivation-actions]
Suppose that $P\leq \F(X,A)$ and $Q \leq \F(X',A')$ are functional $Z$-modules, that $U\leq Z$, and that
\[\t{\nabla}:U \to \rm{Hom}_Z(P,Q)\]
is a derivation-action.  Then $\t{\nabla}$ is \textbf{complexity-bounded} if the map
\[(u,x') \mapsto (\t{\nabla}_uf)(x')\]
is step-polynomial (hence, is an element of $\Z^1_\sp(U,Q)$) for every $f \in P_\sp$.

It is \textbf{stably complexity-bounded} if
\[\Cnd_Z^{Z'}\t{\nabla}:U \to \rm{Hom}_{Z'}(\Cnd_Z^{Z'}P,\Cnd_Z^{Z'}Q) : u \mapsto \Cnd_Z^{Z'}(\t{\nabla}_u)\]
is complexity bounded for every enlargement $Z' \geq Z$.
\end{dfn}

Note that, in principle, the above property is stronger than simply asserting that $\t{\nabla}_u$ is complexity-bounded for each $u$ separately.

We are now ready for the next major definition of this paper.

\begin{dfn}[Functional $\P$-modules]\label{dfn:fnl-delta-mod}
A $(Z,Y,\bfU)$-$\P$-module $(P_e)_e$ is \textbf{functional} if its sub-constituent modules are functional $(Y+U_e)$-modules, and its sub-constituent morphisms and sub-constituent derivation-actions are all stably complexity-bounded.

(Note that, as in Definition~\ref{dfn:fnl}, this implies a particular choice of base and fibre for all of those sub-constituent functional modules. They may all be different.)
\end{dfn}

Henceforth we will be concerned only with morphisms that are stably complexity-bounded, which is why this assumption is now built into the definition.

\begin{ex}
The principle examples from Part I, $\P$-modules of {\PDE}-solutions and zero-sum tuples, are clearly of this kind. \fin
\end{ex}

\begin{dfn}[Semi-functional $\P$-modules]\label{dfn:semi-fnl-delta-mod}
A \textbf{semi-functional $(Z,Y,\bfU)$-$\P$-module} is an inclusion $\scrP \leq \scrQ$ of functional $(Z,Y,\bfU)$-$\P$-modules, where for each $e$ the inclusion $P_e \leq Q_e$ is co-induced from an inclusion of the sub-constituent modules, and that latter inclusion is a semi-functional $(Y+U_e)$-module.

This semi-functional may also be denoted by $\scrP \stackrel{\incln}{\to} \scrQ$.
\end{dfn}

Alternatively, one may regard a semi-functional $\P$-module as a `$\P$-object' in the category $\SFMod(Z)$.

Often, we will view a semi-functional $\P$-module $\scrP \leq \scrQ$ as an enhanced way of looking at the quotient $\P$-module $\scrQ/\scrP$.  For this reason, given a property \textbf{P} of $\P$-modules, such as modesty or almost modesty, we write that a semi-functional $\P$-module $\scrP \leq \scrQ$ has \textbf{P} if this is true of $\scrQ/\scrP$, but not necessarily of $\scrP$ or $\scrQ$ separately.

Now suppose that $(\scrP \leq \scrQ)$ is a semi-functional $(Z,Y,\bfU)$-$\P$-module.  If $e \subseteq [k]$, then the structure complexes of $\scrP$ and $\scrQ$ at $e$ fit together into a semi-functional complex, which is co-induced over $Y + U_e$.

\begin{dfn}
We will refer to this semi-functional complex as the \textbf{structure complex} of $(\scrP \leq \scrQ)$ at $e$.
\end{dfn}

We will carry over many of our notational practices from Part I when working with structure complexes.  For example, we shall generally write $\partial_\ell$ or $\partial_{e,\ell}$ for the boundary homomorphisms of the structure complex (at $[k]$ or $e$, respectively), without notating the $\P$-module they belong to, since this will be clear from the context.

\subsection{Recap of some basic constructions}

Various auxiliary constructions and definitions for $\P$-modules occupied I.4.2: direct sums (Definition I.4.11), co-induction (Definition I.4.12), leanness (Definition I.4.13) and lean versions (Definition I.4.14). These mostly have obvious extensions to semi-functional $\P$-modules, and we will not describe them all again.  Simply note that one now makes sense of direct sums and co-induction using the definitions of these for individual semi-functional modules from Subsections~\ref{subs:semi-sp-mods} and~\ref{subs:behav-coind}.

Now suppose that $c \subseteq e$ is an inclusion of finite sets, that $Y\leq Z$ is an inclusion of compact Abelian groups, and that $\bfU = (U_i)_{i\in e}$ is a family of closed subgroups of $Z$.  Let $\bfU\uhr_c$ be the subfamily $(U_i)_{i\in c}$, and let $\scrP$ be a functional $(Z,Y,\bfU)$-$\P$-module.  Then the restriction $\scrP\uhr_c$ is obviously also functional.  Similarly, if $\scrP \leq \scrQ$ is a semi-functional $\P$-module, then so is $(\scrP\leq \scrQ)\uhr_c := (\scrP\uhr_c \leq \scrQ\uhr_c)$.

We next turn to aggrandizement, so now suppose instead that $\scrP = (P_e)_e$ is a functional ${(Z,Y,\bfU\uhr_c)}$-$\P$-module.  Then Definition I.5.1 gives immediately that $\rm{Ag}_c^e\scrP$ is a functional $(Z,Y,\bfU)$-$\P$-module with the same fibre and dummy.

Now let $\scrP \leq \scrQ$ be a semi-functional $(Z,Y,\bfU\uhr_c)$-$\P$-module.  Then both of $\scrP$ and $\scrQ$ have aggrandizements to $\bfU$, and these fit into a new semi-functional $\P$-module
\[\rm{Ag}_c^e\scrP \leq \rm{Ag}_c^e\scrQ.\]
We will sometimes abbreviate this to $\rm{Ag}_c^e(\scrP \leq \scrQ)$.  It is easy to check that the quotient of $\rm{Ag}_c^e(\scrP \leq \scrQ)$ is equal to $\rm{Ag}_c^e(\scrQ/\scrP)$.

One of the key results about aggrandizements was Corollary I.5.3, which related the structure complexes of $\rm{Ag}_c^{[k]}\scrM$ to those of $\scrM$ itself: if $e \subseteq c$, then the structure complexes at $e$ are the same; and if $e \not\subseteq c$, then the structure complex of $\rm{Ag}_c^{[k]}\scrM$ at $e$ is split.

We will need to re-use this splitting in the present paper, and in doing so we will need the following additional information.

\begin{lem}\label{lem:cplx-bdd-splitting}
Let $c$ and $\scrP \leq \scrQ$ be as above, and let $e \subseteq [k]$ with $e\not\subseteq c$.  The structure complex of $\rm{Ag}_c^{[k]}\scrQ$ at $e$ has a sequence of splitting homomorphisms that are stably complexity-bounded, and that restrict to the corresponding splitting homomorphisms for $\rm{Ag}_c^{[k]}\scrP$.
\end{lem}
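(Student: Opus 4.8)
The plan is to reduce this to the explicit splitting homomorphisms already constructed for Corollary~I.5.3 and then check that they preserve step polynomials. First I would recall precisely how that splitting was built: when $e \not\subseteq c$, there is some index $j \in e \setminus c$, and the structure complex of $\mathrm{Ag}_c^{[k]}\scrQ$ at $e$ involves a differencing operation $d^{U_j}$ along the $j$-th coordinate which acts on a functional module already containing a `dummy' copy of $U_j$; the contracting homotopy is given by an averaging/telescoping formula (in Part~I, a map of the form $f \mapsto$ (a fixed translate or a partial sum over the $U_j$-coordinate), exactly the kind of homomorphism of the form~(\ref{eq:cplx-bdd-homo}) or a composition of such with a slice-evaluation at a step-affine cross-section). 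So the task becomes: each such splitting homomorphism $s_\ell$ is, on the level of the underlying functional modules, built out of (i) precompositions with continuous epimorphisms between the bases, (ii) postcompositions with continuous homomorphisms between the fibres, (iii) integration of a coordinate that ranges over a closed subgroup $U_j$, and (iv) evaluation along step-affine cross-sections furnished by Proposition~\ref{prop:fund-lift}.

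The key steps, in order, would be: (1) Write down the splitting homomorphisms $s_\bullet$ for $\mathrm{Ag}_c^{[k]}\scrQ$ at $e$ verbatim as in the proof of Corollary~I.5.3, making explicit that they are assembled from the four ingredient types above, and observe that by construction they restrict to the analogous splitting homomorphisms for the submodule complex coming from $\scrP$ (this is automatic, since they are defined by the same formulas and $\scrP_e \le \scrQ_e$ sub-constituent-wise). (2) Check that ingredients of types (i) and (ii) are stably complexity-bounded: type (i) because precomposition with a continuous epimorphism of compact groups sends step polynomials to step polynomials (Definition~\ref{dfn:step-poly} factorizes through affine maps to tori, and these pull back), and type (ii) because a continuous homomorphism of SP Lie fibres sends step polynomials to step polynomials by Lemma~\ref{lem:s-p-lift} and Corollary~\ref{cor:Lie-homos-s-p-reps}; all sub-constituent fibres here are SP by the standing assumptions in Definition~\ref{dfn:fnl-delta-mod}. (3) Check type (iii): integrating out a coordinate ranging over a subgroup $U_j$ preserves step polynomials, which is exactly Corollary~\ref{cor:int-step-poly} (after reducing $U_j$ to a Lie-group quotient, as in that corollary's proof). (4) Check type (iv): evaluation of a step polynomial along a step-affine cross-section is again a step polynomial, which is Lemma~\ref{lem:stepaffsteppoly} combined with Lemma~\ref{lem:step-aff-compos}. (5) Combine: a composition of stably complexity-bounded maps is stably complexity-bounded (noted right after Corollary~\ref{cor:Lie-homos-s-p-reps} and implicit in the definition of $\SFMod(Z)$), and stability under every enlargement $Z' \ge Z$ follows because each ingredient is given by a formula that commutes with $\Cnd_Z^{Z'}(-)$ — this is the point of having phrased `stably complexity-bounded' in terms of the co-induced maps, and Proposition~\ref{prop:s-p-coind-consistent} guarantees the identifications used are themselves complexity-bounded in both directions. (6) Conclude that the $s_\bullet$ for $\scrP$ are obtained by restriction, which is trivially still stably complexity-bounded since the restriction of a step polynomial lying in $\scrP_e$ is the same step polynomial.

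I expect the main obstacle to be step (2)–(3): namely, pinning down that the particular contracting homotopy used in Corollary~I.5.3 genuinely is built only from these four harmless ingredients, with no hidden use of a non-SP operation (for instance, a projection-averaging that would require dividing by a character, as in a full rotation action). If the homotopy in Part~I is the naive telescoping sum $\sum_{i} T^{iu} f$, that involves only translation by a fixed group element and finite summation, both obviously step-polynomial-preserving; if instead it uses a genuine integral over $U_j$ against Haar measure, one invokes Corollary~\ref{cor:int-step-poly}. Either way the verification is routine once the formula is exposed, so the real content of the proof is bookkeeping: unwinding the co-induction layers and the base/fibre data so that each elementary piece is visibly of the form covered by our earlier lemmas. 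I would therefore keep the write-up short, citing Corollary~I.5.3 for the construction and Lemmas~\ref{lem:stepaffsteppoly}, \ref{lem:step-aff-compos}, \ref{lem:s-p-lift}, Corollaries~\ref{cor:int-step-poly}, \ref{cor:Lie-homos-s-p-reps} and Proposition~\ref{prop:s-p-coind-consistent} for the six verification steps, and noting that the restriction to $\scrP$ is formal.
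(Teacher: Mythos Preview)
Your plan would eventually reach the right answer, but it is built on a misconception about what the splitting actually is. The contracting homotopy for the structure complex of $\mathrm{Ag}_c^{[k]}\scrQ$ at $e$ (constructed in the proof of Lemma~I.5.2, the `Homotopical Lemma', rather than Corollary~I.5.3) involves \emph{no} differencing, no integration over $U_j$, no telescoping sums, and no step-affine cross-sections. It is purely combinatorial: having fixed $s \in e\setminus c$, one sets
\[
\xi_\ell:\bigoplus_{|b|=\ell+1} Q_{b\cap c} \to \bigoplus_{|a|=\ell} Q_{a\cap c}, \qquad \big(\xi_\ell((q_b)_b)\big)_a := \begin{cases} 0 & \hbox{if } s\in a,\\ \sgn(a\cup s:a)\,q_{a\cup s} & \hbox{if } s\notin a,\end{cases}
\]
which is well-defined because $s\notin c$ forces $(a\cup s)\cap c = a\cap c$, so the source and target summands are literally the same module. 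This is nothing more than a signed coordinate projection between direct sums, hence manifestly stably complexity-bounded and manifestly restricting to $\scrP$. Your steps~(2)--(4) are therefore all unnecessary, and the paper's proof is correspondingly one paragraph.

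The source of the confusion seems to be a conflation of two quite different splittings in the theory: the one needed here, which contracts the structure complex of an aggrandized $\P$-module via a simplicial `cone' argument (insert the missing vertex $s$), versus the analytic splittings used elsewhere to efface cohomology (e.g.\ the integration in Step~2 of the proof of Proposition~\ref{prop:sp-and-m-cohom}). In the aggrandized structure complex the relevant structure morphisms between positions $a$ and $a\cup s$ collapse to identities on $Q_{a\cap c}$, not to differencing operators $d^{U_s}$; there is no dummy copy of $U_s$ to average over. Had you actually carried out your step~(1) and written down the formula from Part~I, you would have seen this immediately and discarded the rest of the plan.
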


\begin{proof}
This follows immediately from the explicit construction of splitting homomorphisms in the proof of Lemma I.5.2 (the `Homotopical Lemma').  Suppose for simplicity that $e = [k]$, and pick $s \in [k]\setminus c$.  Now for $\ell = 0,1,\ldots,k$ define the homomorphism
\[\xi_\ell:\bigoplus_{|b| = \ell+1}Q_{e\cap c} \to \bigoplus_{|a| = \ell}Q_{a\cap c}\]
by
\[\big(\xi_\ell((q_b)_{|b| = \ell+1})\big)_a := \left\{\begin{array}{ll}0 &\quad \hbox{if}\ s \in a\\
\sgn(a\cup s:a)q_{a \cup s} &\quad \hbox{if}\ s \not\in a.\end{array}\right.\]
These manifestly have the required properties.
\end{proof}

Restriction and aggrandizement were combined to define reduction: Definition I.5.4.  In the semi-functional context, the reduction of $\scrP \leq \scrQ$ at $c$ will be
\[(\scrP\leq \scrQ) \llcorner_c := \rm{Ag}_c^{[k]}((\scrP \leq \scrQ)\uhr_c),\]
which is again clearly semi-functional, and has quotient equal to $(\scrQ/\scrP) \llcorner_c$.

%
%
%
%
%

\subsection{Meekness}

We next introduce an enhancement of the notion of almost modesty that keeps track of SP representatives.  This definition will be given in two parts.  The first is only a slight modification of the definition of almost modesty itself (Definition I.4.19).

\begin{dfn}
Let $(\scrP \leq \scrQ)$ be a semi-functional $(Z,Y,\bfU)$-$\P$-module.

If $(Z,Y,\bfU)$ is lean, then $(\scrP \leq \scrQ)$ is \textbf{$\ell_0$-almost top-SP-modest} if the top structure complex of $\scrP \leq \scrQ$ both has $\ell_0$-almost discrete homology and is locally SP. (Note that the extra assumption of an SP homology group is nontrivial only in position in $\ell_0$, since $\ell_0$-almost discreteness implies that all other homology groups here are discrete: see Lemma~\ref{lem:almost-disc-and-locally-SP}.)

In general, $(\scrP \leq \scrQ)$ is \textbf{$\ell_0$-almost top-SP-modest} if its lean version has this property, and it is \textbf{$\ell_0$-almost SP-modest} if every restriction of it is $\ell_0$-almost SP modest.
\end{dfn}

It follows that strict modesty for $\scrQ/\scrP$ implies almost SP-modesty.

\begin{dfn}[Meek and almost meek $\P$-modules]\label{dfn:meek-delta-mod}
Suppose that $0 \leq \ell_0 \leq k$.  If $(\scrP \leq \scrQ)$ is a semi-functional $\P$-module, then it is \textbf{$\ell_0$-meek} (resp. \textbf{$\ell_0$-almost meek}) if
\begin{itemize}
\item $\scrQ/\scrP$ is $\ell_0$-modest (resp. $\ell_0$-almost SP-modest), and
\item all of its structure complexes have stably fully SP homology.
\end{itemize}
\end{dfn}

Almost meek $\P$-modules should be thought of as $\P$-modules which are `explicit' (in that they are semi-functional), and for which the structure complexes are `nearly exact' (in the sense of almost modesty) and have homology represented by step polynomials.

Lemma I.5.5 identified the structure complexes of $\scrM \llc_c$ in terms of those of $\scrM$.  In Corollary I.5.6 this implied that the properties of almost or strict modesty are inherited by this operation.  Exactly the same reasoning gives this for meekness.

\begin{lem}\label{cor:restriction-still-meek}
If $\scrP \leq \scrQ$ is a semi-functional $(Z,Y,\bfU)$-$\P$-module and $c\subseteq [k]$, then $(\scrP\leq \scrQ)\llcorner_c$ is $\ell_0$-almost (resp. strictly) meek if and only if $(\scrP \leq \scrQ)\uhr_c$ is $\ell_0$-almost (resp. strictly) meek, and both are implied if $\scrP \leq \scrQ$ itself is $\ell_0$-almost (resp. strictly) meek.
\end{lem}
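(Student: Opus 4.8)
The plan is to reduce this statement to the corresponding result for strict and almost modesty, namely Corollary I.5.6, together with the structure-complex analysis of the reduction operation recorded in Lemma I.5.5. The essential point is that meekness (Definition~\ref{dfn:meek-delta-mod}) decomposes into two halves: a modesty-type condition on the quotient $\scrQ/\scrP$, and the condition that all structure complexes have stably fully SP homology. Both halves are phrased entirely in terms of the structure complexes, so the argument will be a matter of checking that passing between $(\scrP\leq\scrQ)$, $(\scrP\leq\scrQ)\uhr_c$ and $(\scrP\leq\scrQ)\llcorner_c$ does not create or destroy either half at any position $e$.

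First I would recall from Lemma I.5.5 the identification of the structure complexes of $\scrM\llc_c$ with those of $\scrM\uhr_c$ (via the aggrandizement): for $e\subseteq c$ the structure complex of $(\scrP\leq\scrQ)\llc_c$ at $e$ agrees with that of $(\scrP\leq\scrQ)\uhr_c$ at $e$, while for $e\not\subseteq c$ it is split, with splitting homomorphisms that are stably complexity-bounded and compatible with the $\scrP$-subcomplex by Lemma~\ref{lem:cplx-bdd-splitting}. Consequently the quotient structure complex of $(\scrP\leq\scrQ)\llc_c$ at such an $e$ is a split exact complex of SP Lie modules (indeed of the zero module modulo coboundaries), so it has zero homology, is trivially locally SP, and has stably fully SP homology by Lemma~\ref{lem:criterion-for-finite-complex} and Lemma~\ref{lem:criterion-for-meekness} applied to a split complex. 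For the modesty half, this is exactly the content of Corollary I.5.6. Thus for the split positions $e\not\subseteq c$, both halves of meekness hold automatically, and for the positions $e\subseteq c$ the structure complex of the reduction is literally the same semi-functional complex as that of $(\scrP\leq\scrQ)\uhr_c$. This already gives the equivalence between $(\scrP\leq\scrQ)\llc_c$ being $\ell_0$-almost (resp. strictly) meek and $(\scrP\leq\scrQ)\uhr_c$ being $\ell_0$-almost (resp. strictly) meek, since the two notions are checked complex-by-complex (after passing to lean versions, which commutes with reduction and restriction — this is where I would invoke the leanness bookkeeping of Definition I.4.14 and the discussion in I.5, already mirrored for meekness in the remark preceding Lemma~\ref{cor:restriction-still-meek}).

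Next I would handle the final clause: that $\ell_0$-almost (resp. strict) meekness of $(\scrP\leq\scrQ)$ itself implies it for the reduction, equivalently for the restriction. For strict meekness this is immediate, since strict modesty of $\scrQ/\scrP$ restricts to strict modesty of $(\scrQ/\scrP)\uhr_c$ (the structure complexes of the restriction at $e\subseteq c$ being those of the original at $e$), and stably fully SP homology of all structure complexes is likewise inherited since the set of structure complexes of the restriction is a subset of those of the original. For almost meekness, the relevant point is that $\ell_0$-almost SP-modesty was defined (see the definition just before Definition~\ref{dfn:meek-delta-mod}) by requiring that every restriction be $\ell_0$-almost top-SP-modest; hence a restriction of $(\scrP\leq\scrQ)$, being among these, inherits the property directly, and again the stably-fully-SP-homology condition passes to the sub-collection of structure complexes. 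Assembling these observations — equivalence under reduction vs. restriction from the split/unchanged dichotomy of Lemma I.5.5 and Lemma~\ref{lem:cplx-bdd-splitting}, and inheritance from the original by the definitional "every restriction" clause plus Corollary I.5.6 — completes the proof.

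The main obstacle I anticipate is purely bookkeeping rather than conceptual: verifying that forming lean versions genuinely commutes with restriction and aggrandizement at the level of semi-functional $\P$-modules, so that "$\ell_0$-almost meek" (which is defined via the lean version) can be checked after the operations in either order. This is the exact analog of what was done for almost modesty in Part I, and the splitting homomorphisms of Lemma~\ref{lem:cplx-bdd-splitting} are precisely what is needed to see that the extra SP-representative data behaves well under these identifications; but it does require carefully tracking bases, fibres, dummies, and the stable complexity-boundedness of all the comparison maps, so I would spell this out rather than simply assert "exactly the same reasoning as Corollary I.5.6."
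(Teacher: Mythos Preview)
Your proposal is correct and follows essentially the same approach as the paper: use Lemma I.5.5 to identify the structure complexes of the reduction with those of the restriction for $e\subseteq c$, and for $e\not\subseteq c$ invoke the stably complexity-bounded splitting homomorphisms of Lemma~\ref{lem:cplx-bdd-splitting} to see that both SP representatives and finite-complexity decompositions hold trivially there. The paper's proof is terser on the inheritance clause and the lean-version bookkeeping, but your more explicit treatment of those points is sound and in the same spirit.
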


\begin{proof}
Mostly this is clear from the identification of the structure complexes of the restricted and reduced $\P$-modules.  The only remaining observation one needs is that if $e\not\subseteq c$, then the fact that the structure complexes of $\scrP\llc_c$ and $\scrQ\llc_c$ at $e$ have a consistent sequence of stably complexity-bounded splitting homomorphisms (Lemma~\ref{lem:cplx-bdd-splitting}) implies the following:
\begin{itemize}
\item the property of SP representatives is vacuously satisfied for the homology in these structure complexes;
\item one can obtain bounded-complexity representatives in the structure complex of $(\scrP \leq \scrQ)\llc_c$ at $e$ simply by composition with those splitting homomorphisms.
\end{itemize}
\end{proof}

%

\subsection{Semi-functional $\P$-morphisms and short exact sequences}\label{sec:delta-ses}

The next two definitions are inevitable.

\begin{dfn}[Semi-functional $\P$-morphism]
Let $(\scrP \leq \scrQ)$ and $(\scrP' \leq \scrQ')$ be semi-functional $(Z,Y,\bfU)$-$\P$-module.  Then a \textbf{$\P$-morphism} from $(\scrP \leq \scrQ)$ to $(\scrP' \leq \scrQ')$ is a commutative diagram
\begin{center}
$\phantom{i}$\xymatrix{
\scrP \ar[r]\ar_\incln[d] & \scrP' \ar^\incln[d] \\
\scrQ \ar[r] & \scrQ'
}
\end{center}
in which both horizontal arrows are $\P$-morphisms (Definition I.4.16).  This situation will usually be abbreviated to $(\scrP \leq \scrQ) \to (\scrP' \leq \scrQ')$.

The semi-functional $\P$-morphism is (resp. \textbf{stably}) \textbf{complexity-bounded} if this holds for the resulting semi-functional morphism of the constituent semi-functional modules for each fixed $e \subseteq [k]$.
\end{dfn}

\begin{dfn}[Short exact sequences]
A \textbf{short exact sequence} of semi-functional $(Z,Y,\bfU)$-$\P$-modules is a sequence of semi-functional $\P$-morphisms of the form
\[0 \to (\scrP \leq \scrQ) \to (\scrP' \leq \scrQ') \to (\scrP'' \leq \scrQ'') \to 0\]
such that at each $e \subseteq [k]$ the resulting diagram
\[0 \to (P_e \leq Q_e) \to(P'_e \leq Q'_e) \to (P''_e\leq Q''_e) \to 0\]
is a short exact sequence in $\SFMod(Z)$.

This short exact sequence is \textbf{stable} if each of the resulting short exact sequences in $\SFMod(Z)$ is stable.
\end{dfn}

Given a semi-functional short exact sequence as above, it clearly induces a short exact sequence of the resulting structure complexes of $(\scrP\leq \scrQ)$, $(\scrP' \leq \scrQ')$ and $(\scrP''\leq \scrQ'')$.

\begin{prop}\label{prop:snakerep}
Let
\[0 \to (\scrP \leq \scrQ) \to (\scrP' \leq \scrQ') \to (\scrP'' \leq \scrQ'')\to 0\]
be a stable short exact sequence of semi-functional $(Z,Y,\bfU)$-$\P$-modules, all of them almost SP-modest. Then the following implications hold.
\begin{itemize}
\item[(1)] If $(\scrP \leq \scrQ)$ and $(\scrP' \leq \scrQ')$ are $\ell_0$-almost (resp. strictly) meek, then so is $(\scrP'' \leq \scrQ'')$.
\item[(2)] If $(\scrP \leq \scrQ)$ and $(\scrP'' \leq \scrQ'')$ are $\ell_0$-almost (resp. strictly) meek, then so is $(\scrP' \leq \scrQ')$.
\item[(3)] If $(\scrP' \leq \scrQ')$ and $(\scrP'' \leq \scrQ'')$ are $\ell_0$-almost (resp. strictly) meek, and $M_e = 0$ whenever $|e| \leq \ell_0$, then $(\scrP \leq \scrQ)$ is $(\ell_0+1)$-almost (resp. strictly) meek.
\end{itemize}
\end{prop}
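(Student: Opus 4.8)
The plan is to deduce Proposition~\ref{prop:snakerep} from the corresponding statements for individual semi-functional complexes proved earlier, by working one structure complex at a time. Almost meekness (Definition~\ref{dfn:meek-delta-mod}) is a conjunction of two conditions: an almost-SP-modesty condition on $\scrQ/\scrP$, and the requirement that every structure complex have stably fully SP homology. The almost-SP-modesty parts of (1), (2), (3) are about the $\ell_0$-almost discreteness of the top structure complex homology and the fact that this homology is an SP module; for the $\ell_0$-almost-discreteness part these are exactly the analogues of Corollary I.5.6 / the corresponding results in Part I (since almost SP-modesty of $\scrQ/\scrP$ is a statement about $\scrQ/\scrP$ alone, and the forgetful functor sends our short exact sequence to a short exact sequence in $\PMod(Z)$ to which the Part I arguments apply), while the statement that the relevant homology group is an SP module follows because a short exact sequence of SP Lie modules has SP middle and outer terms (Corollary~\ref{cor:s-p-class-closed}), combined with the long exact sequence in $\rmH^\ast_\m$ and Part I's identification of the almost-modesty homology. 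So the real content is the `stably fully SP homology' clause.

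For that clause, fix $e \subseteq [k]$ and consider the structure complexes of $(\scrP\leq\scrQ)$, $(\scrP'\leq\scrQ')$, $(\scrP''\leq\scrQ'')$ at $e$. By Definition~\ref{dfn:semi-fnl-delta-mod} these are co-induced from locally SP semi-functional complexes over $Y+U_e$ (local SP-ness is part of $\ell_0$-almost SP-modesty, via Lemma~\ref{lem:almost-disc-and-locally-SP}), and the stable short exact sequence of $\P$-modules restricts to a stable short exact sequence of these semi-functional complexes. Now I apply Proposition~\ref{prop:complex-ses-good}: if two of the three complexes in this short exact sequence have fully SP homology, so does the third. This immediately yields (1) and (2): in case (1), the complexes of $(\scrP\leq\scrQ)$ and $(\scrP'\leq\scrQ')$ at $e$ have stably fully SP homology, hence so does that of $(\scrP''\leq\scrQ'')$ at $e$, and since $e$ was arbitrary $(\scrP''\leq\scrQ'')$ is $\ell_0$-almost meek; case (2) is symmetric. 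To get \emph{stably} fully SP homology rather than merely fully SP homology, I note that co-induction $\Cnd_Z^{Z'}(-)$ turns our stable short exact sequence into another one (this is built into Definition~\ref{dfn:stable}), and the co-induced complexes are again co-induced from the same locally SP $(Y+U_e)$-complexes, so Proposition~\ref{prop:complex-ses-good} applies verbatim at every enlargement.

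For part (3) the same machinery applies except that one of the two `known' complexes (the one belonging to $(\scrP\leq\scrQ)$) is the unknown, so I instead run the long exact sequence of homology groups associated to the short exact sequence of quotient complexes, together with its SP subdiagram (the comparison diagram of Lemma~\ref{lem:compar-diag}). Using the hypothesis $M_e = 0$ for $|e|\le \ell_0$, the structure complex of $(\scrP''\leq\scrQ'')$ — hence also the relevant initial segments of the other two — vanishes below position $\ell_0+1$, so the long exact sequence forces the homology of the $(\scrP\leq\scrQ)$-complex at positions $\le\ell_0$ to vanish and at position $\ell_0+1$ to be a subobject of the known homology, which shifts the almost-discreteness and local-SP-ness indices up by one; the fully-SP-homology clause for the $(\scrP\leq\scrQ)$-complex then follows by the five-lemma argument inside the comparison diagram exactly as in Proposition~\ref{prop:complex-ses-good}, reading off injectivity/surjectivity of the middle comparison maps from those of the neighbours. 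The strict-meekness versions of (1)--(3) are recovered by the same arguments, replacing `$\ell_0$-almost SP-modest' with `strictly modest' throughout and using that strict modesty of $\scrQ/\scrP$ propagates along short exact sequences by the Part I results, with the fully-SP-homology clause handled identically.

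I expect the main obstacle to be purely bookkeeping: checking that all the co-induction and leanness reductions (Definition I.4.14, Lemma~\ref{cor:restriction-still-meek}) commute with forming the short exact sequence, so that applying Proposition~\ref{prop:complex-ses-good} is legitimate at each $e$ after passing to lean versions — i.e. verifying that the lean version of a stable short exact sequence of semi-functional $\P$-modules is again a stable short exact sequence. Once that is in hand, the homological content is entirely Proposition~\ref{prop:complex-ses-good} plus the long exact sequence, and no genuinely new idea is needed.
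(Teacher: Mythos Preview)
Your approach is essentially the same as the paper's, which also reduces everything to Proposition~I.4.25 for the almost-modesty clause and to Proposition~\ref{prop:complex-ses-good} applied to each structure complex for the fully-SP-homology clause. One small simplification: your treatment of part~(3) is more involved than necessary, since Proposition~\ref{prop:complex-ses-good} is already a symmetric ``two out of three'' statement (its proof explicitly says the three cases are all alike), so it applies directly to deduce fully SP homology for $(\scrP\leq\scrQ)$ from that of the other two, with no need to re-run the comparison diagram by hand; the shift from $\ell_0$ to $\ell_0+1$ and the hypothesis $M_e=0$ for $|e|\le\ell_0$ pertain only to the almost-modesty clause and are absorbed entirely by Proposition~I.4.25.
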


\begin{proof}
The analog of this for the property of almost modesty was Proposition I.4.25.  The present result now follows by applying Proposition~\ref{prop:complex-ses-good} to the structure complexes of these semi-functional $\P$-modules.
\end{proof}

\section{Cohomology $\P$-modules}

Let $\scrM$ be a Polish $(Z,Y,\bfU)$-$\P$-module, $W \leq Z$ and $p \geq 0$. Then one may form a new pre-$\P$-module $\rmH^p_\m(W,\scrM)$ over $(Z,\bfU)$ by applying the cohomology functor $\rmH^p_\m(W,-)$ to all the modules and morphisms of $\scrM$.  This construction was the subject of Section I.6.  In case $\rmH^p_\m(W,\scrM)$ is still Polish, it may be given the structure of a $(Z,Y+W,\bfU)$-$\P$-module (Lemma I.6.2).  In particular, if $\scrM$ is (almost) modest, then this Polish property obtains, and $\rmH^p_\m(W,\scrM)$ also inherits the property of being (almost) modest (Theorem I.6.7).

Now suppose that $\scrP \leq \scrQ$ is a semi-functional $(Z,Y,\bfU)$-$\P$-module, and let $\scrM$ be its quotient $\P$-module.   Assume that $\rmH^p_\m(W,\scrM)$ is Polish (this will be given by results from Part I in all the cases of interest later).  Then it can also be realized as the quotient of a semi-functional $\P$-module, suggested by Lemma~\ref{lem:rel-cocycs}:
\begin{eqnarray}\label{eq:consts-for-Hp}
\B^p(W,Q_e,P_e) \leq \Z^p(W,Q_e,P_e) \onto \rmH^p_\m(W,M_e) \quad \hbox{for each}\ e \subseteq [k].
\end{eqnarray}
Since we assume that each $\rmH^p_\m(W,M_e)$ is Hausdorff, part of Lemma~\ref{lem:rel-cocycs} gives that each $\B^p(W,Q_e,P_e)$ is closed.

It is easy to verify that the inclusions in~(\ref{eq:consts-for-Hp}) fit together into a new semi-functional $\P$-module. The structure modules and derivation-actions are inherited from $\scrQ$ as was explained before Definition I.6.1.

We will usually abbreviate the above semi-functional $(Z,Y+W,\bfU)$-$\P$-module whose quotient is $\rmH^p_\m(W,\scrM)$ to
\begin{eqnarray}\label{eq:cohom-presentation}
\rmH^p_\m(W,\scrP \leq \scrQ) := \big(\B^p(W,\scrQ,\scrP) \leq \Z^p(W,\scrQ,\scrP)\big).
\end{eqnarray}

\begin{thm}\label{thm:HpgoodSHC}
If $(\scrP \leq \scrQ)$ is $\ell_0$-almost meek, then $\rmH^p_\m(W,\scrP \leq \scrQ)$ is $\ell_0$-almost meek.  It is strictly meek in case either $(\scrP\leq \scrQ)$ is strictly meek or $p\geq 1$.
\end{thm}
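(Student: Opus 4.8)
The plan is to run this proof in close parallel with the proof of its analogue for almost modesty (Theorem I.6.7), the only difference being that the step-polynomial bookkeeping on structure complexes is now carried by Proposition~\ref{prop:Hp-of-cplx} in place of Propositions I.3.6--7.

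First I would unwind Definition~\ref{dfn:meek-delta-mod}. To say that $\rmH^p_\m(W,\scrP\leq\scrQ)$ is $\ell_0$-almost (resp.\ strictly) meek is to say two things: (i) its quotient $\P$-module $\rmH^p_\m(W,\scrM)$ is $\ell_0$-almost SP-modest (resp.\ $\ell_0$-modest); and (ii) every structure complex of $\rmH^p_\m(W,\scrP\leq\scrQ)$ has stably fully SP homology. The bare (almost/strict) modesty in (i) is exactly the content of Theorem I.6.7, which also makes $\rmH^p_\m(W,\scrM)$ strictly modest whenever $\scrM$ is strictly modest or $p\geq 1$. The only extra content of $\ell_0$-almost SP-modesty over $\ell_0$-almost modesty is that the top structure complex be locally SP, and this is subsumed in (ii) --- the homology group at position $\ell_0$ staying an SP Lie module because for $p=0$ it is the $W$-invariants of one, hence a closed submodule (Corollary~\ref{cor:s-p-class-closed}), and for $p\geq 1$ it is discrete by the results of Part I. So the theorem reduces to (ii).

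For (ii) I would induct on $k=|\bfU|$. Each restriction $(\scrP\leq\scrQ)\uhr_a$ is again $\ell_0$-almost (resp.\ strictly) meek by Lemma~\ref{cor:restriction-still-meek}, and since $\rmH^p_\m(W,-)$ is applied module-wise it commutes with restriction, so the structure complex of $\rmH^p_\m(W,\scrP\leq\scrQ)$ at any $a\subsetneq[k]$ is the top structure complex of $\rmH^p_\m(W,(\scrP\leq\scrQ)\uhr_a)$, which has stably fully SP homology by the inductive hypothesis. It remains to treat the top structure complex at $[k]$: passing to the lean version, this is a bounded semi-functional complex co-induced over $Y+U_{[k]}$, whose cohomology presentation~(\ref{eq:semi-fnl-cohom-cplx}) is exactly the object studied by Proposition~\ref{prop:Hp-of-cplx}. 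I would apply that proposition with the fixed closed subgroup $W$. Its three standing hypotheses are met: the quotient complex has locally SP, $\ell_0$-almost discrete homology because $(\scrP\leq\scrQ)$ is $\ell_0$-almost top-SP-modest; the complex itself has stably fully SP homology by almost meekness; and the numbered hypotheses (a)$_i$, (b)$_i$, (c)$_i$ for $i<k$ are assembled from the inductive hypothesis together with the Section~5 machinery --- (a)$_i$ is conclusion (a) of Proposition~\ref{prop:Hp-of-cplx} applied to the structure complexes of the restrictions $(\scrP\leq\scrQ)\uhr_a$ with $|a|=i$ (whose top module is $P_a\leq Q_a$), while (b)$_i$ and (c)$_i$ come, as in the proof of Propositions I.3.6--7, from tracking SP pre-images along the long exact sequences attached to the canonical short exact sequences of structure complexes (Example~\ref{ex:concat}, Proposition~\ref{prop:snakerep}), using Corollaries~\ref{cor:sp-and-m-cohom} and~\ref{cor:complexity-above-and-below} at every step. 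The conclusions (a)$_k$, (b)$_k$, (d)$_i$, (e)$_i$ are then precisely what is needed for the top structure complex of $\rmH^p_\m(W,\scrP\leq\scrQ)$ to have stably fully SP homology, which closes the induction.

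For the strictness clause: when $(\scrP\leq\scrQ)$ is strictly meek, or simply when $p\geq 1$, Theorem I.6.7 makes $\rmH^p_\m(W,\scrM)$ strictly modest, so all the homology groups involved vanish; `fully SP homology' then reduces to stable finite-complexity decompositions at every position, again delivered by Proposition~\ref{prop:Hp-of-cplx} (and in degree $\geq 1$ the effacement used in Step~2 of the proof of Proposition~\ref{prop:sp-and-m-cohom} is what forces this regardless of whether $(\scrP\leq\scrQ)$ was strictly or merely almost meek). I do not expect any new conceptual ingredient; the main obstacle will be the correct organisation of the nested inductions --- in particular, having the numbered hypotheses (a)$_i$--(c)$_i$ of Proposition~\ref{prop:Hp-of-cplx} in hand at level $i<k$ before invoking it at level $k$, i.e.\ checking that step-polynomial control propagates through the cohomology complexes~(\ref{eq:semi-fnl-cohom-cplx}) at the lower positions --- and, crucially, keeping every intermediate assertion in its \emph{stable} form (over arbitrary enlargements $Z'\geq Z$), since only the stable statements feed back into the induction.
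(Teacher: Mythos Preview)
Your high-level strategy (induct on $k$, handle restrictions by the inductive hypothesis, then feed Proposition~\ref{prop:Hp-of-cplx} the hypotheses (a)$_i$, (b)$_i$, (c)$_i$ for $i<k$ to close the top structure complex) matches the paper's, but there is a genuine gap in how you supply (b)$_i$ and (c)$_i$.

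These are statements about positions $i$ and $i+1$ of the \emph{top} structure complex of $\rmH^p_\m(W,\scrP\leq\scrQ)$; they are \emph{not} delivered by the inductive hypothesis on restrictions, which only tells you about the top structure complexes of the $\rmH^p_\m(W,(\scrP\leq\scrQ)\uhr_a)$ --- different complexes altogether. Your appeal to ``tracking SP pre-images along long exact sequences attached to the canonical short exact sequences of structure complexes'' does not bridge this: there is no short exact sequence relating the top structure complex of the full $\P$-module to those of its restrictions. The paper instead applies a separate, substantial result, Proposition~\ref{prop:internal-stuff}, to the $(Z,Y+W,\bfU)$-$\P$-module $(\scrR\leq\scrS):=\rmH^p_\m(W,\scrP\leq\scrQ)$ itself (whose nontrivial restrictions are strictly meek by the induction on $k$, for $p\geq 1$). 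That proposition is proved by its own $\prec$-induction and makes essential use of the derivation-actions via Lemma~\ref{lem:cobdry=image}: one differentiates along some $U_i'\not\leq Y'$, uses the complexity-bounded splitting of the reduced complex, and thereby pushes the problem down to $\rmH^q_\m(U_i',\scrR\leq\scrS)$ over the strictly $\prec$-smaller datum $(Z',Y'+U_i',\bfU')$. None of this mechanism appears in your sketch, and I do not see an alternative route that avoids it. Relatedly, the paper proves Theorem~\ref{thm:QP-prims} simultaneously with Theorem~\ref{thm:HpgoodSHC}, because the $\prec$-induction inside Proposition~\ref{prop:internal-stuff} explicitly consumes previous cases of Theorem~\ref{thm:QP-prims}.

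One smaller correction: in your strictness paragraph, strict modesty does not make the structural homology \emph{vanish}; it makes the group at position $\ell_0$ discrete rather than merely Lie. So ``fully SP homology'' does not reduce to finite-complexity decompositions alone --- SP representatives are still required (though admittedly easier once all targets are discrete, by Proposition~\ref{prop:complexity-above-and-below}).
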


The proof will require that we also establish the following, which is of interest in its own right.

\begin{thm}[Step-polynomial coboundary solutions]\label{thm:QP-prims}
Let $(\scrP \leq \scrQ)$ be an $\ell_0$-almost meek $(Z,Y,\bfU)$-$\P$-module, and let $W \leq Z$ be a closed subgroup.  Then the semi-functional module $(P_e \leq Q_e)$ admits stable SP relative-coboundary-solutions over $W$ for every $e\subseteq [k]$.
\end{thm}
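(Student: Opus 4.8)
The plan is to deduce Theorem~\ref{thm:QP-prims} from Proposition~\ref{prop:Hp-of-cplx}, applied to top structure complexes, by a simultaneous induction with Theorem~\ref{thm:HpgoodSHC} on the number $k$ of active indices. First I would use Lemma~\ref{cor:restriction-still-meek} to reduce to the top constituent: since every restriction $(\scrP\uhr_e\leq\scrQ\uhr_e)$ is again $\ell_0$-almost meek and has $(P_e\leq Q_e)$ as its constituent at the top of $e$, it is enough to prove the statement $S(k)$: \emph{for an $\ell_0$-almost meek $(Z,Y,\bfU)$-$\P$-module with $\bfU$ of length $k$, the module $(P_{[k]}\leq Q_{[k]})$ admits stable SP relative-coboundary-solutions over every $W\leq Z$}; applying $S(|e|)$ to $(\scrP\uhr_e\leq\scrQ\uhr_e)$ then recovers the full statement. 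The base case is Corollary~\ref{cor:sp-and-m-cohom}: when $\bfU$ is empty (or a singleton) the top structure complex is essentially $(P_{[k]}\leq Q_{[k]})$ itself, so $\ell_0$-almost meekness forces $Q_{[k]}/P_{[k]}$ to be an SP Lie $Z$-module for which the inclusion has stable SP representatives, which is precisely the hypothesis of Corollary~\ref{cor:sp-and-m-cohom}.

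For the inductive step I would fix $k\geq 1$, assume $S(i)$ for $i<k$ together with the $(k-1)$-case of Theorem~\ref{thm:HpgoodSHC}, and apply Proposition~\ref{prop:Hp-of-cplx} to the top structure complex of $(\scrP\leq\scrQ)$ — a left-bounded semi-functional $Z$-complex, co-induced from a locally SP complex over $Y+U_{[k]}$, whose position-$i$ term is a finite direct sum (over $|c|=i$) of co-inductions of the constituents $(P_c\leq Q_c)$ and whose top term is $(P_{[k]}\leq Q_{[k]})$. The two standing hypotheses of that proposition (quotient complex locally SP with $\ell_0$-almost discrete homology; complex with stably fully SP homology) are exactly the two clauses of $\ell_0$-almost meekness in Definition~\ref{dfn:meek-delta-mod}. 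Hypothesis (a)$_i$ for $i<k$ follows from $S(i)$ applied to $(\scrP\uhr_c\leq\scrQ\uhr_c)$ for each $|c|=i$, since the property is stable under co-induction by definition and survives finite direct sums because $\rmH^\ast_\sp(W,-)$, $\rmH^\ast_\m(W,-)$ and the comparison maps all commute with $\oplus$. Hypotheses (b)$_i$ and (c)$_i$ for $i<k$ concern positions $i$ and $i+1$ of the cohomology complex~(\ref{eq:semi-fnl-cohom-cplx}); these unwind — using the identification in Part~I of the homology of a structure complex with cohomologies of sub-$\P$-modules, and Lemma~\ref{lem:cplx-bdd-splitting} where cells of the complex split — into the assertion that the cohomology $\P$-modules $\rmH^p_\m\big(W,(\scrP\uhr_c\leq\scrQ\uhr_c)\big)$ with $|c|\leq k-1$ have stably fully SP homology, i.e.\ the $(k-1)$-case of Theorem~\ref{thm:HpgoodSHC}. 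Granting these, the conclusion (a)$_k$ of Proposition~\ref{prop:Hp-of-cplx} is precisely $S(k)$ for $p\geq1$, and the case $p=0$ is automatic, since $Q_{[k],\sp}\cap P_{[k]}=P_{[k],\sp}$ whenever $P_{[k]}$ and $Q_{[k]}$ lie in a common $\F(X,A)$. To keep the joint induction running, I would also read off conclusions (b)$_k$, (d)$_k$, (e)$_k$ of Proposition~\ref{prop:Hp-of-cplx}, which — with a small amount of extra bookkeeping at the lower positions of the cohomology complex, handled by part~(1) of Proposition~\ref{prop:complexity-above-and-below} as in Lemma~\ref{lem:almost-disc-and-locally-SP} — yield the $k$-case of Theorem~\ref{thm:HpgoodSHC}.

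The hard part will not be any single estimate, since Proposition~\ref{prop:Hp-of-cplx} already absorbs the genuinely delicate cohomological diagram chases. Rather, it is the bookkeeping in the inductive step: matching ``positions in a structure complex'' with ``restrictions to sub-index-sets'' so that the hypotheses (a)$_i$, (b)$_i$, (c)$_i$ really are supplied by the $(k-1)$-level of the joint induction and the argument is not circular, and verifying that the various kernels and cokernels occurring there are co-induced from discrete modules — so that Corollary~\ref{cor:complexity-above-and-below} and part~(2) of Proposition~\ref{prop:complexity-above-and-below} apply — which is a fact inherited from Propositions~I.3.6--3.7 of Part~I. All of this is routine given Sections~\ref{sec:fnl-and-semi-fnl} and~\ref{sec:semi-fun-delta}, but it is what makes the proof long rather than a one-line appeal to Proposition~\ref{prop:Hp-of-cplx}.
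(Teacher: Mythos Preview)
Your high-level architecture is exactly the paper's: Theorems~\ref{thm:HpgoodSHC} and~\ref{thm:QP-prims} are proved together by induction on $k$, the conclusion for $(P_{[k]}\leq Q_{[k]})$ is extracted from Proposition~\ref{prop:Hp-of-cplx} applied to the top structure complex, and hypothesis (a)$_i$ is supplied by the inductive hypothesis on restrictions. The base case via Corollary~\ref{cor:sp-and-m-cohom} is also right.

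There is, however, a genuine gap in how you verify hypotheses (b)$_i$ and (c)$_i$. These are properties of the \emph{top} structure complex of the full cohomology $\P$-module $\rmH^p_\m(W,\scrP\leq\scrQ)$: stable SP representatives and stable finite-complexity decompositions at positions $i$ and $i+1$ of the length-$k$ complex
\[
0\to \rmH^p_\m(W,M^{(0)})\to\cdots\to \rmH^p_\m(W,M^{(k)})\to 0.
\]
The $(k-1)$-case of Theorem~\ref{thm:HpgoodSHC} tells you only that each restriction $\rmH^p_\m(W,\scrP\uhr_c\leq\scrQ\uhr_c)$ is meek, i.e.\ that the structure complexes at every $e\subsetneq[k]$ have fully SP homology. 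But the homology of the top complex at positions $i<k$ is \emph{not} the structural homology at any proper $e$; it depends on how all the $\rmH^p_\m(W,M_e)$ for $|e|=i$ assemble under $\partial_i$ and $\partial_{i+1}$, and no splitting lemma collapses this to restrictions. There is no ``identification in Part~I'' that does what you want here.

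This bridge is precisely the content of Proposition~\ref{prop:internal-stuff}: it takes a strictly modest semi-functional $\P$-module all of whose nontrivial restrictions are strictly meek and produces finite-complexity decompositions and SP representatives in its top structure complex (at positions $<k$). Its proof is itself a substantial $\prec$-induction (Lemma~\ref{lem:cobdry=image} and the chain of Claims~1--5), which ultimately uses the derivation-actions $\t\nabla^{U'_i}$ to pass to $\rmH^\ast_\m(U'_i,\scrR\leq\scrS)$ and appeal to a strictly lower case. In the paper, the inductive hypothesis on $k$ gives meekness of all proper restrictions of $\rmH^p_\m(W,\scrP\leq\scrQ)$ for $p\geq 1$; Proposition~\ref{prop:internal-stuff} then yields exactly (b)$_i$ and (c)$_i$, after which Proposition~\ref{prop:Hp-of-cplx} can be invoked. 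Your outline will go through once you insert Proposition~\ref{prop:internal-stuff} at this point, but it cannot be skipped.
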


This provides an extension of the first part of Corollary~\ref{cor:sp-and-m-cohom} to a larger class of semi-functional modules.

Theorems~\ref{thm:HpgoodSHC} and~\ref{thm:QP-prims} will be proved together by an induction on the data $(Z,Y,\bfU)$.

In this section, we will adopt essentially the same notation as in Section I.6.  In addition to those conventions, let
\[P^{(\ell)} := \bigoplus_{|e|=\ell} P_e \quad \hbox{and} \quad Q^{(\ell)} := \bigoplus_{|e|=\ell} Q_e,\]
giving presentations $P^{(\ell)}\into Q^{(\ell)}\onto M^{(\ell)}$.

\subsection{Complexity and pure semi-functional $\P$-modules}

Some of the more substantial proofs in Section I.6 were by induction on the subgroup-data $(Z,Y,\bfU)$ directing a $\P$-module of interest.  This induction was organized using a partial order on those data.  We will meet more proofs like this below, so we quickly recall that partial order now.

Given two tuples of subgroup-data $(Z,Y,\bfU = (U_i)_{i=1}^k)$ and $(Z',Y',\bfU' = (U'_i)_{i = 1}^{k'})$, we have $(Z,Y,\bfU) \prec (Z',Y',\bfU')$ if
\begin{itemize}
\item either $k < k'$,
\item or $k=k'$, but $|\{i\leq k\,|\ Y \geq U_i\}| > |\{i\leq k'\,|\ Y' \geq U'_i\}|$.
\end{itemize}
This is easily seen to be a well-ordering, although not a total ordering.

The minimal tuples for this ordering are those of the form $(Z,Y,\ast)$, where $\ast$ denotes the empty tuple.  However, in most of our proofs by $\prec$-induction, the downwards movement in the order $\prec$ stops before one reaches such a minimal tuple. Rather, one must argue directly, without the $\prec$-inductive hypothesis, for any tuple for which $Y \geq U_i$ for all $i$.  Such a tuple is called `pure', and a $(Z,Y,\bfU)$-$\P$-module is `pure' if $(Z,Y,\bfU)$ is pure.

Proposition I.6.6 showed that pure strictly modest $\P$-modules have quite a simple structure, which made those parts of the inductive proofs very simple.  The same happens for strictly meek semi-functional $\P$-modules, as we shall show now.

\begin{prop}\label{prop:pure-struct}
If $(Y,Y,\bfU)$ is pure (thus, $Z = Y$), and $(\scrP \leq \scrQ)$ is a strictly $\ell_0$-meek semi-functional $(Y,Y,\bfU)$-$\P$-module, then the semi-functional $Y$-module $(P_e \leq Q_e)$ has discrete quotient and stable SP representatives for every $e \subseteq [k]$.

If $(Z,Y,\bfU)$ is pure, then a strictly $\ell_0$-meek semi-functional $(Z,Y,\bfU)$-$\P$-module is co-induced from a strictly $\ell_0$-meek semi-functional $(Y,Y,\bfU)$-$\P$-module.
\end{prop}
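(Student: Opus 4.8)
The plan is to mimic the proof of the analogous fact about strictly modest $\P$-modules (Proposition I.6.6), but to track at each stage that step polynomials are preserved in the required directions. First I would treat the pure case $Z=Y$ directly. Here the key structural input is that the structure complexes of $\scrQ/\scrP$ are strictly exact (by $\ell_0$-modesty together with purity, exactly as in Part I), so that for each $e$ the quotient $M_e = Q_e/P_e$ is identified with the (discrete, as in Part I) homology of a structure complex built out of the $M_c$ with $c\subsetneq e$. In fact by $\prec$-induction on $|e|$ one reduces to the case where $M_e$ is the bottom homology of its structure complex, so $M_e$ is discrete; this is the content that parallels Proposition I.6.6. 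The extra content needed here is that $(P_e\leq Q_e)$ has \emph{stable} SP representatives: but the structure complex of $(\scrP\leq\scrQ)$ at $e$ has stably fully SP homology by the definition of meekness (Definition~\ref{dfn:meek-delta-mod}), and $M_e$ is exactly the homology of this complex in some position. Having stably fully SP homology means precisely that the quotient map onto that homology has stable SP pre-images (and is stably complexity-bounded); so $(P_e\leq Q_e)$ — which after the reduction is, up to the passage through the structure complex, the inclusion $\partial_\ell(Q^{(\ell-1)}) + P_e \leq \partial_{\ell+1}^{-1}(P_{\ell+1})$ with quotient $M_e$ — inherits stable SP representatives. One must be a little careful here since $M_e$ sits at a possibly-higher position in the structure complex when $|e|$ is large, but discreteness of $M_e$ for those $e$ together with part~(2) of Proposition~\ref{prop:complexity-above-and-below} upgrades SP representatives to stable ones automatically.

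Second I would handle the general pure case $(Z,Y,\bfU)$ pure with $Z\geq Y$. Purity means $Y\geq U_i$ for every $i$, so $Y+U_e = Y$ for all $e$. The claim is that $(\scrP\leq\scrQ)$ is co-induced, i.e. $(\scrP\leq\scrQ)\cong\Cnd_Y^Z(\scrP_0\leq\scrQ_0)$ for a strictly meek semi-functional $(Y,Y,\bfU)$-$\P$-module $(\scrP_0\leq\scrQ_0)$. By definition of a semi-functional $\P$-module (Definition~\ref{dfn:semi-fnl-delta-mod}), each constituent inclusion $P_e\leq Q_e$ is already co-induced from an inclusion of sub-constituent modules over $Y+U_e = Y$; call that sub-constituent semi-functional $(Y,Y)$-module $(P_e^\circ \leq Q_e^\circ)$. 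Purity also forces the structure of $(Z,Y,\bfU)$ to be `trivial' in the direction transverse to $Y$, so the sub-constituent morphisms and derivation-actions assemble these $(P_e^\circ\leq Q_e^\circ)$ into a semi-functional $(Y,Y,\bfU)$-$\P$-module $(\scrP_0\leq\scrQ_0)$ whose co-induction to $Z$ is $(\scrP\leq\scrQ)$; this is exactly the mechanism already used in Part I (the proof of Proposition I.6.6), and stable complexity-boundedness of all the sub-constituent morphisms transports along $\Cnd_Y^Z$ by the built-in functoriality of $\Cnd_Y^Z$ on $\SFMod(Z)$ noted in Subsection~\ref{subs:behav-coind}. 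Finally, since $\Cnd_Y^Z$ takes structure complexes to structure complexes and preserves stably fully SP homology (each $H^\circ_\ell$ is an SP module, so this is immediate from the definitions in Subsection `Semi-functional complexes with fully SP homology'), and since it takes $\ell_0$-modest quotients to $\ell_0$-modest quotients (as in Part I), $(\scrP_0\leq\scrQ_0)$ is itself strictly $\ell_0$-meek, completing the proof.

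The main obstacle I anticipate is the first half: carefully threading the identification $M_e \cong$ (homology of the structure complex at $e$) through the definition of stably fully SP homology to extract stable SP representatives for $(P_e\leq Q_e)$ itself, rather than just for the abstract homology group. The subtlety is that the structure complex at $e$ has many positions, and $M_e$ occupies only one of them; one needs strict $\ell_0$-modesty to know the complex is exact elsewhere so that `the homology in position $\ell$' genuinely equals $M_e$ as a quotient of the \emph{explicit} semi-functional module $\partial_\ell(Q^{(\ell-1)})+P_e \leq \partial_{\ell+1}^{-1}(P_{\ell+1})$, and then `fully SP homology' applies verbatim. This bookkeeping is entirely parallel to Part I's proof of I.6.6 plus the SP-tracking that Lemma~\ref{lem:criterion-for-meekness} and Proposition~\ref{prop:complex-ses-good} were set up to handle, so I expect no genuinely new difficulty, only care with the indexing.
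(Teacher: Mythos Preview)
Your second paragraph (the general pure case $Z\geq Y$) is fine and matches the paper, which simply notes this is immediate from the definitions.

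The first paragraph, however, has a genuine gap. You repeatedly rely on an identification ``$M_e \cong$ (homology of the structure complex at $e$)'', and even claim that strict modesty plus purity makes the structure complex ``strictly exact'' so that this identification holds. This is not correct. In the structure complex at $e$, the module $M_e = Q_e/P_e$ sits at the \emph{top} position $|e|$, and the homology there is $M_e/\img(\partial^M_{|e|})$, not $M_e$ itself. Strict modesty does not kill this image (it only forces the homology groups to be discrete), so for $|e|>\ell_0$ there is no position at which $M_e$ appears as a homology group. Consequently ``stably fully SP homology'' does not by itself give SP representatives for $(P_e\leq Q_e)$; it only gives them for the quotient $Q_e/(\partial_{|e|}(Q^{(|e|-1)}) + P_e)$.

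The paper's fix is precisely to combine this with the induction on $|e|$ you mention, but in a concrete way rather than via an abstract identification. Given $q\in Q_e$, SP representatives at position $|e|$ (from fully SP homology) yield a decomposition $q = q_1 + \partial_{|e|}(q_2) + p$ with $q_1\in (Q_e)_\sp$, $q_2\in\bigoplus_{a\in\binom{e}{|e|-1}} Q_a$, and $p\in P_e$. Then the inductive hypothesis (stable SP representatives for each $P_a\leq Q_a$ with $|a|=|e|-1$) lets one replace $q_2$ by a step polynomial modulo $\bigoplus_a P_a$; since $\partial_{|e|}$ carries $\bigoplus_a P_a$ into $P_e$, the element $q_1 + \partial_{|e|}(q_2)$ is a step-polynomial representative of $q+P_e$. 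Discreteness of $M_e$ (from Proposition~I.6.6) then upgrades this to stable SP representatives via part~(2) of Proposition~\ref{prop:complexity-above-and-below}. Your plan has the right ingredients but the wrong assembly; the inductive step must pass through the boundary $\partial_{|e|}(q_2)$ explicitly rather than trying to make it disappear.
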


\begin{proof}
Consider first a $(Y,Y,\bfU)$-$\P$-module $(\scrP \leq \scrQ)$.

If $|e| = \ell_0$, then all the asserted properties of $P_e \leq Q_e$ are directly contained in the definition of strict meekness.

The remaining cases are handled by induction on $|e|$.  Thus, suppose $|e| \geq \ell_0 + 1$.  The fact that $Q_e/P_e$ is discrete comes from Proposition I.6.6.  Therefore it suffices to prove SP representatives, from which the stable version follows by part (2) of Proposition~\ref{prop:complexity-above-and-below}.

So now suppose that $m = q + P_e\in Q_e/P_e$.  Then strict meekness gives that $q = q_1 + \partial_{|e|}(q_2) + p$ for some $q_1 \in (Q_e)_{\rm{sp}}$, $q_2 \in \bigoplus_{a \in \binom{e}{|e|-1}}Q_a$ and $p \in P_e$.  Also, the hypothesis of the induction on $|e|$ gives that we may choose $q_2$ to be a step polynomial modulo $\bigoplus_{a \in \binom{e}{|e|-1}}P_a$.  This represents $m$ by the step polynomial $q + \partial_{|e|}(q_2)$, as required.

For general $Z \geq Y$, the assertion for a pure semi-functional $(Z,Y,\bfU)$-$\P$-module is immediate from the definitions.
\end{proof}

\subsection{An auxiliary $\prec$-induction}

The following is an analog of Proposition I.6.8, and it will play a similar r\^ole in the ensuing proofs.

\begin{prop}\label{prop:internal-stuff}
Fix $(Z',Y',\bfU')$, and assume that Theorems~\ref{thm:HpgoodSHC} and~\ref{thm:QP-prims} are known for any strictly meek $(Z_1,Y_1,\bfU_1)$-$\P$-module for which $(Z_1,Y_1,\bfU_1) \prec (Z',Y',\bfU')$.  (This assumption may be dropped in case $(Z',Y',U')$ is pure.)

Let $(\scrR \leq \scrS)$ be a strictly modest semi-functional $(Z',Y',\bfU')$-$\P$-module all of whose nontrivial restrictions are strictly meek, and let $\scrN:= \scrS/\scrR$.  Let $\partial_\ell:S^{(\ell-1)}\to S^{(\ell)}$, $\ell = 1,\ldots,k$, be the boundary morphisms of the top structure complex of $\scrS$, and let $\partial^\scrN_\ell$ be their counterparts for $\scrN$.
\begin{enumerate}
\item[(1)] The homomorphism
\[S^{(\ell)} \oplus R^{(\ell+1)} \to S^{(\ell+1)}:(f,r)\mapsto \partial_{\ell+1}f + r\]
admits finite-complexity decompositions for every $\ell$.
\item[(2)] The homomorphism
\[\partial_{\ell+1}^{-1}(R^{(\ell+1)}) \to \frac{\partial_{\ell+1}^{-1}(R^{(\ell+1)})}{\partial_\ell(S^{(\ell-1)}) + R^{(\ell)}} \cong \frac{\ker \partial^\scrN_{\ell+1}}{\img\,\partial^\scrN_\ell}\]
has SP representatives for every $\ell \leq k-1$ (recall that the target module here is co-induced from a discrete $(Y'+U'_{[k]})$-module, by the assumption of strict modest).
\end{enumerate}
\end{prop}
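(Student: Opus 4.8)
The plan is to mimic the structure of the proof of Proposition I.6.8, using the machinery of Section~\ref{sec:semi-fun-delta} in place of the Polish-module arguments there. The whole statement is a pair of claims about the top structure complex of $\scrS$ relative to that of $\scrR$, i.e.\ about the semi-functional complex $(\scrR^{(\bullet)} \leq \scrS^{(\bullet)})$, whose quotient complex is the top structure complex of $\scrN$. First I would dispose of the case where $(Z',Y',\bfU')$ is pure: here Proposition~\ref{prop:pure-struct} applies to every nontrivial restriction $(\scrR\uhr_c \leq \scrS\uhr_c)$, giving that each $P_e \leq Q_e$ (for $|e|<k$) has discrete quotient and stable SP representatives, and the strict modesty of $(\scrR \leq \scrS)$ itself handles the top position; from this one reads off both (1) and (2) directly, exactly as in the pure case of I.6.8, since the structure complex is then (up to co-induction) a complex of semi-functional modules with discrete quotients, and Proposition~\ref{prop:complexity-above-and-below}(2) upgrades SP representatives to stable ones.

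For the general case I would argue by $\prec$-induction on the data, using the hypothesis that Theorems~\ref{thm:HpgoodSHC} and~\ref{thm:QP-prims} hold for all strictly meek $\P$-modules with strictly $\prec$-smaller data. The key tool is the splitting trick via aggrandizement: for each $i \in [k]$ one restricts to the sub-tuple $[k]\setminus\{i\}$ and aggrandizes back, producing a short exact sequence of semi-functional $\P$-modules relating $(\scrR \leq \scrS)$ to $(\scrR\uhr_{[k]\setminus i} \leq \scrS\uhr_{[k]\setminus i})\llcorner_{[k]\setminus i}$ and a `Euclidean-type' complementary piece, with Lemma~\ref{lem:cplx-bdd-splitting} supplying stably complexity-bounded splitting homomorphisms. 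The restricted-and-reduced piece is strictly meek by Lemma~\ref{cor:restriction-still-meek} and is directed by $\prec$-smaller data (it has one fewer index, or at least one more index $i$ with $Y' \geq U'_i$ after one moves some $U_i$ into the `dummy' role), so the inductive hypothesis applies to it. The complementary summands in the structure complexes carry consistent stably complexity-bounded splittings, so finite-complexity decompositions and SP representatives are vacuous there; one then transports the conclusions across the short exact sequence using Proposition~\ref{prop:complex-ses-good} (for (1), which is the statement that the structure complex has stable finite-complexity decompositions) together with the homological criteria Lemmas~\ref{lem:criterion-for-finite-complex} and~\ref{lem:criterion-for-meekness}.

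For part (2) specifically, one must identify the homology group $\ker\partial^\scrN_{\ell+1}/\img\,\partial^\scrN_\ell$ and check it is co-induced from a discrete module — this is exactly the content of the strict-modesty hypothesis combined with Proposition I.6.6 — and then show the quotient map from $\partial_{\ell+1}^{-1}(R^{(\ell+1)})$ onto it admits SP pre-images. Here the route is: prove it first in the case $Z' = Y'+U'_{[k]}$ (so the target is genuinely discrete), where SP representatives follow from the $\prec$-inductive form of Theorem~\ref{thm:QP-prims} applied to the restricted complexes together with assembling the pieces along the splitting, and then promote to an arbitrary enlargement via Proposition~\ref{prop:complexity-above-and-below}(1)--(2) and Corollary~\ref{cor:complexity-above-and-below}, just as is done repeatedly in the proof of Proposition~\ref{prop:Hp-of-cplx}. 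The main obstacle I anticipate is bookkeeping the interaction between the aggrandizement splitting and the step-polynomial subdiagrams: one needs the splitting homomorphisms to restrict correctly to SP subgroups and to commute with co-induction, which is exactly what Lemma~\ref{lem:cplx-bdd-splitting} is designed to give, but verifying that the resulting short exact sequences of semi-functional complexes are \emph{stable} (so that Proposition~\ref{prop:complex-ses-good} is applicable) requires care — this is the analogue of the subtleties already flagged around Definition~\ref{dfn:stable} and Example~\ref{ex:concat}, and it is where most of the real work of the argument will sit.
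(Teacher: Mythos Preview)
Your treatment of the pure case is fine and matches the paper.  The non-pure case, however, has a genuine gap: the short exact sequence you invoke does not exist.  There is no $\P$-morphism, let alone a short exact sequence of $(Z',Y',\bfU')$-$\P$-modules, that relates $(\scrR \leq \scrS)$ to its reduction $(\scrR \leq \scrS)\llcorner_{[k]\setminus i}$ and some complementary piece.  The reduced $\P$-module has constituents $S_{e\cap([k]\setminus i)}$, and these are not sub- or quotient-modules of the $S_e$ in any functorial way; the splitting homomorphisms of Lemma~\ref{lem:cplx-bdd-splitting} split the \emph{top structure complex of the reduction itself} (which is exact because $[k]\not\subseteq [k]\setminus i$), not a comparison between the reduction and the original.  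So there is nothing for Proposition~\ref{prop:complex-ses-good} to act on.

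The paper's mechanism is genuinely different.  One fixes $i$ with $U'_i \not\leq Y'$ and \emph{differences} by $U'_i$: the derivation-action $\t\nabla^{U'_i}$ sends $f \in S^{(\ell)}$ into $\Z^1(U'_i, S^{\llc(\ell)})$, where $S^{\llc(\ell)}$ is the $\ell^{\rm th}$ module of the top structure complex of $\scrS\llcorner_{[k]\setminus i}$.  \emph{That} complex splits by Lemma~\ref{lem:cplx-bdd-splitting}, and this is what Lemma~\ref{lem:cobdry=image} exploits to write $d^{U'_i}f$ as $\partial_\ell\sigma + \tau + \gamma$ with $\tau$ a step polynomial.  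The $\prec$-descent then goes not through restriction, but through the cohomology $\P$-module $\rmH^p_\m(U'_i,\scrR \leq \scrS)$, which is a $(Z',Y'+U'_i,\bfU')$-$\P$-module---same tuple length, one more index absorbed into $Y'$---so strictly $\prec$-smaller.  The assumed previous cases of Theorems~\ref{thm:HpgoodSHC} and~\ref{thm:QP-prims} give that all proper restrictions of this cohomology $\P$-module are strictly meek, so the $\prec$-inductive instance of Proposition~\ref{prop:internal-stuff} applies to \emph{it}, and one then climbs back to $(\scrR\leq\scrS)$ by a sequence of claims that alternately use relative-coboundary-solutions (Theorem~\ref{thm:QP-prims} on proper restrictions) and Corollary~\ref{cor:complexity-above-and-below} to handle the passage from $Z' = Y'+U'_{[k]}$ to the general case.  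Your proposal never invokes the derivation-action or the cohomology $\P$-module, and without them there is no bridge between $\scrS$ and the $\prec$-smaller data.
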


\begin{rmks}
\emph{(1)}\quad Since $Z'$ may be arbitrarily larger than $Y' + U'_{[k]}$ in the above statement, it actually proves \emph{stable} SP representatives and finite-complexity decompositions.

\vspace{7pt}

\emph{(2)}\quad The proof will make essential use of the discreteness of the structural homology here, and so we cannot weaken `strictly meek' to `almost meek' in the assumptions.  This discreteness will provide one of the hypotheses for some appeals to Proposition~\ref{prop:complexity-above-and-below}. \fin
\end{rmks}

This proposition will be proved in several steps.

\begin{proof}[Proof of Proposition~\ref{prop:internal-stuff} in pure case]
This is our main application of Proposition~\ref{prop:pure-struct}.  Firstly, applying that proposition to each nontrivial restriction of $(\scrR \leq \scrS)$, we see that it is co-induced from a strictly modest semi-functional $(Y,Y,\bfU)$-$\P$-module, say $(\scrR^\circ \leq \scrS^\circ)$, all of whose restrictions are meek.

For part (1), suppose that $f \in S^{(\ell)}$ and $r \in R^{(\ell+1)}$ are such that $q := \partial_{\ell+1} f + r$ is a step polynomial. Let $\ol{f}$ be the image of $f$ in $N^{(\ell)}$. By Proposition~\ref{prop:pure-struct} and~\ref{prop:complexity-above-and-below}, the fact that $q$ is a step polynomial implies that its image
\[q + R^{(\ell+1)} = \partial^\scrN_{\ell+1}\ol{f} \in N^{(\ell+1)} = \Cnd_Y^Z N^{\circ (\ell+1)}\]
is a step-function element of $\F(Z,N^{\circ (\ell+1)})$.  The homomorphism $\partial^\scrN_{\ell+1}:N^{(\ell)}\to N^{(\ell+1)}$ is co-induced from a closed homomorphism of discrete $Y$-modules, so Corollary~\ref{cor:Lie-homos-s-p-reps} gives some $\ol{g} \in N_\sp^{(\ell)}$ such that $\partial^\scrN_{\ell+1}\ol{g} = \partial^\scrN_{\ell+1}\ol{f}$.

Now, $S^{\circ(\ell)}\onto N^{\circ (\ell)}$ is a direct sum of quotient maps that have stable SP pre-images, by Proposition~\ref{prop:pure-struct}. We may therefore lift $\ol{g}$ to some $g \in S^{(\ell)}_\sp$.  This satisfies $\partial_{\ell+1}(f-g) \in R^{(\ell+1)}$, hence $q = \partial_{\ell+1}g \mod R^{(\ell+1)}$, as required for part (1).

Finally, for part (2), it suffices to complete the proof under the additional assumption that $Y' + U'_{[k]} = Y' = Z'$, by part (2) of Proposition~\ref{prop:complexity-above-and-below}.  In that case $R^{(\ell)} \leq S^{(\ell)}$ itself is co-discrete and has SP representatives; since of course $R^{(\ell)} \leq \partial_{\ell+1}^{-1}(R^{(\ell+1)}) \leq S^{(\ell)}$, the same follows for $R^{(\ell)} \leq \partial_{\ell+1}^{-1}(R^{(\ell+1)})$.  This proves part (2).
\end{proof}

The remainder of the proof of Proposition~\ref{prop:internal-stuff} will be by $\prec$-induction.  The induction is enabled by the following relative of Lemma I.6.9.

\begin{lem}\label{lem:cobdry=image}
Let $(\scrR \leq \scrS)$ be a semi-functional $(Z',Y',\bfU')$-$\P$-module for any $(Z',Y',\bfU')$.  Let $\scrN$, $\partial_\ell$ and $\partial^\scrN_\ell$ be as in the statement of Proposition~\ref{prop:internal-stuff}, and let $i \in [k]$.

If $f \in S^{(\ell)}$ has the property that $\partial_{\ell+1}f \in S^{(\ell+1)}_\sp + R^{(\ell+1)}$, then
\[d^{U_i'}f \in \partial_\ell\big(\Z^1(U_i',S^{(\ell-1)},R^{(\ell-1)})\big) + \Z^1_\sp(U_i',S^{(\ell)},R^{(\ell)}) + \C^1(U_i',R^{(\ell)}).\]
\end{lem}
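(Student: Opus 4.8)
\textbf{Proof strategy for Lemma~\ref{lem:cobdry=image}.}

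The plan is to reduce the statement to a fact about the structure complex of the quotient $\P$-module $\scrN$, applied after a single differencing.  First I would recall that the operation $d^{U_i'}$ commutes (up to sign) with the boundary homomorphisms $\partial_\bullet$ of the top structure complex, so that $d^{U_i'}f$ lies in $\Z^1(U_i', S^{(\ell)}, R^{(\ell)})$ automatically once we know $\partial_{\ell+1}f \in S^{(\ell+1)}_\sp + R^{(\ell+1)}$: indeed $\partial_{\ell+1}(d^{U_i'}f) = \pm d^{U_i'}(\partial_{\ell+1}f)$, and differencing a step polynomial gives a step polynomial (and differencing an element of $R^{(\ell+1)}$ stays in $\C^1(U_i',R^{(\ell+1)})$).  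So the content is that, modulo $\Z^1_\sp(U_i',S^{(\ell)},R^{(\ell)}) + \C^1(U_i',R^{(\ell)})$, the relative $1$-cocycle $d^{U_i'}f$ is in the image of $\partial_\ell$ applied to $\Z^1(U_i',S^{(\ell-1)},R^{(\ell-1)})$.

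The key step is to pass to the quotient.  Write $\ol{f} \in N^{(\ell)}$ for the image of $f$, so that $\partial^\scrN_{\ell+1}\ol{f} = 0$ in $N^{(\ell+1)}$ by hypothesis (the hypothesis says $\partial_{\ell+1}f$ is a step polynomial modulo $R^{(\ell+1)}$, but we only need $\partial_{\ell+1}f \in R^{(\ell+1)}$ modulo step polynomials for the structure-complex argument --- this is exactly what Lemma I.6.9 handled in the non-semifunctional setting).  Then $d^{U_i'}\ol{f}$ is a $1$-cocycle of $U_i'$ valued in $N^{(\ell)}$ which is killed by $\partial^\scrN_{\ell+1}$, i.e. a cocycle in $\ker \partial^\scrN_{\ell+1}$.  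By Lemma I.6.9 applied to $\scrN$ (here the semi-functional structure is irrelevant; this is a statement purely about the Polish $\P$-module $\scrN$ and its structure complex after differencing by $U_i'$), there is some $g \in \C^0(U_i', N^{(\ell-1)})$ --- in fact $g$ can be taken an element of the appropriate relative cocycle group --- with $\partial^\scrN_\ell g = d^{U_i'}\ol{f}$ modulo the relevant coboundary/step-polynomial corrections.  Lifting $g$ back through the quotient map $\Z^1(U_i',S^{(\ell-1)},R^{(\ell-1)}) \onto \Z^1(U_i',N^{(\ell-1)})$ (using the Measurable Selector Theorem, exactly as in Lemma~\ref{lem:rel-cocycs}, since this quotient is surjective at the level of relative cocycles) produces $\t{g} \in \Z^1(U_i',S^{(\ell-1)},R^{(\ell-1)})$ with $\partial_\ell \t{g} - d^{U_i'}f \in \C^1(U_i',R^{(\ell)})$ modulo a step-polynomial term; absorbing that step-polynomial term into $\Z^1_\sp(U_i',S^{(\ell)},R^{(\ell)})$ and the $R^{(\ell)}$-valued term into $\C^1(U_i',R^{(\ell)})$ gives the claimed decomposition.

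The main obstacle I anticipate is bookkeeping around the step-polynomial term on the right-hand side of the hypothesis.  The hypothesis is not that $\partial_{\ell+1}f \in R^{(\ell+1)}$ but that it equals a step polynomial modulo $R^{(\ell+1)}$; this extra step polynomial must be carried through the argument and shown to contribute only to the $\Z^1_\sp$ summand of the conclusion.  Concretely, one writes $\partial_{\ell+1}f = s + r$ with $s \in S^{(\ell+1)}_\sp$ and $r \in R^{(\ell+1)}$, then $d^{U_i'}(\partial_{\ell+1}f) = d^{U_i'}s + d^{U_i'}r$, where $d^{U_i'}s$ is step-polynomial-valued and $d^{U_i'}r \in \C^1(U_i', R^{(\ell+1)})$; the structure-complex manipulations of Lemma I.6.9 then have to be performed `modulo step polynomials and modulo $R$', which is precisely the setting in which that lemma (or its proof) was designed to operate.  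The only genuinely new ingredient relative to Lemma I.6.9 is checking that the lifts through the quotient maps can be chosen to respect the relative-cocycle structure, and that is routine via the Measurable Selector Theorem.
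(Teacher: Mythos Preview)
Your approach via passing to the quotient $\scrN$ and invoking Lemma I.6.9 there has a gap that is more than bookkeeping.  The quotient $\scrN$ carries no step-polynomial structure, so Lemma I.6.9 applied to $\scrN$ would only tell you that $d^{U_i'}\ol{f} \in \partial^\scrN_\ell(\Z^1(U_i',N^{(\ell-1)}))$ provided $\partial^\scrN_{\ell+1}\ol{f} = 0$.  But the hypothesis gives only that $\partial^\scrN_{\ell+1}\ol{f}$ is the image of some step polynomial $s \in S^{(\ell+1)}_\sp$, not that it vanishes; and you cannot subtract a step-polynomial $\partial_{\ell+1}$-preimage of $s$ from $f$ without already knowing finite-complexity decompositions, which is exactly what Proposition~\ref{prop:internal-stuff} is trying to establish.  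Lifting back with the Measurable Selector Theorem only produces measurable objects, so it cannot manufacture the $\Z^1_\sp$ term in the conclusion either.

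The paper avoids the quotient entirely and works directly in the reduction $\scrS^\llc := \scrS\llc_{[k]\setminus i}$, using the derivation-lift $\t{\nabla}^{U_i'}$ (not $d^{U_i'}$) to land in the reduced structure complex.  The key point --- flagged explicitly in the proof --- is that the derivation-actions are stably complexity-bounded, so $\partial^\llc_{\ell+1}\t{\nabla}^{U_i'}f = \t{\nabla}^{U_i'}(\partial_{\ell+1}f)$ is a step polynomial modulo $\C^1(U_i',R^{\llc(\ell+1)})$.  Then Lemma~\ref{lem:cplx-bdd-splitting} supplies \emph{complexity-bounded} splitting homomorphisms for the top structure complex of $\scrR^\llc \leq \scrS^\llc$; applying these to $\t{\nabla}^{U_i'}f$ produces a decomposition $\partial^\llc_\ell\s' + \tau' + \g'$ in which $\tau'$ is step-polynomial precisely because it is the image of a step polynomial under a complexity-bounded map.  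One then pushes back to the unreduced complex via the structure morphisms $\phi_{e\setminus i,e}$, which converts $\t{\nabla}^{U_i'}$ into $d^{U_i'}$.  The ingredient missing from your sketch is exactly this complexity-bounded splitting; without it there is no mechanism that forces the middle term of the decomposition to be a step polynomial.
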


\begin{proof}
Let $(\phi_{a,e})_{a\subseteq e}$ be the structure morphisms of $\scrS$.  Abbreviate $\scrS^\llc := \scrS\llc_{[k]\setminus i}$, and let $S^{\llc(\ell)}$ and $\partial^\llc_\ell$, $0 \leq \ell \leq k$, be the modules and boundary morphisms of the top structure complex of $\scrS^\llc$.  Also, let $\t{\nabla}^{U_i'}$ denote $\t{\nabla}^{\scrS,e,e\setminus i}$ for any $e\subseteq [k]$, or any direct sum of these derivation-lifts over different $e$.

In this notation, we now have $\t{\nabla}^{U'_i}f \in \Z^1(U'_i,S^{\llc(\ell)})$, and
\[\partial_{\ell+1}^\llc \t{\nabla}^{U'_i}f = \t{\nabla}^{U_i'}\partial_{\ell+1} f \in \Z^1_\sp(U_i',S^{\llc(\ell+1)}) + \C^1(U_i',R^{\llc(\ell+1)}).\]
This is the point at which it was crucial that the derivation-actions be complexity-bounded.

By Lemma~\ref{lem:cplx-bdd-splitting}, the top structure complex of $\scrR^\llc \leq \scrS^\llc$ splits with stably complexity-bounded splitting homomorphisms.  Therefore, decomposing $\t{\nabla}^{U'_i}f$ using the splitting homomorphisms $S^{\llc(\ell+1)} \to S^{\llc(\ell)}$ and $S^{\llc(\ell)} \to S^{\llc(\ell-1)}$, one obtains
\begin{eqnarray}\label{eq:nablaU'}
\t{\nabla}^{U'_i}f = \partial^\llc_\ell\s' + \tau' + \g'
\end{eqnarray}
for some
\[\s' \in \Z^1(U'_i,S^{\llc(\ell-1)},R^{\llc(\ell-1)}), \quad  \tau' \in \Z^1_\sp(U_i',S^{\llc(\ell)},R^{\llc(\ell)}) \quad \hbox{and} \quad \g' \in \C^1(U_i',R^{\llc(\ell)}).\]

Now let
\[\phi := \bigoplus_{|e|=\ell-1}\phi_{e\setminus i,e}:S^{\llc(\ell-1)}\to S^{(\ell-1)},\]
and let $\s := \phi\circ \s'$, and similarly $\tau$ and $\g$.  Applying $\phi$ to equation~(\ref{eq:nablaU'}) gives
\begin{multline*}
d^{U'_i}f = \partial_\ell\s + \tau + \g\\
\in \partial_\ell\big(\Z^1(U_i',S^{(\ell-1)},R^{(\ell-1)})\big) + \Z^1_\sp(U_i',S^{(\ell)},R^{(\ell)}) + \C^1(U_i',R^{(\ell)}).
\end{multline*}
\end{proof}

\begin{proof}[Proof of Proposition~\ref{prop:internal-stuff}]
The proof is by $\prec$-induction.  The pure case has already been proved, so assume also that $i \in [k]$ is such that $U_i' \not\leq Y$.  For the non-pure case, the heavy lifting will be done by a sequence of auxiliary claims, which are also part of the $\prec$-induction

\vspace{7pt}

\noindent\emph{Claim 1.}\quad For any $p\geq 0$, the top structure complex of the semi-functional $\P$-module $\rmH^p_\m(U_i',\scrR \leq \scrS)$ admits stable finite-complexity decompositions, and admits stable SP representatives at all positions except the last.

\vspace{7pt}

\noindent\emph{Proof of claim.}\quad By the assumed previous cases of Theorems~\ref{thm:HpgoodSHC} and~\ref{thm:QP-prims}, all nontrivial restrictions of $\rmH^p_\m(U_i',\scrR \leq \scrS)$ are strictly meek.  Therefore $\rmH^p_\m(U_i',\scrR \leq \scrS)$ satisfies the assumptions of Proposition~\ref{prop:internal-stuff}.  Since $(Z',Y' + U_i',\bfU') \prec (Z',Y',\bfU')$, it is covered by a previous case of that proposition, of which parts (1) and (2) now give the desired conclusions. \qed$\phantom{i}_{\rm{Claim}}$

\vspace{7pt}

\noindent\emph{Claim 2.}\quad Suppose that $\ell < k$ and $(f,r) \in S^{(\ell)}\oplus R^{(\ell+1)}$ is such that $\partial_{\ell+1}f + r$ is a step polynomial.  Then there is some $f_0 \in S^{(\ell)}_\sp$ such that
\begin{multline*}
\partial_{\ell+1}f = \partial_{\ell+1}f_0 \quad \mod\Z^0(U_i',S^{(\ell+1)},R^{(\ell+1)})\\
\big(\hbox{i.e.,} \quad \partial_{\ell+1}(f - f_0) \in \Z^0(U_i',S^{(\ell+1)},R^{(\ell+1)})\big).
\end{multline*}

\vspace{7pt}

\noindent\emph{Proof of claim.}\quad Lemma~\ref{lem:cobdry=image} gives
\begin{eqnarray}\label{eq:fsigmatau}
d^{U_i'}f = \partial_\ell \s + \tau + r
\end{eqnarray}
for some $\s \in \Z^1(U_i',S^{(\ell-1)},R^{(\ell-1)})$, $\tau \in \Z_\sp^1(U_i',S^{(\ell)},R^{(\ell)})$ and $r \in \C^1(U_i',R^{(\ell)})$.

Now applying the finite-complexity decompositions given by Claim 1 with $p=1$, it follows that
\[\tau = -\partial_\ell\s_0 + d^{U_i'}f_0 - r_0,\]
where $\s_0 \in \Z^1_\sp(U_i',S^{(\ell-1)},R^{(\ell-1)})$ and
\[d^{U_i'}f_0 - r_0 \in (\B^1(U_i',S^{(\ell)},R^{(\ell)}))_\sp.\]
Since $\ell < k$, one of the assumed cases of Theorem~\ref{thm:QP-prims} gives that $R^{(\ell)} \leq S^{(\ell)}$ admits SP relative-coboundary-solutions, so we may in fact take
\[f_0 \in S^{(\ell)}_\sp \quad \hbox{and} \quad r_0 \in \C_\sp^1(U_i',R^{(\ell)})\]
above.

Now substituting the above decomposition of $\tau$ into~(\ref{eq:fsigmatau}) gives
\begin{eqnarray}\label{eq:d-f-minus-f0}
d^{U_i'}(f-f_0) = \partial_\ell(\s - \s_0) + (r - r_0),
\end{eqnarray}
and hence
\[d^{U_i'}\partial_{\ell+1}(f-f_0) \in \C^1(U_i',R^{(\ell+1)}) \quad \Longrightarrow \quad \partial_{\ell+1}(f - f_0) \in \Z^0(U_i',S^{(\ell+1)},R^{(\ell+1)}).\]
\qed$\phantom{i}_{\rm{Claim}}$

\vspace{7pt}

\noindent\emph{Claim 3.}\quad Let $\ell < k$, let $(f,r)$ be as in Claim 2, and assume in addition that $Y' + U'_{[k]} = Z'$.  Then there are some $g \in S^{(\ell-1)}$ and $f_1 \in S^{(\ell)}_\sp$ such that
\[f = \partial_\ell g + f_1 \quad \mod \Z^0(U_i',S^{(\ell)},R^{(\ell)}).\]

\vspace{7pt}

\noindent\emph{Proof of claim.}\quad Let $f_0$ be as produced for Claim 2.  The conclusion of Claim 3 is not disrupted if we replace $f$ by $f - f_0$, so we may assume that in fact $f_0 = 0$.  Similarly, we may lighten notation by replacing $\s$ by $\s - \s_0$ and $r$ by $r - r_0$.

Having done so, equation~(\ref{eq:d-f-minus-f0}) asserts that the class of $\s$ in ${\rmH^1_\m(U_i',R^{(\ell-1)}\leq S^{(\ell-1)})}$ actually lies in
\[\ker\Big(\rmH^1_\m(U_i',R^{(\ell-1)} \leq S^{(\ell-1)}) \ \stackrel{\rmH^1_\m(U_i',\partial_\ell)}{\to} \ \rmH^1_\m(U_i',R^{(\ell)} \leq S^{(\ell)})\Big).\]
Claim 1 with $p=1$ gives SP representatives at position $\ell-1$ in the top structure complex of $\rmH^1_\m(U_i',\scrR \leq \scrS)$.  Since we now assume $Y' + U'_{[k]} = Z'$, the above kernel contains the image of $\rmH^1_\m(U_i',R^{(\ell-2)} \leq S^{(\ell-2)})$ as a closed-and-open subgroup, by the strict modesty of $\rmH^1_\m(U_i',\scrN)$.  Therefore in fact all cosets of that subgroup in the above kernel contain SP representatives.  This yields a decomposition
\[\s = \s_1 + \partial_{\ell-1} \g + d^{U_i'}g + r_1,\]
where
\begin{itemize}
\item $\s_1 \in \Z^1_\sp(U_i',S^{(\ell-1)},R^{(\ell-1)})$,
\item $\g \in \Z^1(U_i',S^{(\ell-2)},R^{(\ell-2)})$,
\item $g \in S^{(\ell-1)}$, and
\item $r_1 \in \C^1(U_i',R^{(\ell-1)})$.
\end{itemize}

We may now replace $f$ with $f - \partial_\ell g$ without disrupting the desired conclusion, and so assume $g = 0$.  Having done so, substituting the above decomposition of $\s$ into the formula for $d^{U_i'}f$ gives
\begin{eqnarray*}
d^{U_i'}f &=& \partial_\ell\s_1 + (\partial_\ell r_1 + r)\\
&=& \partial_\ell\s_1 \mod \C^1(U_i',R^{(\ell)}).
\end{eqnarray*}
Thus, the step-polynomial $(S^{(\ell)},R^{(\ell)})$-relative cocycle $\partial_\ell\s_1$ is actually a relative coboundary.  Therefore, by an assumed case of Theorem~\ref{thm:QP-prims}, there is some $f_1 \in S^{(\ell)}_\sp$ such that
\begin{multline*}
\partial_\ell\s_1 = d^{U_i'}f_1 \mod \C^1(U_i',R^{(\ell)})\\ \Longrightarrow \quad d^{U_i'}(f - f_1) \ \hbox{is}\ R^{(\ell)}\hbox{-valued}\\
\Longrightarrow \quad f = f_1 \quad \mod \Z^0(U_i',S^{(\ell)},R^{(\ell)}).
\end{multline*}
\qed$\phantom{i}_{\rm{Claim}}$

\vspace{7pt}

\noindent\emph{Claim 4.}\quad Let $\ell < k$, let $(f,r)$ be as in Claim 3, but consider again general $Y'$, $\bfU'$ and $Z'$.  Then there is some $f_1 \in S_\sp^{(\ell)}$ such that
\[\partial_{\ell+1}f = \partial_{\ell+1} f_1 \quad \mod \big(\partial_{\ell+1}\big(\Z^0(U_i',S^{(\ell)},R^{(\ell)})\big) + R^{(\ell+1)}\big).\]

\vspace{7pt}

\noindent\emph{Proof of claim.}\quad Let
\[P := \partial_{\ell+1}(\Z^0(U_i',S^{(\ell)},R^{(\ell)})) + R^{(\ell+1)}.\]

First, Claim 2 gives some $f_0 \in S^{(\ell)}_\sp$ such that
\[\partial_{\ell+1}f = \partial_{\ell+1} f_0 \quad \mod \Z^0(U_i',S^{(\ell+1)},R^{(\ell+1)}).\]
Replacing $f$ by $f - f_0$, we may assume that $\partial_{\ell+1}f$ itself lies in $\Z^0(U_i',S^{(\ell+1)},R^{(\ell+1)})$.  Also, since it is a $\partial_{\ell+1}$-image, the function $h := \partial_{\ell+1}f + r$ actually lies in
\[Q := \Z^0(U_i',S^{(\ell+1)},R^{(\ell+1)})\cap \partial_{\ell+2}^{-1}(R^{(\ell+2)}).\]

Now consider the diagram
\begin{center}
$\phantom{i}$\xymatrix{
& Q \ar^\psi[d]\\
S^{(\ell)} \supseteq \partial_{\ell+1}^{-1}(Q) \ar^-\phi[r] & Q/P,
}
\end{center}
where $\psi$ is the quotient homomorphism and $\phi$ is the composition of $\partial_{\ell+1}$ with that quotient homomorphism.  Given the reductions already made above, we have elements $f \in \partial_{\ell+1}^{-1}(Q)$ and $h \in Q_\sp$ that have the same image in $Q/P$, and Claim 4 will be proved if we find some $f_1 \in (\partial_{\ell+1}^{-1}(Q))_\sp$ which also has that same image.

Now recall that for general $Y'$, $\bfU'$ and $Z'$, $(\scrR \leq\scrS)$ is the co-induction from $Y' + U'_{[k]}$ to $Z'$ of its lean version.  Therefore the same holds for the above diagram.  Let that lean version be $(\scrR_0 \leq \scrS_0)$, and let $P_0$ and $Q_0$ be the analogs of $P$ and $Q$ for the lean version.  Since we assume $(\scrR \leq \scrS)$ is strictly modest, the same holds for $(\scrR_0 \leq \scrS_0)$.  Since the quotient $Q_0/P_0$ is isomorphic to the structural homology of $\rmH^0_\m(U_i',\scrS_0/\scrR_0)$ at position $\ell$ in the top structure complex, Theorem I.6.7 now gives that $Q_0/P_0$ is discrete.  Therefore the desired conclusion about the above diagram will follow in the general case $Z' \geq Y' + U'_{[k]}$ if we prove it only in the lean case, by Corollary~\ref{cor:complexity-above-and-below}.

Finally, with that leanness assumption, suppose that $f \in \partial_{\ell+1}^{-1}(Q)$ is such that $\partial_{\ell+1}f$ agrees with $h \in Q_\sp$ modulo $P$.  Then we have
\[\partial_{\ell+1}f = h + \partial_{\ell+1}\g + r\]
for some $\g \in \Z^0(U_i',S^{(\ell)},R^{(\ell)})$ and $r \in R^{(\ell+1)}$, by the definition of $P$.  By applying Claim 3 to the pair $(f - \g,r)$ (as we may now that we assume $Z' = Y' + U'_{[k]}$), we obtain $g \in S^{(\ell-1)}$ and $f_1 \in S^{(\ell)}_\sp$ such that $f - \g = \partial_\ell g + f_1$ modulo $\Z^0(U_i',S^{(\ell)},R^{(\ell)})$, and hence
\begin{multline*}
\partial_{\ell+1} f = \partial_{\ell+1}f_1 + \partial_{\ell+1} \g = \partial_{\ell+1}f_1 \quad \mod \partial_{\ell+1}(\Z^0(U_i',S^{(\ell)},R^{(\ell)}))\\
\Longrightarrow \quad \partial_{\ell+1}f = \partial_{\ell+1}f_1 \quad \mod P.
\end{multline*}
This of course guarantees that $f_1 \in (\partial_{\ell+1}^{-1}(Q))_\sp$, so this completes the proof. \qed$\phantom{i}_{\rm{Claim}}$

\vspace{7pt}

\emph{Proof of part (1) of Proposition~\ref{prop:internal-stuff}.}\quad Suppose that $(f,r) \in S^{(\ell)}\oplus R^{(\ell+1)}$ is such that $g := \partial_{\ell+1}f + r$ is a step polynomial.  We must show that there are step-polynomials $(f',r')$ with the same image.

Claim 4 gives some $f_1 \in S^{(\ell)}_\sp$ such that
\[\partial_{\ell+1}(f - f_1) = \partial_{\ell+1}f_2 \quad \mod R^{(\ell+1)}\]
for some $f_2 \in \Z^0(U_i',S^{(\ell)},R^{(\ell)})$.  Since $f_1$ is a step polynomial, it therefore suffices to complete the proof with $f_2$ in place of $f$.  However, at this point, our task is completed by an inductive appeal to part (1) of Proposition~\ref{prop:internal-stuff} for $\rmH^0_\m(U_i',\scrR \leq \scrS)$ (the case $p=0$ in Claim 1).  This completes the proof of part (1).

\vspace{7pt}

\noindent\emph{Claim 5.}\quad If $Y' + U'_{[k]} = Z'$ and $f \in \partial_{\ell+1}^{-1}(R^{(\ell+1)})$, then there are $g \in S^{(\ell-1)}$ and $f_2 \in S^{(\ell)}_\sp \cap \partial_{\ell+1}^{-1}(R^{(\ell+1)})$ such that
\[f - \partial_\ell g - f_2 \in \Z^0(U_i',S^{(\ell)},R^{(\ell)}).\]

\vspace{7pt}

\noindent\emph{Proof of claim.}\quad Let $g \in S^{(\ell-1)}$ and $f_1 \in S^{(\ell)}_\sp$ be as provided by Claim 3.  Since $\partial_{\ell+1}\partial_\ell = 0$, we still have $f - \partial_\ell g\in \partial_{\ell+1}^{-1}(R^{(\ell+1})$.  We may therefore replace $f$ with $f - \partial_\ell g$ without disrupting the desired conclusion, and so simply assume that $f - f_1 \in \Z^0(U_i',S^{(\ell)},R^{(\ell)})$.

Having assumed this, we will show that $f_1$ can be modified to some $f_2\in S^{(\ell)}_\sp \cap \partial_{\ell+1}^{-1}(R^{(\ell+1)})$ which still satisfies $f - f_2 \in \Z^0(U_i',S^{(\ell)},R^{(\ell)})$.

Since $\partial_{\ell+1} f \in R^{(\ell+1)}$, we have
\[\partial_{\ell+1} (f - f_1) = \partial_{\ell+1} (-f_1)  \mod R^{(\ell+1)}\]
Since $\partial_{\ell+1}f_1$ is a step polynomial, this implies that
\[\partial_{\ell+1} (f - f_1) \in \Z_\sp^0(U_i',S^{(\ell+1)},R^{(\ell+1)}) + R^{(\ell+1)}.\]
Claim 1 with $p=0$ gives finite-complexity decompositions in the top structure complex of $\rmH^0_\m(U_i',\scrR \leq \scrS)$, so it follows that there is some $f_1' \in \Z_\sp^0(U_i',S^{(\ell)},R^{(\ell)})$ such that
\[\partial_{\ell+1} (f-f_1) = \partial_{\ell+1} f_1' \mod R^{(\ell+1)}.\]
Now $f_2 := f_1 + f_1' \in S^{(\ell)}_\sp$ satisfies
\[\partial_{\ell+1}f_2 = \partial_{\ell+1}f = 0 \mod R^{(\ell+1)},\]
and of course we still have $f - f_2 \in \Z^0(U_i',S^{(\ell)},R^{(\ell)})$, so this $f_2$ satisfies our requirements.  \qed$\phantom{i}_{\rm{Claim}}$

\vspace{7pt}

\emph{Proof of part (2) of Proposition~\ref{prop:internal-stuff}.}\quad The target quotient module here is
\[\frac{\partial_{\ell+1}^{-1}(R^{(\ell+1)})}{\partial_\ell(S^{(\ell-1)}) + R^{(\ell)}} \stackrel{\cong}{\to} \frac{\ker \partial^\scrN_{\ell+1}}{\img \partial^\scrN_\ell}.\]
Since $(\scrR \leq \scrS)$ is strictly modest, all modules here are co-induced over $Y' + U'_{[k]}$, and this quotient in particular is co-induced from a discrete $(Y' + U'_{[k]})$-module.  Therefore, by part (2) of Proposition~\ref{prop:complexity-above-and-below}, stable SP representatives will follow if we prove only SP representatives in the case of lean data: $Y' + U'_{[k]} = Z'$.  Assume this for the rest of the proof.

Now suppose that $f \in \partial_{\ell+1}^{-1}(R^{(\ell+1)})$.  We must decompose $f$ into a step-polynomial and a member of $\partial_\ell(S^{(\ell-1)}) + R^{(\ell)}$.  By Claim 5, there are some $g \in S^{(\ell-1)}$ and $f_2 \in S_\rm{sp}^{(\ell)}\cap \partial_{\ell+1}^{-1}(R^{(\ell+1)})$ such that
\[f - \partial_\ell g - f_2 \in \Z^0(U_i',S^{(\ell)},R^{(\ell)}).\]
Let $f_3 := f - \partial_\ell g - f_2$. Since $f_2$ is a step polynomial, and each term in $f_3$ is an element of $\partial_{\ell+1}^{-1}(R^{(\ell+1)})$, it now suffices to find an SP representative for $f_3$ modulo $\partial_\ell(S^{(\ell-1)}) + R^{(\ell)}$, instead of $f$.  However, that now follows from the $\prec$-inductive hypothesis: this time, the previous case of part (2) of Proposition~\ref{prop:internal-stuff} applied to $\rmH^0_\m(U_i',\scrR \leq \scrS)$ (the case $p=0$ in Claim 1).
\end{proof}

\subsection{Completed analysis of cohomology $\P$-modules}

\begin{proof}[Completed proof of Theorems~\ref{thm:HpgoodSHC} and~\ref{thm:QP-prims}]
These are proved together by an induction on $k$, the size of the tuple $\bfU$ of acting subgroups.  The base clause $k=0$ will be a special case of the recursion clause with vacuous inductive hypothesis, so we do not explain it separately.

Thus, suppose that $\scrP \leq \scrQ$ satisfies the assumptions of Theorems~\ref{thm:HpgoodSHC} and~\ref{thm:QP-prims}, and that those theorems are already known for any strictly smaller tuple of acting groups (if such exist).

First, this inductive hypothesis immediately gives the desired conclusions for any proper restriction of $\scrP \leq \scrQ$.  It therefore remains to prove that the top structure complex of $\rmH^p_\m(W,\scrP\leq \scrQ)$ is co-induced over $(W+Y + U_{[k]})$ from a locally SP complex which has stably fully SP homology, and that $(P_k \leq Q_k)$ admits stable SP relative-coboundary-solutions over $W$.

Next, knowing the desired conclusions for the proper restrictions of $\scrP \leq \scrQ$, this already implies that $(\scrR \leq \scrS) := {\rmH^p_\m(W,\scrP\leq \scrQ)}$ satisfies the hypotheses of Proposition~\ref{prop:internal-stuff} for any $p\geq 1$ (not yet $p=0$, since that proposition required \emph{strict} meekness of proper restrictions).  In this case ${\rmH^p_\m(W,\scrP\leq \scrQ)}$ is strictly modest, hence automatically also SP-modest, and so that proposition gives that it has stable finite-complexity decompositions in its top structure complex, and stably SP-represented homology at all positions except the last.

Now let $Z_1 := W+Y+U_{[k]}$.  Then $(\scrP\leq \scrQ)$ is equal to $\Cnd_{Z_1}^Z(\scrP^\circ \leq \scrQ^\circ)$ for some semi-functional $(Z_1,Y,\bfU)$-$\P$-module $(\scrP^\circ \leq \scrQ^\circ)$.  This $(\scrP^\circ \leq \scrQ^\circ)$ has all the same sub-constituents and same lean version as $(\scrP \leq \scrQ)$, so it also satisfies the hypotheses of Theorems~\ref{thm:HpgoodSHC} and~\ref{thm:QP-prims}.  Therefore the reasoning above also applies to ${\rmH^p_\m(W,\scrP^\circ\leq \scrQ^\circ)}$: it too has stable finite-complexity decompositions in its top structure complex, and stably SP-represented homology at all positions except the last.

Finally, this last conclusion provides the hypotheses needed to apply Proposition~\ref{prop:Hp-of-cplx} to the top structure complex of $(\scrP \leq \scrQ)$, which is likewise co-induced from that of $(\scrP^\circ \leq \scrQ^\circ)$.  That Proposition gives all the remaining desired conclusions (including when $p=0$).
\end{proof}

\section{Partial difference equations and zero-sum tuples}\label{sec:PDceE}

In Part I, {\PDE}-solution $\P$-modules and zero-sum $\P$-modules were shown to be $1$-almost and $2$-almost modest, respectively, and this implied the main structural results of that paper.  This was proved by showing how a general {\PDE}-solution $\P$-module, resp. zero-sum $\P$-module, can be constructed out of simpler $\P$-modules using cohomology and short exact sequences: since all of those preserve the desired almost modest structure, the result followed.

We will now prove Theorems A and B along similar lines, using the general results of the preceding sections about the preservation of almost meekness.

\begin{proof}[Proof of Theorem A]
Proposition~\ref{prop:full-rot-char} implies that if $A$ is compact-by-discrete, then it is SP; this is the key assumption for the proof.

Fix $Z$ and $\bfU = (U_1,\ldots,U_k)$, and let us recall the description of the solution $\P$-module for the resulting {\PDE} given in Subsection I.7.1.  For each $j \in \{0,1,\ldots,k\}$, let $\bfU^{(j)} = (U_\ell^{(j)})_{\ell=1}^k$ be the subgroup tuple defined by
\[U_\ell^{(j)} := \left\{\begin{array}{ll}U_\ell &\quad \hbox{if}\ \ell \leq j\\ \{0\} &\quad \hbox{if}\ \ell > j,\end{array}\right.\]
and let $\scrM^j$ be the solution $\P$-module of the {\PDE} directed by $\bfU^{(j)}$.  In particular, $\scrM^k$ is the $\P$-module we wish to analyze.

It follows from this definition that
\[M^0_\emptyset = 0 \quad \hbox{and}\quad M^0_e = \F(Z,A) \quad \hbox{whenever}\ e \neq \emptyset.\]
This is easily seen to be both $1$-almost SP-modest and $1$-almost meek as a $(Z,0,\bfU^{(0)})$-$\P$-module: indeed, its homology is equal to $\F(Z,A)$ in position $(e,1)$ whenever $e \neq \emptyset$, and vanishes at all other positions. (This meekness assertion is using the fact that $A$ is SP)

Also, let $\scrM^j_\llc := \scrM^j_{\llc[k]\setminus \{j+1\}}$ for each $j$.  In Subsection I.7.1, it was next shown that all these solution $\P$-modules $\scrM^j$ and their reductions $\scrM^j_\llc$ are related as follows.  For each $j$, if $\scrM^j$ is $1$-almost modest, then the $(Z,0,\bfU^{(j)})$-$\P$-module $\scrM^j_\llc$ and the $(Z,U_{j+1},\bfU^{(j)})$-$\P$-module $(\scrM^j/\scrM^j_\llc)^{U_{j+1}}$ are both also $(Z,0,\bfU^{(j+1)})$-$\P$-modules, and are still $1$-almost modest after this re-interpretation.  Interpreting $(\scrM^j/\scrM^j_\llc)^{U_{j+1}}$ as the semi-functional $\P$-module $\rmH^0_\m(U_{j+1},\scrM^j_\llc \leq \scrM^j)$, the same reasoning works for $1$-almost SP-modesty and $1$-almost meekness.

Then, for each $j \in \{0,1,\ldots,k-1\}$, one obtains a short exact sequence of $(Z,0,\bfU^{(j+1)})$-$\P$-modules:
\[\scrM_\llc^j\into \scrM^{j+1}\onto \rmH^0_\m(U_{j+1},\scrM^j_\llc \leq \scrM^j).\]
For each $e\subseteq [k]$, this short exact sequence arises from the concatenation
\[0 \leq (M^j_\llc)_e \leq M^{j+1}_e = \Z^0(U_{j+1},M^j_e,M^j_{\llc e}),\]
and so it is actually stably exact when regarded as a sequence of semi-functional $\P$-modules.  We know from Subsection I.7.1 that all three of these $\P$-modules are $1$-almost modest, and an easy check shows that they are actually all $1$-almost SP-modest.  This is because when $|e| = 1$, the modules $M^j_e$ and $(M^j_e/M^j_{\llc e})^{U_{j+1}}$ are functional, not just semi-functional: they always equal either $M^j_e$ (functional) or $0$.  Locally SP homology follows easily from this using Lemma~\ref{lem:almost-disc-and-locally-SP}.

Since we have seen that $\scrM^0$ is a $1$-almost meek $(Z,0,\bfU^{(0)})$-$\P$-module, it now follows by induction on $j$, using Corollary~\ref{cor:restriction-still-meek}, Proposition~\ref{prop:snakerep}, and Theorem~\ref{thm:HpgoodSHC}, that $\scrM^j$, $\scrM^j_\llc$ and then $\rmH^0_\m(U_{j+1},\scrM^j_\llc \leq \scrM^j)$ are $1$-almost meek for every $j$.  Once we reach $j=k$, this includes the conclusion we wanted.
\end{proof}

\begin{proof}[Proof of Theorem B]
Once again, it suffices to assume $A$ is a SP module. Let $\bfU^{(j)}$ for $j \in \{0,1,\ldots,k\}$ be as before, and now for each $j$ let $\scrN^j$ be the zero-sum $\P$-module associated to $(Z,\bfU^{(j)})$.  In this setting, Section I.7.3 showed that $\scrN^0$ is $2$-almost modest, and that these $\scrN^j$ are related to one another in just the same way as the {\PDE}-solution modules $\scrM^j$ in the proof of Theorem A.  Since $\scrN^0$ is also easily seen to be $2$-almost meek, the rest of the proof is now just like the proof of Theorem A.
\end{proof}

With Theorem A in hand, the following is an easy corollary.  The analogous corollary for Theorem B can be proved along exactly the same lines.

\begin{cor}\label{cor:sp-solns-dense}
If $\scrM$ is the {\PDE}-solution $\P$-module corresponding to $Z$ and $\bfU$, so that $M_{[k]}$ is the module of solutions, then $(M_{[k]})_{\rm{sp}}$ is dense in $M_{[k]}$.
\end{cor}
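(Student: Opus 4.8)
The plan is to reduce the statement, by induction on the cardinality of the index set, to the first conclusion of Theorem~A, using the complexity-boundedness of the structural boundary morphisms together with the elementary observation that the closure of a subgroup is a subgroup. Recall from the proof of Theorem~A that $\scrM$ is $1$-almost meek; hence, by Lemma~\ref{cor:restriction-still-meek}, so is every restriction $\scrM\uhr_e$.

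I would prove by induction on $|e|$ that $(M_e)_{\rm{sp}}$ is dense in $M_e$ for every $e\subseteq[k]$; the corollary is the case $e=[k]$. The base case $e=\emptyset$ is trivial, since $M_\emptyset=0$ (as is visible from the construction of the {\PDE}-solution $\P$-module in Part~I). For the inductive step, let $\partial\colon M_e^{(|e|-1)}:=\bigoplus_{c\in\binom{e}{|e|-1}}M_c\to M_e$ be the top boundary morphism of the structure complex of $\scrM\uhr_e$, and note two facts. First, $M_e=(M_e)_{\rm{sp}}+\partial\big(M_e^{(|e|-1)}\big)$: for $e=[k]$ this is exactly the first conclusion of Theorem~A (every coset of $\partial_k(M^{(k-1)})$ in $M_{[k]}$ contains a step-polynomial representative), while for general $e$ it is the content of the stable SP-representatives clause of $1$-almost meekness of $\scrM\uhr_e$ at the top position of its structure complex — the structural homology there being an SP Lie module, all of whose elements count as step polynomials. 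Second, $\partial$ is continuous and complexity-bounded, being built from the structure morphisms of the functional $\P$-module $\scrM$, which are stably complexity-bounded by definition; and since every $c$ occurring in $M_e^{(|e|-1)}$ is a proper subset of $e$, the inductive hypothesis makes $(M_c)_{\rm{sp}}$ dense in $M_c$, hence $\big(M_e^{(|e|-1)}\big)_{\rm{sp}}$ dense in $M_e^{(|e|-1)}$ (density in a finite direct sum of functional modules is tested summand by summand). Therefore $\partial\big(M_e^{(|e|-1)}\big)$ lies in the closure of $\partial\big((M_e^{(|e|-1)})_{\rm{sp}}\big)\subseteq(M_e)_{\rm{sp}}$.

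Combining the two facts completes the induction: $(M_e)_{\rm{sp}}$ is a subgroup of $M_e$ (it is the intersection of $M_e$ with a subgroup of the ambient $\F(X,A)$), so its closure $\overline{(M_e)_{\rm{sp}}}$ is a closed subgroup, and it contains $\partial\big(M_e^{(|e|-1)}\big)$ by the second fact; hence the first fact gives $M_e\subseteq(M_e)_{\rm{sp}}+\partial\big(M_e^{(|e|-1)}\big)\subseteq\overline{(M_e)_{\rm{sp}}}+\overline{(M_e)_{\rm{sp}}}=\overline{(M_e)_{\rm{sp}}}$. I expect the one point deserving real care to be the bookkeeping of base and fibre under restriction of $\P$-modules: one must verify that the SP subgroup $(M_e)_{\rm{sp}}$ figuring in Theorem~A, the one to which complexity-boundedness of $\partial$ refers, and the one governed by the meekness of $\scrM\uhr_e$ via Lemma~\ref{cor:restriction-still-meek}, are literally the same, which is a matter of unwinding the definitions of Section~\ref{sec:semi-fun-delta} together with the description of the {\PDE}-solution $\P$-module from Part~I. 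Note that, unlike the heuristic sketched after Theorem~C, this argument does not use the relative openness of $\partial_k(M^{(k-1)})$; a self-contained seed for the induction is also available from Lemma~\ref{lem:q-p-fine} (step polynomials are dense in $\F(Z,A)$), though it is not needed here.
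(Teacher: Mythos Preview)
Your argument has a genuine gap at the ``first fact''.  You claim that
\[
M_e \;=\; (M_e)_{\rm{sp}} + \partial\big(M_e^{(|e|-1)}\big)
\]
for every $e\subseteq[k]$, justifying this by saying that the top structural homology of $\scrM\uhr_e$ is ``an SP Lie module, all of whose elements count as step polynomials''.  That is not what meekness gives.  The structure complex of $\scrM\uhr_e$ is co-induced over $U_e$; its top homology $H_{|e|}$ equals $\Cnd_{U_e}^Z H^\circ_{|e|}$ for an SP Lie $U_e$-module $H^\circ_{|e|}$, and ``SP representatives'' in Definition~\ref{dfn:s-p-reps-cplx} only promises that every \emph{step-polynomial} element of $H_{|e|}$ (relative to base $Z$ and fibre $H^\circ_{|e|}$) lifts to $(M_e)_{\rm{sp}}$.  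When $U_e\neq Z$ (which is typical for proper $e$, even under the hypothesis $U_{[k]}=Z$ of Theorem~A), not every element of $H_{|e|}$ is a step polynomial.  Concretely, at the first inductive step $|e|=1$ your claim reads $M_{\{i\}}=(M_{\{i\}})_{\rm{sp}}$, i.e.\ every $U_i$-invariant measurable function on $Z$ is a step polynomial, which is false whenever $Z/U_i$ is infinite.

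The repair is exactly the ingredient you set aside at the end: one needs that $(H_{|e|})_{\rm{sp}}$ is \emph{dense} in $H_{|e|}$ (essentially Lemma~\ref{lem:q-p-fine} applied on $Z/U_e$).  Combined with SP representatives, this gives that the image of $(M_e)_{\rm{sp}}$ in $H_{|e|}$ is dense, hence $(M_e)_{\rm{sp}}+\partial\big(M_e^{(|e|-1)}\big)$ is dense in $M_e$ by openness of the quotient map; together with your second fact and the closed-subgroup observation, the induction then goes through.  The paper's proof confronts the same non-leanness issue, but resolves it by hand: it applies Theorem~A coset-wise on $U_{[k]}$ to produce an $f_0$ that is a step polynomial on each coset of $U_{[k]}$, then uses the splitting $Z\cong(Z/U_{[k]})\times U_{[k]}$ of Corollary~\ref{cor:fund-split} and an explicit approximation of $f_0$ by functions constant on a fine QP partition of $Z/U_{[k]}$.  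Once patched, your route is arguably tidier; but as written, the first fact fails and the induction does not start.
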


\begin{proof}
We prove this by induction on $k$.  When $k=1$, the solutions are just measurable functions lifted from $Z/U_1$, which may of course be approximated by lifts of step polynomials on $Z/U_1$.  So now suppose the result is known for all {\PDE}s of degree less than some $k\geq 2$, and that $f \in M_{[k]}$.  By Theorem A, there is a decomposition
\[f = f_0 + \sum_{i=1}^k f_i,\]
where $f_i \in M_{[k]\setminus i}$ for $i=1,2,\ldots,k$, and where the restriction $f_0|(z + U_{[k]})$ is a step polynomial for each coset $z + U_{[k]}$.

Each $f_i$ may be approximated by a step polynomial solution to its respective simpler {\PDE}, by the hypothesis of our induction on $k$, so it suffices to find a step-polynomial approximation to $f_0$.

To do this, let $V:= Z/U_{[k]}$, and let $\psi:V\times U_{[k]}\to Z$ be a $U_{[k]}$-equivariant bijection as given by Corollary~\ref{cor:fund-split}.  By considering $f_0 \circ \psi$ instead of $f_0$, it suffices to find a step polynomial approximation to $f_0$ in the special case $Z = V\times U_{[k]}$, so we now assume this.

With this assumption, some routine measure theory gives that if $\frP$ is a QP partition of $V$ into cells of sufficiently small diameter (for some choice of compact group metric on $V$), then $f_0$ may be approximated in probability by a function $f_1$ such that
\[f_1(v,u) = f_C(u) \quad \forall (v,u) \in C \times U_{[k]},\]
where $f_C$ is a step polynomial solution to our {\PDE} on $U_{[k]}$ itself for each $C \in \frP$.  The right-hand side clearly defines a step-polynomial element of $M_{[k]}$, so this completes the proof.
\end{proof}

\section{Steps towards quantitative bounds}\label{sec:quantitative}

Our next goal is Theorem C, the quantitative extension of Theorem A.  Before proving it, we must introduce the notion of `complexity' for a step polynomial, and then prove some of its basic properties.  This notion is not canonical, and other possibilities will be discussed following the proofs, but it is essentially the only notion for which I can prove a version of Theorem C.

\subsection{Complexity and its basic consequences}\label{subs:intro-cplxty}

Recall from Subsection~\ref{subs:step-poly} that a function $f:Z\to \bbT$ is a step polynomial if and only if $\{f\}:Z\to \bbR$ is a step polynomial, and that any step polynomial $[0,1)^d\to \bbR$ is a sum of basic step polynomials, one for each cell of some convex polytopal partition of $[0,1)^d$ that controls it.

\begin{dfn}[Complexity]\label{dfn:cplxty}
Fix $D \in \bbN$. If $C\subseteq [0,1)^d$ is a convex polytope, then it has \textbf{complexity at most $D$ relative to $[0,1)^d$} if $C = [0,1)^d\cap H_1\cap\cdots \cap H_D$ for some open or closed half-spaces $H_i \subseteq \bbR^d$, $i = 1,2,\ldots,D$.

A basic step polynomial $f:[0,1)^d\to \bbR$ has \textbf{basic complexity at most $D$} if
\begin{itemize}
\item $d \leq D$, and
\item $f$ may be represented as $p\cdot 1_C$, where $p:\bbR^d\to \bbR$ is a polynomial of degree at most $D$ and with absolute values of all coefficients bounded by $D$, and $C \subseteq [0,1)^d$ is a convex polytope of complexity at most $D$ relative to $[0,1)^d$.
\end{itemize}

A step polynomial $f:[0,1)^d\to \bbR$ has \textbf{complexity at most $D$} if it is a sum of at most $D$ basic step polynomials, all of basic complexity at most $D$.  The least $D$ for which this holds is denoted $\cplx(f)$.

Finally, if $Z$ is a compact Abelian group, then a function $f:Z\to \bbT$ has \textbf{complexity at most $D$} if $\{f\} = f_0\circ \{\chi\}$ for some affine map $\chi:Z\to \bbT^d$ and some $f_0:[0,1)^d\to \bbR$ of complexity at most $D$.  The least $D$ for which there is such a factorization is denoted $\cplx(f)$.

In either setting, if $f$ is not a step polynomial then we set $\cplx(f) := \infty$.
\end{dfn}

Our first estimate for this notion is obvious from the definition.

\begin{lem}\label{lem:sum-cplx-bdd}
If $g_0,h_0:[0,1)^d\to \bbR$ are step polynomials, then $\cplx(g_0 + h_0) \leq \cplx(g_0) + \cplx(h_0)$. \qed
\end{lem}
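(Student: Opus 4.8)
This statement is genuinely immediate from the definitions, so the "proof" is really just an unpacking of Definition~\ref{dfn:cplxty}, and the plan below reflects that.

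\begin{proof}[Proof plan]
The plan is simply to expand both sides using Definition~\ref{dfn:cplxty}. If either $\cplx(g_0) = \infty$ or $\cplx(h_0) = \infty$, the inequality is trivial, so assume both are finite, say $\cplx(g_0) = D_1$ and $\cplx(h_0) = D_2$. First I would write $g_0$ as a sum of at most $D_1$ basic step polynomials, each of basic complexity at most $D_1$, and $h_0$ as a sum of at most $D_2$ basic step polynomials, each of basic complexity at most $D_2$.

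The key observation is that "basic complexity at most $D$" is monotone in $D$: a basic step polynomial of basic complexity at most $D_i$ also has basic complexity at most $D_1 + D_2$, since the conditions $d \leq D$, $\deg p \leq D$, coefficients bounded by $D$, and $C$ of complexity at most $D$ relative to $[0,1)^d$ are all preserved when $D$ is increased. (For the half-space count defining the complexity of $C$, one may repeat a defining half-space if necessary to reach exactly $D_1 + D_2$ of them, or simply note that fewer half-spaces is allowed by the phrasing "at most $D$".) Concatenating the two lists of basic step polynomials therefore exhibits $g_0 + h_0$ as a sum of at most $D_1 + D_2$ basic step polynomials, each of basic complexity at most $D_1 + D_2$. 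By the definition of $\cplx$, this gives $\cplx(g_0 + h_0) \leq D_1 + D_2 = \cplx(g_0) + \cplx(h_0)$.

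There is no real obstacle here; the only thing to be slightly careful about is the monotonicity of the basic-complexity bound in $D$, which is the one place where one uses that all four numerical parameters in the definition of basic complexity are governed by the single bound $D$ with "at most" inequalities. Everything else is bookkeeping.
\end{proof}
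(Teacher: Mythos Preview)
Your proof is correct and matches the paper's approach exactly: the paper gives no argument at all, just a \qed, signalling that the inequality is immediate from Definition~\ref{dfn:cplxty}. Your unpacking is precisely the intended verification, including the monotonicity observation that justifies why each basic summand of basic complexity at most $\max(D_1,D_2)$ also has basic complexity at most $D_1+D_2$.
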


The next estimate requires a little more work.

\begin{lem}\label{lem:rotn-cplx-bdd}
For each $D \in \bbN$ there is a $D' \in \bbN$ with the following property.  Let $d\leq D$, let $Z$ be a compact Abelian group and let $\chi:Z\to \bbT^d$ be an affine map. If $g_0:[0,1)^d\to \bbR$ has complexity at most $D$, $g := g_0\circ \{\chi\}$, and $z \in Z$, then there is some $h_0:[0,1)^d\to \bbR$ of complexity at most $D'$ such that $R_zg = h_0\circ \{\chi\}$.
\end{lem}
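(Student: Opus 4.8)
The plan is to strip away the group-theoretic wrapping, reduce to a concrete Euclidean statement about shifting the argument of a step polynomial on $[0,1)^d$, and then carry out the (essentially bookkeeping) complexity estimate there.

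First I would rewrite $R_zg$. Since $\chi$ is affine, the map $w\mapsto\chi(w-z)-\chi(w)$ is constant on $Z$, say with value $t_z\in\bbT^d$; writing $s:=\{t_z\}\in[0,1)^d$ for its representative, one checks that $\{\chi(w-z)\}=\{\{\chi(w)\}+s\}$ for every $w$, where the outer $\{\cdot\}$ denotes the coordinate-wise fractional part on $\bbR^d$. Hence, defining $h_0(x):=g_0(\{x+s\})$ for $x\in[0,1)^d$, one gets $h_0\circ\{\chi\}=R_zg$ directly from $g=g_0\circ\{\chi\}$. So it suffices to prove: for any $s\in[0,1)^d$ and any step polynomial $g_0:[0,1)^d\to\bbR$ with $\cplx(g_0)\le D$, the shifted function $x\mapsto g_0(\{x+s\})$ has complexity at most some $D'$ that depends on $D$ alone, and in particular not on $s$.

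The key point is that $x\mapsto\{x+s\}$ is piecewise affine on a partition of $[0,1)^d$ whose combinatorial shape is independent of $s$: for $S\subseteq[d]$ put $B_S:=\{x\in[0,1)^d:x_i+s_i<1\ (i\in S),\ x_i+s_i\ge1\ (i\notin S)\}$. These $2^d$ convex polytopes, each of complexity $d$ relative to $[0,1)^d$, partition $[0,1)^d$, and on $B_S$ one has $\{x+s\}=\phi_S(x):=x+s-\sum_{i\notin S}e_i$, an invertible translation with $\phi_S(B_S)\subseteq[0,1)^d$ and translation vector of sup-norm $<1$. Writing $g_0=\sum_{j=1}^{D}p_j\cdot1_{C_j}$ as a sum of at most $D$ basic step polynomials of basic complexity $\le D$, one obtains globally
\[ h_0(x)=\sum_{S\subseteq[d]}\ \sum_{j=1}^{D}(p_j\circ\phi_S)(x)\cdot1_{\phi_S^{-1}(C_j)\cap B_S}(x). \]
Since $\phi_S$ is an invertible translation and $\phi_S(B_S)\subseteq[0,1)^d$, each $\phi_S^{-1}(C_j)\cap B_S$ is $B_S$ intersected with the at most $D$ half-spaces that are $\phi_S$-preimages of those cutting out $C_j$, hence a convex polytope of complexity $\le d+D\le2D$ relative to $[0,1)^d$; and $p_j\circ\phi_S$ is a polynomial in $d\le D$ variables of degree $\le D$. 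Summing over the $\le2^dD\le2^DD$ terms then exhibits $h_0$ with bounded complexity, once the coefficients of $p_j\circ\phi_S$ are controlled.

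Controlling those coefficients is the one step requiring care, and it is where uniformity in $s$ is won. Expanding $p_j(x+v)$ (with $v$ the translation vector of $\phi_S$, so $\|v\|_\infty<1$) by the multinomial theorem, each coefficient of the shifted polynomial is a sum over the at most $(D+1)^D$ monomials of $p_j$ of terms bounded in absolute value by $D$ (a bound on the coefficients of $p_j$) times a product of binomial coefficients that is $\le2^D$ times a nonnegative power of $\|v\|_\infty\le1$; hence these coefficients are bounded by a quantity depending only on $D$. Taking $D'$ to be the maximum of $2^DD$ and this coefficient bound completes the proof. The argument is elementary throughout; the only pitfall is letting $D'$ secretly depend on the shift, which the estimates above avoid since they use nothing about $v$ beyond $\|v\|_\infty\le1$.
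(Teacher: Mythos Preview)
Your proof is correct and follows essentially the same route as the paper: both reduce to the case of $\bbT^d$ via the affinity of $\chi$, then partition $[0,1)^d$ into the $2^d$ boxes on which the coordinate-wise fractional-part shift acts by a fixed Euclidean translation, and count the resulting basic pieces. Your write-up is in fact more careful than the paper's on one point: you explicitly bound the coefficients of $p_j\circ\phi_S$ via the multinomial expansion and the fact that the translation vector has sup-norm at most $1$, whereas the paper simply asserts the basic complexity bound $D+d\le 2D$ with justification only for the polytope side.
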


\begin{proof}
First suppose that $Z = \bbT^d$ and $\chi$ is the identity.  By summing at most $D$ terms, it suffices to prove this when $g = p\cdot 1_C$ is a basic step polynomial of basic complexity at most $D$.  However, now the coordinates of $\{z\} = (\{z_1\},\ldots,\{z_d\})$ dissect $[0,1)^d$ into at most $2^d \leq 2^D$ smaller boxes, say $Q_\omega$ for $\omega \in \{0,1\}^d$, and for each $\omega$ one has a vector $v_\omega \in \bbR^d$ such that the following diagram commutes:
\begin{center}
$\phantom{i}$\xymatrix{
\{\cdot\}^{-1}(Q_\omega) \ar_{\{\cdot\}}[d]\ar^-{w\mapsto w-z}[rr] && \{\cdot\}^{-1}(Q_\omega) - z \ar^{\{\cdot\}}[d]\\
Q_\omega \ar_-{u\mapsto u + v_\omega}[rr] && Q_\omega + v_\omega.
}
\end{center}
It follows that for each $\omega$, the function $R_z((p\cdot 1_{C\cap Q_\omega})\circ \{\cdot\})$ is still a basic polynomial of complexity at most $D + d \leq 2D$ (since $C\cap Q_\omega$ could require up to $D + d$ linear inequalities to define it).  Summing over $\omega$ completes the proof when $\chi = \rm{id}_{\bbT^d}$.

Finally, for general $\chi$, simply observe that we may factorize $g = g_1\circ \chi$, where $g_1 := g_0\circ \{\cdot\}$ satisfies the assumption of the special case above.  Since $R_zg = R_{\chi(z)}g_1\circ \chi$, the result now follows from that special case.
\end{proof}

\begin{lem}[Compactness]\label{lem:cptness}
If $d \leq D$ then the set
\[\big\{f\circ \{\cdot\}\,\big|\ f:[0,1)^d\to \bbR,\ \cplx(f) \leq D\big\} \subseteq \F(\bbT^d)\]
is compact for the topology of convergence in probability for the measure $m_{\bbT^d}$.
\end{lem}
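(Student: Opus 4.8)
The plan is to realise the set in question as a continuous image of a compact space. Fix $d\le D$; the case $D=0$ (hence $d=0$) is trivial, so assume $D\ge 1$. It is convenient to argue first in $\F(\bbT^d,\bbR)$: every step polynomial $f:[0,1)^d\to\bbR$ with $\cplx(f)\le D$ is bounded on $[0,1)^d$ (its degree, its number of basic summands, and all coefficients are bounded in terms of $D$, and the domain is bounded), so composing with the continuous homomorphism $\bbR\to\bbT$ will transfer a compactness statement from $\F(\bbT^d,\bbR)$ to $\F(\bbT^d,\bbT)=\F(\bbT^d)$. Recall also that $\F(\bbT^d,\bbR)$ is Polish, so in particular a complete metric group, which legitimises the manipulations below.

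Next I would parametrise the relevant convex polytopes. For $\s=(a,b)\in\bbR^d\times\bbR$ put $H(\s):=\{x\in\bbR^d\,:\ a\bullet x\le b\}$. As $\s$ ranges over the unit sphere $\rm{S}^d\subseteq\bbR^{d+1}$, the set $H(\s)$ ranges over all closed half-spaces of $\bbR^d$, together with $\bbR^d$ (at $\s=(0,1)$) and $\emptyset$ (at $\s=(0,-1)$). Since open half-spaces differ from closed ones by a Lebesgue-null set, and $D\ge 1$, every convex polytope $C\subseteq[0,1)^d$ of complexity at most $D$ relative to $[0,1)^d$ agrees up to a null set with $[0,1)^d\cap H(\s_1)\cap\cdots\cap H(\s_D)$ for suitable $\s_1,\dots,\s_D\in\rm{S}^d$ (padding with $\s=(0,1)$ when fewer than $D$ genuine constraints are needed, and realising the empty polytope by one constraint $\s=(0,-1)$ plus padding). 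The key point is that $\s\mapsto 1_{H(\s)\cap[0,1)^d}$ is \emph{continuous} from $\rm{S}^d$ into $\F([0,1)^d,\bbR)$ (Lebesgue measure): if $\s^{(n)}=(a^{(n)},b^{(n)})\to\s=(a,b)$ with $a\ne 0$, the indicators converge pointwise off the null hyperplane $\{a\bullet x=b\}$, hence in $L^1$; and if $a=0$ then $b=\pm1$, and boundedness of $[0,1)^d$ together with $a^{(n)}\to 0$ forces $H(\s^{(n)})\cap[0,1)^d$ to be eventually $[0,1)^d$ (if $b=1$) or $\emptyset$ (if $b=-1$). This continuity statement is the step I expect to be the main obstacle: the naive parametrisation of half-spaces by their coefficient data in $\bbR^{d+1}$ fails to be continuous at the degenerate datum $(0,0)$, and passing to rays through the origin in $\bbR^{d+1}$ (that is, to $\rm{S}^d$) is exactly what repairs this, by sending the degenerate limits correctly to $\bbR^d$ and $\emptyset$.

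I would then parametrise the polynomials. With $N:=\binom{d+D}{d}$ the number of monomials of degree at most $D$ in $d$ variables, the map $\bs c\mapsto p_{\bs c}:=\sum_{|\a|\le D}c_\a x^\a$ is continuous from $[-D,D]^N$ into $(\rm{C}([0,1]^d),\|\cdot\|_\infty)$, its image is precisely the set of admissible polynomials, and these are uniformly bounded on $[0,1)^d$ by some $M=M(D)$. Put $\Theta:=(\rm{S}^d)^D\times[-D,D]^N$, which is compact, and define
\[\Psi(\s_1,\dots,\s_D,\bs c):=\big(p_{\bs c}\cdot 1_{[0,1)^d\cap H(\s_1)\cap\cdots\cap H(\s_D)}\big)\circ\{\cdot\}\ \in\ \F(\bbT^d,\bbR),\]
where $\{\cdot\}:\bbT^d\to[0,1)^d$ pushes $m_{\bbT^d}$ forward to Lebesgue measure. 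Using that convergence in probability is stable under products of uniformly bounded functions and under multiplication by a uniformly convergent sequence of uniformly bounded functions, together with the two continuity statements above, one checks that $\Psi$ is continuous. By the remarks on parametrising polytopes and polynomials, $\Psi(\Theta)$ is exactly the set of Haar-a.e.\ classes of basic step polynomials $[0,1)^d\to\bbR$ of basic complexity at most $D$, post-composed with $\{\cdot\}$; being a continuous image of a compact space, it is compact.

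Finally I would assemble the conclusion. A step polynomial $f:[0,1)^d\to\bbR$ with $\cplx(f)\le D$ is by definition a sum of at most $D$ basic step polynomials of basic complexity at most $D$; padding with zeros (and $0=0\cdot 1_{[0,1)^d}\in\Psi(\Theta)$) writes $f\circ\{\cdot\}=g_1+\cdots+g_D$ with each $g_j\in\Psi(\Theta)$, while conversely any such $D$-fold sum has complexity at most $D$. Hence the set appearing in the statement equals the image of the compact set $\Psi(\Theta)^D$ under the continuous $D$-fold addition map of $\F(\bbT^d,\bbR)$, and is therefore compact; composing with the continuous map $\F(\bbT^d,\bbR)\to\F(\bbT^d)$ induced by $\bbR\to\bbT$ gives the statement as phrased.
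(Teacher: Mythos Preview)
Your proof is correct and follows essentially the same approach as the paper: reduce to basic step polynomials, parametrise them by a compact space of data (polynomial coefficients in $[-D,D]^N$ together with the defining half-spaces), and take the continuous image. The paper's version is a one-sentence sketch that simply asserts ``the set of all such half-spaces is compact''; your use of the sphere $\mathrm{S}^d$ to parametrise half-spaces, with the degenerate points $(0,\pm 1)$ absorbing the trivial constraints, is exactly the kind of care needed to make that assertion honest and to verify continuity of the parametrisation map.
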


\begin{proof}
It suffices to prove this for the set of all compositions $f_0\circ \{\cdot\}$ with $f_0$ a basic polynomial of basic complexity at most $D$, and hence for the set of all functions $(p\cdot 1_C)\circ \{\cdot\}$ where $p$ and $C$ satisfy the bounds in the definition of basic complexity.  Also, since $\{\cdot\}:\bbT^d\to [0,1)^d$ defines an isomorphism of measure spaces, it suffices to prove this compactness on $[0,1)^d$ itself.  This conclusion now follows because these data are specified by $\rm{O}_D(1)$ coefficients for $p$, all lying in $[-D,D]$, and by at most $D$ open or closed half-spaces that intersect $[0,1]^d$, and the set of all such half-spaces is compact.
\end{proof}

\begin{cor}\label{cor:cptness}
Let $d\leq D$, let $Z$ be a compact Abelian Lie group and let $\chi:Z\to\bbT^d$ be affine. Then the set
\[\big\{\{f\}\circ \{\chi\}\,\big|\ f:[0,1)^d\to \bbR,\ \cplx(f) \leq D\big\} \subseteq\F(Z)\]
is pre-compact for the topology of convergence in $m_Z$-probability, and its closure is contained in the set
\[\{g \in \F(Z)\,|\ \cplx(g) \leq D'\}\]
for some $D' \in \bbN$ (which may depend on $Z$ and $\chi$).
\end{cor}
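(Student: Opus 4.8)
\textbf{Proof proposal for Corollary~\ref{cor:cptness}.}

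The plan is to reduce the statement on $Z$ to the Euclidean statement on $[0,1)^d$ that has already been proved in Lemma~\ref{lem:cptness}, and then to argue that the (continuous) operations used in this reduction carry compact sets of bounded-complexity step polynomials to compact sets of bounded-complexity step polynomials. First I would note that the affine map $\chi:Z\to\bbT^d$ factorizes through a Lie-group quotient, and since $Z$ is already assumed to be a compact Abelian Lie group, the structure theory (and in particular the existence of a step-affine cross-section, Proposition~\ref{prop:fund-lift}, applied to $\ker\chi\leq Z$) lets me treat the map $Z\to\bbT^d$ as close enough to a projection. More precisely, $\{\chi\}:Z\to[0,1)^d$ is the composition of $\chi$ with $\{\cdot\}:\bbT^d\to[0,1)^d$, and pushing forward $m_Z$ by $\{\chi\}$ gives a measure on $[0,1)^d$ absolutely continuous with respect to Lebesgue measure (in fact a sum of translates of the normalized Lebesgue measure on the image subtorus, restricted appropriately), so convergence in $m_Z$-probability of functions $g=\{f\}\circ\{\chi\}$ is controlled by convergence in Lebesgue-probability of the functions $\{f\}$ on $[0,1)^d$.

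For pre-compactness: the map $\F(\bbT^d)\ni h\mapsto h\circ\chi\in\F(Z)$ is continuous for convergence in probability (it is the pullback along a continuous homomorphism, which pushes Haar measure to Haar measure up to the normalization just mentioned). By the last bullet of Lemma~\ref{lem:step-aff-obvious}, $\{f\}:Z\to[0,1)$ being a step polynomial makes $\{f\}\circ\{\chi\}$, equivalently $(\{f\}\bmod 1)$ pulled back, lie in $\F(Z)$, and the set of all $f:[0,1)^d\to\bbR$ with $\cplx(f)\le D$ has compact image in $\F(\bbT^d)$ by Lemma~\ref{lem:cptness}. Taking fractional parts is a continuous operation $\F(\bbT^d,\bbR)\to\F(\bbT^d)$ on the relevant set (it is continuous away from the measure-zero locus where the value lies in $\bbZ$, and the functions here never take values in $\bbZ$ on a positive-measure set after a generic perturbation — more cleanly, $\{f\}$ and $f$ differ by a $\bbZ$-valued step function, so $\{f\}\bmod 1 = f\bmod 1$ and one simply composes with the continuous quotient $\bbR\to\bbT$). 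Hence $\{g\in\F(Z)\mid g=\{f\}\circ\{\chi\},\ \cplx(f)\le D\}$ is the continuous image of a compact set, so it is compact, in particular pre-compact with closed image.

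For the complexity bound on the closure: I would argue that this compact set is in fact \emph{contained} in $\{g\in\F(Z)\mid\cplx(g)\le D'\}$ for a suitable $D'=D'(Z,\chi)$, so that being closed it equals its own closure and the containment is automatic. The point is that the bounded-complexity step polynomials on $[0,1)^d$ already form a compact set $\mathcal{K}_D\subseteq\F(\bbT^d)$ consisting \emph{entirely} of complexity-$\le D$ functions (this is just the definition), and pulling back along $\chi$ together with taking fractional parts composes two operations each of which changes complexity in a controlled way: composition with the affine $\chi$ and with $\{\cdot\}$ is handled by Lemma~\ref{lem:little-step-aff-coords} and the computations in its proof, which express the pullback of a basic polytopal piece on $[0,1)^d$ as a sum of $O_D(1)$ basic pieces each of complexity $O_{Z,\chi}(1)$ (the number of extra half-spaces needed depends on the integer matrix defining $\chi$, hence on $\chi$); and taking fractional parts of a step polynomial is the same kind of operation, subtracting a $\bbZ$-valued step function whose polytopal pieces again cost $O_D(1)$ inequalities. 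Summing over the at most $D$ basic summands and invoking Lemmas~\ref{lem:sum-cplx-bdd} and~\ref{lem:rotn-cplx-bdd} gives a bound $D'$ depending only on $D$, $Z$ and $\chi$, as claimed. The main obstacle I expect is bookkeeping the dependence of $D'$ on $\chi$ honestly — one must be careful that the integer coefficients of $\chi$ genuinely enter the complexity bound (so $D'$ cannot be taken uniform in $\chi$, only in $D$), which is exactly why the statement allows $D'$ to depend on $Z$ and $\chi$; no deeper difficulty arises, since every ingredient is a continuous map between spaces of bounded-complexity step polynomials that have already been shown compact.
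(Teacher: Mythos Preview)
There is a genuine gap in your pre-compactness argument. You claim that the pushforward of $m_Z$ under $\{\chi\}$ is absolutely continuous with respect to Lebesgue measure on $[0,1)^d$, and hence that the pullback $h\mapsto h\circ\chi$ is a continuous map $\F(\bbT^d)\to\F(Z)$. This is false unless $\chi$ is surjective: if, say, $Z=\bbT$ and $\chi(t)=(t,0)\in\bbT^2$, then $\chi_*m_Z$ is supported on a line and is singular with respect to $m_{\bbT^2}$. In that situation two functions agreeing $m_{\bbT^d}$-a.e.\ need not agree $m_Z$-a.e.\ after pullback, so the map $\F(\bbT^d)\to\F(Z)$ is not even well-defined on equivalence classes, and the compactness of Lemma~\ref{lem:cptness} cannot be transported this way.

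The paper avoids this by never pulling back along $\chi$ at the level of $\F$. Instead it first restricts to a coset of the identity component (so $Z=\bbT^r$), then uses Lemma~\ref{lem:little-step-aff-coords} to produce a single step-affine $\psi:[0,1)^r\to[0,1)^d$ with $\{\chi\}=\psi\circ\{\cdot\}$. Every function in the set then equals $\{f\circ\psi\}\circ\{\cdot\}$ with $\cplx(f\circ\psi)\le D'$ for a $D'$ depending only on $\psi$ and $D$, and now Lemma~\ref{lem:cptness} applies \emph{on $\bbT^r$ with the identity chart}, where there is no absolute-continuity issue. This single step yields pre-compactness and the complexity bound simultaneously. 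Your second paragraph in fact contains this idea, but you use it only for the complexity bound and rely on the flawed first paragraph for compactness; reorganising so that the $\psi$-reduction comes first fixes the argument.
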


\begin{proof}
It clearly suffices to prove this for each coset of the identity component $Z_0 \leq Z$ separately, so we may assume that $Z = \bbT^r$ for some $r$.  Now Lemma~\ref{lem:little-step-aff-coords} gives a commutative diagram
\begin{center}
$\phantom{i}$\xymatrix{
\bbT^r \ar_{\{\cdot\}}[d]\ar^\chi[r] & \bbT^d \ar^{\{\cdot\}}[d]\\
[0,1)^r \ar_\psi[r] & [0,1)^d
}
\end{center}
for some step-affine map $\psi:[0,1)^r \to [0,1)^d$.  It is also clear that if $f:[0,1)^d\to\bbR$ has $\cplx(f) \leq D$, then $f\circ \psi$ has complexity bounded by some $D'$ that depends only on $\psi$ and $D$.  Therefore the set of functions in question is contained in
\[\big\{\{f'\}\circ \{\cdot\}\,\big|\ f':[0,1)^r \to \bbR,\ \cplx(f') \leq D'\big\},\]
so the result follows from Lemma~\ref{lem:cptness}.
\end{proof}

\begin{lem}[Bounded complexity and equidistribution]\label{lem:bdd-cplx-equidist}
Let $Z$ be a compact Abelian Lie group with a translation-invariant metric $\rho$, let $\chi:Z\to \bbT^d$ be affine, and let $D \in \bbN$ be fixed. Then for every $\eps > 0$ there is a $\delta > 0$ such that for any closed subgroup $Y \leq Z$, if $Y$ is $\delta$-dense in $Z$ for the metric $\rho$, then
\[|d^Y_0(0,g|_Y) - d^Z_0(0,g)| < \eps\]
whenever $g = g_0\circ \{\chi\}$ and $g_0:[0,1)^d\to \bbR$ has complexity at most $D$, where $d_0^Y$ and $d_0^Z$ are the metrics of convergence in $m_Y$-probability and $m_Z$-probability, respectively.
\end{lem}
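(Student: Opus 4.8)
The plan is to prove this by an equidistribution argument applied uniformly over the compact family of bounded-complexity step polynomials. First I would reduce to understanding, for a fixed measurable set $E \subseteq Z$ whose indicator appears in the computation of $d_0$, how well the normalized count $m_Y(E \cap Y\text{-shift structure})$ approximates $m_Z(E)$; more precisely, since $d_0^Z(0,g) = \int_Z \min(|g(z)|,1)\,\d m_Z(z)$ and similarly for $Y$, I want to control $|\int_Y \phi\circ g\,\d m_Y - \int_Z \phi \circ g\,\d m_Z|$ where $\phi(t) = \min(|t|,1)$. The key point is that $\phi \circ g = (\phi \circ g_0)\circ\{\chi\}$, and $\phi\circ g_0$ is a bounded function on $[0,1)^d$ which is continuous off the boundary of the controlling convex polytopal partition of $g_0$. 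So the integrands form a family of uniformly bounded functions on $\bbT^d$ (pulled back via $\chi$), each of which is Riemann integrable with a uniform modulus-of-continuity-type control coming from the bounded-complexity hypothesis.

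Second, I would invoke compactness: by Lemma~\ref{lem:cptness} (or Corollary~\ref{cor:cptness}), the set $\{g_0\circ\{\cdot\}\,|\,\cplx(g_0)\le D\}$ is compact in $\F(\bbT^d)$ for convergence in probability, hence the set of integrands $\{\phi\circ g_0\circ\{\cdot\}\}$ is also totally bounded in $L^1(\bbT^d, m_{\bbT^d})$ (since $\phi$ is $1$-Lipschitz and bounded, $g_0 \mapsto \phi\circ g_0$ is continuous for the $L^1$ topology). Therefore, given $\eps$, one may fix a finite $\eps/4$-net $\{h_1,\ldots,h_N\}$ of this family, where each $h_j$ may be taken to be a fixed bounded measurable function on $\bbT^d$. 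It then suffices to find $\delta$ such that $\delta$-density of $Y$ in $Z$ forces $|\int_{Y} h_j\circ\chi\,\d m_Y - \int_{\bbT^d} h_j\,\d m_{\bbT^d}\circ(\text{pushforward})|$ to be small for each of the finitely many $j$ — and then the $L^1$-approximation plus the fact that $m_Y$-integrals against functions pulled back through $\chi$ are controlled by the pushforward of $m_Y$ to $\chi(Z) \subseteq \bbT^d$ handles the general $g$.

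Third, for each fixed bounded measurable $h_j$ on $\bbT^d$: writing $\bar h_j$ for its ($m_{\bbT^d}$-a.e. defined) restriction to $\chi(Z)$, the quantity $\int_Z h_j\circ\chi\,\d m_Z$ equals the average of $h_j$ over $\chi(Z)$ with respect to Haar measure on the (closed) subgroup $\chi(Z)$, and similarly $\int_Y h_j\circ\chi\,\d m_Y$ is the average over $\chi(Y)$. Since $h_j$ is a bounded function that is continuous off a controlled-complexity null set, and $Y$ $\delta$-dense in $Z$ implies $\chi(Y)$ is $(\|\chi\|\delta)$-dense in $\chi(Z)$ (with $\|\chi\|$ the Lipschitz constant of $\chi$), standard facts about Haar measures on a nested pair of compact groups — namely that $m_{\chi(Y)}$ converges weakly to $m_{\chi(Z)}$ as $\chi(Y)$ becomes dense — give that for $\delta$ small enough the two averages differ by less than $\eps/4$. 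Summing the errors gives the claim. I expect the main obstacle to be the bookkeeping in this last step: one must ensure the choice of $\delta$ is genuinely uniform over the $\eps/4$-net and that the ``continuous off a small boundary'' feature survives the comparison between $m_Y$ and $m_Z$; in particular one needs that the pushforward measures don't concentrate mass near the boundary of the controlling polytopes, which again follows from bounded complexity (finitely many hyperplanes, each a null set for $m_{\bbT^d}$, with uniformly small $\delta$-neighbourhoods) together with the density hypothesis. The Lie and translation-invariant-metric assumptions are exactly what make ``density'' interact cleanly with the affine map $\chi$ and with the Haar measures involved.
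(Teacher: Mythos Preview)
Your compactness-and-net strategy has a genuine gap in the third error term of the implicit triangle inequality. Having chosen $h_j$ with $\|\phi\circ g - h_j\circ\chi\|_{L^1(m_Z)} < \eps/4$, you still need
\[
\Big|\int_Y (\phi\circ g - h_j\circ\chi)\,\d m_Y\Big| < \eps/4,
\]
and this is an $L^1(m_Y)$ estimate, not an $L^1(m_Z)$ one. When $Y$ is, say, a finite subgroup that happens to be $\delta$-dense, $m_Y$ is a sum of point masses and is mutually singular with $m_Z$; $L^1(m_Z)$-closeness gives you nothing here. Your acknowledged obstacle at the end is exactly this term, and your proposed fix---that ``the pushforward measures don't concentrate mass near the boundary of the controlling polytopes, which again follows from bounded complexity''---is not a patch but the whole argument: once you control the $m_Y$-mass near the polytope boundaries uniformly in terms of $D$, you can compare $\int_Y$ and $\int_Z$ directly for every $g$ in the family, and the net becomes superfluous.

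The paper's proof does precisely this direct argument. It first reduces to connected $Z$, then uses the step-affine factorization to assume $\chi = \rm{id}_{\bbT^d}$, and finally reduces to a single basic step polynomial $p\cdot 1_C$. The two controlling observations are that $(p\cdot 1_C)\circ\{\cdot\}$ is Lipschitz with constant $O_D(1)$ on both the interior and exterior of $\{\cdot\}^{-1}(C)$, and that $\partial C$ is covered by at most $D$ hyperplane pieces, so $m_{[0,1)^d}\{v:\rm{dist}(v,\partial C)<\delta\}\to 0$ at a rate depending only on $D$. These two facts give the uniform equidistribution statement directly, with no net.
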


\begin{proof}
For $\delta$ sufficiently small, it suffices to prove this on each connected component of $Z$, so we may assume that $Z = \bbT^r$.  Having done so, we may argue from Lemma~\ref{lem:little-step-aff-coords} as in the proof of Corollary~\ref{cor:cptness} to pull everything back to $\bbT^r$ itself, and assume that $\chi$ is the identity.

Lastly, it suffices to prove the result for $g = g_0\circ \{\cdot\}$ with $g_0$ a basic step polynomial of basic complexity at most $D$, say $g_0 = p\cdot 1_C$ as in Definition~\ref{dfn:cplxty}. At this point the result is clear, because
\begin{itemize}
\item on the one hand, the resulting function $(p\cdot 1_C)\circ \{\cdot\}:\bbT^d\to \bbR$ is locally Lipschitz, with constant bounded in terms of $D$, on both the interior and exterior of $\{\cdot\}^{-1}(C)$,
\item and on the other, $C \subseteq [0,1)^d$ is a convex set with boundary contained in a union of at most $D$ portions of hyperplanes in $\bbR^d$, implying that
\[m_{[0,1)^d}(\{v\,|\ \rm{dist}(v,\partial C) < \delta\})\to 0 \quad \hbox{as}\ \delta\downarrow 0\]
at a rate that can be bounded in terms of $D$ alone.
\end{itemize}
\end{proof}

\subsection{Proof by compactness}\label{subs:cptness-proof}

Theorem C will also be proved by compactness and contradiction.  It will involve the Vietoris topology on the set of closed subgroups of a torus $\bbT^d$, which is a compact and metrizable topology since $\bbT^d$ is compact and metrizable.  Letting $\rho$ be any standard choice of metric on $\bbT^d$ (such as the quotient of the Euclidean metric on $\bbR^d$), if $Z_n\to Z$ as subsets of $\bbT^d$ with this topology, then, by definition, for every $\eps > 0$ there is an $n_0$ such that $Z_n$ is $\eps$-dense in $Z$ for $\rho$ for all $n\geq n_0$.  It is also standard that this implies $m_{Z_n}\to m_Z$ in the vague topology.

In this setting we will need the following elementary lemma.

\begin{lem}\label{lem:gp-lim-from-within}
If $Z_n$ is a sequence of closed subgroups of $\bbT^d$ tending to another subgroup $Z$ in the Vietoris topology, then $Z_n \leq Z$ for all sufficiently large $n$.
\end{lem}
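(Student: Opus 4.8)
The plan is to exploit the fact that $Z_n \to Z$ in the Vietoris topology forces $Z_n$ to be eventually contained in every closed subgroup that contains $Z$, and that the subgroup structure of a compact metrizable group is approximated from above by Lie-group quotients. First I would reduce to the case where $Z$ itself is closed (which it is, by hypothesis) and recall that $Z = \bigcap_{\chi} \ker\chi$, where $\chi$ ranges over the characters of $\bbT^d$ that vanish on $Z$; this is because characters separate points in a compact Abelian group, so every closed subgroup is an intersection of kernels of characters. Since $\bbT^d$ has countable dual, it suffices to handle each such character $\chi$ separately and then take a finite intersection (in fact, since $\widehat{\bbT^d} \cong \bbZ^d$ is finitely generated, $Z$ is already the kernel of a single homomorphism $\bbT^d \to \bbT^m$, which simplifies matters but is not essential).

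The key step is the following observation: if $\chi:\bbT^d \to \bbT$ is a character with $\chi|_Z \equiv 0$, I claim $\chi|_{Z_n} \equiv 0$ for all large $n$. Indeed, $\chi(Z_n)$ is a closed subgroup of $\bbT$, hence either all of $\bbT$ or a finite cyclic group $\frac{1}{q_n}\bbZ/\bbZ$. The Vietoris convergence $Z_n \to Z$ implies, via the continuity of $\chi$, that $\chi(Z_n)$ converges to $\chi(Z) = \{0\}$ in the Vietoris topology on closed subsets of $\bbT$ (using that $\chi$ is a continuous closed map, so images of Hausdorff-convergent sets converge). But the only closed subgroups of $\bbT$ near $\{0\}$ in the Vietoris topology are $\{0\}$ itself: any nontrivial closed subgroup either equals $\bbT$ (at Vietoris-distance bounded below from $\{0\}$) or contains the point $\frac{1}{q}$ for some $q$, and a sequence of such subgroups converging to $\{0\}$ would need $q_n \to \infty$, but then $\chi(Z_n)$ becomes dense in $\bbT$, contradicting convergence to $\{0\}$. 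Hence $\chi(Z_n) = \{0\}$ for $n$ large, i.e. $Z_n \leq \ker\chi$.

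Finally I would assemble this: choosing finitely many characters $\chi_1,\dots,\chi_m$ with $\bigcap_j \ker\chi_j = Z$ (possible since $\widehat{Z^\perp}$ is finitely generated, or directly since $Z$ is the kernel of a continuous homomorphism into a torus), the previous step gives an $n_0$ such that $Z_n \leq \ker\chi_j$ for all $j$ and all $n \geq n_0$, whence $Z_n \leq \bigcap_j \ker\chi_j = Z$. The main obstacle I anticipate is making the "closed subgroups of $\bbT$ near $\{0\}$ are trivial" argument fully rigorous with respect to the precise definition of Vietoris convergence being used (as stated in the excerpt, $Z_n \to Z$ means for every $\eps$ eventually $Z_n$ is $\eps$-dense in $Z$, but one also needs the complementary containment $Z_n \subseteq (Z)_\eps$, the $\eps$-neighborhood of $Z$, which is the genuinely useful direction here); I would want to double-check that the ambient definition includes this, and if the excerpt only guarantees $\eps$-density of $Z_n$ in $Z$ one would instead argue directly that $Z_n$ being $\eps$-dense in $Z$ plus $Z_n$ a group forces, for small enough $\eps$ relative to the covolume data of $Z$, that $Z_n$ lies in the same Lie-quotient fiber structure as $Z$. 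But under the standard two-sided Vietoris/Hausdorff notion the argument above is clean and short.
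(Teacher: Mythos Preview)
Your proof is correct and follows essentially the same route as the paper: pass to the Pontryagin dual, use that the annihilator $Z^\perp \leq \bbZ^d$ is finitely generated, and check that each generator eventually lies in $Z_n^\perp$. The only difference is in how that last check is made: the paper argues that Vietoris convergence $Z_n \to Z$ gives vague convergence $m_{Z_n} \to m_Z$, hence $\int \chi\,dm_{Z_n} \to \int \chi\,dm_Z$ for each character $\chi$, which forces $Z_n^\perp \to Z^\perp$ pointwise in $\bbZ^d$; you instead push $\chi$ forward and analyze the possible closed subgroups of $\bbT$ near $\{0\}$. Your worry about one-sided versus two-sided Vietoris convergence is well-placed but unnecessary here---on a compact metric space the Vietoris topology agrees with the Hausdorff metric, which is two-sided---and note that the paper's Haar-measure route sidesteps this issue entirely, since vague convergence of probability measures on a compact space already encodes both containments.
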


\begin{proof}
This is most easily seen in the Pontryagin dual.  The Vietoris convergence $Z_n\to Z$ implies the vague convergence $m_{Z_n}\to m_Z$, and hence the convergence $Z_n^\perp\to Z^\perp$ as subsets of $\hat{\bbT^d} = \bbZ^d$, in the sense of eventual agreement at any given point of $\bbZ^d$.  Since all subgroups of $\bbZ^d$ are generated by at most $d$ elements, this means that $Z_n^\perp$ must eventually contain a set of generators for $Z^\perp$, at which point one has $Z_n^\perp\geq Z^\perp$ and hence $Z_n \leq Z$.
\end{proof}

\begin{proof}[Proof of Theorem C]
We will first select an $\eps$ depending only on $k$.  The keys to this are the quantitative results from Part I.

Firstly, Theorem I.C implies that there is non-decreasing function $\k:(0,\infty)\to (0,\infty)$, depending only on $k$ and tending to $0$ at $0$, such that if $\scrM$ is the solution $\P$-module for the {\PDE} associated to some $Z$ and subgroup-tuple $\bfU = (U_1,\ldots,U_k)$, and if $f \in \F(Z)$ is such that
\[d_0(0,\ d^{U_1}\cdots d^{U_k}f) < \eps \quad \hbox{in}\ \F(U_1\times \cdots \times U_k\times Z),\]
then $d_0(f,M_{[k]}) < \k(\eps)$.

Second, Theorem I.A$'$ promises some $\eta > 0$, depending only on $k$, such that if $f' \in M_{[k]}$ and $d_0(0,f') < \eta$, then in fact $f' \in \partial_k(M^{(k-1)})$.

Putting these facts together, we may now choose $\eps > 0$ so small that
\[\eps + \k(2^{k+1}\eps) < \eta.\]
This still depends only on $k$.  We will prove that this $\eps$ has the property asserted in Theorem C.

This will be proved by assuming otherwise, and deriving a contradiction from Theorem A and a compactness argument.  Thus, suppose that $Z_n$ is a sequence of compact Abelian groups and $\bfU_n = (U_{n,i})_{i=1}^k$ a sequence of tuples of subgroups.  Let $\scrM_n = (M_{n,e})_e$ be the {\PDE} solution $\P$-module associated to $\bfU_n$, and let $f_n \in M_{n,[k]}$ and $g_n \in \F_{\rm{sp}}(Z_n,\bbT)$ be sequences such that
\[\cplx(g_n) \leq d \quad \hbox{and} \quad d_0(f_n,g_n) < \eps\]
for all $n$, but on the other hand such that
\[\min\{\cplx(f')\,|\ f' \in f_n + \partial_k(M_n^{(k-1)})\} \to \infty.\]

For each $n$, let $\chi_n:Z_n\to \bbT^{r_n}$ be an affine map and $g''_n:[0,1)^{r_n}\to \bbR$ be a step polynomial such that
\[g_n = g''_n\circ \{\chi_n\} \mod \bbZ \quad \hbox{and} \quad  \cplx(g_n'') = \cplx(g_n).\]
Also, let
\[g_n' := g_n''\circ \{\cdot\} \mod \bbZ:\bbT^{r_n}\to \bbT,\]
so $g_n = g_n'\circ\chi_n$.

\vspace{7pt}

\emph{Step 1.}\quad Since $r_n \leq d$ for every $n$, after passing to a subsequence we may suppose that
\begin{itemize}
\item $r_n = r$ for every $n$,
\item $Z_n' := \chi_n(Z_n)\to Z'$ for some $Z' \leq \bbT^r$ in the Vietoris topology,
\item similarly, $U_{n,i}' := \chi_n(U_{n,i})\to U_i' \leq Z'$ for each $i\leq k$ in the Vietoris topology.
\end{itemize}
Lemma~\ref{lem:gp-lim-from-within} implies that $Z_n' \leq Z'$ and $U'_{n,i} \le U'_i$ for all sufficiently large $n$; by omitting finitely many terms of our sequences, we may assume this holds for all $n$.

Using Corollary~\ref{cor:cptness}, another passage to a subsequence now allows us to assume in addition that $d_0(g'_n|_{Z'},g') \to 0$ in $\F(Z')$, where $g':Z' \to \bbT$ is another step polynomial.

Let $\scrM = (M'_e)_e$ be the solution $\P$-module for the {\PDE} associated to $Z'$ and $\bfU' = (U_1',\ldots,U_k')$.

\vspace{7pt}

\emph{Step 2.}\quad Since $d_0(f_n,g_n) < \eps$ and $f_n \in M_{n,[k]}$, we have
\[d_0(0,\ d_{u_1}\cdots d_{u_k}g_n ) < 2^k\eps \quad \hbox{in}\ \F(Z_n)\]
for all $(u_1,\ldots,u_k) \in \prod_iU_{n,i}$, for all $n\geq 1$.  Equivalently,
\begin{eqnarray}\label{eq:dddsmall}
d_0(0,\ (d_{u_1'}\cdots d_{u_k'}g'_n)|_{Z'_n}) < 2^k\eps \quad \hbox{in}\ \F(Z_n')
\end{eqnarray}
for all $(u_1',\ldots,u_k') \in \prod_i U_{n,i}'$, for all $n\geq 1$.

Now, since $g'_n = g''_n\circ \{\cdot\}$ mod $\bbZ$ for $g''_n:[0,1)^d\to \bbR$ having complexity at most $d$, applying each of Lemmas~\ref{lem:sum-cplx-bdd} and~\ref{lem:rotn-cplx-bdd} $k$ times gives that also
\[d_{u_1'}\cdots d_{u_k'}g'_n = g''_{n,u_1',\ldots,u_k'}\circ \{\cdot\} \mod \bbZ\]
for some functions $g''_{n,u_1',\ldots,u_k'}:[0,1)^d\to \bbR$ whose complexity is bounded only in terms of $D$ and $k$, uniformly in $(u_1',\ldots,u_k')$.  On the other hand, $Z_n'$ equidistributes in $Z'$ as $n\to\infty$.  Given this, Lemma~\ref{lem:bdd-cplx-equidist} and~(\ref{eq:dddsmall}) imply that
\begin{eqnarray}\label{eq:some-conv}
d_0(0,\ d_{u_1'}\cdots d_{u_k'}g'_n) < 2^{k+1/3}\eps \quad \hbox{in}\ \F(Z')
\end{eqnarray}
for all $(u_1',\ldots,u_k') \in \prod_i U_{n,i}'$, for all sufficiently large $n$.

Next, since $d_0(g'_n,g') \to 0$ in $\F(Z')$, the estimate~(\ref{eq:some-conv}) implies that
\[d_0(0,\ d_{u_1'}\cdots d_{u_k'}g') < 2^{k+2/3}\eps \quad \hbox{in}\ \F(Z')\]
for all $(u_1',\ldots,u_k') \in \prod_i U_{n,i}'$, for all sufficiently large $n$.  Finally, since $\bigcup_{n\geq 1}U'_{n,i}$ is dense in $U'_i$ for each $i$, this turns into
\begin{multline*}
d_0(0,\ d_{u_1'}\cdots d_{u_k'}g') < 2^{k+1}\eps \quad \hbox{in}\ \F(Z') \quad \forall (u_1',\ldots,u_k') \in \prod_i U_i'\\
\Longrightarrow \quad d_0\big(0,\ d^{U_1'}\cdots d^{U_k'}g'\big) < 2^{k+1}\eps \quad \hbox{in}\ \F(U_1'\times \cdots \times U_k'\times Z').
\end{multline*}

\vspace{7pt}

\emph{Step 3.}\quad Having reached this last estimate, we may apply Theorem I.C to conclude that there is some $h' \in M_{[k]}'$ such that
\[d_0(g',h') < \kappa(2^{k+1}\eps).\]
Moreover, by the density result of Corollary~\ref{cor:sp-solns-dense}, we may assume in addition that this $h'$ is a step polynomial on $Z'$.

\vspace{7pt}

\emph{Step 4.}\quad Finally, since $Z_n' \leq Z'$ and $U'_{n,i} \leq U'_i$ for all $i$ and $n$, the pulled-back functions
\[h_n := h'\circ \chi_n\]
are all well-defined, each $h_n$ solves the {\PDE} associated to $Z_n$ and $\bfU_n$, and each $h_n$ is still a step polynomial on $Z_n$ of complexity at most $\cplx(h')$ (since pre-composing with an affine function cannot increase complexity according to Definition~\ref{dfn:cplxty}).  Now another appeal to Lemma~\ref{lem:bdd-cplx-equidist}, the equidistribution of $Z_n'$ in $Z'$, and the fact that $g_n'\to g'$ in $\F(Z')$ give that
\[d_0(g_n,h_n) \leq d_0(g_n'\circ\chi_n,g'\circ \chi_n) + d_0(g'\circ \chi_n,h'\circ \chi_n) < \kappa(2^{k+1}\eps)\]
for all $n$ sufficiently large.  Therefore
\[d_0(f_n,h_n) < \eps + \k(2^{k+1}\eps) \quad \hbox{in}\ \F(Z_n).\]
However, we chose this right-hand side to be less than $\eta$, so this implies $h_n \in f_n + \partial_k(M_n^{(k-1)})$, yet $\cplx(h_n) = \cplx(h')$ remains bounded as $n\to\infty$.  This contradicts our initial assumptions, and so completes the proof.
\end{proof}

\subsection{Discussion}\label{subs:cplx-discussion}

Instead of proving Theorem C by compactness and contradiction, it is tempting to try to keep track of the complexities of the step polynomials that appear as we build up the machinery of semi-functional $\P$-modules.  This would hopefully lead to effective versions of our results about short exact sequences, cohomology $\P$-modules, and so on.  (In principle, our proof by compactness could be forced to yield explicit bounds by quantifier-elimination, but they would be atrocious.)

Unfortunately, the most na\"\i ve version of this idea cannot be carried out, because some of the necessary results are not true.

\begin{ex}\label{ex:fcd-bad}
Let $Z = (\bbZ/p\bbZ)^2\times \bbT$ for some large prime $p$, and let $\chi:\bbT\to\bbT$ be the identity character.  Let
\begin{multline*}
U_1 := ((1,0)\cdot (\bbZ/p\bbZ))\times \bbT, \quad U_2 := ((1,-1)\cdot (\bbZ/p\bbZ))\times \bbT\\ \hbox{and} \quad U_3 := ((0,1)\cdot (\bbZ/p\bbZ))\times \bbT,
\end{multline*}
and define $f:Z\to\bbT$ by
\[f(s_1,s_2,t) := \lfl \{s_1\}/p + \{s_2\}/p\rfl\cdot p \cdot \chi(t),\]
where $\{s_1\}$ now denotes the element of $\{0,1,\ldots,p-1\}$ that represents $s_1$ modulo $p$.

An easy check shows that
\begin{eqnarray}\label{eq:repf}
f(s_1,s_2,t) = \{s_1\}\chi(t) + \{s_2\}\chi(t) - \{s_1 + s_2\}\chi(t),
\end{eqnarray}
so $f$ is a degenerate solution of the {\PDE} associated to $(U_1,U_2,U_3)$.

This $f$ has complexity bounded independently of $p$. Indeed, one may simply observe that $f = f_1\circ \pi_p$, where
\[\pi_p:(\bbZ/p\bbZ)^2\times \bbT \to \bbT^3:(s_1,s_2,t) \mapsto (\{s_1\}/p \!\!\!\!\mod 1,\{s_2\}/p \!\!\!\!\mod 1,pt)\]
is a homomorphism, and
\[f_1(s_1',s_2',t') = \lfl \{s_1'\} + \{s_2'\}\rfl\cdot \chi(t').\]
Therefore $\cplx(f) \leq \cplx(f_1)$, and this does not depend on $p$.  However, the summands in~(\ref{eq:repf}) have complexity that grows as $p$ increases, because each of $\{s_1\}$, $\{s_2\}$ and $\{s_1 + s_2\}$ can take real values as large as $p-1$.  (This is not a complete proof that the summands grow in complexity, but it can be turned into one with a little extra work.  Alternatively, Corollary~\ref{cor:cptness} shows that if the complexities of these summands were bounded uniformly in $p$, then one could factorize them through affine maps to a fixed torus so that they are all pulled back from some pre-compact family of step polynomials, and one can also check by hand that this is not possible.) Similar reasoning also gives that there is no other representation as in~(\ref{eq:repf}) in which all summands have complexity bounded independently of $p$.

Before leaving this example, let us note that the function $f$ above is built from the function
\[(\bbZ/p\bbZ)^2\to \bbZ:(s_1,s_2)\to p\lfl \{s_1\}/p + \{s_2\}/p\rfl,\]
which is an element of $\B^2(\bbZ/p\bbZ,\bbZ)$.  This latter can also be cast as an example in which a low-complexity $2$-coboundary is not the coboundary of a low-complexity $1$-cochain, and from that perspective it can also be used to rule out certain complexity-dependences in Theorem~\ref{thm:QP-prims}. \fin
\end{ex}

The upshot of this example is that, although we have proved that a {\PDE}-solution $\P$-module $\scrM$ admits finite-complexity decompositions in its top structure complex, for a given step polynomial $f \in \partial_k(M^{(k-1)})$ it may not be possible to control the minimal complexity of $g \in \partial_k^{-1}\{f\}$ only in terms of $\cplx(f)$.  At the very least, this control must also depend on the choice of $Z$, and if $Z$ is infinite-dimensional then there may be no such control at all (for instance, by taking an infinite product of increasingly bad examples).

The best one can obtain is a control on the minimal complexity of $g \in \partial_k^{-1}\{f\}$ that depends both on $\cplx(f)$ and on an a priori bound for how easily some $g' \in \partial_k^{-1}\{f\}$ may be \emph{approximated} by controlled-complexity step functions. This would be much like the statement of Theorem C.  However, this is strictly stronger than a control on the ability to approximate $f$ itself (the above example witnesses this, too), and it seems to me a very complicated challenge to develop effective versions of the main results for $\P$-modules that take this technical necessity account.

\begin{ques}
In the setting above, if one fixes a Lie group $Z$ and subgroup-tuple $\bfU$, can one bound $\min\{\cplx(g)\,|\ g \in \partial_k^{-1}\{f\}\}$ only in terms of $\cplx(f)$?
\end{ques}

Looking again at Example~\ref{ex:fcd-bad}, the summands in~(\ref{eq:repf}) had large complexity (in the sense of Definition~\ref{dfn:cplxty}) only by virtue of requiring polynomials with large coefficients in their representation as step polynomials.  Those polynomials may still be taken to be quadratic, and the directing QP partitions involve only a small number of cells.

\begin{ques}
Is there a version of Theorem C in which one controls only the complexities of directing QP partitions and the degrees of polynomials, but not the size of their coefficients?
\end{ques}

I do not know how to approach this question.  Subsections~\ref{subs:intro-cplxty} and~\ref{subs:cptness-proof} both made essential use of some consequences of low complexity for the `regularity' of step polynomials, and this will not hold if one allows very large coefficients.  If one attempts to avoid the proof by contradiction-and-compactness, and instead keep track of this alternative kind of complexity through the work of all the previous sections, one still runs into this problem, because the regularity properties of step polynomials were also implicitly involved in the cohomological results of Subsection~\ref{subs:compar}.

Finally, recall from Theorems I.A and I.B that if $Z$ is a Lie group, then the structural homology of the $\P$-modules $\scrM$ and $\scrN$ is finitely generated in all positions above $1$ (resp. $2$).  Coupled with Theorems A and B of the present paper, this immediately implies the following.

\begin{cor}
Let $Z$ be a Lie group, $\bfU$ a tuple of at least three subgroups of $Z$, $\scrM$ the associated {\PDE}-solution $\P$-module, and $\scrN$ the associated zero-sum $\P$-module.  Then there is some $D \in \bbN$ such that every $f \in M_{[k]}$ (resp. $f \in N_{[k]}$) is of the form
\[f = n_1f_1 + \cdots + n_\ell f_\ell + g,\]
where $n_i \in \bbZ$, $f_i \in M_{[k]}$ (resp. $f_i \in N_{[k]}$) has complexity at most $D$ for all $i\leq \ell$, and $g \in \partial_k(M^{(k-1)})$ (resp. $g \in \partial_k(N^{(k-1)})$). \qed
\end{cor}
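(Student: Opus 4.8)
The plan is to read the corollary off directly from Theorems~A and~B together with the finite-generation of structural homology recalled from Part~I. First I would note that, since $Z$ is a compact Abelian Lie group, Theorems~I.A and~I.B guarantee that the structural homology groups of $\scrM$ (resp.\ $\scrN$) are finitely generated in all positions above $1$ (resp.\ $2$). Because $\bfU$ has $k\geq 3$ terms, the position $[k]$ lies strictly above both thresholds, and the homology of the top structure complex there is exactly the quotient
\[H := M_{[k]}/\partial_k(M^{(k-1)}) \quad (\text{resp. } N_{[k]}/\partial_k(N^{(k-1)})),\]
since $[k]$ is the last position of that complex, so there is no outgoing map and the homology is just the cokernel of $\partial_k$. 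Hence $H$ is a finitely generated Abelian group, and I would fix a finite generating set $\ol{f}_1,\dots,\ol{f}_\ell$ of $H$.

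Next I would invoke the first conclusion of Theorem~A (resp.\ Theorem~B): every coset of $\partial_k(M^{(k-1)})$ in $M_{[k]}$ contains a step-polynomial representative. Applying this to the cosets $\ol{f}_1,\dots,\ol{f}_\ell$ yields step polynomials $f_1,\dots,f_\ell\in M_{[k]}$ (resp.\ $N_{[k]}$) with $f_i + \partial_k(M^{(k-1)}) = \ol{f}_i$. Since $Z$ is a Lie group, each such genuine step polynomial has finite complexity in the sense of Definition~\ref{dfn:cplxty}: by Definition~\ref{dfn:step-poly} it factorizes through an affine map $Z\to\bbT^d$ and a step polynomial on $[0,1)^d$, which has a well-defined finite complexity. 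Setting $D := \max_{i\leq\ell}\cplx(f_i)$ then fixes the constant asserted in the corollary; it depends only on $Z$ and $\bfU$.

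Finally, given any $f\in M_{[k]}$ (resp.\ $N_{[k]}$), its class $\ol{f}$ in $H$ can be expanded as $\ol{f} = n_1\ol{f}_1 + \cdots + n_\ell\ol{f}_\ell$ for some integers $n_i$, simply because the $\ol{f}_i$ generate $H$. Then $g := f - (n_1f_1 + \cdots + n_\ell f_\ell)$ lies in the kernel of the quotient map $M_{[k]}\onto H$, that is, $g\in\partial_k(M^{(k-1)})$ (resp.\ $g\in\partial_k(N^{(k-1)})$), giving the desired decomposition. I do not expect a genuine obstacle: the only two points needing care are checking that $[k]$ sits above the almost-modesty threshold — which is precisely where the hypothesis of at least three subgroups is used — and observing that a step polynomial on a Lie group has finite complexity; everything else is formal. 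The zero-sum case runs verbatim, using Theorem~B and the corresponding finite-generation statement for $\scrN$ from Part~I.
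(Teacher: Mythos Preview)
Your proposal is correct and matches the paper's own (implicit) argument: the paper simply notes that finite generation of the top structural homology from Theorems~I.A and~I.B, combined with the step-polynomial representatives supplied by Theorems~A and~B, immediately gives the result, and marks it with \qed. You have spelled out exactly those steps, including the one point the paper leaves tacit --- that a step polynomial on a Lie group has some finite complexity --- and correctly identified that the hypothesis $k\geq 3$ is needed only so that position $k$ exceeds the threshold $2$ for $\scrN$.
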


Of course, we cannot control the size of the coefficients $n_i$ in this corollary, and so we do not control the complexity of the whole sum $n_1f_1 + \cdots + n_\ell f_\ell$.  This begs the following question, which I suspect lies far beyond the methods of the present paper.

\begin{ques}
Does the preceding corollary hold for any compact Abelian $Z$, with a choice of $D$ that does not depend on $Z$ or $\bfU$, but now with no control over either the coefficients $n_i$ or the number of summands $\ell$?
\end{ques}

\appendix

\section{Step-polynomial cohomology for discrete modules}\label{app:sp-and-m-cohom}

This appendix proves Proposition~\ref{prop:disc-sp-and-m-cohom}.  We bring forward the notation used there.

First, let $V := Z/Z_1$.  A repeated application of Proposition~\ref{prop:s-p-coind-consistent} gives a diagram of cochain $Z_1$-complexes (hence also $W$-complexes)
\begin{center}
$\phantom{i}$\xymatrix{
0 \ar[r] & \Cnd_{Z_1}^Z A_1 \ar^-d[r]\ar_\cong[d] & \C^1(W,\Cnd_{Z_1}^Z A_1) \ar^-d[r]\ar_\cong[d] & \cdots\\
0 \ar[r] & \F(V,A_1) \ar^-d[r] & \C^1(W,\F(V,A_1)) \ar^-d[r] & \cdots,
}
\end{center}
where each vertical isomorphism is complexity-bounded in both directions.  This restricts to an isomorphism of the step-polynomial subdiagrams of the top and bottom rows.  It therefore suffices to prove the counterpart of Proposition~\ref{prop:disc-sp-and-m-cohom} for the bottom row: that is, to prove that the comparison maps
\[\rmH^p_\sp(W,0 \leq \F(V,A_1)) \to \F(V,\rmH^p_\m(W,A_1))\]
are injective with images equal to $\F_\sp(V,\rmH^p_\m(W,A_1))$.  Equivalently, this has reduced our work to the case $Z = V\times Z_1$.

The rest of the proof rests on defining two \emph{further} cohomological functors from $\PMod(Y)$ to Abelian groups, say $H^\ast = (H^p)_{p\geq 0}$ and $K^\ast = (K^p)_{p\geq 0}$, with the following properties:
\begin{itemize}
\item if $M$ is a discrete $Y$-module, then
\begin{multline}\label{eq:Hp-and-Kp}
H^p(M) = \rmH^p_\sp\big(W,0 \leq \F(V,M_1)\big)\\ \quad \hbox{and} \quad K^p(M) = \F_\sp\big(V,\rmH^p_\m(W,M_1)\big),
\end{multline}
where we will now let $M_1 := \Cnd_Y^{Z_1} M$ for any $M \in \PMod(Y)$,
\item $H^\ast$ and $K^\ast$ are both effaceable cohomological functors on the whole of $\PMod(Y)$ (these terms are defined in~\cite{Moo76(gr-cohomIII)}), and
\item there are comparison homomorphisms $H^p \to K^p$ in all degrees which respect long exact sequences and give an isomorphism in degree zero.
\end{itemize}
By the usual universality argument (see~\cite[Theorem 2]{Moo76(gr-cohomIII)}), the second and third of these properties together imply that all of the those comparison homomorphisms are actually isomorphisms.  Then the first property implies that the desired cohomology groups are equal for discrete modules, completing the proof of Proposition~\ref{prop:disc-sp-and-m-cohom}.

We need this rather abstract formulation because the universality argument works only for functors that are defined on a large enough category: in this case, $\PMod(Y)$.  It cannot be applied directly to the two right-hand sides in~(\ref{eq:Hp-and-Kp}), since these are well-defined only for discrete modules, which are too small a class to allow effacement.  Similar techniques were used in~\cite{AusMoo--cohomcty} to prove various cocycle-regularity results for the theory $\rmH^\ast_\m$.

The key to defining $H^\ast$ and $K^\ast$ on general Polish modules is the following.

\begin{dfn}
For any compact metrizable Abelian group $Z$ and Polish Abelian group $M$, a function $Z\to M$ is \textbf{almost-step} if it is a uniform limit of step functions $Z \to M$ (that is, finite-valued functions whose level-sets form a QP partition).  The set of Haar-a.e. equivalence classes of almost-step functions is denoted by $\F_{\rm{as}}(Z,M)$.
\end{dfn}

The definitions of $H^\ast$ and $K^\ast$ and the comparison between them will rely on a few basic properties of almost step functions.

\begin{lem}\label{lem:lifting-as}
If $\psi:M\to N$ is a continuous homomorphism of Polish Abelian groups and $F\in \F_{\rm{as}}(Z,M)$, then $\psi\circ F \in \F_{\rm{as}}(Z,N)$.  If $\psi$ is surjective, then for any $f \in \F_{\rm{as}}(Z,N)$ there exists $F \in \F_{\rm{as}}(Z,M)$ such that $f = \psi\circ F$.
\end{lem}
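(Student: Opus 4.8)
\textbf{Proof proposal for Lemma~\ref{lem:lifting-as}.}

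The plan is to reduce both assertions to the corresponding facts about genuine step functions, which are already available, and then pass to the uniform limits that define almost-step functions. For the first assertion, suppose $F\in \F_{\rm{as}}(Z,M)$ and fix a sequence of step functions $F_n:Z\to M$ converging to $F$ uniformly (after adjusting on a null set we may take a Borel representative so that the convergence is genuinely uniform on $Z$). Since $\psi$ is continuous, it is uniformly continuous on compact sets, but $M$ need not be compact; however, the images $F_n(Z)$ and $F(Z)$ are precompact (finite and a uniform limit of finite sets, respectively), so $\psi$ is uniformly continuous on a fixed precompact neighbourhood of $\overline{F(Z)}$ containing all the $F_n(Z)$ for $n$ large. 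Hence $\psi\circ F_n \to \psi\circ F$ uniformly. Each $\psi\circ F_n$ is finite-valued with level-set partition refined by that of $F_n$, hence QP by Lemma~\ref{lem:qp-obvious}; so each $\psi\circ F_n$ is a step function and $\psi\circ F$ is almost-step.

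For the second assertion, assume $\psi$ surjective and $f\in \F_{\rm{as}}(Z,N)$, with step functions $f_n:Z\to N$ converging uniformly to $f$. The idea is to lift the \emph{increments} $f_{n+1}-f_n$, which are small step functions, to small step functions in $M$, and sum a telescoping series. Concretely, by passing to a subsequence we may assume $\sup_z \rho_N(f_{n+1}(z),f_n(z)) < 2^{-n}$ for a fixed translation-invariant metric $\rho_N$ on $N$; then $g_n := f_{n+1}-f_n$ is a step function $Z\to N$ taking finitely many values, each within $2^{-n}$ of $0$. Since $\psi$ is a continuous surjective homomorphism of Polish groups, the open mapping theorem gives $\delta_n>0$ with $\psi(B_M(0,\delta_n))\supseteq B_N(0,2^{-n})$ shrunk appropriately; so each value of $g_n$ has a $\psi$-preimage of $\rho_M$-norm $<2^{-n+c}$ for a fixed constant, and choosing one such preimage per value (finitely many choices) yields a step function $G_n:Z\to M$ with $\psi\circ G_n = g_n$ and $\sup_z\rho_M(G_n(z),0) < 2^{-n+c}$. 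Also choose any step-function lift $G_0$ of $f_0$ (possible: pick a preimage of each of the finitely many values of $f_0$). Then $F := G_0 + \sum_{n\geq 0} G_n$ converges uniformly, so $F$ is a uniform limit of the step functions $G_0+\cdots+G_N$, hence $F\in\F_{\rm{as}}(Z,M)$, and by continuity of $\psi$ one has $\psi\circ F = f_0 + \sum_{n\geq 0} g_n = \lim_N f_{N+1} = f$.

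The main obstacle is the quantitative control on lifts: one needs that small elements of $N$ have small preimages in $M$. This is exactly the open mapping theorem for Polish groups (the surjection $\psi$ is automatically open), which guarantees the required $\delta_n$; the only care needed is that a single value-by-value choice of preimage on a finite set produces a step function (its level-set partition refines that of $g_n$, hence is QP). Everything else — precompactness of images, uniform continuity of $\psi$ on the relevant precompact set, and convergence of the telescoping sum — is routine. One should also note at the outset that the statement is about Haar-a.e. equivalence classes, so all the manipulations above are understood to be performed on fixed Borel representatives and the conclusion read back at the level of equivalence classes.
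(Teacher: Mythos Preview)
Your approach is essentially correct and closely parallels the paper's, but there is one genuine slip in the second part. You assert that, by the open mapping theorem, each value of $g_n$ (which lies in $B_{\rho_N}(0,2^{-n})$) has a $\psi$-preimage of $\rho_M$-norm $< 2^{-n+c}$ for a \emph{fixed} constant $c$. The open mapping theorem only gives that the quotient metric and $\rho_N$ induce the same topology; it does not give a bi-Lipschitz comparison. What you actually get is: for every $\varepsilon>0$ there is $\delta(\varepsilon)>0$ such that $\rho_N(x,0)<\delta(\varepsilon)$ implies $x$ has a lift of $\rho_M$-norm $<\varepsilon$. The fix is easy: when thinning the sequence $(f_n)$, choose it so that $\sup_z\rho_N(f_{n+1}(z),f_n(z)) < \delta(2^{-n})$ rather than $<2^{-n}$; then each $g_n$ lifts to a step function $G_n$ with $\sup_z\rho_M(G_n(z),0)<2^{-n}$, and the telescoping sum converges uniformly as you want.

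For comparison, the paper's argument is organised slightly differently but uses the same underlying fact. Rather than lifting the increments $f_{n+1}-f_n$, it lifts the $g_n$ themselves, recursively: having chosen a lift $G_n$ constant on the cells of a QP partition $\frP_n$ (with $\frP_{n+1}\succeq\frP_n$), it uses the definition of the quotient metric $\ol{d}$ directly to pick, on each cell of $\frP_{n+1}$, a lift of $g_{n+1}$ within $2^{-n+1}$ of the already-chosen value of $G_n$ there. This yields a Cauchy sequence $(G_n)$ whose uniform limit is the desired $F$. This sidesteps the summability bookkeeping entirely, since one controls $d_\infty(G_n,G_{n+1})$ rather than $\sum\|G_n\|_\infty$. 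Your telescoping version is equally valid once the quantitative slip is repaired. (Incidentally, your first paragraph is correct but more elaborate than needed: a continuous homomorphism of topological groups is automatically uniformly continuous on the whole group, since $\psi(x)-\psi(y)=\psi(x-y)$, so the precompactness discussion is unnecessary.)
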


\begin{proof}
The first conclusion is obvious.

For the second, suppose $\psi$ is surjective, let $d$ be a bounded complete group metric generating the topology of $M$, let $\ol{d}$ be the resulting quotient metric on $N$, and let $\ol{d}_\infty$ be the uniform metric on functions $Z\to N$ that arises from $\ol{d}$.  Let $g_n:Z\to N$ be a sequence of step functions such that $\ol{d}_\infty(f,g_n) < 2^{-n-1}$ for all $n$, and let $\frP_n$ be a QP partition that refines the level-set partition of $g_n$ for each $n$.  By replacing each $\frP_n$ with $\bigvee_{m\leq n}\frP_m$ if necessary, we may assume that each $\frP_n$ refines its predecessors.

We now construct lifts $G_n:Z\to M$ of the functions $g_n$ recursively as follows.

To begin, let $G_1$ be any lift of $g_1$ which is constant on the cells of $\frP_1$.

Now assume that $g_m$ has already been defined for each $m \leq n$.  Since 
$\ol{d}_\infty(g_n,g_{n+1}) < 2^{-n}$, and since $g_n$ and $g_{n+1}$ are both constant on every cell of $\frP_{n+1}$, by the definition of $\ol{d}$ we may choose a lift of the value of $g_{n+1}$ on each of these cells which lies within $2^{-n+1}$ of the previously-chosen value of $G_n$ on this cell.  This new lift will be the value of $G_{n+1}$ on this cell.

These selections define the lifts $G_n$.  The construction gives
\[d_\infty(G_n,G_{n'}) < 2^{-n+1} + \dots + 2^{-n'}\]
whenever $n' > n$, so these lifts form a Cauchy sequence in the uniform topology.  Letting $F$ be their uniform limit completes the proof.
\end{proof}

\begin{lem}\label{lem:step-unif-cts}
Let $Z$ and $M$ be as above, and suppose that $f:Z\to M$ is a function for which there is a QP partition $\frP$ of $Z$ such that $f|C$ has a uniformly continuous extension to $\ol{C}$ for every $C \in \frP$.  Then $f$ is almost step.
\end{lem}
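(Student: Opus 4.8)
The plan is to prove directly that $f$ is a uniform limit of finite-valued step functions. Fix a bounded complete metric $d$ on $M$ and a metric $\rho$ on the compact group $Z$, and write $d_\infty$ for the resulting sup-metric on maps $Z\to M$. Given $\eps>0$, I will construct a step function $g$ with $d_\infty(f,g)\le\eps$. A QP partition has only finitely many cells (by Definition~\ref{dfn:qp}, $\frP$ is refined by some $\{\chi\}^{-1}(\frQ)$ with $\frQ$ a finite convex-polytopal partition of $[0,1)^d$, hence $\frP$ is itself finite), so for each $C\in\frP$ the given uniformly continuous extension $\overline{f_C}\colon\overline C\to M$ admits a modulus $\delta_C>0$ with $d(\overline{f_C}(x),\overline{f_C}(y))<\eps$ whenever $x,y\in\overline C$ and $\rho(x,y)<\delta_C$; put $\delta:=\min_{C\in\frP}\delta_C>0$.

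The one genuinely non-routine step is to refine $\frP$ to a QP partition whose cells are $\rho$-small \emph{in $Z$}, as opposed to small in a coordinate chart. For this, apply Lemma~\ref{lem:q-p-fine} to the open cover of $Z$ by $\rho$-balls of radius $\delta/3$: it yields a QP partition $\frP''$ every cell of which lies in such a ball and hence has $\rho$-diameter $<\delta$. By Lemma~\ref{lem:qp-join}, $\frP':=\frP\vee\frP''$ is a (finite) QP partition refining both $\frP$ and $\frP''$. Now choose a point $x_{C'}\in C'$ for each $C'\in\frP'$, let $C(C')\in\frP$ be the unique cell containing $C'$, and define $g$ to take the constant value $f(x_{C'})=\overline{f_{C(C')}}(x_{C'})$ on $C'$. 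For $x\in C'$ we have $\rho(x,x_{C'})\le\operatorname{diam}_\rho(C')<\delta\le\delta_{C(C')}$, whence $d(f(x),g(x))=d(\overline{f_{C(C')}}(x),\overline{f_{C(C')}}(x_{C'}))<\eps$; so $d_\infty(f,g)\le\eps$. Finally $g$ is finite-valued and is constant on each cell of the QP partition $\frP'$, so $\frP'$ refines the level-set partition of $g$, which is therefore QP by Lemma~\ref{lem:qp-obvious}; thus $g$ is a step function.

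Letting $\eps\downarrow 0$ exhibits $f$ as a uniform limit of step functions, i.e. $f$ is almost step. I do not expect a real obstacle here: the substantive content is already packaged in Lemma~\ref{lem:q-p-fine} together with Lemma~\ref{lem:qp-join}, and the only points needing care are the identification of the ``right'' notion of small cell, which forces the argument to pass through $\rho$-balls in $Z$ rather than through the polytopal coordinates of Definition~\ref{dfn:qp}, and the implicit finiteness of $\frP$, which is what makes the uniform modulus $\delta$ available.
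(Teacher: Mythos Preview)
Your proof is correct and follows essentially the same approach as the paper: both use the finiteness of $\frP$ to obtain a uniform modulus, invoke Lemma~\ref{lem:q-p-fine} on a cover by small balls to produce a QP partition with $\rho$-small cells, take the common refinement with $\frP$, and then define $g$ by sampling one value of $f$ per cell. The paper's write-up is terser (it does not cite Lemma~\ref{lem:qp-join} or Lemma~\ref{lem:qp-obvious} explicitly, and writes $\frP\cap\frQ$ where you write $\frP\vee\frP''$), but the argument is the same.
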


\begin{proof}
Fix $\eps > 0$, and let $d$ be a suitable metric on $M$.  Since $\frP$ has only finitely many cells, our assumption implies that there is a finite open cover $\cal{U}$ of $Z$, say by balls of sufficiently small radius for some choice of metric on $Z$, such that
\[\big(\ y \sim_\frP z \ \hbox{and}\ z,y \in U \in \cal{U}\ \big) \quad \Longrightarrow \quad d(f(y),f(z)) < \eps.\]
Now let $\frQ$ be a QP partition to which $\cal{U}$ is subordinate (Lemma~\ref{lem:q-p-fine}), and let $\frR := \frP\cap \frQ$.  Then the above property allows us to choose a function $g$ which is constant on every cell of $\frR$ and takes values uniformly within $\eps$ of those of $f$ everywhere.
\end{proof}

Finally, we will need the following result about `slicing' almost step functions.

\begin{lem}\label{lem:slicing-as}
If $Y$ and $Z$ are compact metrizable Abelian groups, $M$ is a Polish Abelian group with translation-invariant metric $d$, and $f:Y\times Z\to M$ is an almost-step function, then the function $F:Y\to \F(Z,M)$ defined by
\[F(y)(z) := f(y,z)\]
is also an almost-step function.
\end{lem}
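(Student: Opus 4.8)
I would first reduce to the case where $f$ is a step function. Fix a bounded translation-invariant complete metric $d$ on $M$ (replacing $d$ by $\min(d,1)$ if needed, so $d\le1$), and let $\rho$ be the resulting metric of convergence in probability on $\F(Z,M)$, so that $\rho(a,b)\le m_Z(\{z:a(z)\ne b(z)\})$. If $g:Y\times Z\to M$ is Borel with sliced function $G:Y\to\F(Z,M)$, then $\sup_{y}\rho(G(y),F(y))\le\sup_{(y,z)}d(g(y,z),f(y,z))$; so if $f$ is a uniform limit of step functions $g_n$, then $F$ is the uniform limit of their sliced functions $G_n$. Since a uniform limit of almost-step functions is almost-step (a routine diagonal argument), it suffices to treat the case where $f$ itself is a step function.

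So suppose $f$ is a step function with distinct values $m_1,\dots,m_r$ and level sets $C_1,\dots,C_r$ forming a QP partition of $Y\times Z$. The idea is to package the combinatorial type of $f$ into a function valued in a discrete (hence Lie) group, so that Corollary~\ref{cor:connected-images} becomes applicable. Set
\[
h:=(1_{C_1},\dots,1_{C_r}):Y\times Z\longrightarrow\bbZ^r ,
\]
which is constant on the cells of a polytopal-pullback partition refining $\{C_i\}_i$, hence is a step polynomial in the sense of Definition~\ref{dfn:step-poly}. The point is that for $y,y'\in Y$ one has $f(y,z)=f(y',z)$ precisely when $h(y,z)=h(y',z)$ — both say that $(y,z)$ and $(y',z)$ lie in the same $C_i$ — so
\[
\rho\big(F(y),F(y')\big)\ \le\ m_Z\big(\{z:h(y,z)\ne h(y',z)\}\big)\ =\ \rho_{\F(Z,\bbZ^r)}\big(h(y,\cdot),h(y',\cdot)\big).
\]

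Next I would apply Corollary~\ref{cor:connected-images} to the step polynomial $Z\times Y\to\bbZ^r$, $(z,y)\mapsto h(y,z)$ (a step polynomial, being $h$ composed with the coordinate-swap affine isomorphism), obtaining a QP partition $\frQ$ of $Y$ on whose cells $y\mapsto h(y,\cdot)$ is uniformly continuous into $\F(Z,\bbZ^r)$. Given $\eps>0$, uniform continuity on each totally bounded cell of $\frQ$ yields a $\delta>0$ with $\rho_{\F(Z,\bbZ^r)}(h(y,\cdot),h(y',\cdot))<\eps$ whenever $y,y'$ lie in one cell of $\frQ$ at distance $<\delta$; taking the common refinement of $\frQ$ with a QP partition subordinate to the cover of $Y$ by $\delta/2$-balls (Lemmas~\ref{lem:q-p-fine} and~\ref{lem:qp-join}) gives a QP partition $\frR$ of $Y$, refining $\frQ$, with all cells of diameter $<\delta$. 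For each $D\in\frR$ choose $y_D\in D$ and put $G(y):=f(y_D,\cdot)$ for $y\in D$. Then $\sup_{y}\rho(F(y),G(y))<\eps$ by the displayed bound, while $G$ has finitely many values and is constant on each cell of $\frR$, so its level-set partition is refined by $\frR$ and hence QP by Lemma~\ref{lem:qp-obvious}; thus $G$ is a step function $Y\to\F(Z,M)$. Letting $\eps\downarrow0$ exhibits $F$ as a uniform limit of step functions, i.e.\ as almost-step.

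The genuinely non-routine point — and the only real obstacle — is that slicing a step function need \emph{not} give a step function (Example~\ref{ex:bad-slicing}), so the ``almost-step'' slack cannot be avoided; the trick is to recognize that the required regularity is exactly the uniform continuity on QP cells provided by Corollary~\ref{cor:connected-images}, and to access it by transporting the combinatorics of $f$ through the discrete-valued $h$, since that corollary needs a Lie-group target whereas $M$ is only Polish. The remaining steps are standard measure theory together with bookkeeping for QP partitions.
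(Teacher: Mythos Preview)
Your proof is correct and follows essentially the same route as the paper: reduce to a genuine step function, invoke Corollary~\ref{cor:connected-images} to obtain a QP partition of $Y$ on whose cells the sliced map is uniformly continuous, and then approximate by a function constant on a QP refinement. Your passage through the auxiliary $\bbZ^r$-valued $h$ is a careful way to meet the Lie-target hypothesis of Corollary~\ref{cor:connected-images} (since $M$ is only Polish); the paper leaves this step implicit and then finishes by citing Lemma~\ref{lem:step-unif-cts} rather than building the step approximant by hand as you do.
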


\begin{proof}
Let $g_n \to f$ be a uniformly convergent sequence of step functions, and define $G_n:Y\to \F(Z,M)$ by
\[G_n(y)(z) := g_n(y,z).\]
These satisfy the uniform convergence $G_n\to F$ for the convergence-in-probability metric $d_0$ on $\F(Z,M)$.  The proof is finished by showing that each $G_n$ is an almost-step function (although possibly not a step function: consider again Example~\ref{ex:bad-slicing}).  This holds because Corollary~\ref{cor:connected-images} gives a QP partition $\frP$ of $Y$ such that $G_n$ restricts to a uniformly continuous function on each cell of $\frP$, so we may make another appeal to Lemma~\ref{lem:step-unif-cts}.
\end{proof}

Now let us return to the setting of two subgroups $Y,W \leq Z_1$ with $Y+W= Z_1$ and $Z = V\times Z_1$.  For each $p\geq 0$ and $M \in \PMod(Y)$ define
\[\C^p_{\rm{as}}(M) := \F_{\rm{as}}(W^p\times V\times Z_1,M)^Y,\]
where we take the fixed points for the diagonal $Y$-action on $Z_1$ and $M$. This is a submodule of $\C^p(W,\F(V,M_1))$.  Define the corresponding coboundary operators
\[d : \C^p_{\rm{as}}(M) \to \F(W^{p+1}\times V\times Z_1,M)^Y \cong \C^{p+1}(W,\F(V,M_1))\]
by the usual formula:
\begin{eqnarray}\label{eq:dfn-d2}
df(w_1,\ldots,w_{p+1},v,z) &:=& w_1\cdot (f(w_2,\ldots,w_{p+1},v,z+w_1))\nonumber \\ && + \sum_{i=1}^p(-1)^pf(w_1,\ldots,w_i + w_{i+1},\ldots,w_{p+1},v,z)\nonumber \\ 
&& + (-1)^{p+1}f(w_1,\ldots,w_p,v,z),
\end{eqnarray}
(this is really just a repeat of~(\ref{eq:dfn-d})).

\begin{lem}\label{cor:d-as-is-as}
If $f \in \C^p_\rm{as}(M)$ then $df \in \C^{p+1}_{\rm{as}}(M)$.
\end{lem}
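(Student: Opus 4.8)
The plan is to show that $df$ is a finite signed sum of functions obtained from $f$ by precomposition with surjective continuous homomorphisms of compact metrizable Abelian groups, and then to invoke two elementary closure properties of the class $\F_{\rm{as}}$.

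First I would unwind~(\ref{eq:dfn-d2}). Under the canonical identification $\C^p(W,\F(V,M_1)) \cong \F(W^p\times V\times Z_1,M)^Y$, the subgroup $\C^p_{\rm{as}}(M)$ is precisely $\F_{\rm{as}}(W^p\times V\times Z_1,M)\cap\F(W^p\times V\times Z_1,M)^Y$. The coordinate $V=Z/Z_1$ is a ``dummy'' on which $W\le Z_1$ acts trivially, while on $M_1=\Cnd_Y^{Z_1}M$ the group $W$ acts by rotation of the $Z_1$-coordinate alone, with no twist of the $M$-value (all of the $Y$-action on $M$ having been absorbed into the $Y$-equivariance defining $M_1$). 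Granting this, every term on the right of~(\ref{eq:dfn-d2}), including the leading one, has the form $f\circ\gamma$ for a surjective continuous homomorphism $\gamma:W^{p+1}\times V\times Z_1\to W^p\times V\times Z_1$: the leading term is $f\circ\gamma_1$ with $\gamma_1(w_1,\dots,w_{p+1},v,z)=(w_2,\dots,w_{p+1},v,z\pm w_1)$; the $i$-th middle term ($1\le i\le p$) is $f\circ\gamma_{i+1}$ with $\gamma_{i+1}$ replacing $(w_i,w_{i+1})$ by its sum and re-indexing; and the final term is $f\circ\gamma_{p+2}$ with $\gamma_{p+2}$ deleting $w_{p+1}$. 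Each $\gamma_j$ is visibly a surjective continuous homomorphism.

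Then I would record the two needed facts. (i) If $\phi:G\to H$ is a surjective continuous homomorphism of compact metrizable Abelian groups, then $g\mapsto g\circ\phi$ maps $\F_{\rm{as}}(H,M)$ into $\F_{\rm{as}}(G,M)$: since $\phi_*m_G=m_H$ by uniqueness of Haar measure, precomposition is well defined on Haar-a.e. classes; a step function $g$ with QP level-set partition $\frP$ pulls back to a finite-valued function with level-set partition $\phi^{-1}(\frP)$, which is QP by Lemma~\ref{lem:qp-obvious}, so $g\circ\phi$ is again a step function; and precomposition preserves uniform convergence. (ii) $\F_{\rm{as}}(G,M)$ is a subgroup of $\F(G,M)$: a sum of step functions is constant on the cells of the common refinement of their level-set partitions, which is QP by Lemma~\ref{lem:qp-join}, and a uniform limit of sums is the sum of the uniform limits. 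Combining these with the decomposition above, each $f\circ\gamma_j$ lies in $\F_{\rm{as}}(W^{p+1}\times V\times Z_1,M)$, hence so does the finite signed sum $df$; since $d$ is the ordinary coboundary on $\C^\bullet(W,\F(V,M_1))$ we also have $df\in\F(W^{p+1}\times V\times Z_1,M)^Y$, and intersecting gives $df\in\C^{p+1}_{\rm{as}}(M)$.

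The one place where I expect to have to be careful --- and the analogue of where the step-polynomial theory of Subsection~\ref{subs:semi-sp-cohom} appeals to the SP hypothesis on the fibre --- is precisely the leading term of~(\ref{eq:dfn-d2}): one must confirm that the $W$-action appearing there is genuinely a rotation action, untwisted on the $M$-value, so that this term too is just an affine precomposition of $f$. This is a direct consequence of the explicit form of the $Z_1$-module $\Cnd_Y^{Z_1}M$, whose $Z_1$-action is by rotation of the $Z_1$-coordinate alone; everything else in the argument is routine.
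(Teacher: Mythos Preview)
Your proof is correct. Both you and the paper recognize that the middle and final summands of $df$ are of the form $f\circ\gamma$ for surjective continuous homomorphisms, and that such pull-backs preserve the almost-step property. The divergence is in the handling of the leading term.

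The paper first reduces to the case of step $f$ (via uniform approximation), and then treats the leading term as merely \emph{uniformly continuous} on the cells of the pulled-back partition $\frQ$, invoking Lemma~\ref{lem:step-unif-cts} to conclude it is almost step. This is the argument one would need if the $W$-action on the fibre $M$ were genuinely nontrivial: on each cell of $\frQ$ the leading term would then be an orbit map $w_1\mapsto w_1\cdot c$ for some fixed $c\in M$, continuous but not constant.

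You instead observe---correctly, and you rightly flag it as the one point needing care---that in this appendix setting the $W$-action on $\F(V,M_1)$ is pure rotation of the $Z_1$-coordinate, with no twist on $M$-values, because that is exactly how $Z_1$ acts on $M_1=\Cnd_Y^{Z_1}M$. With that observation the leading term is itself $f\circ\gamma_1$, and all $p+2$ summands can be treated uniformly; no preliminary reduction to step $f$ and no appeal to Lemma~\ref{lem:step-unif-cts} is needed. Your route is therefore more direct in the present setting; the paper's route would survive a nontrivially twisted fibre action (which is not actually present here, despite what the ``$w_1\cdot$'' in~(\ref{eq:dfn-d2}) might suggest).
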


\begin{proof}
Since $Z$ is compact, it is clear from formula~(\ref{eq:dfn-d2}) that if $g_n\to f$ uniformly, then $dg_n \to df$ uniformly, so it suffices to prove this in case $f$ is strictly a step function.

In this case, every summand in~(\ref{eq:dfn-d2}) is obviously also step, except for the first:
\[w_1\cdot (f(w_2,\ldots,w_{p+1},v,z+w_1)).\]
If $\frP$ is a QP partition of $W^p\times V\times Z_1$ that controls $f$, and $\frQ$ is its lift through the homomorphism
\[W^{p+1}\times (V\times Z_1)\to W^p\times (V\times Z_1): (w_1,\ldots,w_{p+1},v,z) \mapsto (w_2,\ldots,w_{p+1},v,z+w_1),\]
then that first term is uniformly continuous on every cell of $\frQ$, so it is almost step by the preceding lemma.

Finally, it is obvious that a sum of almost step functions is almost step.
\end{proof}

This lemma shows that one has a complex
\[0 \to \C^0_{\rm{as}}(M) \stackrel{d}{\to} \C^1_\rm{as}(M) \stackrel{d}{\to} \cdots.\]
Now we define $H^p(M)$ for $p\geq 0$ to be the homology of this complex.  In particular, $H^0(M)$ is the module of $W$-fixed points in $\C^0_\rm{as}(M) = \F_{\rm{as}}(V\times Z_1,M)^Y$, which is easily identified with $\F_{\rm{as}}(V,(\Cnd_Y^{Z_1}M)^W) = \F_{\rm{as}}(V,\rmH^0_\m(W,M_1))$.

We next define the functor $K^\ast$.  To this end, now set
\[\t{\C}^p(M) := \F_{\rm{as}}\big(V,\C^p(W,M_1)\big)\]
for all $p\geq 0$, and observe that these fit into a complex
\[0 \to \t{\C}^0(M) \stackrel{\t{d}}{\to} \t{\C}^1(M) \stackrel{\t{d}}{\to} \cdots,\]
where $\t{d}:\t{\C}^p(M) \to \t{\C}^{p+1}(M)$ is given by $f\mapsto d\circ f$ when $f \in \t{\C}^p(M)$ is regarded as an almost step function $V \to \C^p(W,M_1)$, and $d$ is the differential on $\C^\ast(W,M_1)$.  This complex is well-defined by the first part of Lemma~\ref{lem:lifting-as}.  We define $K^p(M)$ for $p\geq 0$ to be the homology of this complex.

In order to compare $H^\ast$ and $K^\ast$, observe from Lemma~\ref{lem:slicing-as} that if $f \in \C^p_\rm{as}(M) = \F_{\rm{as}}(W^p\times V\times Z_1,M)^Y$, and we regard it as a function $V\to \C^p(W,M_1)$, then it defines an element of $\F_{\rm{as}}(V,\C^p(W,M_1))$.  This re-interpretation therefore defines a sequence of homomorphisms $\C^p_\rm{as}(M)\to \t{\C}^p(M)$ that intertwine the differentials of these complexes, and these now descend to a sequence of comparison homomorphisms
\[H^p(M)\to K^p(M).\]

Now suppose that $M$ is discrete.  In this case, a uniformly convergent sequence of $M$-valued functions must actually stabilize.  Therefore
\[\C^p_{\rm{as}}(M) = \C^p_\sp\big(W,\F(V,M_1)\big) \quad \hbox{and} \quad H^p(M) = \rmH^p_\sp\big(W,0\leq \F(V,M_1)\big)\]
for discrete $M$.  (Note, however, that if $M$ is a non-discrete functional module, then the class of almost-step functions $W^p\times V\times Z_1\to M$ may be strictly larger than ${\F_\sp(W^p\times V\times Z_1,M)}$, and so it does not follow that $H^p(M) = \rmH^p_\sp(W,\F(V,M_1))$ for general functional modules $M$.)

On the other hand, for any $M \in \PMod(Y)$ for which
\[\rmH^p_\m(W,M_1) = \Z^p(W,M_1)/\B^p(W,M_1)\]
is Hausdorff, hence Polish, the second part of Lemma~\ref{lem:lifting-as} shows that
\begin{eqnarray}\label{eq:Kpequals}
K^p(M) = \frac{\F_{\rm{as}}(V,\Z^p(W,M_1))}{\F_{\rm{as}}(V,\B^p(W,M_1))} = \F_{\rm{as}}(V,\rmH^p_\m(W,M_1)),
\end{eqnarray}
regarded as a subgroup of $\F(V,\rmH^p_\m(W,M_1))$.  In particular, in case $M$ is discrete, the groups $\rmH^p_\m(W,M_1)$ are also discrete (see Corollary I.3.4), and so this identifies $K^p(M)$ with the subgroup $\F_\sp(V,\rmH^p_\m(W,M_1))$.

Thus, we have proved~(\ref{eq:Hp-and-Kp}), and Proposition~\ref{prop:disc-sp-and-m-cohom} becomes a consequence of the following more general result:

\begin{prop}\label{prop:as-and-m-cohom}
The comparison maps $H^p(M) \to K^p(M)$ are isomorphisms for every $M \in \PMod(Y)$.
\end{prop}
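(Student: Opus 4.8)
The plan is to prove Proposition~\ref{prop:as-and-m-cohom} by the universality argument for cohomological functors, exactly as signposted in the appendix. So the three things to establish are: (i) both $H^\ast$ and $K^\ast$ are cohomological functors on $\PMod(Y)$, meaning every short exact sequence of Polish $Y$-modules gives rise to a long exact sequence naturally; (ii) both are effaceable in the sense of~\cite{Moo76(gr-cohomIII)}; and (iii) the comparison maps $H^p(M)\to K^p(M)$ commute with the connecting homomorphisms and are isomorphisms in degree $0$. Granting all of this, \cite[Theorem 2]{Moo76(gr-cohomIII)} forces the comparison maps to be isomorphisms in every degree.

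First I would check that both functors are well-defined on all of $\PMod(Y)$ and functorial: a continuous $Y$-homomorphism $M\to N$ induces maps on $\C^p_{\rm as}(-)$ and $\t\C^p(-)$ by post-composition, using the first part of Lemma~\ref{lem:lifting-as}, and these respect the differentials. For the long exact sequences, the key point is that a short exact sequence $M'\into M\onto M''$ in $\PMod(Y)$ induces \emph{short exact} sequences of cochain complexes
\[0\to \C^p_{\rm as}(M')\to \C^p_{\rm as}(M)\to \C^p_{\rm as}(M'')\to 0\]
and similarly for $\t\C^p$. Exactness on the left and in the middle is immediate; surjectivity on the right is precisely the lifting statement in the second part of Lemma~\ref{lem:lifting-as} (for $\C^p_{\rm as}$, applied to the surjection $W^p\times V\times Z_1$-functions, together with a Fubini/selector argument to arrange strict $Y$-equivariance of the lift, as in Lemma~\ref{lem:strict-equivar}; for $\t\C^p$, applied to $\F(W^p,-)$ first and then to $\F_{\rm as}(V,-)$). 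The snake lemma then yields the long exact sequences, and naturality of the connecting maps is the usual diagram chase. The comparison $H^p\to K^p$ is induced by the slicing map $\C^p_{\rm as}(M)\to \t\C^p(M)$ of Lemma~\ref{lem:slicing-as}, which is a morphism of short exact sequences of complexes, so it commutes with connecting homomorphisms. In degree $0$ both sides are identified with $\F_{\rm as}(V,\rmH^0_\m(W,M_1))$ and the comparison is the identity.

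The main obstacle is effaceability. For each $M$ I need a monomorphism $M\into \tilde M$ in $\PMod(Y)$ with $\tilde M$ such that $H^p(\tilde M)=K^p(\tilde M)=0$ for all $p\geq 1$; the natural candidate is the co-induced (or "relatively injective") module $\tilde M := \C^0(Y,M) = \F(Y,M)$ with $M\into \F(Y,M)$ as constants, just as for $\rmH^\ast_\m$. The content is then that the cochain complexes $\C^\ast_{\rm as}(\F(Y,M))$ and $\t\C^\ast(\F(Y,M))$ are acyclic in positive degrees. For $K^\ast$ this reduces, via~(\ref{eq:Kpequals}) and the fact that $\rmH^p_\m(W,\F(Y,M)_1)=\rmH^p_\m(W\cap Y,M)$-type Shapiro computations vanish for $p\geq1$, to the corresponding acyclicity for $\rmH^\ast_\m$, which is classical. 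For $H^\ast$ one writes down an explicit contracting homotopy on $\C^\ast_{\rm as}(\F(Y,M))$ by the standard "integrate out the last variable"/"evaluate" formula used to efface the bar resolution of an induced module, and one must check that this homotopy preserves the class of almost-step functions --- this is where Lemmas~\ref{lem:step-unif-cts} and~\ref{lem:slicing-as} and the uniform-limit closure of $\F_{\rm as}$ get used, much as Lemma~\ref{cor:d-as-is-as} was proved. Once effaceability is in hand, the universality theorem closes the argument, and with it Proposition~\ref{prop:disc-sp-and-m-cohom}.
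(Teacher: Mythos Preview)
Your overall architecture is exactly the paper's: prove $H^\ast$ and $K^\ast$ are effaceable cohomological functors on $\PMod(Y)$, match them in degree zero, and invoke Buchsbaum--Moore universality. The long exact sequences and the degree-zero comparison are handled correctly.

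The gap is in your choice of effacing module. You take $\tilde M=\F(Y,M)$, but the cohomology being effaced is $W$-cohomology (of $\F(V,M_1)$), not $Y$-cohomology. For $K^\ast$ your choice still works, because by Shapiro $\rmH^p_\m(W,(\F(Y,M))_1)\cong\rmH^p_\m(W\cap Y,\F(Y,M))$, and the latter vanishes for $p\ge 1$; note, however, that the intermediate module is $\rmH^p_\m(W\cap Y,\F(Y,M))$, not $\rmH^p_\m(W\cap Y,M)$ as you wrote. For $H^\ast$ the problem is real: the ``standard evaluate / insert-a-variable'' homotopy for the bar complex needs to \emph{produce} an element of $W$ to fill the extra cochain slot, and $\F(Y,M)$ only gives you an extra variable in $Y$. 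There is no natural step-affine map $Y\to W$ in general, so there is no evident formula for a contracting homotopy that stays inside almost-step functions. Measurable vanishing (which you do have) does not by itself give almost-step vanishing.

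The paper fixes this by effacing with $\F(Z_1,M)$ (as a $Y$-module via the diagonal action), not $\F(Y,M)$. Because $Z_1\supseteq W$, Proposition~\ref{prop:fund-lift} supplies a step-affine $W$-equivariant map $\xi:Z_1\to W$, and one defines explicitly
\[
F(w_1,\dots,w_{p-1},v,z)(z'):=(-1)^p f\big(w_1,\dots,w_{p-1},\ \xi(z')-w_1-\cdots-w_{p-1},\ v,\ z\big).
\]
A direct calculation gives $dF=f$ among $\F(Z_1,M)$-valued cochains, and Lemma~\ref{lem:slicing-as} shows $F$ is almost step whenever $f$ is. The same formula effaces $K^\ast$. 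Once you swap $\F(Y,M)$ for $\F(Z_1,M)$ and use this $\xi$-based homotopy, your outline coincides with the paper's proof.
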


\begin{proof}
This is proved by showing that both $H^\ast$ and $K^\ast$ are effaceable cohomological functors on $\PMod(Y)$, and that the comparison map is an isomorphism when $p=0$.  Then a standard appeal to the universality properties of effaceable cohomological functors on $\PMod(Y)$ complete the proof: see~\cite[Theorem 2]{Moo76(gr-cohomIII)}.  This kind of argument was discussed a little more in Subsection I.3.2.

Thus, we must give the constructions of long exact sequences for both theories; prove that both are effaceable; and compare them in degree zero.

\vspace{7pt}

\emph{Long exact sequences.}\quad For $K^\ast$, long exact sequence simply follow from those for $\rmH^p_\m$ itself, using Lemma~\ref{lem:lifting-as}.

For $H^\ast$, the usual construction also works (compare~\cite{Moo76(gr-cohomIII)}): running through the construction of the switchback homomorphisms for the measurable theory, one sees that Lemma~\ref{lem:lifting-as} and Corollary~\ref{cor:d-as-is-as} enable one to stay among almost-step functions.

\vspace{7pt}

\emph{Effaceability.}\quad We will show that the inclusion $M\into \F(Z_1,M)$, where $\F(Z_1,M)$ is regarded as a $Y$-module with the diagonal action, is effacing for both $H^\ast$ and $K^\ast$.

Indeed, if $f:W^p\times V\times Z_1\to M$ is $Y$-equivariant and satisfies $df = 0$, then let $\xi:Z_1\to W$ be a $W$-equivariant step-affine function (from Proposition~\ref{prop:fund-lift}), and consider the function $F:W^{p-1}\times (V\times Z_1)\to \F(Z_1,M)$ defined by
\[F(w_1,\ldots,w_{p-1},v,z)(z') := (-1)^pf(w_1,w_2,\ldots,w_{p-1},\xi(z') - w_1 - \cdots - w_{p-1},v,z).\]
A direct calculation gives that $dF = f$ among $\F(Z_1,M)$-valued cochains.  If $f$ was almost step, then so is $F$, by Lemma~\ref{lem:slicing-as}, and similarly if $f \in \t{C}^p(M)$, then $F \in \t{\C}^{p-1}(\F(Z_1,M))$, so this inclusion effaces both theories.

\vspace{7pt}

\emph{Interpretation in degree zero.}\quad Finally, when $p=0$, we have seen that $H^0(M) = \F_{\rm{as}}(V,\rmH^0_\m(W,M_1))$, and equation~(\ref{eq:Kpequals}) gives the same for $K^0(M)$, since $\rmH^0_\m(W,M_1)$ is always Hausdorff.
\end{proof}

\bibliographystyle{abbrv}
\bibliography{bibfile}

\end{document}